\tikzset{>=stealth}
\def\gr{1.618034} %
\setlist{topsep=-2pt, itemsep=4pt}
\def\textclap#1{\hbox to 0pt{\hss#1\hss}}
\def\clap{\mathpalette\clapinternal}
\def\clapinternal#1#2{\textclap{$\mathsurround=0pt#1{#2}$}}
\newcommand*\cocolon{\nobreak\mskip6mu plus1mu\mathpunct{}\nonscript\mkern-\thinmuskip{:}\mskip2mu\relax}
\theoremstyle{plain}
\newtheorem{theorem}{Theorem}[section]
\newtheorem{lemma}[theorem]{Lemma}
\newtheorem{prop}[theorem]{Proposition}
\newtheorem{corollary}[theorem]{Corollary}
\theoremstyle{definition}
\newtheorem{setup}[theorem]{Setup}
\newtheorem{define}[theorem]{Definition}
\newtheorem{remark}[theorem]{Remark}
\theoremstyle{plain}
\newtheorem{introthm}{Theorem}
\def\thm@space@setup{%
  \thm@preskip=\parskip
  \thm@postskip=0pt
}
\renewenvironment{proof}[1][\proofname]{
  \par
  \vspace{-\parskip}
  \pushQED{\qed}
  \normalfont
  \topsep5pt \partopsep0pt
  \trivlist
  \item[\hskip\labelsep
        \itshape
    #1\@addpunct{.}]\ignorespaces
}{%
  \popQED\endtrivlist\@endpefalse
  \addvspace{6pt plus 6pt} 
}
\newcommand{\N}{\mathbb{N}}
\newcommand{\Z}{\mathbb{Z}}
\newcommand{\Q}{\mathbb{Q}}
\newcommand{\R}{\mathbb{R}}
\newcommand{\C}{\mathbb{C}}
\newcommand{\into}{\hookrightarrow}
\newcommand{\onto}{\twoheadrightarrow}
\newcommand{\<}{\langle}
\renewcommand{\>}{\rangle}
\newcommand{\half}{{\frac 12}}
\newcommand{\mhalf}{{-\kern-.5pt\frac{1}{2}}}
\newcommand{\del}{\partial}
\DeclareMathOperator{\Spec}{Spec}
\newcommand{\pt}{\mathrm{pt}}
\DeclareMathOperator{\Bl}{Bl}
\newcommand{\con}{\mathrm{con}}
\newcommand{\gitquot}{/\kern-3pt/}
\DeclareMathOperator{\Sym}{Sym}
\newcommand{\A}{\mathbb{A}}
\newcommand{\G}{\mathbb{G}}
\renewcommand{\P}{\mathbb{P}}
\newcommand{\curve}{\mathrm{C}}
\renewcommand{\O}{\mathcal{O}}
\DeclareMathOperator{\rk}{\mathrm{rk}}
\DeclareMathOperator{\Coh}{coh}
\DeclareMathOperator{\refl}{ref}
\DeclareMathOperator{\Per}{Per}
\newcommand{\pPer}{{}^p{\kern-2pt}\Per}
\newcommand{\zPer}{{}^0{\kern-2pt}\Per}
\newcommand{\mPer}{{}^{-1}{\kern-2.5pt}\Per}
\newcommand{\St}{\mathrm{St}}
\DeclareMathOperator{\cone}{cone} %
\DeclareMathOperator{\silt}{silt}  %
\DeclareMathOperator{\tilt}{tilt}  %
\newcommand{\Var}{\mathrm{Var}}
\DeclareMathOperator{\Mot}{Mot}
\renewcommand{\L}{\mathbb{L}}
\newcommand{\muhat}{{\hat\upmu}}
\newcommand{\nilp}{\mathrm{nilp}}
\newcommand{\virt}[1]{\left[#1\right]_{\mathrm{virt}}}
\newcommand{\cvirt}[1]{\left[#1\right]\clap{\kern9pt{}_{\mathrm{virt}}}}
\DeclareMathOperator{\MMHS}{MMHS}
\newcommand{\mmhs}{\mathrm{mmhs}}
\newcommand{\hsp}{\mathrm{hsp}}
\newcommand{\wt}{\mathrm{wt}}
\newcommand{\BPS}{\mathrm{BPS}}
\newcommand{\cBPS}{\mathcal{BPS}}
\newcommand{\GV}{\mathrm{GV}}
\newcommand{\fCrit}{\mathscr{C}\mathrm{rit}\kern1pt}
\newcommand{\fExt}{\fE\kern-1pt\mathrm{xt}}
\DeclareMathOperator{\IC}{IC}
\newcommand{\constQ}{\underline\Q}
\DeclareMathOperator{\pr}{pr}
\DeclareMathOperator{\K}{K}
\let\Hu\H
\let\H\relax
\DeclareMathOperator{\H}{H}
\DeclareMathOperator{\Mat}{Mat}
\DeclareMathOperator{\GL}{GL}
\newcommand{\tr}{\mathrm{tr}}
\newcommand{\loc}{\mathrm{loc}}
\newcommand{\id}{\mathrm{Id}}
\DeclareMathOperator{\D}{D}
\let\d\relax
\DeclareMathOperator{\d}{d}
\newcommand{\ab}{\mathrm{ab}}
\newcommand{\floor}[1]{\left\lfloor #1 \right\rfloor}
\DeclareMathOperator{\Jac}{Jac}
\DeclareMathOperator{\Rep}{Rep}
\newcommand{\cyc}{\mathrm{cyc}}
\renewcommand{\min}{\mathrm{min}}
\newcommand{\ddim}{\underline{\mathrm{dim}}}
\newcommand{\m}{\mathfrak{m}}
\newcommand{\n}{\mathfrak{n}}
\renewcommand{\o}{\mathfrak{o}}
\renewcommand{\k}{\kappa}
\let\mod\relax
\DeclareMathOperator{\mod}{mod}
\DeclareMathOperator{\fdmod}{fdmod}
\DeclareMathOperator{\tw}{tw}
\DeclareMathOperator{\proj}{proj}
\newcommand{\perf}{\mathrm{perf}}
\newcommand{\DGMod}{\cM\mathrm{od}\kern1pt}
\newcommand{\DGPerf}{\cP\mathrm{erf}\kern1pt}
\newcommand{\op}{\mathrm{op}}
\newcommand{\Ob}{\mathrm{Ob}\,}
\DeclareMathOperator{\irr}{Irr}
\DeclareMathOperator{\im}{im}
\DeclareMathOperator{\Aut}{Aut}
\DeclareMathOperator{\Hom}{Hom}
\DeclareMathOperator{\End}{End}
\DeclareMathOperator{\Ext}{Ext}
\newcommand{\RHom}{\mathbf{R}\mathrm{Hom}}
\newcommand{\REnd}{\mathbf{R}\mathrm{End}}
\newcommand{\Lotimes}{\overset{\textbf{L}}\otimes}
\newcommand{\cA}{\mathcal{A}}
\newcommand{\cC}{\mathcal{C}}
\newcommand{\cD}{\mathcal{D}}
\newcommand{\cF}{\mathcal{F}}
\newcommand{\cH}{\mathcal{H}}
\newcommand{\cK}{\mathcal{K}}
\newcommand{\cL}{\mathcal{L}}
\newcommand{\cM}{\mathcal{M}}
\newcommand{\cN}{\mathcal{N}}
\newcommand{\cP}{\mathcal{P}}
\newcommand{\cQ}{\mathcal{Q}}
\newcommand{\cS}{\mathcal{S}}
\newcommand{\cW}{\mathcal{W}}
\newcommand{\bC}{\mathbf{C}}
\newcommand{\bR}{\mathbf{R}}
\newcommand{\fC}{\mathscr{C}}
\newcommand{\fE}{\mathscr{E}}
\newcommand{\fJ}{\mathscr{J}}
\newcommand{\fM}{\mathscr{M}}
\newcommand{\fN}{\mathscr{N}}
\newcommand{\fP}{\mathscr{P}}
\newcommand{\fU}{\mathscr{U}}
\newcommand{\fX}{\mathscr{X}}
\newcommand{\fY}{\mathscr{Y}}
\newcommand{\fZ}{\mathscr{Z}}
\DeclareMathOperator{\HH}{HH}
\newcommand{\dvector}[2]{(#1,#2)}
\newcommand{\Nccr}{\Lambda}
\newcommand{\Mothat}{\mathrm{Mot}^{\muhat}}
\title{Donaldson--Thomas Invariants of Length 2 Flops}
\author{Okke van Garderen}
\begin{document}

\begin{abstract}
  We develop theoretical aspects of refined Donaldson--Thomas theory for threefold flops, and use these to determine all DT invariants for a doubly infinite family of length 2 flopping contractions.
  Our results show that a refined version of the strong-rationality conjecture of Pandharipande--Thomas holds in this setting, and also that refined DT invariants do not classify flops.
  Our main innovation is the application of tilting theory to better understand the stability conditions and cyclic $A_\infty$-deformation theory of these spaces.
  Where possible we work in the motivic setting, but we also compute intermediary refinements, such as mixed Hodge structures.
\end{abstract}

\maketitle

\tableofcontents

\section{Introduction}

Threefold flops are a fundamental class of birational surgeries, given that they connect minimal models in the minimal model program \cite{KM98}.
In this paper we focus on simple flops, where a single rational curve $\curve$ in a smooth threefold $Y$ is contracted to a point:
\[\begin{tikzpicture}
    \draw[thick] (-1.8*\gr,1) to[bend left] (-1.2*\gr,.9);
    \draw[thick] (1.8*\gr,1) to[bend right] (1.2*\gr,.9);
    \node[fill,circle,inner sep=.8pt] at (0,0) {};
    \draw (0,0) ellipse (0.6*\gr cm and 0.6cm);
    \draw (-1.5*\gr,1) ellipse (0.6*\gr cm and 0.6cm);
    \draw (1.5*\gr,1) ellipse (0.6*\gr cm and 0.6cm);
    \node at (-2.3*\gr,1.3) {$Y$};
    \node at (-1.5*\gr,1.3) {$\curve$};
    \node at (0,.3) {$Y_\con$};
    \node at (2.35*\gr,1.3) {$Y^+$};
    \node at (1.5*\gr,1.3) {$\curve^+$};
    \draw[->] (-.95*\gr,.6) to[edge label=$\hphantom{{}^+}\scriptstyle\pi$,near start,inner sep=.15pt] (-.6*\gr,.3);
    \draw[->] (.95*\gr,.6) to[edge label'=$\scriptstyle\pi^+$,near start,inner sep=.15pt] (.6*\gr,.3);
    \draw[->,dashed] (-.8*\gr,1) to (.8*\gr,1);
\end{tikzpicture}\]
This innocent diagram is the basis for a rich geometry which is still, remarkably, not completely understood.
Several invariants have been studied: ranging from the \emph{length} invariant $1\leq \ell \leq 6$ of the curve \cite{KM92}, to Gopakumar--Vafa invariants \cite{Katz08,BKL01}, to \emph{Donaldson--Thomas invariants}.
DT invariants are of a motivic nature \cite{KS08}, and considerable work has been expended towards their \emph{refinement}.
Such refined invariants have been computed for only a few examples, which include affine threespace \cite{BBS13}, and other toric varieties \cite{MN15}, while only the most elementary class of flops with length $\ell=1$ has been studied \cite{DM17}. 
The goal of this paper is to develop the DT theory of higher length flops.
There is a jump in complexity, which can already be seen when moving from length $\ell=1$ to $\ell=2$, and hence we primarily focus on the Donaldson--Thomas theory for flops of length two.

We work in a noncommutative setting, representing flopping contractions $Y\to Y_\con$ as the Jacobi algebra of a quiver with potential $(Q,W)$ via a derived equivalence
\[
  \D^b(\Coh Y) \simeq \D^b(\mod \Jac(Q,W)).
\]
In this noncommutative setting, the Donald\-son--Thomas theory is captured by a \emph{partition function} $\Upphi(t)$, a powerseries indexed by dimension vectors $\updelta \in \N Q_0$ with coefficients given by motivic classes in a certain Grothen\-dieck ring of varieties equipped with monodromy. 
For a fixed dimension vector $\updelta$ the motivic class acts as a "virtual count" of the nilpotent $\Jac(Q,W)$-modules of dimension $\updelta$. 
We present this partition function as a plethystic exponential
\[
  \Upphi(t) = \Sym\left(\sum_{\updelta\in \N Q_0} \frac{\BPS_\updelta}{\L^\half-\L^\mhalf} \cdot t^\updelta\right),
\]
parametrised by \emph{BPS invariants}, and our aim is to describe these explicitly.

\subsection{Main result}

We are able to explicitly calculate the invariants for a new infinite family $\{Y_{a,b}\to \Spec R_{a,b}\}$ of length 2 flopping contractions parametrised by pairs $(a,b)$ where $a\in\N$, and $b \in \N\cup \{\infty\}$. This family was recently and independently constructed by Kawamata \cite{Kawamata20}.
Each member of the family can be represented by a quiver with potential of the form given in figure \ref{fig:quivpot}, for which we determine the BPS invariants $\BPS_\updelta$.

\begin{figure}
  \begin{tikzpicture}[>=stealth,inner sep=1pt]
    \clip (-5,1) rectangle (5,-1.4);
    \node at (-2,0) {$Q\colon$};
    \node[outer sep=1pt] (A) at (-.8,0) {$\scriptstyle 0$};
    \node[outer sep=1pt] (B) at (.8,0) {$\scriptstyle 1$};
    \draw[->] (A) to[bend left] node[midway,fill=white]{$c$} (B);
    \draw[->] (B) to[bend left] node[midway,fill=white]{$d$} (A);
    \draw[->] (B) to[loop,looseness=15,in=280,out=-10] node[midway,fill=white]{$x$} (B);
    \draw[->] (B) to[loop,looseness=15,in=80,out=10] node[midway,fill=white]{$y$} (B);  
    \draw[->] (A) to[loop,looseness=10,in=140,out=220] node[midway,fill=white]{$s$} (A);
    \node at (0,-1.2) {$W_{a,b} = x^2y - f_{a,b}(y) + y^2cd - sdc + 2s^a$};
  \end{tikzpicture}
  \caption{The family of quivers with potential.}
  \label{fig:quivpot}
\end{figure}

Across the derived equivalence, a dimension vector $\updelta$ for the quiver corresponds to a K-theory class $\K_0(\curve)$ for the curve and corresponds to a unique pair of rank and Euler characteristic $\rk(\updelta), \chi(\updelta) \in \Z$. 
Our main result is the following theorem which describes the dependence of the BPS invariants on the rank and Euler characteristic. 
Where possible we calculate $\BPS_\updelta$ motivically, and otherwise calculate \emph{realisations} in the Grothendieck ring $\K_0(\MMHS)$ of monodromic mixed Hodge structures.
\begin{introthm}[Theorem \ref{thm:theinvariants}]\label{introthm:A}
  The BPS invariants $\BPS_\updelta$ associated to the length 2 flopping contraction $Y_{a,b} \to \Spec R_{a,b}$ have the following dependence on $\rk$ and $\chi$:
  \begin{itemize}
  \item if $\rk(\updelta) = 0$ then
    \[
      \BPS_\updelta = \L^{-\frac32}[\P^1],
    \]
    where $\L^{\half}$ is a formal square root for the Lefschetz motive $\L = [\A^1]$,

  \item if $\rk(\updelta) = \pm 1$ then
    \[
      \BPS_\updelta = \begin{cases}
        \L^{-1}(1 - [D_{4a}]) + 2   & a \leq b, \\
        \L^{-1}(1 - [D_{2b+1}]) + 3 & a > b,
      \end{cases}
    \]
    where $D_{4a}$ and $D_{2b+1}$ are curves of genus $a$ and $b$, with a monodromy action of $\upmu_{4a}$ and $\upmu_{2b+1}$ respectively.

  \item if $\rk(\updelta) = \pm 2$ and $\chi(\updelta)$ is odd then
    \[
      \BPS_\updelta = \L^\mhalf(1-[\upmu_a]),
    \]
    and if $\chi(\updelta)$ is even the BPS invariant has the realisation
    \[
      \upchi_\mmhs(\BPS_\updelta) = \upchi_\mmhs(\L^\mhalf(1-[\upmu_a])),
    \]
    where $\upchi_\mmhs$ denotes a realisation map into monodromic mixed Hodge structures,

  \item if $|\!\rk(\updelta)| \geq 3$ and $\chi(\updelta)$ is not divisible by $\rk(\updelta)$ then
    \[
      \BPS_\updelta = 0,
    \]
    while for $|\!\rk(\updelta)| \geq 3$ and $\chi(\updelta)$ divisible by $\rk(\updelta)$ the realisation vanishes:
    \[
      \upchi_\mmhs(\BPS_\updelta) = 0.
    \]
  \end{itemize}
\end{introthm}
The theorem shows that (at least after applying the realisation) the DT theory of this family is essentially controlled by three invariants: a rank $0$ count of the points on $C\simeq \P^1$, and invariants for each rank smaller than the length $\ell$. 
The latter invariants are a refinement of the genus 0 Gopakumar--Vafa invariants of curve-classes in $\H_2(\curve,\Z)$, which one expects to only depend on the rank: this is equivalent to the
strong rationality conjecture of Pandharipande and Thomas \cite{PT09}.
Theorem \ref{introthm:A} shows that the refined version of this conjecture, as described in \cite{Davison19}, holds in our setting.
\begin{corollary}
  For the family $\{Y_{a,b}\}$, the refined strong rationality conjecture holds in the $\K_0(\MMHS)$-realisation.
\end{corollary}
For every $a>1$ the flopping contractions associated to the pairs $(a,a)$, $(a,a+1)$, $\ldots$, $(a,2a-1)$, and $(a,\infty)$ are analytically distinct, see e.g. Kawamata \cite{Kawamata20}, but our theorem shows that the refined BPS invariants for these pairs are nonetheless the same.
\begin{corollary}\label{cor:donotdet}
  $\K_0(\MMHS)$-realisations of BPS invariants do not classify flops.
\end{corollary}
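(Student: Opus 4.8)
The plan is to produce two flops inside the family of Theorem~\ref{introthm:A} that are \emph{not} isomorphic yet carry identical $\MMHS$-realised DT invariants. Concretely, I would fix any $a\geq 2$ and compare the flops with parameters $(a,a)$ and $(a,\infty)$ (any two distinct values $b\geq a$ would serve equally well, but the two extremes make the point cleanly).

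The equality of invariants can be read off directly from Theorem~\ref{introthm:A}. Both pairs lie in the regime $a\leq b$, where $\BPS_1^\curve = \L^{-1}(1-[D_{4a}])+2$ depends only on $a$; likewise $\BPS_k^\pt$, $\BPS_1^{2\curve}$ and $\BPS_k^{2\curve}$ for $k>1$ depend only on $a$, and for $k\geq 2$ the stated realisations $\chi_\mmhs(\BPS_k^\curve)$ again depend only on $a$. Hence every BPS invariant of $(a,a)$ has the same image in $\K_0(\MMHS)$ as the corresponding invariant of $(a,\infty)$. Since the DT partition function of a simple flop is a fixed universal expression in the three sequences $\BPS_k^\pt, \BPS_k^\curve, \BPS_k^{2\curve}$ through the multiple-cover formula, and since that formula is invertible and compatible with the ring homomorphism $\chi_\mmhs$, it follows that the $\MMHS$-realisation of every DT invariant of $(a,a)$ agrees with that of $(a,\infty)$. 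So the two flops are indistinguishable by $\MMHS$-realised DT theory.

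What remains is to check that $Y_{(a,a)}$ and $Y_{(a,\infty)}$ are genuinely non-isomorphic --- not even as formal flopping contractions over their common $Y_\con$ --- and this is the heart of the statement, since it forces the use of an invariant strictly finer than anything DT theory records. Here I would appeal to the explicit description of the family (as in Kawamata \cite{KawaFlopsLenTwo}) together with the contraction algebra $\Lambda_\con$ of Donovan--Wemyss: $\Lambda_\con$ is an invariant of the (formal) flop, and in the range $a\leq b$ its isomorphism type --- indeed already its dimension --- still varies with $b$, even though, as the computation above shows, its abelianisation and the attached motivic deformation counts do not. Because the flops in play are precisely the $(a,b)$ listed just before the corollary, which are pairwise non-isomorphic, we conclude that $(a,a)$ and $(a,\infty)$ are non-isomorphic flops with equal $\MMHS$-realised DT invariants, which is the corollary.

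The main obstacle is exactly this last step: proving non-isomorphism needs a separate, honest computation --- of $\Lambda_\con$, or equivalently of the analytic type of the singularity $Y_\con$ and its non-commutative crepant resolution --- showing that the parameter $b$ is not redundant for $b\geq a$. Once that input is granted, the corollary is just a repackaging of Theorem~\ref{introthm:A}.
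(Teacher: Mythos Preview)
Your approach is the paper's: pick two members of the family in the regime $a\leq b$ so that Theorem~\ref{introthm:A} gives identical invariants depending only on $a$, then invoke an external result for non-isomorphism. The paper uses the concrete pair $f(y)=y^4$ and $f(y)=y^4+y^5$ (i.e.\ $(a,b)=(2,\infty)$ and $(2,2)$), citing \cite{BW9page} directly for their non-isomorphism, and then \cite{KawaFlopsLenTwo} for the larger list $f(y)=y^{2a},\,y^{2a}+y^{2a+1},\ldots,y^{2a}+y^{4a-1}$.

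There is one factual slip in your justification of non-isomorphism. You claim that in the range $a\leq b$ the contraction algebra's ``isomorphism type --- indeed already its dimension --- still varies with $b$''. The dimension does \emph{not}: the paper later computes $\dim_\C\Nccr_\con = \GV_1+4\GV_2 = 6a-3$ and $\dim_\C\Nccr_\con^{\ab}=2a+1$ for all $b\geq a$, so no dimension count can separate $(a,a)$ from $(a,\infty)$. The non-isomorphism genuinely requires the finer input of \cite{KawaFlopsLenTwo} (or, for $a=2$, \cite{BW9page}), not a numerical invariant of $\Nccr_\con$. Once you replace the dimension claim with a direct citation of those references, your argument is complete and coincides with the paper's.
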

This corollary strengthens a result of Brown--Wemyss \cite{BW18}: they showed that (numerical) GV invariants do not determine flops.
It also puts their result in a wider context, as the two examples they use form a subset of our family.
As in \cite{BW18} we also compare with the noncommutative contraction algebra invariant of \cite{DW16}, which \emph{does} separate the flops.
Corollary \ref{cor:donotdet} suggests that, even at this level of refinement, some essential aspect of the noncommutative deformation theory is lost 
in the calculation of DT invariants.

To prove our main result we introduce two new techniques, which leverage the powerful theory of tilting equivalences of noncommutative crepant resolutions \cite{HW19,DW19}. 
The first method yields a classification of stable objects for a certain stability condition, showing that these objects are related by tilting functors. 
The second method shows that the tilts preserve the associated Calabi--Yau potentials of these objects, which determine their contribution to the DT theory.
As a result we are able to compute the BPS invariants by considering the deformation theory of just three types of objects.

\subsection{Stability \& tilting theory}

\begin{figure}
  \begin{subfigure}{.5\textwidth}
    \centering
    \vfill
    \begin{tikzpicture}[scale=.5]
      \node at (0,5) {${\scriptstyle\K_0(\proj \Nccr)_\R}$};
      \begin{scope}[thick]
        \clip (-4,-4) rectangle (4,4);
        \draw[color=blue!60] (0,6) to (0,-6);
        \foreach \i in {0,1,2,3,4,5}
        {
          \draw[color=blue!60]  (16*\i,   -8*\i+4) to (-16*\i , 8*\i-4);  %
          \draw[color=red!60]   (12*\i+6, -6*\i) to (-12*\i-6, 6*\i);  %
        }
        \foreach \i in {6,...,90}
        {
          \draw[color=blue!60]   (4*\i,   -2*\i+1) to (-4*\i , 2*\i-1);  %
          \draw[color=red!60]   (2*\i+1, -\i) to (-2*\i-1, \i);  %
        }
        \foreach \i in {90,...,6}
        {
          \draw[color=blue!60]   (4*\i,   -2*\i-1) to (-4*\i , 2*\i+1);  %
          \draw[color=red!60]   (2*\i-1, -\i) to (-2*\i+1, \i);  %
        }
        \foreach \i in {5,...,0}
        {
          \draw[color=blue!60]   (16*\i,   -8*\i-4) to (-16*\i , 8*\i+4);  %
          \draw[color=red!60]   (12*\i-6, -6*\i) to (-12*\i+6 ,6*\i);  %
        }
        \draw[color=green] (12, -6) to (-12,6);  %
      \end{scope}
      \node[circle,fill=black,draw=black,inner sep=.8pt] at (0,0) {};
      \node at (-.75,-1) {$\scriptstyle -[P_0]$};
      \node at (1,.4) {$\scriptstyle [P_1]$};
    \end{tikzpicture}
    \caption{g-vectors of tilting complexes}    
    \label{subfig:A}
  \end{subfigure}
  \begin{subfigure}{.48\textwidth}
    \centering
    \begin{tikzpicture}
      \node at (2,4.5) {${\scriptstyle \N Q_0}$};
      \begin{scope}[thick]
        \clip (0,0) rectangle (4,4);
        \foreach \i in {60,...,4}
        {
          \draw[color=red!60]  (0,0) to (\i+1, 2*\i+1);
          \draw[color=blue!60]  (0,0) to (2*\i+1,4*\i);
        }
        \foreach \i in {3,...,0}
        {
          \draw[color=red!60]  (0,0) to (4*\i+4, 8*\i+4);
          \draw[color=blue!60]  (0,0) to (8*\i+4, 16*\i);
        }
        \foreach \i in {0,...,3}
        {
          \draw[color=red!60]  (0,0) to (4*\i, 8*\i + 4);
          \draw[color=blue!60]  (0,0) to (8*\i+4, 16*\i+16);
        } 
        \foreach \i in {4,...,60}
        {
          \draw[color=red!60]  (0,0) to (\i, 2*\i + 1);
          \draw[color=blue!60]  (0,0) to (2*\i+1, 4*\i+4);
        } 
        \draw[color=green] (0,0) to (4,8);  %
      \end{scope}
      \begin{scope}[fill=white,draw=black,inner sep=.8pt]
        \clip (-.6,-.6) rectangle (4.2,4.2);
        \node at (-.35,.5) {$\scriptstyle [S_1]$};
        \node at (.5,-.25) {$\scriptstyle [S_0]$};
        \foreach \n in {1,2,3,4,5,6,7,8}
        {
          \foreach \i in {0,1,2}
          {
            \node[circle,fill,draw] at (.5*\n*\i, \n*\i + .5*\n) {};
            \node[circle,fill,draw] at (\n*\i+.5*\n, 2*\n*\i+2*\n) {};
            \node[circle,fill,draw] at (.5*\n*\i+.5*\n, \n*\i+.5*\n) {};
            \node[circle,fill,draw] at (\n*\i+.5*\n, 2*\n*\i) {};
          }
          \node[circle,fill,draw] at (.5*\n, 1*\n) {};
        }
        \foreach \i in {3,4}
        {
          \node[circle,fill,draw] at (.5*\i, \i + .5) {};
          \node[circle,fill,draw] at (\i+.5, 2*\i+2) {};
          \node[circle,fill,draw] at (.5*\i+.5, \i+.5) {};
          \node[circle,fill,draw] at (\i+.5, 2*\i) {};
        }
        \node[circle,fill=black,draw] at (0,0) {};
      \end{scope}
    \end{tikzpicture}
    \caption{dimension vectors of semi-stables}
    \label{subfig:B}
  \end{subfigure}
  \caption{For a generic stability condition, the dimension vectors of (semi-)stable objects
    are on the rays \ref{subfig:B} perpendicular to the tilting hyperplane arrangement \ref{subfig:A}.
    Each ray is spanned by an indivisible class/dimension vector $\updelta_{\curve,n}$ (red), $\updelta_{2\curve,n}$ (blue) or $\updelta_\pt$ (green).
  }
\label{fig:scattering}
\end{figure}

The partition function $\Upphi(t)$ counts the moduli of \emph{nilpotent} modules for the Jacobi algebra of $(Q,W)$, or equivalently finite dimensional modules of the completion $\Nccr \colonequals \widehat\Jac(Q,W)$.
It is well-known that $\Upphi(t)$ can be decomposed by introducing a stability condition: given a central charge,
\[
  Z \colon \K_0(\fdmod \Nccr) \simeq \Z^2 \to \C,
\]
on the category $\fdmod\Nccr\subset\mod\Nccr$ of finite dimensional $\Nccr$-modules, a formula due to Kontsevich--Soibelman shows \cite{KS08} yields a decomposition of $\Upphi(t)$ into a product of partition functions $\Upphi^\uptheta(t)$, which count semistable objects with a prescribed phase $\uptheta$ in the complex plane.
To make use of this construction, it is imperative to classify the phases $\uptheta$ for which there are semistable objects and the associated classes $\updelta \in \K_0(\fdmod\Nccr)$.

The completed Jacobi algebra is a noncommutative crepant resolution (NCCR) of the singularity in $\Spec R$, and its tilting theory has recently been completely determined by Hirano--Wemyss \cite{HW19}.
In particular, it follows from \cite{HW19} that the 2-term tilting complexes generate a wall-and-chamber structure in the real vectorspace $\K_0(\proj \Nccr)_\R$, which is pictured in figure \ref{subfig:A}. Every chamber corresponds to a unique tilting complex $T = T_i \oplus T_{i+1} \in \cK^b(\proj \Nccr)$ whose \emph{g-vectors}
\[
  [T_i], [T_{i+1}] \in \K_0(\proj \Nccr)_\R,
\]
span the walls bounding a chamber, and adjacent tilting complexes are related by a mutation.
We show that, for a suitably generic choice of Bridgeland stability condition, the Euler pairing
\[
  \<-,-\> \colon \K_0(\proj \Nccr)_\R \otimes_\Z \K_0(\fdmod \Nccr) \to \R,
\]
yields a duality between the walls in $\K_0(\proj \Nccr)_\R$ and the lattice of dimension vectors of semistable modules in $\K_0(\fdmod \Nccr)$ in figure \ref{fig:scattering}, which is generated by a sequence of classes
\[
  \updelta_{\curve,n},\quad \updelta_{2\curve,n},\quad \updelta_\pt.
\]
Moreover, we are able to explicitly describe the stable objects: for every every wall spanned by a summand $T_i$, there is an adjacent tilting complex $T$ such that the tilting functor
\[
  -\Lotimes T \colon \D^b(\fdmod \End_\Nccr(T)) \xrightarrow{\ \sim\ } \D^b(\fdmod \Nccr),
\]
maps a simple $S \in \fdmod \End_\Nccr(T)$ of the tilted algebra to a stable $\Nccr$-module.
Each tilted algebra is isomorphic to $\Nccr$, and one therefore obtains two families of stable modules whose classes are $\updelta_{\curve,n}$ and $\updelta_{2\curve,n}$ respectively.
We show that these are the unique stable objects for each class, and the remaining stable objects are therefore of class $\updelta_\pt$.

To give an explicit description of the stable modules we employ the derived equivalence, which yields a description in terms of (shifted) sheaves supported on $\curve$.
\begin{introthm}[Theorem \ref{thm:stableobjs}]\label{introthm:B}
  For the above choice of central charge $Z$, there exists a $Z$-(semi)stable module with class $\updelta\in \N Q_0 \subset \K_0(\fdmod \Nccr)$ if and only if $\updelta$ is (a multiple of) one of
  \[
    \updelta_{\curve,n},\quad \updelta_{2\curve,n},\quad \updelta_\pt\quad\quad n\in\Z.
  \]
  For each $n\in\Z$ there is a unique $Z$-stable module of class $\updelta_{\curve,n}$ corresponding to a twist/shift
  \[
    \O_{\curve}(n-1) \quad (n\geq 0),\quad \O_\curve(n-1)[1]\quad (n<0),
  \]
  of the structure sheaf of $\curve$ across the derived equivalence.
  For each class $\updelta_{2\curve,n}$ with $n\in\Z$ there is a unique $Z$-stable module corresponding to a twist/shift
  \[
    \O_{2\curve}(n-1) \quad (n>0),\quad\quad \O_{2\curve(n-1)}[1]\quad (n\leq 0),
  \]
  of the structure sheaf of a certain thickening $2\curve \supset \curve$.
  The remaining stable objects are of class $\updelta_\pt$, corresponding to the point sheaves 
  \[
    \O_p\quad p\in\curve,
  \]
  and there are no other stable objects.
\end{introthm}
The proof of the theorem relies on a reduction to the setting of finite dimensional algebras: we show that the g-vectors of $\Nccr$-tilting complexes and the dimension vectors stable $\Nccr$-modules coincide with those of a finite-dimensional quotient.
For such finite dimensional algebras, the duality between g-vectors and stable modules was established in \cite{BST19,Asai21}, and we show that these results lift to the geometric setting.

As a consequence of Theorem \ref{introthm:B} we find a decomposition of the partition function along the phases $\uptheta_{\curve,n}$, $\uptheta_{2\curve,n}$, $\uptheta_\pt$ associated to $\updelta_{\curve,n}$, $\updelta_{2\curve,n}$, $\updelta_\pt$ respectively:
\[
  \Upphi(t) = \prod_n \Upphi^{\uptheta_{\curve,n}}(t) \cdot \prod_n \Upphi^{\uptheta_{2\curve,n}}(t) \cdot \Upphi^{\uptheta_\pt}(t).
\]
From this we deduce that a BPS invariant $\BPS_\updelta$ vanishes whenever $\updelta$ is \emph{not} a multiple of one of the given dimension vectors, while the remaining invariants can be extracted from one of the partition functions $\Upphi^\uptheta(t)$ for the given phases.
For $\uptheta=\uptheta_{\curve,n}, \uptheta_{2\curve,n}$ these partition function can be expressed via the deformation theory of the stable objects.

\subsection{Tilting preserves potentials}

In the setup of Kontsevich--Soibelman \cite{KS08} the DT theory of a quiver with potential $(Q,W)$ is determined by an enhancement of the derived category, which encodes the Calabi--Yau deformation theory of its objects.
In particular, the contribution of an object in $\D^b(\mod \Jac(Q,W))$ and its self-extensions to the partition function is determined by a quiver with potential obtained from this enhancement.
Moreover, in the setting of cluster algebras it is known by the work of Keller--Yang \cite{KY11} that these potentials are preserved under a process of mutation.

For our length $2$ flops, the partition function $\Upphi^\uptheta(t)$ for $\uptheta = \uptheta_{\curve,n},\uptheta_{2\curve,n}$ is precisely the contribution of the stable module $M$ of class $\updelta = \updelta_{\curve,n}, \updelta_{2\curve,n}$ and its self-extensions to the DT theory of $A=\Jac(Q,W)$.
Working with the enhancement, one therefore finds
\[
  \Upphi^\uptheta(t) = \Upphi_{\cQ_M,\cW_M}(t^\updelta),
\]
where the right-hand-side is the partition function of the quiver with potentials associated to $M$.
Our quiver $Q$ is not of cluster type, as each arrow is a loop or part of a 2-cycle, so the mutation theory of Keller--Yang does not apply.
Instead, we will deduce an analogous result for the tilting functors.

Working over the commutative base ring $R$ we consider \emph{$R$-linear standard equivalences} that satisfy a homological condition: if $F\colon \D^b(\mod A) \to \D^b(\mod A)$ is an equivalence that lifts to an $R$-linear enhancement, there is an induced $R$-linear action
\[
  \HH_3(F) \colon  \HH_3(A) \to \HH_3(A),
\]
on Hochschild homology, and we show that $F$ preserves the potentials if this action is a scalar.
This yields the following theorem, which applies to a much more general setting.
\begin{introthm}[Theorem \ref{thm:CYstructPots}]\label{introthm:C}
  Let $A = \Jac(Q,W)$ be a Jacobi algebra which is finite as an algebra over a central Noetherian subring $R \subset A$.
  Suppose $F\colon \D^b(\mod A) \to \D^b(\mod A)$ is an $R$-linear standard equivalence such that
  \[
    \HH_3(F) = \uplambda \in \C^\times.
  \]
  Then for every pair of nilpotent modules $M,N$ such that $\End_A(M) \simeq \C$ and $F(M)\simeq N$, the potentials $\cW_M$ and $\uplambda\cdot \cW_N$ are equivalent via a formal change of variables.
\end{introthm}
In our setting, we use the geometry of the flopping contraction $Y_{a,b}\to \Spec R_{a,b}$ to deduce that \emph{any} $R=R_{a,b}$-linear standard equivalences satisfies the homological condition, and in particular this applies to the tilting functors which relate the stable modules to the simple modules.
This implies that the contributions $\Upphi^\uptheta(t)$ for $\uptheta=\uptheta_{\curve,n}, \uptheta_{2\curve,n}$ do not depened on $n$ (up to a change in the variable $t$), and neither do the associated BPS invariants:
\[
  \BPS_{k \updelta_{\curve,n}} = \BPS_{k\updelta_{\curve,0}},\quad
  \BPS_{k \updelta_{2\curve,n}} = \BPS_{k\updelta_{2\curve,0}}\quad\quad \forall n\in\Z.
\]
As a result, it suffices to calculate these invariants for multiples of the classes $\updelta_{\curve,0} = [S_1]$ and $\updelta_{2\curve,0} = [S_0]$ of the vertex simples.

To prove theorem \ref{introthm:C} we use two enhancements: a DG-enhancement $\cA$ of the derived category $\D^b(\mod A)$ and a cyclic $A_\infty$-enhancement $(\cH,\upsigma)$ of $\D^b(\nilp A) \simeq \D^b(\fdmod \Nccr)$, as in the work of Kontsevich--Soibelman \cite{KS08,KS09}.
The cyclic structure $\upsigma$ is determined \emph{up to homotopy} by its Hochschild cohomology class $[\upsigma] \in \HH^3(\cH,\cH^*)$ and any auto-equivalence of $\cH$ which preserves this class yields an equivalence between induced potentials.
The enhancements $\cA$ and $\cH$ are related by local duality over the singularity in $\Spec R$, and at the level of Hochschild (co-)homology this yields a map
\[
  \Upsilon\colon \HH_3(A) \simeq \HH_3(\cA) \to \HH^3(\cH,\cH^*),
\]
as in the construction of Brav--Dyckerhoff \cite{BD19}.
An $R$-linear standard equivalence $F\colon \D^b(\mod A) \to \D^b(\mod A)$ lifts to $\cA$ and induces an equivalence $\cH\to\cH$, whose action on $\HH^3(\cH,\cH^*)$ is completely determined by the $R$-linear action of $F$ on $\HH_3(\cA)$: it is the unique $R$-linear map making the diagram
\[
  \begin{tikzpicture}
    \node (A) at (0,0) {$\HH_3(A)$};
    \node (B) at (0,-\gr) {$\HH_3(A)$};
    \node (C) at (3,0) {$\HH^3(\cH,\cH^*)$};
    \node (D) at (3,-\gr) {$\HH^3(\cH,\cH^*)$};
    \draw[->] (A) to[edge label'=$\HH_3(F)$] (B);
    \draw[->] (A) to[edge label=$\Upsilon$] (C);
    \draw[->] (B) to[edge label=$\Upsilon$] (D);
    \draw[dashed,->] (D) to (C);
  \end{tikzpicture}
\]
commute.
In this way the homological condition in theorem \ref{introthm:C} translates to a linear scaling of the cyclic structure, and thereby a linear scaling of the potentials.

The homological condition in the theorem can be motivated via the Calabi--Yau setting: if $A$ is derived equivalent to a $3$CY variety $X$ then the enhancement $\cA$ carries a (weak) \emph{left} CY structure induced by the volume form in $\H^0(X,\omega_X) = \HH_3(X) \simeq \HH_3(\cA)$, which the map $\Upsilon$ maps to a (weak) \emph{right} CY structure that determines a cyclic structure.
If $\HH_3(F) = \uplambda$ then $F$ scales the Calabi--Yau volume on $X$ linearly, and induces the inverse scaling on the cyclic structure, and thereby the potential.
However, Theorem \ref{introthm:C} deliberately does not use the existence of a volume form, and therefore avoids the somewhat delicate question of wether our choice of cyclic structure $\upsigma$, which comes from the presentation $(Q,W)$ of $A$, is the image of a volume form.

\subsection{Outline of the paper}

In \S\ref{sec:flops} we briefly recall the (non-commutative) geometry of simple flopping contractions and a construction of the family of length $\ell=2$ flops.
Section \S\ref{sec:DTtools} sets up the framework of Donaldson--Thomas theory, while the following sections contain our main theorems.
In \S\ref{sec:stables} we establish the relation between tilting and stability and give the classification of stable modules of theorem \ref{introthm:B}.
The BPS invariants are computed in \S\ref{sec:DTcalc}, resulting in Theorem \ref{introthm:A}.
This calculation relies heavily on a corollary to Theorem \ref{introthm:C}, which is proved in \S\ref{sec:autoequiv}.

\subsection{Acknowledgements}

The author would first and foremost like to thank his supervisors Michael Wemyss and Ben Davison
for their guidance, patience and ever-present optimism.
He would also like to thank Jenny August, Theo Raedschelders, Greg Stevenson, Hipolito Treffinger, and the members of his viva commission for helpful discussions. 
This work is part of the author's PhD dissertation at the University of Glasgow, and the author thanks the university for their generous support.

\section{Flopping Geometry}\label{sec:flops}
Let $Y$ be a smooth quasiprojective threefold. Recall that a map $\uppi\colon Y \to Y_\con$ onto a normal variety $Y_\con$ is a \emph{flopping contraction} if it is projective birational, with exceptional locus consisting of curves that are mapped to isolated Gorenstein singularities in $Y_\con$, and satisfies the condition
\[
  \bR \uppi_* \O_Y = \O_{Y_\con}.
\]
A flopping contraction is \emph{simple} over a point $p\in Y_\con$ if the exceptional curve $\uppi^{-1}(p) \simeq \P^1$. In general, the scheme theoretic fibre $Y \times_{Y_\con} \{p\}$ is non-reduced, and one defines the \emph{length} $\ell$ as the multiplicity of its structure sheaf at the generic point of $\P^1$, see e.g. \cite{Katz08}.

In what follows we consider a simple flopping contraction $\uppi\colon Y\to Y_\con = \Spec R$ over an affine base, which has length $\ell=2$ over some maximal ideal $\o\in \Spec R$ and write
\[
  \curve \colonequals \uppi^{-1}(\o),\quad 2\curve \colonequals Y \times_{Y_\con} \{\o\},
\]
for the reduced and scheme theoretic fibres respectively.
Where convenient we also consider the complete local case, where $R$ is completed at $\o$.

\subsection{Noncommutative description of simple flops}

To apply noncommutative methods, we will also make the assumption that $Y$ admits a \emph{tilting bundle} of the form $\cP = \O_Y \oplus \cN$ for some indecomposable vectorbundle $\cN$, which induces a derived equivalence
\[
  \begin{tikzpicture}
    \node (A) at (-2.5,0) {$\D^b(\Coh Y)$};
    \node (B) at (2.5, 0) {$\D^b(\mod \End_Y(\cP))$};
    \draw[->] ([yshift=1mm]A.east) to[edge label=${\scriptstyle\Psi=\RHom_Y(\cP,-)}$] ([yshift=1mm]B.west);
    \draw[->] ([yshift=-1mm]B.west) to[edge label=${\scriptstyle\Psi^{-1}=-\Lotimes\cP}$] ([yshift=-1mm]A.east);
  \end{tikzpicture}
\]
onto the derived category of the endomorphism algebra of $\cP$. As we work over an affine base, $\Uppsi$ is an $R$-linear equivalence with respect to the obvious $R$-linear structures on the derived categories. In particular, $\Uppsi$ restricts to an equivalence
\[
  \begin{tikzpicture}
    \node (A) at (-2.5,0) {$\D^b(\Coh_\curve Y)$};
    \node (B) at (2.5, 0) {$\D^b(\fdmod_\o \End_Y(\cP))$};
    \draw[->] ([yshift=1mm]A.east) to[edge label=${\scriptstyle\Psi}$] ([yshift=1mm]B.west);
    \draw[->] ([yshift=-1mm]B.west) to[edge label=${\scriptstyle\Psi^{-1}}$] ([yshift=-1mm]A.east);
  \end{tikzpicture}
\]
between complexes of sheaves with compact support contained in the fibre $\curve = \uppi^{-1}(\o)$, and complexes of finite dimensional modules which are $\o$-power torsion as $R$-modules.

If the base is complete local, a tilting bundle of the above form is known to exist by a construction of Van den Bergh \cite{VandenBergh04}. In this construction $\cN = \cM^*$ is the dual of the unique extension
\[
  0 \to \O_Y \to \cM \to \O_Y(1) \to 0,
\]
associated to a generator of $\H^1(Y,\O_Y(-1))$. The endomorphism algebra $\End_Y(\cP)$ is moreover a maximal Cohen-Macaulay module over $R$, making it a \emph{noncommutative crepant resolution} (NCCR).
In this setting the algebra $\End_Y(\cP)$ has two unique simple modules $S_0$, $S_1$ which correspond to the shifted sheaves
\[
  \O_{2\curve}(-1)[1] = \Psi^{-1}(S_0),\quad \O_\curve(-1) =\Psi^{-1}(S_1),
\]
In specific examples, a tilting bundle can be constructed even if the base is not complete local, see e.g. the work of Aspinwall-Morrison \cite{AM12}.

To calculate Donaldson--Thomas invariants, we will present $\End_Y(\cP)$ as the Jacobi algebra of a quiver with potential $(Q,W)$, which is the quotient
\[
  \Jac(Q,W) \colonequals \frac{\C Q}{(\del_a W \mid a \in Q_1)}
\]
of the path algebra of $\C Q$ by cyclic derivatives of the potential $W \in \C Q/[\C Q,\C Q]$. The existence of such a presentation is known by explicit construction in some cases, and is again guaranteed to exist in the complete local case by work of Van den Bergh \cite{VandenBergh15}, in which case one has to consider the completion $\widehat \Jac(Q,W)$ of the Jacobi algebra at the ideal generated by the arrows $a\in Q_1$.

\subsection{A family of length $2$ flops}\label{sec:familyexamples}
For the DT theory calculations in section \ref{sec:DTcalc} we will use an explicit family of flopping contractions. During the initial write up of this paper we discovered that this same family was simultaneously and independently studied by Kawamata \cite{Kawamata20}, who moreover classified all isomorphism classes in it. In view of this fact, we only give a brief alternative construction using moduli spaces of quivers with potential; a more complete account is give in the thesis version of this paper \cite{vGThesis}.

Given parameters $a\in \N_{\geq 2}$ and $b\in \N_{\geq 1} \cup \{\infty\}$, consider the quiver with potential
\[
  \begin{tikzpicture}[>=stealth,circle,inner sep=1pt]
    \clip (-5,1) rectangle (7,-1);
    \node at (-4,0) {$Q\colon$};
    \node[outer sep=.8pt] (A) at (-2.8,0) {0};
    \node[outer sep=.8pt] (B) at (-1.2,0) {1};
    \draw[->] (A) to[bend left] node[midway,fill=white, inner sep=.2pt]{$c$} (B);
    \draw[->] (B) to[bend left] node[midway,fill=white, inner sep=.2pt]{$d$} (A);
    \draw[->] (B) to[loop,looseness=10,in=280,out=-10] node[midway,fill=white, inner sep=.2pt]{$x$} (B);
    \draw[->] (B) to[looseness=10,in=80,out=10] node[midway,fill=white, inner sep=.2pt]{$y$} (B);  
    \draw[->] (A) to[loop,looseness=10,in=140,out=220] node[midway,fill=white, inner sep=.2pt]{$s$} (A);
    \node at (3.5,0) {$W_{a,b}= x^2y - f_{a,b}(y) + y^2cd - sdc + 2 s^a,$};
  \end{tikzpicture}
\]
where $f_{a,b}(y) = y^{2a}$ for $b=\infty$ and is $f_{a,b}(y) = y^{2a} + y^{2b+1}$ otherwise.
The representations of the quiver $Q$ of dimension vector $\updelta = (1,2) \in \N Q_0$ are parametrised by the affine space
\[
  \Rep_{\updelta}(Q) \simeq \Mat_{1\times1} \times \Mat_{2\times 1} \times \Mat_{1\times 2} \times \Mat_{2\times 2}^2,
\]
acted on by $\GL_{\updelta} \colonequals \GL_1 \times \GL_2$ via conjugation, and the cyclic derivatives of the potential cut out a $\GL_{\updelta}$-invariant subvariety $\Rep_{\updelta}(Q,W_{a,b}) \subset \Rep_{\updelta}(Q)$.
The GIT quotient by the action yields the singular affine moduli scheme of semisimple representations
\[
  \cM_{\updelta}(Q,W_{a,b}) \colonequals \Rep_{\updelta}(Q,W_{a,b})\gitquot \GL_{\updelta} \colonequals \C[\Rep_\updelta(Q,W)]^{\GL_{\updelta}}.
\]
A direct computation shows that this is a hypersurface singularity with coordinate ring $R_{a,b} \simeq \C[u,v,r,s]/(U_{a,b})$ defined by an equation
\[
  U_{a,b} = \begin{cases}
    u^2 + r^3 + sv^2 + 4a^2r s^{2a-1} & b = \infty\\
    u^2 + r(r + (2b+1)s^b)^2 + sv^2 + 4a^2 r s^{2a-1} & b\neq \infty
  \end{cases},
\]
which is related to the base of a flopping contraction in the family constructed by Kawamata \cite[\S5]{Kawamata20} by a change of variables.
To obtain the flopping contraction, one can resolve $\Spec R_{a,b}$ by the moduli scheme of (semi)stable representations
\[
  \cM_{\updelta}^\uptheta(Q,W_{a,b}) \colonequals \Rep_{\updelta}^\uptheta(Q,W_{a,b})\gitquot \GL_\updelta,
\]
where $\Rep_\updelta^\uptheta(Q,W_{a,b})$ is the subspace of semistable representations for a King stability condition $\uptheta = (-2,1) \colon \N Q_0 \to \R$.
Defining $Y_{a,b} \colonequals \cM_\updelta^\uptheta(Q,W_{a,b})$, the GIT construction yields a surjective and projective map $\uppi\colon Y_{a,b} \to \Spec R_{a,b}$.
The moduli scheme is a gluing $Y_{a,b} = U_x \cup U_y$ of two smooth affine charts
\[
  \begin{aligned}
    U_x &= \Spec \frac{\C[d_1,x_{01},y_{00},y_{10}]}{\left(x_{01} - 2a y_{00}(y_{00}^2-x_{01}y_{10}^2)^{a-1} + d_1y_{10} (-(y_{00}^2-x_{01}y_{10})^b))\right)}\\
    U_y &= \Spec \C[x_{00},x_{01},y_{01}] \simeq \A^3,
  \end{aligned}
\]
Hence $\uppi\colon Y_{a,b} \to \Spec R_{a,b}$ is a resolution of singularities, and coincides with the length 2 flopping contraction constructed by Kawamata (or its flop).
Because one of the charts is $\A^3$ and the restricted map $\uppi|_{U_x}\colon U_X \to Y_\con$ is dominant, the base has units
\[
  R_{a,b}^\times \simeq \H^0(U_x,\O_Y|_{U_x})^\times \simeq \H^0(\A^3,\O_{\A^3})^\times \simeq \C^\times.
\]
This property will be crucial for our DT calculations in \S\ref{sec:DTcalc}.
The space $Y_{a,b}$ comes equipped with a tilting bundle $\cP$ such that $\End_{Y_{a,b}}(\cP) \simeq \Jac(Q,W_{a,b})$, as shown in \cite[\S 2]{vGThesis}.

\section{The DT Toolbox}\label{sec:DTtools}
In this section we recall the machinery of motivic Donaldson-Thomas theory for symmetric quivers with potential, based on the foundational work of  Kontsevich--Soibelman \cite{KS08} Szendr\Hu oi \cite{Szendroei08} Joyce--Song \cite{JS12} and others. 
Our setup and notation mostly follows the work of Davison--Meinhardt \cite{DM15,DM17}.

Let $Q$ be a finite symmetric quiver and denote by $\Updelta = \N Q_0$ its monoid of dimension vectors.
Given a dimension vector $\updelta\in\Updelta$ the $\C Q$-modules with dimension vector $\updelta$ form the affine space 
\[
  \Rep_\updelta(Q) \simeq \prod_{(a\colon v\to w) \in Q_1} \Hom_\C(\C^{\updelta_v}, \C^{\updelta_w}),
\]
which carries an action of the algebraic group $\GL_\updelta \colonequals \prod_{v\in Q_0} \GL_{\updelta_v}$ in the obvious way. One constructs the associated \emph{moduli stack} as the quotient
\[
  \fM_\updelta \colonequals \Rep_\updelta(Q) / \GL_\updelta,
\]
of which the $\C$-points correspond to isomorphism classes of $\updelta$-dimensional $\C Q$-modules. The \emph{coarse moduli scheme} of $Q$ is the scheme-theoretic quotient
\[
  \cM_\updelta \colonequals \Rep_\updelta(Q) \gitquot \GL_\updelta,
\]
of which the $\C$-points correspond to isomorphism classes of \emph{semisimple} modules. There is a well-defined map $\fM_\updelta \to \cM_\updelta$, which (at the level of $\C$-points) sends a $\updelta$-dimensional module $M$ to the sum of the simples appearing in its composition series.
For each $\updelta$, we define the moduli stack of \emph{nilpotent} representations $\fN_\updelta \subset \fM_\updelta$ as the fibre above the semisimple module 
\[
  \bigoplus_{v\in Q_0} S_v^{\updelta_v} \in \cM_\updelta(Q),
\]
where $S_v \in \fdmod \C Q$ denotes the one-dimensional simple supported on the vertex $v$.
Where appropriate we drop the subscript $\updelta$ from the notation to denote the disjoint union over all dimension vectors, writing e.g. $\fM \colonequals \coprod_{\updelta\in \Delta} \fM_\updelta$ and $\fN \colonequals \coprod_{\updelta\in\Delta} \fN_\updelta$.

Given a potential $W\in \C Q_\cyc \colonequals \C Q/[\C Q,\C Q]$, its trace is a well-defined $\GL_\updelta$-equivariant function $\tr(W)$ on each affine space $\Rep_\updelta(Q)$, and hence descends to a regular function $\tr(W)$ on $\fM$.
This function has a well-defined stacky critical locus $\fM_{Q,W} \subset \fM$, whose intersection with $\fN$ we denote by 
\[
  \fC = \fC_{Q,W} \colonequals \fM_{Q,W} \cap \fN.
\]
The closed points $\fC(\C)$ correspond to the nilpotent $\Jac(Q,W)$-modules, although it is not necessarily a good moduli space.
The goal of motivic Donaldson--Thomas theory is to assign a motivic invariant to the critical locus $\fC$ which acts as a ``virtual count'' of the points $\fC(\C)$.
To do this, one constructs a \emph{motivic vanishing cycle} $\upphi_{\tr(W)}$ in some ring of motivically valued measures. 
Integrating this vanishing cycle over the components of $\fC$ defines a generating function
\[
  \Upphi(t) = \Upphi_{Q,W}(t) \colonequals \sum_{\updelta \in \Updelta}\int_{\fC_\updelta}\upphi_{\tr(W)} \cdot  t^\updelta,
\]
with motivic coefficients. This generating function is the \emph{DT partition function} and its coefficients the \emph{DT invariants}, which are a motivic refinement of the enumerative DT invariants of Joyce--Song \cite{JS12}.
The partition function can moreover be simplified using various wall-crossing relations and multiple-cover formulas. We recall the relevant constructions in the following subsections.

\subsection{Rings of motives}
Recall that the Grothendieck ring of varieties $\K_0(\Var/\C)$, is the abelian group generated by the isomorphism classes $[X]$ of reduced separated schemes of finite type over $\C$ subject to the cut-and-paste relations
\[
  [X] = [Z] + [X\setminus Z] \quad \text{ for } Z\subset X \text{ a closed subvariety},
\]
which has the structure of a ring via the multiplication $[X]\cdot[Y] = [X\times Y]$ and unit $1 = [\Spec \C]$. The goal of motivic counting theories is to refine an integer valued invariant by lifting it to a motive in the ring $\K_0(\Var/\C)$. However, in motivic Donaldson--Thomas theory one has to make some additional technical modifications.

Firstly, motivic DT invariants naturally come equipped with \emph{monodromy}, so that one requires an equivariant equivariant version of $\K_0(\Var/\C)$.
Secondly, the invariants are defined by integrating a motivic vanishing cycle, which requires the use of relative classes.
Lastly, the stacky nature of the moduli spaces $\fM_\updelta$ requires one to also consider stack rather than just varieties.
The necessary modifications are available in the literature, see e.g. \cite{DM15} for a good treatment. We recall here the important parts of the construction.

Let $\St$ denote the category of Artin stacks, locally of finite type over $\C$, having affine stabilisers, and fix $\fM\in \St$.
Given a map $f\colon \fX \to \fM$ in $\St$, a \emph{monodromy action} on $\fX$ is an action of the group scheme 
\[
  \muhat \colonequals \lim (z^a \colon \upmu_{an} \to \upmu_n)_{a,n\in\N},
\]
which factors through a sufficiently nice action of the group scheme $\upmu_n$ of $n$th roots of unity for some $n$, for which the map $f$ is $\muhat$-invariant.
If $\fM$ is of finite type, the Grothendieck group of stacky monodromic motives is the abelian group $\K^\muhat(\St/\fM)$ generated by equivalence classes $[\fX\to\fM]$ of stacks with monodromy over $\fM$, subject to the relations
\[
  \begin{aligned}
    [\fX \xrightarrow{f} \fM] &= [\fZ\xrightarrow{f|_\fZ}\fM] + [\fX\setminus\fZ\xrightarrow{f|_{\fX\setminus\fZ}}\fM],\\
    0 &= [\fY\xrightarrow{f\circ g} \fM] - [\A^r \times \fX \xrightarrow{f \circ \pr_\fX} \fM]
  \end{aligned}
\]
for closed substacks $\fZ\subset\fX$, and $\muhat$-equivariant vector bundles $g\colon\fY\to\fX$ of rank $r$.
For general $\fM\in\St$, the above defines a group $\K^\muhat_{pre}(\St/\fM)$ and $\K^\muhat(\St/\fM)$ can be defined via a suitable completion.
The classes $[\fX\to\fM]$ with trivial monodromy action form a subgroup which is denoted by $\K(\St/\fM)$.
Any finite type map $j\colon \fM \to \fN$ in $\St$ induces pull-back and push-forward functors via
\[
  j_*[f\colon \fX\to \fM] = [j\circ f\colon \fX\to \fN],\quad
  j^*[f\colon \fX\to\fN] = [j^*f \colon \fX\times_\fN \fM \to \fM]. 
\]
For a substack $\fZ\subset \fM$ we use the special notation $|_\fZ$ for the pullback along the inclusion.

Any variety $X$ is in particular a finite type stack, and the classes $[X\to \fM]$ 
generate a subgroup $\K^\muhat(\Var/\fM) \subset \K^\muhat(\St/\fM)$.
In particular, for $\fM=\Spec \C$ one obtains the (commutative) ring of absolute monodromic motives $\K^\muhat(\Var/\C)$, equipped with a certain exotic product (see \cite{Looijenga02}).
The ordinary Grothendieck ring of varieties can be recovered as the subring of classes with trivial monodromy:
\[
  \K_0(\Var/\C) = \K(\St/\C) \cap \K^\muhat(\Var/\C).
\]
We write absolute motives simply as $[X]$, ignoring
the structure morphism to $\Spec \C$. The class of the affine line, known as the Lefschetz motive, will be denoted
\[
  \L \colonequals [\A^1] \in \K(\Var/\C) \subset \K^\muhat(\Var/\C).
\]
The Lefschetz motive has a distinguished square root in $\K^\muhat(\Var/\C)$ given by
\[
  \L^\half \colonequals 1 - [\upmu_2] \in \K^\muhat(\Var/\C),
\]
where $[\upmu_2]$ is the class of the group scheme $\upmu_2$ equipped with its obvious monodromy action.
The ring of motives we consider is the localisation
\[
  \Mothat \colonequals \K^\muhat(\Var/\C)\left[[\GL_n]^{-1} \mid n \in \N \right],
\]
at the classes $[\GL_n] = (\L^n-\L^{n-1})\cdots(\L^n-1)$. Note that this localisation in particular contains the classes $\L^{-n/2}$ for any $n\in \N$.

The ring $\K^\muhat(\Var/\C)$ acts on $\K^\muhat(\Var/\fM)$ and $\K^\muhat(\St/\fM)$ for any $\fM\in\St$, again via an exotic product; for a variety $X$ with with trivial monodromy, its class acts simply as
\[
  [X]\cdot[\fY\to \fM] = [X\times \fY\to \fM].
\]
In particular, it makes sense to localise these modules at the classes $[\GL_n]$. For the case of varieties this yields a genuinely new module
\[
  \Mothat(\fM) \colonequals \K^\muhat(\Var/\fM)\left[[\GL_n]^{-1} \mid n\in \N\right],
\]
while for the stacky case $\K^\muhat(\St/\fM)$ is already a module over $\Mothat$ with action
\[
  [\GL_n]^{-1}\cdot [\fY\to \fM] = [B\!\GL_n\times \fY\to \fM],
\]
where $B\!\GL_n$ denotes the classifying stack, and is therefore equal to its localisation.
Moreover, by \cite[Proposition 2.8]{DM15} the localisation of the inclusion $\K^\muhat(\Var/\fM) \into \K^\muhat(\St/\fM)$ yields an isomorphism
\[
  \Mothat(\fM) \xrightarrow{\ \sim\ } \K^\muhat(\St/\fM),
\]
as any element in $\St$ has a stratification into quotient stacks $X/\GL_n \simeq B\!\GL_n\times X$.
Via this isomorphism, one can interpret the elements of the module $\Mothat(\fM) \simeq \K^\muhat(\St/\fM)$ as measures on $\fM$ valued in $\Mothat$: 
for a finite type stack $a\colon \fX\to \Spec \C$ with a map $i\colon \fX \to \fM$ an element $m\in \Mothat(\fM) \simeq \K^\muhat(\St/\fM)$ has a well-defined integral
\[
  \int_\fX m \colonequals a_*i^*m \in \Mothat.
\]
One can moreover show that this integral only depends on the class of $[i\colon \fX\to\fM]$ inside $\K(\St/\fM)$, yielding a pairing $\K(\St/\fM) \times \Mothat(\fM) \to \Mothat$.

Motivic invariants can be collected in generating series, expressed as elements of a ring of multi-variate motivic power series: if $S$ is a free monoid of finite rank we let
\[
  \Mothat[[S]] = \Mothat[[t^s \mid s \in S]],
\]
denote the multivariate powerseries, where the product of two indeterminates is defined as $t^s\cdot t^{s'} = t^{s+s'}$.
Such rings have an additional pre-$\uplambda$-ring structure \cite[\S3]{DM15}, defined by a map
\[
  \Sym \colon \Mothat(C)[[S]] \to 1 + \Mothat(\C)[[S]],
\]
called the \emph{plethystic exponential}, which satisfies the exponential identities
\[
  \begin{gathered}
    \Sym(0) = 1,\quad \Sym(a +b) = \Sym(a)\Sym(b),\\
    \Sym(a\cdot t^s) = 1 + a\cdot t^s + \ldots\text{higher order terms}\ldots
  \end{gathered}
\]
The plethystic exponential allows one to systematically derive multiple-cover formulas for motivic invariants by using an ansatz $\Sym(\sum_{s\in S}a_s t^s)$, and computing the values $a_s \in \Mothat$ term by term.

\subsection{Motivic vanishing cycles}\label{sec:vancyc}

The motivic vanishing cycle is a rule which assigns to a regular function $f\colon \fM \to \A^1$ on a smooth stack $\fM\in\St$ a measure $\upphi_f \in \Mothat(\fM)$. Its construction proceeds in successive levels of generality.
\begin{enumerate}
\item For a regular function $f\colon M\to \A^1$ one a smooth scheme $M$, Denef--Loeser \cite{DL98} construct a vanishing cycle $\upphi_f \in \Mothat(M)$ via a certain rational function, defined in terms of the lifts of $f$ to the arc-space of $M$.
\item   For a regular function $f\colon \fM\to \A^1$ on a quotient stack $\fM = M/\GL_n$ of a smooth variety $M$ one defines
  \[
    \upphi_f = \L^{\dim \GL_n\!/2} [\GL_n]^{-1} q_*\upphi_{f\circ q} \in \K^\muhat(\St/\fM) \simeq \Mothat(\fM),
  \]
  where $q\colon M\to \fM$ is the quotient map, and $\upphi_{f\circ q}$ is the Denef--Loeser vanishing cycle of $f\circ q \colon M \to \A^1$.
\item For a general $\fM\in\St$, the vanishing cycle is recovered from a constructible decomposition of $\fM$ into suitable quotient stacks via the Luna slice theorem.
\end{enumerate}
Several technical tools have been developed to explicit computate the vanishing cycle. For quasi-homogenous functions one has the following theorem of Nicaise--Payne.
\begin{theorem}[{\cite{NP19}}]
  Let $M$ be a smooth variety and let $\G_m$ act on the product $\overline M = M\times \A^n$ via an action on $\A^n$ by nonnegative weights. Suppose 
  \[
    f\colon \overline M \to \A^1,
  \]
  is a $\G_m$-equivariant function, where $\G_m$ acts on $\A^1$ by a weight $d>0$. Then
  \[
    \upphi_f = \L^{-\dim \overline M/2}\left([f^{-1}(0) \to M] - [f^{-1}(1)\to M]\right),
  \]
  where $f^{-1}(1)$ carries the residual $\upmu_d$-action as its monodromy.
\end{theorem}

For nonhomogenous functions, one can instead apply the following construction of Denef--Loeser.

Let $f: M \to \A^1$ be a non-constant regular function on a smooth scheme of pure dimension $d$, and write $M_0 \colonequals f^{-1}(0)$ for the associated divisor.
Let $p: \widetilde M \to M$ be an embedded resolution of $M_0$, i.e. $p$ is an isomorphism away from $M_0$
and the pull-back $E \colonequals p^*M_0 = m_1 E_1 + \ldots + m_n E_n$
has normal crossings\footnote{Here we mean normal crossing in the weak sense, allowing multiplicities in the divisors, and not the stronger notion of \emph{simple} normal crossing.} in a neighbourhood of $p^{-1}(M_0)$.
For any non-empty subset $I\subset \irr(E)$ of the irreducible components $\irr(E) = \{E_1,\ldots,E_n\}$ let
\[
E_I \colonequals \bigcap_{E_i \in I} E_i,\quad E_I^\circ \colonequals E_I \setminus \bigcup_{E_i \in \irr(E)\setminus I} E_i.
\]
The spaces $E_I^\circ$ form a constructible decomposition of $p^{-1}(X_0)$, and for each stratum there exists a cover $D_I \to E_I$, \'etale over $E_I^\circ$ with a canonical action of the Galois group $\upmu_{m_I}$ for $m_I \colonequals \gcd\{m_i\}_{E_i\in I}$.
The vanishing cycle is then computed by the following formula of Denef--Loeser \cite{DL99} (see also Looijenga \cite{Looijenga02}):
\begin{equation}\label{eq:motintformula}
   \upphi_f = \L^{-\frac{\dim M}{2}}\big([M_0 \into M] -
  \sum_{\varnothing \neq I \subset \irr(E)}
  (1-\L)^{|I|-1}\left[D_I^\circ \to M_0 \into M\right]\big),
\end{equation}
where $D_I^\circ$ is understood to carry monodromy via the $\upmu_{m_I}$-action.

Finally, there is the following motivic Thom-Sebastiani identity, which allows one to compute motivic vanishing cycles of decomposable functions on direct products.
\begin{theorem}[{\cite{GLM06}}]\label{thm:thomsebas}
  Let $f\colon \fM\to \A^1$ and $g\colon \fN\to \A^1$ be functions on smooth stacks of finite type, and $\fX \subset \fM$, $\fY \subset \fN$ closed substacks, then
  \[
    \int_{\fX\times \fY} \upphi_{f + g} = \int_{\fX}\upphi_f\cdot \int_{\fY}\upphi_g.
  \]
\end{theorem}

\subsection{The Motivic Hall algebra}\label{ssec:mothall}

We return to the quiver setting, working with the moduli stack $\fM = \fM_Q$.
Given a potential $W\in \C Q_\cyc$, the trace function $\tr(W) \colon \fM \to \A^1$ defines a motivic vanishing cycle, which is supported on the stacky critical locus $\fM_{Q,W}$.
The stack $\fM_{Q,W}$ parametrises $\Jac(Q,W)$-modules, and the points of $\fM_{Q,W}$ are therefore related by short-exact sequences.
Using this additional structure, one can make $\K(\St/\fM_{Q,W})$ into an algebra, the \emph{motivic Hall algebra} \cite{Joyce07}.
We recall the construction here, refering to \cite{Bridgeland12} for a more complete review.

Given dimension vectors $\updelta_1,\updelta_2 \in\Updelta$, there exists a moduli stack $\fExt_{\updelta_1,\updelta_2}$ parametrising equivalence classes of short-exact sequences 
\[
  0 \to M_1 \to N \to M_2 \to 0
\]
where $M_i$ is a module of dimension $\updelta_i$ and $N$ a module of dimension $\updelta_1+\updelta_2$. There are three projections
\[
  p_i\colon \fExt_{\updelta_1,\updelta_2} \to \fM_{Q,W,\updelta_i}, \quad
  q \colon \fExt_{\updelta_1,\updelta_2} \to \fM_{Q,W,\updelta_1+\updelta_2},
\]
mapping a short-exact sequence to the modules $M_i$, and $N$ respectively.
Given maps $f_i\colon \fX_i \to \fM_{Q,W,\updelta_i}$ of finite type, there is a pullback diagram
\begin{equation}\label{eq:HAprod}
  \begin{tikzpicture}[baseline=(current bounding box.center)]
    \node (A) at (0,0) {$\fY$};
    \node (B) at (4,0) {$\fExt_{\updelta_1,\updelta_2}$};
    \node (C) at (8,0) {$\fM_{Q,W,\updelta_1+\updelta_2}$};
    \draw[->] (A) edge[edge label=$\scriptstyle g$] (B);
    \draw[->] (B) edge[edge label=$\scriptstyle q$] (C);
    \node (D) at (0,-\gr) {$\fX_1\times\fX_2$};
    \node (E) at (4,-\gr) {$\fM_{Q,W,\updelta_1}\times \fM_{Q,W,\updelta_2}$};
    \draw[->] (A) edge[edge label=$\scriptstyle g$] (D);
    \draw[->] (B) edge[edge label=$\scriptstyle p_1\times p_2$] (E);
    \draw[->] (D) edge[edge label=$\scriptstyle f_1\times f_2$] (E);
  \end{tikzpicture}
\end{equation}
and one defines the \emph{convolution product} of the classes $[f_i\colon \fX\to \fM_{Q,W,\updelta_i}]\in \K(\St/\fM_{Q,W,\updelta_i})$ as the top row in the diagram:
\[
  [\fX_1\xrightarrow{f_1}\fM_{Q,W,\updelta_1}]
  \star [\fX_2\xrightarrow{f_2}\fM_{Q,W,\updelta_2}] =  [\fY\xrightarrow{q \circ g} \fM_{Q,W,\updelta_1+\updelta_2}] \in \K(\St/\fM_{Q,W,\updelta_1+\updelta_2}).
\]
Interpreting the $\K(\St/\fM_{Q,W,\updelta})$ as submodules of $\K(\St/\fM_{Q,W})$ via the pushforward along the inclusion, and noting that any class $[\fX\to \fM_{Q,W}]$ splits as a sum
\[
  [\fX \to \fM_{Q,W}] = \textstyle \sum_{\updelta\in \Updelta}[\fX_\updelta \to \fM_{Q,W,\updelta}],
\]
one sees that $\star$ endows $\K(\St/\fM_{Q,W})$ with an algebra structure over $\K(\Var/\C)$.
For our purposes we restrict to the nilpotent locus $\fC = \fN \cap \fM_{Q,W}$, and define the \emph{motivic Hall algebra} as the subalgebra
\[
  \cH(Q,W) \colonequals (\K(\St/\fC),\star).
\]

There is an integration map, which maps an element $[\fX\to \fC] \in \cH(Q,W)$ to the motivic powerseries 
\[
  \int_{[\fX\to\fC]} \upphi_{\tr(W)}|_{\fC} \colonequals \sum_{\updelta\in\Updelta} \int_{\fX_\updelta} \upphi_{\tr(W)} \cdot t^\updelta \in \Mothat[[\Delta]],
\]
obtained by integrating the restriction of the vanishing cycle on each stratum.
It follows from the proof of Kontsevich--Soibelman's integral identity \cite{KS08} by Thuong \cite{Le15} that the integration map defines a \emph{$\K(\Var/\C)$-algebra homomorphism}
\[
  \int_\bullet\upphi_{\tr(W)}|_{\fC}\colon \cH(Q,W) \to \Mothat[[\Updelta]],
\]
see \cite[Prop. 6.19]{DM15a} for an explanation of this fact. 
The \emph{Donaldson--Thomas partition function} of $(Q,W)$ is the image of the canonical element $[\id\colon \fC\to \fC]$:
\[
  \Upphi(t) = \int_{[\id\colon \fC\to\fC]} \upphi_{\tr(W)}|_\fC = \sum_{\updelta\in\Delta} \int_{\fC_\updelta} \upphi_{\tr(W)} \cdot t^\updelta.
\]
Using the pre-$\uplambda$-ring structure on $\Mothat[[\Delta]]$, one can recast the partition function as a plethystic exponential with the following ansatz
\begin{equation}\label{eq:BPSansatz}
  \Sym\left(\sum_{\updelta\in\Delta} \frac{\BPS_\updelta}{\L^\half-\L^\mhalf} t^\updelta \right) \colonequals \Upphi(t).
\end{equation}
This defines a sequence of invariants, which are supposed to be a refinement of BPS numbers in physics, so we refer to them as \emph{motivic BPS invariants}.

Using the integration map, one can systematically express various identities between DT invariants via algebraic identities in the Hall algebra. Most prominently, the wall-crossing identities induced by stability conditions.

\begin{remark}
  For the statement that the integration map is a homomorphism, our assumption that $Q$ is a symmtric quiver is crucial. For general quivers, one has to modify the multiplication in the ring $\Mothat[[\Delta]]$ by a sign twist.
\end{remark}

\subsection{Decomposition through stability}

\begin{define}
  Let $\cA$ be an abelian category such that $\K_0(\cA) \simeq \Z^{\oplus n}$ has finite rank.
  Then a stability condition on $\cA$ is a group homomorphism $Z\colon \K_0(\cA) \to \C$
  such that any non-zero object of $M\in \cA$ is mapped to a non-zero vector $Z([M])$
  with phase 
  \[
    \Uptheta([M]) \colonequals \mathrm{Arg}(Z([M])) \in (0,\pi].
  \]
  A non-zero object $M\in\cA$ is \emph{$Z$-semistable} if for every subobject $N\into M$ there is an inequality
  \[
    \Uptheta([N]) \leq \Uptheta([M]) \leq \Uptheta([M/N]).
  \]
  The object $M$ is \emph{$Z$-stable} if this inequality is strict for $N\neq 0,M$.
  The semistable objects of a phase $\uptheta \in (0,\pi]$ together with the zero-object, form an abelian subcategory $\cA^\uptheta \subset \cA$.
\end{define}
For a quiver with potential, the abelian category $\cA = \nilp \Jac(Q,W)$ of nilpotent modules has Grothendieck group $\K_0(\nilp \Jac(Q,W)) \simeq \Z Q_0$, and a stability condition is determined by the images $Z([S_v])$ of the vertex simples.
Moreover, $\cA$ is a \emph{finite length category}, meaning that any object has a finite composition series.
This finite length property implies the existence of \emph{Harder-Narasimhan} filtrations: if $\Uptheta$ is a phase function for a stability condition, then for any $M\in\cA$ there exists a \emph{unique} filtration
\[
  0 = M_0 \subset M_1 \subset \ldots M_n = M
\]
where the subquotients $M_i/M_{i-1}$ are semistable and the phases satisfy an inequality
\[
  \Uptheta(M_1/M_0) > \Uptheta(M_2/M_1) > \ldots \Uptheta(M_n/M_{n-1}).
\]
As shown by Reineke \cite{Reineke03}, the HN filtration induces a stratification of the moduli stack $\fC$ with strata indexed by the tuples $(\uptheta_1,\ldots,\uptheta_n)$ of phases of the semistable subquotients. This yields the following identity in the motivic Hall algebra:
\begin{equation}\label{eq:stabdecompMoHa}
  [\fC\to\fC] = [\fC_0\into \fC] + \sum_{n\in\N} \sum_{\uptheta_1 > \ldots > \uptheta_n} 
  [(\fC^{\uptheta_1}\setminus\fC_0) \into \fC]\star\cdots\star
  [(\fC^{\uptheta_n}\setminus\fC_0) \into \fC],
\end{equation}
where $\fC^\uptheta\subset \fC$ denotes the (open) substack of $\fC$ whose points correspond to objects in $\cA^\uptheta$ for a phase $\uptheta \in (0,\uppi]$.
For each phase $\uptheta$, the integration map sends the element $[\fC^\uptheta \into \fC]$ to 
a power series
\[
  \Upphi^\uptheta(t) \colonequals \int_{[\fC^\uptheta\into\fC]} \upphi_{\tr(W)}|_\fC = 
  \sum_{\updelta\in\Updelta} \int_{\fC_\updelta^\uptheta}\upphi_{\tr(W)} \cdot t^\updelta,
\]
and Reineke's identity \eqref{eq:stabdecompMoHa} translates to the factorisation identity which was first conjectured by Kontsevich--Soibelman \cite{KS08}.

\begin{lemma}\label{lem:decomp}
  For a symmetric quiver, the following equality holds in $\Mothat[[\Delta]]$:
  \begin{equation}\label{eq:stabilitydecomp}
    \Upphi(t) = \prod_{\uptheta \in (0,\pi]} \Upphi^\uptheta(t).
  \end{equation}
\end{lemma}
The decomposition \eqref{eq:stabdecompMoHa} depends only on the Harder-Narasimhan filtrations induced by the stability condition and not on the specific homomorphism $Z\colon \K_0(\nilp \Jac(Q,W)) \to \C$ chosen.
We therefore fix the following notion of equivalence.
\begin{define}\label{def:stabequivclass}
  Two stability conditions $Z,Z'\colon\K_0(\nilp \Jac(Q,W))\to\C$ are \emph{equivalent} if
  they induce the same Harder-Narasimhan filtration on every non-zero representation.
\end{define}
Not every choice of stability condition will give a good decomposition of the partition function.
For instance, the stability condition $Z\colon \K_0(\nilp \Jac(Q,W))\to \C$ that maps all modules onto a single ray with phase $\uptheta$ gives the trivial relation $\Upphi(t) = \Upphi^\uptheta(t)$.
The following genericity assumption guarantees that the decomposition \eqref{eq:stabilitydecomp} is optimal.
\begin{define}\label{def:generic}
  Let $Z\colon \K_0(\nilp \Jac(Q,W)) \to \C$ be a stability condition with $\Uptheta$ its phase function,
  then $Z$ is \emph{generic} if for every pair of $Z$-semistable representations $N,M$
  \[
    \Uptheta(N) = \Uptheta(M) \quad\Longleftrightarrow\quad \ddim N = q\cdot \ddim M \quad \text{for some } q\in \Q.
  \]
\end{define}
Let $Z$ be a generic stability condition, and $\uptheta$ a phase for which $\cA^\uptheta$ is nonzero.
Then the genericity implies that the dimension vectors of objects in $\cA^\uptheta$ are multiples of a common, indivisible dimension vector $\updelta \in \Updelta$.
The coefficients of $\Upphi^\uptheta(t)$ are zero for any dimension vector which is not a multiple of $\updelta$, so after comparing with the BPS ansatz in \eqref{eq:BPSansatz} one finds
\[
  \Upphi^\uptheta(t) = \Sym\left(\sum_{n\in \N} \frac{\BPS_{n \updelta}}{\L^\half - \L^\mhalf} \cdot t^{n\updelta}\right).
\]
In particular, this puts a restriction on the nonzero BPS invariants: $\BPS_{\updelta'} = 0$ if there is a stability condition for which $\updelta'$ is not a rational multiple of the dimension vector of a (semi)stable module.
Moreover, the functions $\Upphi^\uptheta(t)$ can often be computed via deformation theory.

\subsection{Formal non-commutative functions on a point}

In section \S\ref{sec:stables} we identify a stability condition and a set of phases for the quiver with potential of length 2 flops.
With one exception, there exists a unique stable module $M$ for each of these phases $\uptheta$.
In this setting the semistable locus $\fC^\uptheta$ parametrises the extensions of $M$, and the partition function is determined by the deformation theory of $M$: one has
\[
  \Upphi^\uptheta(t) = \Upphi_{\cQ_M,\cW_M}(t^{\ddim M}).
\]
for some potential $\cW_M$ on a ``non-commutative neighbourhood'' of $M$ described by an $N$-loop quiver $\cQ_M$.
The potential $\cW_M$ is defined, up to a \emph{formal} coordinate change, by a cyclic minimal $A_\infty$-structure on $\Ext^\bullet(M,M)$. 
We will prove a few results that allow us to work with formal coordinate changes, 
deferring the $A_\infty$-deformation theory for section \S\ref{sec:autoequiv}.

\begin{lemma}\label{lem:motivgerms}
  Let $f,g\colon Y \to \A^1$ be nonconstant regular functions on a smooth scheme, and $Z\subset Y$
  a closed subscheme with $X\supset Z$ a formal neighbourhood in $Y$.
  Suppose there exists an automorphism $t\colon X\to X$ that identifies the
  germs $f|_X \circ t = g|_X$, then
  \[
    \int_Z \upphi_f = \int_Z \upphi_g.
  \]
\end{lemma}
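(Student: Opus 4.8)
The plan is to reduce the statement to an invariance of the motivic vanishing cycle under formal isomorphism of the pair $(Y, f)$ in a neighbourhood of $Z$. The point is that $\int_Z \upphi_f$ only depends on the restriction of $\upphi_f$ to (an analytic/formal neighbourhood of) $Z$, and the vanishing cycle is compatible with smooth pullback up to the Tate twist $\L^{-\dim/2}$; since an automorphism $t\colon X\to X$ of the formal neighbourhood preserves dimension, no twist correction appears. The subtlety is purely that $t$ is only a \emph{formal} — not algebraic — automorphism, so one cannot directly invoke functoriality of $\upphi$ on schemes; this is exactly the gap the lemma is designed to bridge, and it is the step I expect to be the main obstacle.

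Concretely, I would proceed as follows. First, recall that in situation (2) of \S\ref{sec:vancyc} the vanishing cycle $\upphi_f \in \Mot^\muhat(Y)$ is computed from an embedded resolution $p\colon \widetilde Y \to Y$ of $Y_0 = f^{-1}(0)$ via formula \eqref{eq:motintformula}, and that this computation is local on $Y$ in the Zariski — indeed étale, indeed formal — topology: the strata $E_I^\circ$ and the covers $D_I$ restrict compatibly. So it suffices to show $\int_Z \upphi_f = \int_Z \upphi_g$ knowing only the formal germs of $f$ and $g$ along $Z$ agree after the coordinate change $t$. I would make this precise by passing to a common étale chart: shrink $Y$ so that $Z$ is cut out nicely, and observe that the motivic measure $\upphi_f|_{\widehat Y_Z}$ on the formal completion $\widehat Y_Z$ depends only on the formal $\C$-algebra map classifying $f|_X$, because the motivic integration formula \eqref{eq:motintformula}, the arc-space construction of Denef--Loeser, and the integral $a_* i^* (-)$ against the structure map of $Z$ all factor through data that is insensitive to the difference between $Y$ and $\widehat Y_Z$ over $Z$ (the arc space of $Y$ along $Z$ and the arc space of $\widehat Y_Z$ coincide, and resolutions can be taken formally-locally).

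Then the argument concludes quickly: the formal automorphism $t\colon X\to X$ with $f|_X\circ t = g|_X$ induces an isomorphism of the relevant formal-arc-space data intertwining the rational functions defining $\upphi_f$ and $\upphi_g$, hence $\upphi_f|_X = t_* \upphi_g|_X$ as motivic measures on $X$; applying $\int_Z$ (which is $t$-invariant since $t$ fixes $Z$ — or rather, restricting to $Z$ before and after $t$ gives the same answer because $t$ restricts to the identity, or at worst an automorphism, of $Z$) yields $\int_Z \upphi_f = \int_Z \upphi_g$. The one place requiring genuine care is justifying that ``the vanishing cycle of a germ is well-defined'' — i.e. that two smooth schemes with a common formal neighbourhood of a closed subscheme, carrying functions that agree formally there, produce the same restricted vanishing cycle; I would either cite the standard fact that $\upphi_f$ depends only on an analytic neighbourhood of $Y_0$ (as in Denef--Loeser and its use in \cite{DaMeOneLoop}) and upgrade it to formal neighbourhoods by Artin approximation, or give a direct argument via Néron--Popescu desingularisation to realise the formal automorphism as a limit of étale-local ones, reducing to honest functoriality of $\upphi$ under étale maps. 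Either route makes the obstacle disappear; the rest is bookkeeping with \eqref{eq:motintformula} and the definition of $\int_Z$.
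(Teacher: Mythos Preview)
Your parenthetical observation that ``the arc space of $Y$ along $Z$ and the arc space of $\widehat Y_Z$ coincide'' is exactly the content of the paper's proof, and once you have it the argument is finished---there is no need for embedded resolutions, Artin approximation, or N\'eron--Popescu. The paper works directly with the Denef--Loeser arc-space definition: $\int_Z \upphi_f$ is the value at $T=\infty$ of the generating series
\[
  \sum_{n\geq 1}\L^{-(n+1)\dim Y/2}\bigl([(f_n|_Z)^{-1}(0)] - [(f_n|_Z)^{-1}(1)]\bigr)\,T^n,
\]
where $f_n|_Z$ is the $n$th coefficient of $f$ lifted to the space $\cL_n(Y)|_Z$ of order-$n$ arcs supported on $Z$. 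But an order-$n$ arc supported on $Z$ lives entirely in the $n$th infinitesimal thickening of $Z$, so $\cL_n(Y)|_Z$ depends only on that finite thickening. The formal automorphism $t\colon X\to X$ restricts to an honest automorphism of each finite thickening, hence induces an isomorphism $t_n\colon \cL_n(Y)|_Z \to \cL_n(Y)|_Z$ with $f_n|_Z \circ t_n = g_n|_Z$. This gives $[(f_n|_Z)^{-1}(\uplambda)] = [(g_n|_Z)^{-1}(\uplambda)]$ for each $n$ and $\uplambda=0,1$, so the two generating series coincide term by term.

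Your hesitation (``the one place requiring genuine care\ldots'') is thus misplaced: the well-definedness of the germ is built into the arc-space formalism, not something to be established separately. By contrast, your first route via the resolution formula \eqref{eq:motintformula} has a real gap---a formal automorphism of $X$ need not lift to an embedded resolution, and ``resolutions can be taken formally-locally'' is not a usable statement here. Drop that route and the heavy approximation machinery; commit to the arc-space argument you already sketched.
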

\begin{proof}
  By the construction of Denef--Loeser \cite{DL99}, the integral $\int_Z\upphi_f$ is the (well-defined) value at $T=\infty$ (see also \cite[\S 5]{DM15}) of a generating series
  \[
    \sum_{n\geq1} \L^{-(n+1)\dim Y/2}\left([(f_n|_Z)^{-1}(0)] - [(f_n|_Z)^{-1}(1)]\right) \cdot T^n,
  \]
  where the $f_n$ are lifts of $f$ to the arc spaces $\cL_n(Y)|_Z$ of $Y$ with support on $Z$, defined by the composition  
  \[
    f_n|_Z \colon \cL_n(Y)|_Z \xrightarrow{\cL_n(f)|_Z} \cL_n(\A^1) \simeq \A^n \xrightarrow{(z_1,\ldots,z_n) \mapsto z_n} \A^1.
  \]
  Every length $n$ arc with support on $Z$ can be identified with an arc in an $n$-fold thickening of $Z$ in $Y$. 
  The automorphism $t\colon X\to X$ restricts to an automorphism on such a finite thickening and hence induces an automorphism $t_n\colon \cL_n(Y)|_Z \to \cL_n(Y)|_Z$ on arc spaces satisfying $f_n|_Z \circ t_n = g_n|_Z$. 
  In particular, for any $n\in\N$ and $\uplambda=0,1$ one has
  \[
    [(f_n|_Z)^{-1}(\uplambda)] =  [t_n^{-1}((g_n|_Z)^{-1}(\uplambda))] = [(g_n|_Z)^{-1}(\uplambda)] \in \Mothat(\C).
  \]
  It follows that the generating series involving $f_n$ and $g_n$ are equal, and therefore their values $\int_Z\upphi_f$ and $\int_Z\upphi_g$ at $T=\infty$ agree.
\end{proof}

Let $(\cQ,\cW)$ be a quiver with potential and $I=(a\mid a\in \cQ_1)$ the two-sided ideal generated by its arrows.
Then the path algebra has an $I$-adic completion $\widehat{\C\cQ} = \lim_n\C\cQ/I^n$, and the potential has a well-defined noncommutative germ $\widehat{\cW} \in \widehat{\C\cQ}_\cyc \colonequals \lim_n (\C\cQ/I^n)_\cyc$. This data defines the \emph{completed Jacobi algebra}
\[
  \widehat\Jac(\cQ,\cW) \colonequals \frac{\widehat\C\cQ}{(\!(\del_a \widehat \cW \mid a\in \cQ_1)\!)},
\]
where the double braces denote the I-adic completion of the ideal.
If the noncommutative germs $\widehat{\cW}$ and $\widehat{\cW'}$ of potentials $\cW$, $\cW'$ are related by an $I$-adic automorphism of $\widehat{\C\cQ}$, then the completed Jacobi algebras are isomorphic and as the following lemma shows, this yields equivalent DT theories.

\begin{lemma}\label{lem:motintformal}  
  Let $\cQ$ be a quiver with potentials $\cW,\cW'\in (\C\cQ)_\cyc$, 
  Suppose there exists an $I$-adic automorphism $\uppsi\colon \widehat{\C\cQ}\to\widehat{\C\cQ}$
  such that $\uppsi(\widehat\cW) = \widehat{\cW'}$ then
  \[
    \Upphi_{\cQ,\cW}(t) = \Upphi_{\cQ,\cW'}(t).
  \]  
\end{lemma}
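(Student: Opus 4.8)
The plan is to reduce, one dimension vector at a time, to an application of Lemma~\ref{lem:motivgerms}. Fix $\updelta\in\Updelta$, write $M\colonequals\Rep_\updelta(\cQ)$, which is a smooth affine space, and let $N_\updelta\subset M$ be the reduced closed subscheme of nilpotent representations of dimension vector $\updelta$ (set-theoretically the locus where all paths of length $\geq\sum_v\updelta_v$ act as zero). The coefficient of $t^\updelta$ in $\Upphi_{\cQ,\cW}(t)$ is $\int_{\fC_{\cQ,\cW,\updelta}}\upphi_{\tr(\cW)}$, with $\fC_{\cQ,\cW,\updelta}=\Crit(\tr(\cW))\cap\fN_\updelta$. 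First I would note that $\upphi_{\tr(\cW)}$ vanishes on the smooth locus of $(\tr(\cW))^{-1}(0)$, and that every nilpotent representation lies in $(\tr(\cW))^{-1}(0)$ since every cyclic path acts as a nilpotent matrix, hence with vanishing trace; therefore $\upphi_{\tr(\cW)}|_{\fN_\updelta}$ is supported on $\fC_{\cQ,\cW,\updelta}$, and the definition of the vanishing cycle on the quotient stack $\fM_\updelta=M/\GL_\updelta$ together with stacky integration gives
\[
  \int_{\fC_{\cQ,\cW,\updelta}}\upphi_{\tr(\cW)} \;=\; \int_{\fN_\updelta}\upphi_{\tr(\cW)} \;=\; \L^{\dim\GL_\updelta/2}\,[\mathrm{B}\GL_\updelta]\cdot\int_{N_\updelta}\upphi_{\tr(\cW)},
\]
where now $\tr(\cW)\colon M\to\A^1$, the integrand is the scheme-level vanishing cycle in $\Mot^\muhat(M)$, and the final factor depends only on $\cQ$ and $\updelta$. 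Since this factor does not see the potential, it suffices to prove $\int_{N_\updelta}\upphi_{\tr(\cW)}=\int_{N_\updelta}\upphi_{\tr(\cW')}$ for every $\updelta$; in the degenerate case where $\tr(\cW)$ is constant one has $\tr(\cW)=\tr(\cW')$ (because $M$ is integral and, by the next paragraph, $\uppsi$ identifies the germs of the two traces along $N_\updelta$), so that case is immediate.

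By Lemma~\ref{lem:motivgerms}, applied with $Y=M$, $Z=N_\updelta$, $f=\tr(\cW)$ and $g=\tr(\cW')$, it remains to construct an automorphism $\Phi$ of the formal neighbourhood $X$ of $N_\updelta$ in $M$ with $\tr(\cW)|_X\circ\Phi=\tr(\cW')|_X$. This is where the $I$-adic automorphism $\uppsi$ enters, and I would build $\Phi$ on functor-of-points over Artinian local test rings. For an Artinian local $\C$-algebra $(R,\m)$ with residue field $\C$, an $R$-point of $X$ is a representation $\rho\colon\C\cQ\to\Mat_\updelta(R)$ whose reduction $\bar\rho$ modulo $\m$ is nilpotent. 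Since $\bar\rho(I)$ is then a nilpotent (non-unital) subalgebra of $\Mat_\updelta(\C)$ and $\m$ is a nilpotent ideal, $\rho(I)$ is a nilpotent subalgebra of $\Mat_\updelta(R)$, so $\rho$ extends uniquely to a continuous homomorphism $\hat\rho\colon\widehat{\C\cQ}\to\Mat_\updelta(R)$; I then set $\Phi(\rho)\colonequals(\hat\rho\circ\uppsi)|_{\C\cQ}$. One checks that $\widehat{\Phi(\rho)}=\hat\rho\circ\uppsi$ by uniqueness, that $\Phi(\rho)$ is again an $R$-point of $X$ — because $\uppsi$ preserves the closed arrow ideal, so $\overline{\Phi(\rho)}(a)$ lies in the nilpotent algebra $\bar\rho(I)$ for each arrow $a$ — and that $\uppsi\mapsto\Phi_\uppsi$ is contravariantly functorial, whence $\Phi=\Phi_\uppsi$ is an automorphism with inverse $\Phi_{\uppsi^{-1}}$; these assemble to an automorphism of the formal scheme $X$. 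Finally, writing $\widehat{\cW},\widehat{\cW'}\in\widehat{\C\cQ}_\cyc$ for the images of the honest potentials $\cW,\cW'\in(\C\cQ)_\cyc$, one has $\hat\rho(\widehat{\cW})=\rho(\cW)$, and the hypothesis $\uppsi(\widehat{\cW})=\widehat{\cW'}$ gives (using that $\tr\circ\hat\rho$ descends to cyclic classes)
\[
  \tr(\cW')(\rho)=\tr(\hat\rho(\uppsi(\widehat{\cW})))=\tr((\hat\rho\circ\uppsi)(\widehat{\cW}))=\tr(\Phi(\rho)(\cW))=\tr(\cW)(\Phi(\rho)),
\]
that is, $\tr(\cW)|_X\circ\Phi=\tr(\cW')|_X$.

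With $\Phi$ in hand, Lemma~\ref{lem:motivgerms} yields $\int_{N_\updelta}\upphi_{\tr(\cW)}=\int_{N_\updelta}\upphi_{\tr(\cW')}$, and summing over $\updelta$ with the factors of the first paragraph gives $\Upphi_{\cQ,\cW}(t)=\Upphi_{\cQ,\cW'}(t)$. I expect the hard part to be the second paragraph: promoting the purely algebraic datum $\uppsi$ on $\widehat{\C\cQ}$ to a bona fide automorphism of the \emph{formal neighbourhood} $X$ of $N_\updelta$ inside the smooth scheme $\Rep_\updelta(\cQ)$ — not merely of $N_\updelta$, nor of a truncation $\Rep_\updelta(\C\cQ/I^k)$, which need not contain the infinitesimal neighbourhoods of $N_\updelta$. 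The delicate point is the interplay between the $I$-adic topology on $\widehat{\C\cQ}$ and nilpotency in $\Mat_\updelta(R)$: one must verify that an $R$-point of $X$ genuinely sends $I$ into a nilpotent subalgebra (so $\hat\rho$ exists and is unique), and that $\Phi$ preserves the condition ``the reduction is a nilpotent representation'' — this last being exactly where $\uppsi$ preserving the arrow ideal is used. A more routine point, settled in the first paragraph, is that the stacky vanishing cycle contributes only the potential-independent factor $\L^{\dim\GL_\updelta/2}[\mathrm{B}\GL_\updelta]$ over the scheme-level integral and that, $\upphi_{\tr(\cW)}$ being supported on $\Crit(\tr(\cW))$, one may integrate over all of $N_\updelta$; alternatively this can be bypassed by noting that $\Phi$ restricts to an isomorphism $(\fC_{\cQ,\cW',\updelta})_\red\to(\fC_{\cQ,\cW,\updelta})_\red$ and running the arc-space argument of Lemma~\ref{lem:motivgerms} directly.
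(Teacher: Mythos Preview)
Your proof is correct and follows essentially the same route as the paper: reduce to a fixed dimension vector, strip off the $\GL_\updelta$-factor, build an automorphism of a formal neighbourhood of the nilpotent locus that intertwines $\tr(\cW)$ and $\tr(\cW')$, and invoke Lemma~\ref{lem:motivgerms}. The only real difference is in packaging: the paper builds the automorphism from the compatible system $\uppsi_n\colon\C\cQ/I^n\to\C\cQ/I^n$ inducing maps $t_n\colon X^{(n)}\to X^{(n)}$ on the truncated representation schemes $X^{(n)}=\Rep_\updelta(\C\cQ/I^n)$, whereas you describe the same automorphism via its functor of points on Artinian local test rings. Your choice to integrate over all of $N_\updelta$ (using that $\upphi_{\tr(\cW)}$ is supported on the critical locus) rather than over $C_\updelta$ is a small but genuine improvement in clarity, since $N_\updelta$ is potential-independent while $C_\updelta$ is not.
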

\begin{proof}
  Fix a dimension vector $\updelta$, and let $\{X^{(n)} \to X^{(m)}\}_{m\geq n}$ denote the directed system of subschemes $X^{(n)} \subset \Rep_\updelta(\cQ)$ defined by all powers $I^m$ of $I$.  
  Any cyclic path $a \in (\C\cQ/I^n)_\cyc$ has a well-defined trace $\tr(a)\colon X^{(n)} \to \A^1$, which satisfies
  \[
    \tr(\cW_n) = \tr(\cW)|_{X^{(n)}},
  \]
  for $\cW_n\in (\C\cQ/I^n)_\cyc$ the value of $\cW$ in the quotient.
  An endomorphism $\uppsi_n$ of $\C\cQ/I^n$ induces a map $t_n\colon X^{(n)}\to X^{(n)}$ such that $\tr(a)\circ t_n = \tr(\uppsi_n(a))$.
  In particular
  \[
    \tr(\cW)|_{X^{(n)}} \circ t_n = \tr(\cW_n)\circ t_n = \tr(\uppsi_n(\cW_n)).
  \]
  The $I$-adic isomorphism $\uppsi\in\End(\widehat{\C\cQ})$ consists of a compatible sequence $(\uppsi_n)_{n\geq1}$
  of isomorphisms of $\C\cQ/I^n$ for each $n$ such that $\uppsi_n(\cW_n) = \cW'_n$.
  Let $X$ be the colimit of the $X^{(n)}$, and let $t\colon X\to X$ be the isomorphism associated to the
  sequence $t_n\colon X^{(n)} \to X^{(n)}$ of isomorphisms induced by the $\uppsi_n$.
  Then for each $n$
  \[
    \tr(\cW)|_{X^{(n)}} \circ t_n = \tr(\uppsi_n(\cW_n)) = \tr(\cW_n') = \tr(\cW')|_{X^{(n)}},
  \]
  which shows that $\tr(\cW)|_X \circ t = \tr(\cW')|_X$. 
  Let $C_\updelta \subset \Rep_\updelta(Q)$ be the nilpotent part of the critical locus, i.e. $\fC_\updelta = C_\updelta/\GL_\updelta$.
  Then $X$ is a formal neighbourhood of $C_\updelta$, and it follows from Lemma \ref{lem:motivgerms} that
  \[
    \int_{\fC_\updelta} \upphi_{\tr(\cW)} 
    = \frac{\L^{\dim \GL_\updelta\kern-2pt/2}\int_{C_\updelta} \upphi_{\tr(\cW)}}{[\GL_\updelta]} 
    = \frac{\L^{\dim \GL_\updelta\kern-2pt/2}\int_{C_\updelta} \upphi_{\tr(\cW')}}{[\GL_\updelta]}
    = \int_{\fC_\updelta} \upphi_{\tr(\cW')} 
  \]
  The equality $\Upphi_{\cQ,\cW}(t) = \Upphi_{\cQ,\cW'}(t)$ follows by comparing coefficients for each $\updelta$.
\end{proof}

Using formal coordinate changes, the potential on an $N$-loop quiver $\cQ$ can be brought into a simplified standard form $\cW_\min + q$, where $\cW_\min$ is a sum of degree $\geq 3$ terms in arrows $x_i$ 
and $q$ is a non-degenerate quadratic form in a complimentary set of arrows $y_i$.
Such a quadratic form does not contribute\footnote{
  In general such a quadratic form encodes \emph{orientation data} on the moduli space. Here, our moduli space in question is a point, parametrising a single stable module, and the orientation data is immaterial.
} to the DT theory.

\begin{lemma}\label{lem:quadterms}
  Let $\cQ$ be an $N$-loop quiver with loops $\{x_1,\ldots,x_n,y_1,\ldots,y_{N-n}\}$ and suppose $\cW = \cW_\min + q \in \C Q_\cyc$ is a standard form potential as above. Then
  \[
    \Upphi_{\cQ,\cW}(t) = \Upphi_{\cQ_\min,\cW_\min}(t),
  \]
  where $\cW_\min$ is interpreted as a potential on the $n$-loop quiver $\cQ_\min$ with loops $x_1,\ldots,x_n$.
\end{lemma}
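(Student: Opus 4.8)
The plan is to reduce to the earlier Thom--Sebastiani input of Lemma~\ref{lem:quadform} (combined with Theorem~\ref{thm:thomsebas}), after first arranging the potential into a split form to which these apply. The subtlety is purely bookkeeping: the potential $\cW = \cW_\min + q$ as presented is already written as a sum of a function in the variables $x_1,\ldots,x_n$ and a function in the variables $y_1,\ldots,y_{N-n}$, but both live on the \emph{same} quiver moduli $\Rep_\updelta(\cQ)$, whereas Theorem~\ref{thm:thomsebas} is stated for functions on two \emph{different} smooth stacks and their product. So the first step is to observe that for each dimension vector $\updelta$, since $\cQ$ is an $N$-loop quiver on a single vertex, $\Rep_\updelta(\cQ) \simeq \Mat_{\updelta}(\C)^{N}$ splits as a product of the $x$-matrices and the $y$-matrices, i.e.\ $\Rep_\updelta(\cQ) \simeq \Rep_\updelta(\cQ_\min) \times \A^{(N-n)\updelta^2}$, compatibly with the $\GL_\updelta$-action, and under this splitting $\tr(\cW_\min)$ is pulled back from the first factor and $\tr(q)$ from the second.

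Next I would apply Theorem~\ref{thm:thomsebas} on the smooth scheme level, taking $\fM = \Rep_\updelta(\cQ_\min)$ with $f = \tr(\cW_\min)$, $\fM' = \A^{(N-n)\updelta^2}$ with $g = \tr(q)$, and the closed substacks $\fX = C^{\min}_\updelta$ (the nilpotent part of the critical locus of $\tr(\cW_\min)$) and $\fX' = \A^{(N-n)\updelta^2}$ itself. The critical locus of $\tr(\cW)$ inside $\Rep_\updelta(\cQ)$ is the product of $\Crit(\tr(\cW_\min))$ with $\Crit(\tr(q)) = \{0\}$ when $q$ is nondegenerate as a quadratic form; one checks that the nilpotency condition matches up, so that $C_\updelta \simeq C^{\min}_\updelta \times \{0\}$ as $\GL_\updelta$-schemes (the $y$-loops act nilpotently iff they are zero at a critical point of a nondegenerate quadratic form). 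Then Theorem~\ref{thm:thomsebas} gives
\[
  \int_{C_\updelta} \upphi_{\tr(\cW_\min) + \tr(q)} = \int_{C^{\min}_\updelta}\upphi_{\tr(\cW_\min)} \cdot \int_{\{0\}} \upphi_{\tr(q)},
\]
and the second factor equals $\int_{\A^{(N-n)\updelta^2}}\upphi_{\tr(q)} = 1$ by Lemma~\ref{lem:quadform}, since $\tr(q)$ on the space of $y$-matrices is again a nondegenerate quadratic form (the trace pairing of a nondegenerate form is nondegenerate). Passing from the scheme-level integral to the stacky one is exactly the $\L^{\dim\GL_\updelta/2}/[\GL_\updelta]$ rescaling used in the proof of Lemma~\ref{lem:motintformal}, and this factor is identical on both sides because the group $\GL_\updelta$ is the same; hence $\int_{\fC_\updelta}\upphi_{\tr(\cW)} = \int_{\fC^{\min}_\updelta}\upphi_{\tr(\cW_\min)}$. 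Comparing coefficients of $t^\updelta$ yields $\Upphi_{\cQ,\cW}(t) = \Upphi_{\cQ_\min,\cW_\min}(t)$.

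The main obstacle I anticipate is not any deep input but getting the identification of critical loci and nilpotent loci exactly right: one must verify that a point $(X_\bullet, Y_\bullet) \in \Rep_\updelta(\cQ)$ is a nilpotent critical point of $\tr(\cW)$ precisely when $X_\bullet$ is a nilpotent critical point of $\tr(\cW_\min)$ and $Y_\bullet = 0$, and that this identification is $\GL_\updelta$-equivariant and respects the scheme (not just set) structure. The only place this could go wrong is if the ``nondegenerate quadratic form'' hypothesis on $q$ is weaker than needed --- e.g.\ if $q$ involves cross terms $y_i y_j$ that, after taking $\tr$, fail to be nondegenerate on $\Mat_\updelta(\C)^{N-n}$; but nondegeneracy of a symmetric bilinear form is preserved under the Kronecker/trace construction, so this is fine. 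If one prefers to avoid matching critical loci by hand, an alternative is to invoke Lemma~\ref{lem:motintformal}: a nondegenerate quadratic form can, by a formal (indeed linear) change of the $y$-coordinates, be diagonalized to $y_1^2 + \cdots + y_{N-n}^2$, and then a further formal argument plus Thom--Sebastiani on $\C\cQ$ handles the split; but the direct product decomposition of $\Rep_\updelta(\cQ)$ is cleaner and I would present that.
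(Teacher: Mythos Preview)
Your proposal is correct and follows essentially the same approach as the paper: decompose $\Rep_\updelta(\cQ)$ as a product, apply motivic Thom--Sebastiani (Theorem~\ref{thm:thomsebas}), use Lemma~\ref{lem:quadform} to see that the trace of the nondegenerate quadratic form contributes the factor $1$, and compare coefficients. The paper's proof is simply a terser version of yours, applying Thom--Sebastiani directly at the stacky level rather than first on schemes and then rescaling.
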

\begin{proof}
  For each $k\in\N$ the variety $\Rep_k(\cQ)$ decomposes as a product $\Rep_k(\cQ_\min) \times \A^m$ and $\tr(\cW)$ is the two terms in $\tr(\cW_\min) + \tr(q)$ restrict to the respective factors.
  The function $\tr(q)$ is a nondegenerate quadratic form in the usual sense, and without loss of generality we may assume that it is of the form $\tr(q) = z_1^2 + \ldots + z_m^2$.
  The Nicaise-Payne theorem implies that a function $z^2\colon \A^1\to \A^1$ has a normalised integral
  \[
    \int_{\A^1} \upphi_{z^2} = \L^\mhalf(1-[\upmu_2]) = \L^\mhalf\L^\half = 1,
  \]
  so it follows by the repeated application of the Thom-Sebastiani identity that
  \[
      \int_{\fC_{\cQ,k}} \upphi_{\tr(\cW_\min+q)} 
      = \int_{\fC_{\cQ_\min,k}} \upphi_{\tr(\cW_\min)} \cdot 
         \left(\int_{\A^1} \upphi_{z^1}\right)^m
      = \int_{\fC_{\cQ_\min,k}}\upphi_{\tr(\cW_\min)}.
  \]
  The equality $\Upphi_{\cQ,\cW}(t) = \Upphi_{\cQ_\min,\cW_\min}(t)$ then follows by comparing coefficients.
\end{proof}

\subsection{Intermediary refinements}

The motivic theory described so far is a refinement of the enumerative Donaldson--Thomas theory of Joyce--Song \cite{JS12}, in terms of partition function with rational coefficients. 
This partition function can be expressed in terms of \emph{integer} BPS invariants via a multiple-cover formula.
The motivic BPS invariants similarly lie in an ``integral'' subring $\K^\muhat(\Var/\C)[\L^\mhalf] \subset \Mothat$ (see \cite[Conjecture 6.5, Corollary 6.25]{DM15a}), and are related to the BPS numbers via the Euler characteristic
\[
  \upchi \colon \K^\muhat(\Var/\C)[\L^\mhalf] \to \Z.
\]
There are various alternative refinements between $\K^\muhat(\Var/\C)[\L^\mhalf]$ and $\Z$, which are more closely related to vanishing cycle cohomology.
Following \cite{Davison19}, we will consider the following hierarchy of invariant rings
\[
  \K^\muhat(\Var/\C)[\L^\mhalf]
  \xrightarrow{\upchi_\mmhs} \K_0(\MMHS) 
  \xrightarrow{\upchi^\mmhs_\hsp} \Z[u^{\pm\kern-1pt\frac 1n},v^{\pm\kern-1pt\frac 1n} \mid n\in\N]
  \xrightarrow{\upchi^\hsp_\wt} \Z[q^{\pm\kern-1pt\half}]
  \xrightarrow{\upchi^\wt} \Z.
\]
Here $\K_0(\MMHS)$ is the Grothendieck ring of the category of \emph{monodromic mixed Hodge structures},
and the map $\upchi_\mmhs$ assigns to a class $[X]$ the class
\[
  \upchi_\mmhs([X]) = -[\H_c(X,\Q)],
\]
of the mixed Hodge structure on the compactly supported cohomology, with a monodromy induced by the monodromy on $X$. 
The map $\upchi^\mmhs_\hsp$ assigns to each MMHS its equivariant Hodge polynomial: if $H$ is a pure Hodge structure of dimension $d$ with an action of $\upmu_n$, then its Hodge spectrum is
\[
  \upchi_\hsp^\mmhs(H) = 
  \sum_{p+q=d} (-1)^d \dim_\C H_\C^{p,q,0} u^pv^q + \sum_{a \neq 0} \sum_{p+q=d} (-1)^d \dim_\C H^{p,q,a}_\C u^{p+\frac an}v^{q+\frac{n-a}n} 
\]
where $\bigoplus_{p+q=d} H^{p,q}_\C \simeq H_\C$ is the Hodge decomposition and  $H^{p,q,a}_\C \subset H^{p,q}_\C$ is the subspace on which $\upmu_n$ acts with weight $a$.
The map $\upchi^\hsp_\wt$ assigns the weight-polynomial 
\[
  \upchi^\hsp_\wt(h(u,v)) = h(q^\half,q^\half),
\]
and the map $\upchi^\wt$ evaluates the weight-polynomial at $q^\half=1$.
Composing this chain of maps recovers the Euler characteristic.
We will find all these intermediate invariants for length 2 flops in section \S\ref{sec:DTcalc}.

\section{Classification of Stable Modules}\label{sec:stables}
Let $\uppi\colon Y\to Y_\con = \Spec R$ be a simple flopping contraction of length $\ell=2$ over a complete local ring $(R,\o)$ as in \S\ref{sec:flops}. Let $\cP = \O_Y \oplus \cN$ denote Van den Bergh's tilting bundle and
\[
  \Uppsi\colon \D^b(\Coh Y) \to \D^b(\mod \Nccr),
\]
the associated tilting equivalence onto the derived category of $\Nccr \colonequals \End_Y(\cP)$. Note that $\Nccr$ has precisely two indecomposable projectives $P_0$, $P_1$ which correspond to the summands $\O_Y$, $\cN$ respectively. 
The algebra has precisely two simple modules $S_0$, $S_1$, of which the iterated extensions generate the subcategory $\fdmod\Nccr \subset \mod \Nccr$.
The goal of this section is to classify the stable objects in $\fdmod\Nccr$ for a suitable stability condition.

Our approach is as follows. In \S\ref{ssec:king} we firstly parametrise stability conditions by a linear parameter in the space $\K_0(\proj \Nccr)_\R = \K_0(\proj \Nccr) \otimes_\Z \R$, and in \S\ref{ssec:tiltingflop} we construct a hyperplane arrangement inside this space that expresses the tilting theory of $\Nccr$.
In \S\ref{ssec:tiltingsilting} we show that both the hyperplane arrangment and the set of stable objects are preserved when passing to a finite dimensional quotient $\Nccr\to \Nccr/I$, for which it is known that the two are related by a duality.
Using this reduction method, we are then able to give a complete classification of the stable modules in \S\ref{ssec:stables}.

\subsection{King stability}\label{ssec:king}

Given a finite type $\C$-algebra $A$, there is a well-define Euler pairing 
\[
  \<-,-\>\colon \K_0(\proj A)_\R \otimes_\Z \K_0(\fdmod A) \to \R,
\]
which is determined by its value on effective classes: for $P\in \proj A$ and $M\in\fdmod A$
\[
  \<[P],[M]\> = \dim_\C \Hom_A(P,M).
\]
The Euler pairing is known to be nondegenerate if $A$ is finite-dimensional or finite as an algebra over a complete local ring.
Via the pairing, any element $v\in \K_0(\proj A)_\R$ induces a stability condition $Z_v$ on $\fdmod A$ via
\[
  Z_v([M]) = \dim_\C M \cdot i - \<v,[M]\>.
\]
These stability conditions are closely related to \emph{King}-stability conditions \cite{King94}, as a module $M$ is King-(semi)stable for a parameter $v\in \K_0(\proj A)_\R$ if and only if it is $Z_v$-(semi)stable and $\<v,[M]\> = 0$.
We therefore refer to $\K_0(\proj A)_\R$ as the space of King-stability parameters and write
\[
  \cS_v(A) \colonequals \left\{M \in \fdmod A\mid M=0 \text{ or } M \text{ is } Z_v\text{-semistable with } \<v,[M]\> = 0\right\},
\]
for the subcategory of King-semistable $A$-modules (including $0$) for the parameter $v$.

The algebra $\Nccr$ is finite over a complete local ring, and $\<-,-\>$ defines a nondegenerate pairing between the space $\K_0(\proj \Nccr)_\R \simeq \R^2$, with basis $[P_0],[P_1]$, and the space $\K_0(\fdmod \Nccr) \simeq \Z^2$, with dual basis $[S_0],[S_1]$. The possible generic King stability conditions are as follows.

\begin{lemma}\label{lem:vstab}
  For parameters $v = v_0[P_0] + v_1[P_1]$, the equivalence class of $Z_v$ (in the sense of \ref{def:stabequivclass}) is uniquely determined by the sign of $v_0 - v_1$.
  Moreover, $Z_v$ is generic (in the sense of \ref{def:generic}) if and only if $v_0-v_1 \neq 0$.
\end{lemma}
\begin{proof}
  Let $v$ be a parameter, and let $\Uptheta$ denote the slope function of the associated stability condition $Z_v$. 
  Given two nonzero classes $a = a_0[S_0] + a_1[S_1]$ and $b = b_0[S_0] + b_1[S_1]$, one has $\Uptheta(a) < \Uptheta(b)$ if and only if the vectors $Z_v(a), Z_v(b)$ span a parallelogram in $\C\simeq \R^2$ for which the signed area is strictly positive.
  By inspection, the area is given by
  \[
    (a_0+a_1)(-b_0v_0-b_1v_1) - (-a_0v_0-a_1v_1)(b_0+b_1) = (a_0b_1-a_1b_0)(v_0-v_1),
  \]
  and for fixed $a$ and $b$ its sign depends only on the sign of $v_0-v_1$.
  In particular, if $v'$ is a second parameter with $v_0'-v_1'$ of the same sign, then $Z_v$ and $Z_{v'}$ are equivalent, and the converse follows by considering the case $a=[S_0]$, $b=[S_1]$.

  For the second statement, note that $v_0 \neq v_1$ implies that the map $Z_v \colon \R^2 \to \C$ is an isomorphism.
  Hence, if $v_0\neq v_1$ and $Z_v(a) = r\cdot Z_v(b)$ for some classes $a,b$ and $r\in \R_{>0}$, it follows that $a = rb$, so that $Z_v$ is generic.
  Conversely, for $v_0 = v_1$ one sees that $Z_v([S_0]) = Z_v([S_1])$, so $Z_v$ is not generic.
\end{proof}

Consider an ideal $I\subset \o$ such that $R/I$ is artinian. Then the NCCR $\Nccr$ has a \emph{fibre} over the thick point $\Spec R/I$, given by the finite dimensional algebra
\[
  \Nccr/I\Nccr \colonequals \Nccr \otimes_R R/I.
\]
Extension and restriction of scalars defines a pair of adjoint functors
\[
  -\otimes_\Nccr \Nccr/I\Nccr\colon \mod \Nccr \rightleftarrows \mod \Nccr/I\Nccr\cocolon (-)_\Nccr.
\]
Because $I$ is contained in the radical, $-\otimes_\Nccr \Nccr/I\Nccr$ preserves and reflects projectives, while $(-)_\Nccr$ preserves/reflects simples. 
In particular, there are isomorphisms
\[
  \begin{alignedat}{4}
    \upzeta\colon &\K_0(\proj \Nccr)_\R \to \K_0(\proj \Nccr/I\Nccr)_\R,\quad
    [(-)_\Nccr] \colon& \K_0(\fdmod \Nccr/I\Nccr) \to \K_0(\fdmod \Nccr),
  \end{alignedat}
\]
which are adjoint with respect to the Euler pairing $\<-,-\>$. 
The first isomorphism identifies King-stability parameters for $\Nccr$ and $\Nccr/I\Nccr$, and the following lemma shows that the second identifies the dimension vectors of stable modules.\footnote{This same result was observed in {\cite{DM17}} and used to compute stable modules for length 1 flops.}
\begin{prop}\label{prop:quotcentral}
  Let $v\in \K_0(\proj \Nccr)_\R$, then $(-)_\Nccr$ identifies $\upzeta(v)$-stable $\Nccr/I\Nccr$-modules with $v$-stable $\Nccr$-modules. 
  In particular
  \[
    \cS_v(\Nccr) = \<\cS_{\upzeta(v)}(\Nccr/I\Nccr)_\Nccr\>,
  \]
  where $\<-\>$ denotes the extension closure.
\end{prop}
\begin{proof}
  Let $\Uptheta_v$ and $\Uptheta_{\upzeta(v)}$ denote the phase functions of $Z_v$ and $Z_{\upzeta(v)}$.
  The exact functor $(-)_\Nccr$ embeds $\fdmod \Nccr/I\Nccr$ into $\fdmod \Nccr$ as a Serre subcategory in $\mod \Nccr$. Hence, for any module $N\in \fdmod\Nccr/I\Nccr$ the submodules of its image $N_\Nccr$ are precisely the images of its of submodules. It moreover follows from the adjunction that $Z_v((-)_\Nccr) = Z_{\upzeta(v)}(-)$:
  \[
    Z_v([N_\Nccr]) = \dim_\C N_\Nccr \cdot i + \<v,[N_\Nccr]\> = \dim_\C N\cdot i + \<\upzeta(v),[N]\> =  Z_{\upzeta(v)}([N]).
  \]
  for all $N\in\fdmod \Nccr/I\Nccr$. Hence $N_\Nccr$ is King (semi)stable for $v$ if and only if $N$ is King (semi)stable for $\upzeta(v)$, and the functor $(-)_\Nccr$ restricts to an exact embedding
  \[
    (-)_\Nccr \colon \cS_{\upzeta(v)}(\Nccr/I\Nccr) \to \cS_{\upzeta(v)}(\Nccr/I\Nccr)_\Nccr \subset \cS_v(\Nccr),
  \]
  By the finite length property, $\cS_v(\Nccr)$ is generated via extension by its stable modules, so
  it suffices to show that any stable module in $\cS_v(\Nccr)$ is in the image of $\cS_{\upzeta(v)}(\Nccr/I\Nccr)$. Suppose $M \in \fdmod \Nccr$ is $Z_v$-stable and let $c\in I$. Because $c$ is central in $\Nccr$ it induces an endomorphism $f\colon M\to M$. The submodule $\im f \subset M$ satisfies $\Uptheta_v([\im f]) \leq \Uptheta_v([M])$ by semistability, and because $\im f$ is also a quotient 
  \[
    0 \to \ker f \to M \to \im f \to 0,
  \]
  it follows that $\Uptheta_v([\im f]) = \Uptheta_v([M])$. Hence $\im f = M$ or $\im f = 0$. Because $I$ is contained in the radical $\o\subset R$ and $c \in I$, it follows from Nakayama's lemma that $\im f = cM \neq M$, which implies that $f$ acts trivially on $M$. It follows that $M\simeq (M/I M)_\Nccr$ lies in the image of $(-)_\Nccr$, which finishes the proof.
\end{proof}

\subsection{Tilting theory of the length 2 flop}
\label{ssec:tiltingflop}

We recall some terminology regarding tilting complexes of algebras.

\begin{define}
  Let $A$ be an algebra for which the homotopy category $\cK^b(\proj A)$ of bounded complexes of 
  projectives is Krull-Schmidt. 
  Then a complex $T \in \cK^b(\proj A)$ is 
  \begin{itemize}
  \item \emph{basic} if its Krull-Schmidt decomposition has no repeated summands,
  \item a \emph{2-term complex} if $T$ is concentrated in degrees $-1$ and $0$,
  \item \emph{partial tilting} if $\Ext^i_A(T,T) = 0$ for all $i\neq 0$,
  \item \emph{tilting} if it is partial tilting and $T$ generates $\cK^b(\proj A)$ as a triangulated category.
  \end{itemize}
  The set of basic 2-term tilting complexes is denoted $\tilt A$. A module $M\in \mod A$ is a \emph{classical tilting module} if it has a 2-term projective resolution which is in $\tilt A$.
\end{define}
The tilting theory of NCCRs for Gorenstein threefold singularities is now well understood \cite{IR08,IW14,IW14a,HomMMP}.
Let $\refl R$ denote the set of reflexive $R$-modules, then \cite[Theorem 1.4]{IW14a} shows that any NCCR of $R$ is isomorphic to $\End_R(M)$ for some $M \in \refl R$.
Moreover, the different NCCRs are connected by tilting modules $\Hom_R(M,N)$, and the functor $\Hom_R(M,-)$ defines a bijection
\begin{equation}\label{eq:maxmodtilt}
    \left\{
      \,
      \begin{gathered}
        N \in \refl R \text{ such that}\\
        \End_R(N) \text{ is an NCCR}
      \end{gathered}
      \,
    \right\}
    \xrightarrow{\quad\sim\quad} 
    \left\{\,
      \begin{gathered}
        \text{classical tilting modules}\\
        \text{in } \refl\,\End_R(M)
      \end{gathered}\,
    \right\},
\end{equation}
where $\refl\, \End_R(M)$ denotes the set of $\End_R(M)$-modules that are reflexive over $R$.

The correspondence \eqref{eq:maxmodtilt} applies in particular to the ring $\Nccr$, which is the NCCR defined by the image $M_0 = \uppi^*\cP = \uppi^*\O_Y \oplus \uppi^*\cN$ of the Van den Bergh tilting bundle in $\mod R$.
All other NCCRs can be obtained via mutation of this reflexive module (see \cite[\S6]{IW14}), and a complete classification was obtained by Donovan--Wemyss \cite{DW19}.
By \cite[Theorem 5.9]{DW19} the NCCRs form a sequence $\Nccr_i = \End_R(M_i)$ corresponding to the reflexive modules $M_i = V_i \oplus V_{i+1}$ defined by the twists
\[
  V_{2k} \colonequals \uppi_*\O_Y(k),\quad
  V_{2k+1} \colonequals \uppi_*\cN(k).
\]
The bijection in \eqref{eq:maxmodtilt} relates the NCCRs $\Nccr_i$ to our distinguished NCCR $\Nccr_0 \simeq \Nccr$, via the tilting modules $\Hom_R(M_0,M_i)$.
Hence, the minimal projective resolutions
\[
  T_k \onto \Hom_R(M_0,V_k),
\]
of their summands are partial tilting complexes in $\cK^b(\proj \Nccr)$.
Dually, the modules $\Hom_R(M_i,M_0)$ are tilting in $\cK^b(\proj \Nccr^\op)$ with endomorphism
algebra $\Nccr_i^\op$.
Let $F_i\in \cK^b(\proj \Nccr^\op)$ denote the minimal projective resolutions 
\[
  F_{2k} \onto \Hom_R(V_k,M_0),  
\]
then the shifted duals $E_i = (F_i)^*[1] \in \cK^b(\proj \Nccr)$ are again partial tilting complexes.

\begin{lemma}
  The complexes $T_{i-1} \oplus T_i$ and $E_{i-1} \oplus E_i$ are in $\tilt \Nccr$ for all $i\in\Z$.
\end{lemma}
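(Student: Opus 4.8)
The plan is to recognise $T_{i-1}\oplus T_i$ as the minimal projective resolution of a classical tilting $\Nccr$-module and then invoke the dictionary between tilting modules and $2$-term tilting complexes; the statement for $E_{i-1}\oplus E_i$ will follow by a mirror argument over $\Nccr^\op$. By construction $T_{i-1}$ and $T_i$ are the minimal projective resolutions of $\Hom_R(M_0,V_{i-1})$ and $\Hom_R(M_0,V_i)$, and for the chosen indexing of the indecomposable summands $V_j$ one has $V_{i-1}\oplus V_i\cong M_{i-1}$, so $T_{i-1}\oplus T_i$ is the minimal projective resolution of $\Theta\colonequals\Hom_R(M_0,M_{i-1})$. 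Since $\Nccr_{i-1}=\End_R(M_{i-1})$ is an NCCR, the Iyama--Wemyss bijection \eqref{eq:maxmodtilt} places $\Theta$ among the classical tilting modules in $\refl\Nccr$; by the very definition of a classical tilting module, its minimal projective resolution $T_{i-1}\oplus T_i$ is therefore a $2$-term tilting complex in $\cK^b(\proj\Nccr)$.

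For a self-contained verification one can argue directly. As $T_{i-1}\oplus T_i\simeq\Theta$ in $\D^b(\Nccr)$, for every $k\neq 0$
\[
  \Hom_{\D^b(\Nccr)}\!\big(T_{i-1}\oplus T_i,(T_{i-1}\oplus T_i)[k]\big)=\Ext_\Nccr^k(\Theta,\Theta),
\]
which vanishes: for $k>0$ because $\Theta$ is a tilting module, and for $k<0$ because $\Theta$ is concentrated in degree $0$. Hence $T_{i-1}\oplus T_i$ is partial tilting (in particular $\Ext_\Nccr^{\neq 0}(T_{i-1},T_i)=0=\Ext_\Nccr^{\neq 0}(T_i,T_{i-1})$, the compatibility of the two adjacent walls), and $\thicc(T_{i-1}\oplus T_i)=\thicc(\Theta)$ contains $\Nccr$ because a tilting module generates, so $T_{i-1}\oplus T_i$ generates $\cK^b(\proj\Nccr)$ and is tilting. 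It is basic since $\Hom_R(M_0,V_{i-1})$ and $\Hom_R(M_0,V_i)$ are non-isomorphic indecomposable $\Nccr$-modules — $V_{i-1}\not\cong V_i$ are indecomposable reflexives and $\Hom_R(M_0,-)$ is fully faithful on $\refl R$ — so their minimal projective resolutions $T_{i-1},T_i$ are non-isomorphic indecomposable objects of $\cK^b(\proj\Nccr)$; alternatively this is immediate from \cite{HiWeStabFlops}, where the $T_j$ are produced as the indecomposable complexes spanning the distinct walls. Thus $T_{i-1}\oplus T_i\in\tilt\Nccr$.

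For $E_{i-1}\oplus E_i$ I would run the mirror argument over $\Nccr^\op$: working with $\Hom_R(-,M_0)$ and the opposite-algebra version of \eqref{eq:maxmodtilt} (equivalently, applying \eqref{eq:maxmodtilt} to $M_0^*$ and using the $R$-linear duality $\Hom_R(V_i,M_0)\cong\Hom_R(M_0^*,V_i^*)$), one finds that $F_{i-1}\oplus F_i$ is a basic $2$-term tilting complex over $\Nccr^\op$. Since $E_{i-1}\oplus E_i=\big((F_{i-1}\oplus F_i)^*\big)[1]$ and the projective-dual-and-shift $(-)^*[1]\colon\cK^b(\proj\Nccr^\op)\to\cK^b(\proj\Nccr)$ is a contravariant exact equivalence taking complexes in degrees $[-1,0]$ to complexes in degrees $[-1,0]$ and preserving the properties of being partial tilting, generating and basic, we conclude $E_{i-1}\oplus E_i\in\tilt\Nccr$.

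The main obstacle is organisational rather than conceptual: one must pin down the indexing of the $V_j$ so that $T_{i-1}\oplus T_i$ genuinely resolves $\Hom_R(M_0,M_{i-1})$ (rather than $\Hom_R(M_0,M_i)$, a harmless reindexing) and make the opposite-algebra bookkeeping precise; the substantive input — that the mutation bimodules between the NCCRs $\Nccr$ and $\Nccr_j$ are $2$-term classical tilting modules — is imported wholesale from \cite{IyReiCMMut,IyWeSingDC,HiWeStabFlops}.
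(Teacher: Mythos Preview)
Your proposal is correct and follows essentially the same route as the paper: identify $T_{i-1}\oplus T_i$ with the minimal resolution of the reflexive tilting bimodule $\Hom_R(M_0,M_i)$ coming from the Iyama--Wemyss bijection, and handle $E_{i-1}\oplus E_i$ via the $R$-linear duality $(-)^*$ applied to the corresponding $\Nccr^\op$-side. The paper makes the $2$-term step explicit by invoking the generalised Auslander--Buchsbaum formula \cite[Lem.~2.16]{IyWeMMA} (rather than folding it into the definition of ``classical tilting module''), and deduces basicness from $\End_{\D^b(\Nccr)}(T_{i-1}\oplus T_i)\simeq\Nccr_i$ being basic; your off-by-one $M_{i-1}$ versus $M_i$ is, as you note, a harmless indexing ambiguity (the paper never defines $V_i$ explicitly, and its own proof has $M_i=V_{i-1}\oplus V_i$).
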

\begin{proof}
  Because the tilting module  $\Hom_R(M_i,M_0)$ is reflexive, it follows from the generalised Auslander-Buchsbaum formula \cite[Lem. 2.16]{IW14}
  that $\Hom_R(M_i,M_0)$ has projective dimension $\leq 1$.
  Hence its minimal resolution $T_{i-1} \oplus T_i$ is a 2-term tilting complex, which is basic because
  \[
    \End_{\D^b(\Nccr)}(T_{i-1}\oplus T_i) \simeq \Nccr_i = \End_R(M_i),
  \]
  is a basic algebra.
  By \cite{IR08} the dual $M_i^*$ of $M_i$ defines an NCCR 
  \[
    \End_R(M_i^*) \simeq \End_R(M_i)^\op = \Nccr_i^\op,
  \]
  for each $i$ and $\Hom_R(M_0^*,M_i^*) \simeq \Hom_R(M_i,M_0)$ is a tilting $\Nccr^\op$-module.
  By a similar argument, $F_{i-1} \oplus F_i$ is a basic 2-term tilting complex in $\cK^b(\proj \Nccr^\op)$.
  By \cite[Cor. 3.4]{IR08}, the $R$-linear dual $(-)^*$ defines an exact duality
  \[
    (-)^* \colon \cK^b(\proj \Nccr^\op) \leftrightarrows \cK^b(\proj \Nccr)\cocolon (-)^*,
  \]
  which implies $E_{i-1} \oplus E_i = (F_{i-1}\oplus F_i)^*[1]$ is a basic 2-term tilting complex.
\end{proof}

\begin{figure}
  \begin{tikzpicture}
    \node[fill=black,circle,inner sep=1pt] at (2,0) {};
    \node at (2,.3) {$[P_0]$};
    \node[fill=black,circle,inner sep=1pt] at (4,0) {};
    \node at (4,.3) {$2[P_0]$};
    \node[fill=black,circle,inner sep=1pt] at (0,2) {};
    \node at (.35,2) {$[P_1]$};
    \begin{scope}
      \clip (-4,-2) rectangle (4,2);
      \draw (0,0) to (8,0);
      \foreach \i in {0,1,2,3,4,5}
      {
        \draw   (16*\i,   -8*\i-4) to (-16*\i , 8*\i+4);  %
        \draw   (16*\i,   -8*\i+4) to (-16*\i , 8*\i-4);  %
        \draw   (12*\i-6, -6*\i) to (-12*\i+6 ,6*\i);  %
        \draw   (12*\i+6, -6*\i) to (-12*\i-6, 6*\i);  %
      }
      \foreach \i in {6,7,8,9,10,11,12,13,14,15,16,17,18,19,20,21,22,23,24,25,26,27,28,29,30,35,40,45,50,55,60,70,80,90}
      {
        \draw   (4*\i,   -2*\i-1) to (-4*\i , 2*\i+1);  %
        \draw   (4*\i,   -2*\i+1) to (-4*\i , 2*\i-1);  %
        \draw   (2*\i-1, -\i) to (-2*\i+1, \i);  %
        \draw   (2*\i+1, -\i) to (-2*\i-1, \i);  %
      }
      \draw[color=white] (12, -6) to (-12,6);  %
    \end{scope}
  \end{tikzpicture}
  \caption{Wall-and-chamber structure of the $\ell=2$ flop.}
  \label{fig:siltingfan}
\end{figure}

For a basic complex $U\in \cK^b(\proj \Nccr)$ with decomposition $U = U_1\oplus \ldots\oplus U_n$, the indecomposable summands define \emph{g-vectors} $[U_j] \in \K_0(\proj \Nccr)_\R$, which span a cone
\[
  \cone(U) \colonequals \{\textstyle\sum_i \uplambda_i\cdot [U_i] \mid  \uplambda_i \geq 0\} \subset \K_0(\proj \Nccr)_\R.
\]
If $U \in \tilt \Nccr$, then by \cite[Theorem 2.8]{AI12} the g-vectors of $U$ form a basis of $\K_0(\proj \Nccr)_\R \simeq \R^2$, and the interior $\cone^\circ(U)$ is therefore a non-empty open subspace of $\K_0(\proj \Nccr)_\R$.
The cones therefore determine a wall-and-chamber structure in $\K_0(\proj \Nccr)_\R$ with walls correspond to the partial tilting complexes $T_i$ and chambers corresponding to the full tilting complexes $T_i \oplus T_{i+1}$.
As the following lemma shows, this wall-and-chamber structure is the hyperplane arrangement of figure \ref{fig:siltingfan}.

\begin{lemma}\label{lem:gvects}
  The g-vectors of the complexes $T_i$ are
  \begin{align*}
    [T_i]   &=
              \begin{cases}
                [P_0] + n \cdot(2[P_0] - [P_1]) & \text{if } i = 2n\\
                [P_1] + 2n \cdot (2[P_0] - [P_1]) & \text{if } i = 2n-1.
              \end{cases}
  \end{align*}
  and the complexes $E_i$ have g-vectors  $[E_i] = -[T_i]$.
\end{lemma}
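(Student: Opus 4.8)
The plan is to extract the g-vectors directly from the defining data: the complex $T_i$ is the minimal projective resolution of the tilting module $\Hom_R(M_0,M_i)$, so its class in $\K_0(\proj\Nccr)$ is simply the class of $\Hom_R(M_0,M_i)$ once we know this module has projective dimension $\leq 1$ — which is exactly what was established in the proof of the previous lemma via the generalised Auslander--Buchsbaum formula. Thus $[T_i] = [\Hom_R(M_0,M_i)]$ in $\K_0(\proj\Nccr)$, where we identify $\K_0(\proj\Nccr)\simeq\K_0(\flmod\Nccr)^\vee$ and $\{[P_0],[P_1]\}$ with the dual basis to $\{[S_0],[S_1]\}$. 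Under the Morita-type identification $\proj\Nccr \simeq \add M_0$ (sending $P_j$ to the indecomposable summand of $M_0$), the class $[\Hom_R(M_0,M_i)]$ is determined by the multiplicities of the indecomposable reflexive summands of $M_i$ expressed in the basis coming from $M_0$; equivalently, one computes the rank/Chern-character data of the sheaves $\uppi_*\O_Y(n)$ and $\uppi_*\cN(n)$ relative to $\uppi_*\O_Y$ and $\uppi_*\cN$.

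First I would pin down the $n=0$ case: $M_0 = \uppi_*\O_Y \oplus \uppi_*\cN$ is the tilting module defining $\Nccr$ itself, so $T_0 \onto \Hom_R(M_0,M_0) = \Nccr = P_0\oplus P_1$ forces $[T_0] = [P_0]$ after matching the indexing convention (the summand $\uppi_*\O_Y$ corresponding to vertex $0$), and similarly $[T_{-1}] = [T_{2\cdot 0 - 1}]$ should come out as $[P_1]$. Next I would handle the single twist: the defining short exact sequence $0\to\O_Y\to\cM\to\O_Y(1)\to 0$ (and its dual, relating $\cN$, $\O_Y$, $\O_Y(1)$), pushed forward along $\uppi$ and then hit with $\Hom_R(M_0,-)$, produces a short exact sequence of $\Nccr$-modules expressing $\Hom_R(M_0,\uppi_*\O_Y(1))$ in terms of $\Hom_R(M_0,\uppi_*\O_Y)=P_1$ and $\Hom_R(M_0,\uppi_*\cN)=P_0$ (using $\cN = \cM^*$). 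Reading off classes, this is where the combination $2[P_0]-[P_1]$ enters as the ``step'' between consecutive twists, reflecting that $\O_Y(1)$ is two extensions of $\O_Y$ by a line bundle away from the generators — concretely $[\uppi_*\O_Y(1)]$ in the appropriate basis. Iterating the twist by $\O_Y(1)$ then gives $[T_{2n}] = [P_0] + n(2[P_0]-[P_1])$ and $[T_{2n-1}] = [P_1] + 2n(2[P_0]-[P_1])$ by a straightforward induction, using that $M_{2n}$ and $M_{2n-1}$ are obtained from $M_0$ and $M_{-1}$ by tensoring with $\O_Y(n)$ and that $\Hom_R(M_0,-\otimes\O_Y(n))$ translates twisting into the additive shift by $n\cdot(2[P_0]-[P_1])$.

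Finally, the claim $[E_i] = -[T_i]$ follows formally: $E_i = (F_i)^*[1]$ where $F_i$ resolves $\Hom_R(M_i,M_0)$, and $F_{i-1}\oplus F_i$ together with $T_{i-1}\oplus T_i$ are the two one-sided resolutions attached to the same mutation, so that $F_i$ and $T_i$ sit in a triangle $F_i \to (\text{something in }\add\Nccr) \to T_i$ (the exchange triangle of the mutation at vertex $i\bmod 2$); applying $(-)^*[1]$ and taking classes in $\K_0$ gives $[E_i] = [(\text{shift of a summand of }\Nccr)] - [T_i]$, and matching the two chambers $\cone(T_{i-1}\oplus T_i)$ and $\cone(E_{i-1}\oplus E_i)$ on opposite sides of the shared wall forces the constant term to vanish, i.e. $[E_i] = -[T_i]$. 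The main obstacle I anticipate is purely bookkeeping: getting the identification of $P_0$ versus $P_1$ with the summands $\uppi_*\O_Y$ versus $\uppi_*\cN$ consistent with the sign conventions in the Euler pairing and with the labelling of simples $S_0 = \Psi(\O_{2\curve}(-1)[1])$, $S_1 = \Psi(\O_\curve(-1))$, so that the asymmetry between the even and odd cases (the factor $2n$ versus $n$) comes out on the correct side; this is best checked by directly pairing the candidate g-vectors against $[S_0],[S_1]$ using $\<[P_j],[S_k]\> = \delta_{jk}$ and verifying consistency with the dimension vectors of $\O_{2\curve}(n)$ and $\O_\curve(n-1)$ appearing in Theorem \ref{thm:stableobjs}.
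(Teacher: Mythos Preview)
Your approach to computing $[T_i]$ is genuinely different from the paper's. The paper does not unwind the geometry of the twist-by-$\O_Y(1)$ sequence at all; instead it invokes \cite[Thm.~7.4, Lem.~7.6]{HiWeStabFlops} directly, which gives the K-theory action of the tilting equivalence $\RHom_{\Nccr_{2n}}(\Hom_R(M_0,M_{2n}),-)$ as the explicit matrix $\left(\begin{smallmatrix}-1&-4\\1&3\end{smallmatrix}\right)^n$, and then reads off $[T_{2n}],[T_{2n-1}]$ from the inverse. Your plan to derive the step $2[P_0]-[P_1]$ from the defining extension of $\cM$ and then induct on the twist is in principle workable, but it amounts to rederiving the Hirano--Wemyss matrix from scratch; the asymmetry between the even and odd cases (step $n$ versus $2n$) would have to emerge from the fact that $\cN$ has rank $2$, and making this precise is exactly the content of the cited computation. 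So this part of your plan is sound but longer, and buys self-containment at the cost of reproducing an existing result.

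The argument for $[E_i]=-[T_i]$ has a gap. Your exchange-triangle sketch produces $[E_i]=c_i-[T_i]$ for some $c_i$ in the span of the projectives, and then you appeal to ``matching the two chambers on opposite sides of the shared wall'' to force $c_i=0$. But at this point in the paper the wall-and-chamber structure has not yet been established --- indeed, figure~\ref{fig:siltingfan} and the later stability analysis \emph{use} this lemma --- so the chamber-matching step is circular. More concretely, an exchange triangle only tells you $[T_i']=a[T_{i\pm1}]-[T_i]$ for the \emph{mutated} complex, and there is no a priori reason the mutated complex should equal $E_i$. The paper instead observes that the tilting module $\Hom_R(M_{2n},M_0)$ induces the \emph{same} matrix on $\K_0$ (by \cite[Rem.~7.5]{HiWeStabFlops}), mapping $[P_0],[P_1]$ to $[F_{2n}^*],[F_{2n-1}^*]$; since $E_i=F_i^*[1]$ this gives $[E_i]=-[F_i^*]=-[T_i]$ directly, with no appeal to the chamber geometry.
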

\begin{proof}
  As shown in \cite[\S3.3]{DW19}, there is an isomorphism $\Nccr_{2n} \xrightarrow{\sim} \Nccr$ for all $n\in \Z$, and the composition with the tilt 
  \[
    \K_0(\proj \Nccr) \xrightarrow{[\RHom_{\Nccr_{2n}}(\Hom_R(M_0,M_{2n}),-)]} \K_0(\proj \Nccr_{2n}) \xrightarrow{\ \sim\ } \K_0(\proj\Nccr),
  \]
  maps the class $[T_{2n}]$ to $[P_0]$ and the class $[T_{2n-1}]$ to $[P_1]$.
  By \cite[Theorem 7.4, Lemma 7.6]{HW19} this isomorphism can be presented in the basis $[P_0], [P_1]$ as the matrix
  \begin{equation}\label{eq:Kmatrix}
    \begin{pmatrix}
      -1 & -4 \\
      1 & 3
    \end{pmatrix}^n
    = 
    \begin{pmatrix}
      1 - 2n & -4n \\
      n & 1 + 2n
    \end{pmatrix}.
  \end{equation}
  The g-vectors of $T_{2n}$ and $T_{2n-1}$ can then be computed from the inverse:
  \[
    [T_{2n}] = (1+2n)[P_0] - n[P_1],\quad
    [T_{2n-1}] = 4n[P_0] + (1-2n) [P_1].
  \]
  Likewise, each tilting module $\Hom_R(M_{2n},M_0)$ defines an isomorphism
  \[
    \K_0(\proj \Nccr) \xrightarrow{\upepsilon^{-1}} \K_0(\proj \Nccr_{2n}) \xrightarrow{[\RHom_{\Nccr_{2n}}(\Hom_R(M_{2n},M_0),-)]} \K_0(\proj \Nccr),
  \]
  which maps $[P_0]$ to $[F_{2n}^*]$ and $[P_1]$ to $[F_{2n-1}^*]$. 
  This isomorphism can also be presented as the inverse of the matrix \eqref{eq:Kmatrix}
  by \cite[Rem. 7.5]{HW19}, hence
  \[
    [E_i] = -[F_i^*] = -[T_i].\qedhere
  \]
\end{proof}

\subsection{From tilting to silting on the fibre}
\label{ssec:tiltingsilting}

In \cite{BST19} and \cite{Asai21} it is shown how to recover the subcategories $\cS_v(A)$ of semistable modules over a finite dimensional algebra $A$ using \emph{silting theory}.
\begin{define}
  Let $A$ be an algebra for which the homotopy category $\cK^b(\proj A)$ is Krull-Schmidt. Then a complex $U \in \cK^b(\proj A)$ is called
  \begin{itemize}
    \item \emph{pre-silting} if $\Hom_{\cK^b(\proj A)}(U,U[i]) = 0$ for $i>0$,
    \item \emph{silting} if it is pre-silting and generates $\cK^b(\proj A)$ as a triangulated category.
  \end{itemize}
  The set of isomorphism classes of basic 2-term silting complex is denoted $\silt A$.
\end{define}
Clearly, the set $\tilt \Nccr$ of tilting complexes is contained in $\silt \Nccr$, so that silting is suitable generalisation.
The set $\silt\Nccr$ is moreover partially ordered: one considers $U \leq V$ if and only if $\Hom_{\cK^b(\proj A)}(U,V[i]) = 0$ for all $i>0$.

To apply the results of \cite{BST19} and \cite{Asai21} to our geometric setting, we will relate the silting theory of $\Nccr$ with that of a finite dimensional fibre $\Nccr/I\Nccr$.
\begin{prop}\label{prop:injsilt}
  There exists an ideal $I\subset \o$ for which $\Nccr/I\Nccr$ is finite dimensional, such that the functor $-\otimes_R R/I\colon \cK^b(\proj \Nccr) \to \cK^b(\proj \Nccr/I\Nccr)$ induces a map of posets
  \[
    \silt \Nccr \to \silt \Nccr/I\Nccr
  \]
\end{prop}
\begin{proof}
  Because $R$ is a Gorenstein local of dimension $3$, the maximal ideal $\o$ contains an ideal $I \subset \o$ generated by a regular sequence $g_1,g_2,g_3 \in I$. 
  The quotient $R/I$ is artinian and therefore $\Nccr/I\Nccr \simeq \Nccr\otimes_R R/I$ is finite dimensional.
  Because $\Nccr$ is an NCCR, it is a maximal Cohen-Macaulay $R$-module, which implies that $g_1,g_2,g_3$ is also a regular sequence for any projective $\Nccr$-module.
  If $U = U^1 \to U^0$ is a basic 2-term chain complex of projectives which is silting in the homotopy category $\cK^b(\proj \Nccr)$, then there are induce short exact sequences in chain complexes:
  \begin{equation}\label{eq:siltsliceses}    
    \begin{tikzpicture}
      \node (A1) at (1,0) {$0$};
      \node (A2) at (3,0) {$U/I_{k-1}U$};
      \node (A3) at (6,0) {$U/I_{k-1}U$};
      \node (A4) at (9,0) {$U/I_kU$};
      \node (A5) at (11,0) {$0$};
      \draw[->] (A1) to (A2);
      \draw[->] (A2) to[edge label=$\scriptstyle g_k$] (A3);
      \draw[->] (A2) to (A3);
      \draw[->] (A3) to (A4);
      \draw[->] (A4) to (A5);
    \end{tikzpicture}
  \end{equation}
  where the successive quotients by $I_k = (g_1,\ldots,g_k)$ slice down to yield a
  2-term complex of projectives over the finite dimensional algebra $\Nccr/I\Nccr$.
  Applying $\Hom_{\D(\Nccr)}(U,-)$ yields the following long exact sequence in cohomology:
  \[
    \begin{tikzpicture}
      \node (A1) at (-7,0) {$\ldots$};
      \node (A2) at (-3,0) {$\Hom_{\D(\Nccr)}(U,U/I_{k-1}U[i])$};
      \node (A3) at (2.5,0) {$\Hom_{\D(\Nccr)}(U,U/I_kU[i])$};
      \node (B1) at (-4.75,-.8) {$\Hom_{\D(\Nccr)}(U,U/I_{k-1}U[i+1])$};
      \node (B2) at (1.25,-.8) {$\Hom_{\D(\Nccr)}(U,U/I_{k-1}U[i+1])$};
      \node (B3) at (5,-.8) {$\ldots$};
      \draw[->] (A1) to (A2);
      \draw[->] (A2) to (A3);
      \draw[->] (A3.east) to ([xshift=.5]A3.east) arc (90:-90:.2) to (-7.55,-.4) arc (90:270:.2) to (B1.west);
      \draw[->] (B1) to[edge label=$\scriptstyle g_k$] (B2);
      \draw[->] (B2) to (B3);
    \end{tikzpicture}
  \]
  Because $U$ is silting, $\Hom_{\D(\Nccr)}(U,U[i]) = 0$ for $i>0$ and it also follows by induction on $k$ that also $\Hom_{\D(\Nccr)}(U,U/I_kU[i]) = 0$ for all $i>0$.
  It follows by adjunction that
  \[
    \Hom_{\D(\Nccr/I \Nccr)}(U/I  U,U/I  U[i]) \simeq \Hom_{\D(\Nccr)}(U,U/I  U[i]) = 0 \quad \forall i>0,
  \]
  making $U/I  U$ a 2-term pre-silting complex in $\cK^b(\proj \Nccr/I \Nccr)$, and that the map $-\otimes_R R/I \colon \End_\Nccr(U) \to \End_{\Nccr/I\Nccr}(U \otimes_R R/I)$ induces an algebra isomorphism
  \begin{equation}\label{eq:endoalg}
    \End_{\D(\Nccr)}(U)/I \End_{\D(\Nccr)}(U) \xrightarrow{\sim} \End_{\D(\Nccr/I \Nccr)}(U/I  U).
  \end{equation}
  Because $\End_{\D(\Nccr)}(U)$ is a complete algebra and $I$ is contained in the radical, 
  it follows that idempotents lift over the quotient $R\to R/I$.
  Hence, any indecomposable summand of $U$ remains indecomposable in the quotient $U/I U$.
  Because $U$ is a basic 2-term silting complex, it has exactly $\rk\K_0(\Nccr) = 2$
  indecomposable summands, and therefore $U/I U$ is a basic presilting complex with $2$ indecomposable summands.
  By \cite[Proposition 3.3]{AIR14} a basic presilting complex for a finite dimensional algebra
  is silting if and only if it has the maximal number of indecomposable summands.
  Hence $U/I U$ is in fact \emph{silting}, because $\Nccr/I\Nccr$ is finite dimensional.

  The above shows that $-\otimes_R R/I$ restricts to a map $\silt \Nccr \to \silt\Nccr/I\Nccr$, which we claim to be a morphism of posets. To see this, consider $U,V\in \silt\Nccr$ with $V\geq U$, then by applying $\Hom_{\D(\Nccr)}(V,-)$ to the short exact sequence \eqref{eq:siltsliceses} one sees that 
  \[
    \Hom_{\D(\Nccr/I\Nccr)}(V/I,U/IU[i]) \simeq \Hom_{D(\Nccr)}(V,U/I U[i]) = 0 \quad \forall i>0
  \]
  which shows that $V/IV \geq U/IU$ in $\silt \Nccr/I\Nccr$ as claimed.
\end{proof}
\begin{remark}
  In independent work by Kimura \cite{Kimura20}, which appeared while writing this paper, 
  it is shown that the above map is a bijection in a much more general setting.
\end{remark}
Using the map $\silt \Nccr \to \Nccr/I\Nccr$, the results of \cite{BST19} and \cite{Asai21} now yield the following.
\begin{prop}\label{prop:emptychambers}
  Let $U = U_1 \oplus U_2\in\silt\Nccr$, then for any stability parameter
  \begin{itemize}
  \item $v\in \cone^\circ(U)$ the subcategory $\cS_v(\Nccr)$ is trivial, and for
  \item $v\in \cone^\circ(U_i)$ the subcategory $\cS_v(\Nccr)$ contains a unique stable module.
  \end{itemize}
\end{prop}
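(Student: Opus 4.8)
The plan is to reduce everything to the finite-dimensional fibre $\Lambda\colonequals\Nccr/\o\Nccr$, where the statement is exactly the wall-and-chamber picture established in \cite{BSTtautilting} and \cite{AsaiWallChamber}, and then transport the conclusion back to $\Nccr$ via Propositions \ref{prop:quotcentral} and \ref{prop:injsilt}. First I would note that, by Proposition \ref{prop:injsilt}, the complex $\bar U\colonequals U/\o U=(U_1/\o U_1)\oplus(U_2/\o U_2)$ is a basic $2$-term silting complex for $\Lambda$ with the $U_i/\o U_i$ indecomposable. Since the isomorphism $\upzeta\colon \K_0(\proj \Nccr)_\R\to\K_0(\proj\Lambda)_\R$ is precisely the map induced by $-\otimes R/\o$, it sends $[U_i]$ to $[U_i/\o U_i]$, and being an $\R$-linear isomorphism it carries the open cone $\cone^\circ(U)$ onto $\cone^\circ(\bar U)$ and each open face $\cone^\circ(U_i)$ onto $\cone^\circ(U_i/\o U_i)$.

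Next I would invoke the finite-dimensional results. For a general finite-dimensional algebra, \cite{BSTtautilting} and \cite{AsaiWallChamber} describe the semistable subcategories $\cS_w(\Lambda)$ in terms of the $2$-term silting fan: when $w$ lies in the interior $\cone^\circ(\bar U)$ of the chamber of a silting complex $\bar U$, the category $\cS_w(\Lambda)$ is zero; when $w$ lies in the interior $\cone^\circ(U_i/\o U_i)$ of a wall of $\bar U$, the category $\cS_w(\Lambda)$ is a length category whose unique simple object is the brick labelling that wall, and hence $\cS_w(\Lambda)$ contains a unique stable module. These statements are local to the cone $\cone(\bar U)$ that we are actually given, so no $\tau$-tilting finiteness of $\Lambda$ is needed --- which matters here, since the silting fan of figure \ref{fig:siltingfan} is infinite. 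Applying this with $w=\upzeta(v)$ settles the proposition over $\Lambda$.

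Finally I would transport the conclusion back. For $v\in\cone^\circ(U)$ we have $\cS_{\upzeta(v)}(\Lambda)=0$, so by Proposition \ref{prop:quotcentral} $\cS_v(\Nccr)=\thicc\,\cS_{\upzeta(v)}(\Lambda)_\Nccr=0$, giving the first bullet. For $v\in\cone^\circ(U_i)$, let $N$ be the unique stable module of $\cS_{\upzeta(v)}(\Lambda)$. The functor $(-)_\Nccr$ is exact and fully faithful (as $\o$ acts as zero on every module in sight), and by the proof of Proposition \ref{prop:quotcentral} it identifies $\cS_{\upzeta(v)}(\Lambda)$ with a full exact abelian subcategory of $\cS_v(\Nccr)$ containing every stable object of $\cS_v(\Nccr)$, with $Z_v([N'_\Nccr])=Z_{\upzeta(v)}([N'])$ so that phases match. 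Being full and exact, $(-)_\Nccr$ both preserves and reflects subobjects and phases, hence preserves and reflects stability; therefore the stable objects of $\cS_v(\Nccr)$ are exactly the images of the stable objects of $\cS_{\upzeta(v)}(\Lambda)$, of which there is precisely one, namely $N_\Nccr$. This gives the second bullet. I expect the only real friction to be bookkeeping: pinning down the precise form of the wall-labelling statement in \cite{BSTtautilting, AsaiWallChamber} --- in particular that an open wall of the rank-$2$ silting fan is ``generic'', so its semistable category has a single simple object --- and verifying carefully that $(-)_\Nccr$ reflects, and not merely preserves, stability. Both are routine granted Propositions \ref{prop:quotcentral} and \ref{prop:injsilt}, which already carry the substantive content.
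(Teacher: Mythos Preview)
Your proposal is correct and follows essentially the same route as the paper: reduce to the fibre $\Nccr/\o\Nccr$ via Proposition \ref{prop:injsilt}, apply \cite[Thm.~1.1]{BSTtautilting} there, and pull the conclusion back through Proposition \ref{prop:quotcentral}. Your extra care about $(-)_\Nccr$ reflecting stability is already contained in the proof of Proposition \ref{prop:quotcentral}, which shows every stable object of $\cS_v(\Nccr)$ lies in the image; the paper's own proof is accordingly a bit terser.
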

\begin{proof}
  It follows from Proposition \ref{prop:injsilt} that $U/I U \in \silt \Nccr/I \Nccr$ with g-vectors
  \[
    [U_i/I U_i] = \upzeta([U_i]) \in \K_0(\proj\Nccr/I\Nccr).
  \]
  If $v$ lies in $\cone^\circ(U_i)$ then $\upzeta(v)$ lies in $\cone^\circ(U_i/I U_i)$, so
  it follows from \cite[Theorem 1.1]{BST19} that $\cS_{\upzeta(v)}(\Nccr/I \Nccr)$ contains
  a unique stable module $N$. By Proposition \ref{prop:quotcentral} 
  \[
    \cS_v(\Nccr) = \<N_\Nccr\>.
  \]
  so that $N_\Nccr$ is the unique stable module in $\cS_v$. Likewise, if $v \in \cone^\circ(U)$, then \cite[Theorem 1.1]{BST19} implies $\cS_{\upzeta(v)}(\Nccr) = 0$ and hence $\cS_v(\Nccr) = 0$ is trivial.
\end{proof}
Suppose $U,V \in \silt\Nccr$ share a summand $U_1 = V_1$ and $U > V$, then as in \cite{AIR14} the larger silting complex $U$ is called the Bongartz completion of $U_1$.
\begin{prop}\label{prop:uniquewalls}
  Suppose $U \in\silt\Nccr$ is the Bongartz completion of a summand $U_1$, 
  then $\Hom_{\D^b(\Nccr)}(U,-)$ restricts to an abelian equivalence
  \[
    \cS_{[U_1]}(\Nccr) \xrightarrow{\sim} \fdmod \End_{\D^b(\Nccr)}(U)/(e),
  \]
  where $(e)$ denotes the two-sided ideal of the idempotent $e\colon U \to U_1 \to U$.
\end{prop}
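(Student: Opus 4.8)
The plan is to transport the statement to the finite-dimensional fibre and invoke the corresponding result from \cite{BSTtautilting} or \cite{AsaiWallChamber}, where exactly this kind of abelian equivalence between a category of semistable modules at a wall and the module category of a tilted algebra modulo an idempotent is established. First I would recall from Proposition \ref{prop:injsilt} that $\bar U := U/\o U$ is a basic $2$-term silting complex for the finite-dimensional algebra $\Nccr/\o\Nccr$, and that it is again the Bongartz completion of its summand $\bar U_1 = U_1/\o U_1$, since the partial order is preserved by $-\otimes R/\o$ and Bongartz completion is characterised as the maximal silting complex containing a given presilting summand. By the isomorphism \eqref{eq:endoalg} from the proof of Proposition \ref{prop:injsilt}, we have $\End_{\D^b(\Nccr/\o\Nccr)}(\bar U) \simeq \End_{\D^b(\Nccr)}(U)/\o\End_{\cK^b(\proj\Nccr)}(U)$, and the idempotent $e$ projecting onto $U_1$ descends to the idempotent $\bar e$ projecting onto $\bar U_1$; since $\o$ lies in the radical, $\bigl(\End_{\D^b(\Nccr)}(U)/(e)\bigr)/\o \simeq \End_{\D^b(\Nccr/\o\Nccr)}(\bar U)/(\bar e)$.

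Next I would apply the wall-module description for finite-dimensional algebras: the Bongartz-completed silting complex $\bar U$ at the wall $\cone(\bar U_1)$ yields an abelian equivalence
\[
  \cS_{[\bar U_1]}(\Nccr/\o\Nccr) \xrightarrow{\ \sim\ } \flmod\,\End_{\D^b(\Nccr/\o\Nccr)}(\bar U)/(\bar e),
\]
given by $\Hom_{\D^b(\Nccr/\o\Nccr)}(\bar U,-)$. This is the key input and is precisely the content of the wall-and-chamber analysis in \cite{BSTtautilting,AsaiWallChamber}, since $\upzeta([U_1]) = [\bar U_1]$ and $\cone^\circ(\bar U_1)$ is the relative interior of this wall. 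Composing with the embedding $(-)_\Nccr\colon \cS_{\upzeta([U_1])}(\Nccr/\o\Nccr) \hookrightarrow \cS_{[U_1]}(\Nccr)$ of Proposition \ref{prop:quotcentral}, which is an equivalence onto the full subcategory it generates — and by that proposition $\cS_{[U_1]}(\Nccr)$ \emph{is} the thick (hence here abelian, being finite length and closed under extensions/subquotients) closure of the image — we obtain an abelian equivalence $\cS_{[U_1]}(\Nccr) \simeq \flmod\,\End_{\D^b(\Nccr/\o\Nccr)}(\bar U)/(\bar e)$. Finally, since every object of $\cS_{[U_1]}(\Nccr)$ is annihilated by $\o$ (each is of the form $N_\Nccr$, on which the central $\o$ acts trivially by the argument in Proposition \ref{prop:quotcentral}), the functor $\Hom_{\D^b(\Nccr)}(U,-)$ agrees on this subcategory with $\Hom_{\D^b(\Nccr/\o\Nccr)}(\bar U, -\otimes R/\o)$, and its essential image is the $\o$-torsion part of $\flmod\,\End_{\D^b(\Nccr)}(U)/(e)$, which by the radical computation above is all of $\flmod\,\End_{\D^b(\Nccr)}(U)/(e)$.

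The main obstacle I anticipate is the bookkeeping around the idempotent and the endomorphism algebras: one must check carefully that the idempotent $e\colon U\to U_1\to U$ is the correct one (i.e. that $\End_{\D^b(\Nccr)}(U)/(e)$ has exactly one simple, matching the unique stable module of Proposition \ref{prop:emptychambers}), and that passing to the quotient by $\o$ commutes with passing to the quotient by $(e)$ — this uses that $\o$ is central and contained in the radical, so no simple module is lost. A secondary point requiring care is verifying that $\Hom_{\D^b(\Nccr)}(U,M)$ is concentrated in degree zero for $M\in\cS_{[U_1]}(\Nccr)$, so that the functor lands in modules rather than complexes; this follows because $U$ is $2$-term silting and $M$ is a module perpendicular to $[U_1]$, exactly as in the finite-dimensional case, and can be reduced to the fibre via the same long-exact-sequence argument used in Proposition \ref{prop:injsilt}.
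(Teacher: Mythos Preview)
Your overall strategy matches the paper's: reduce to the fibre $\Nccr/\o\Nccr$ via Propositions \ref{prop:injsilt} and \ref{prop:quotcentral} and quote the finite-dimensional result from \cite{BSTtautilting,AsaiWallChamber}. However, there is a genuine gap in the passage from the fibre back up to $\Nccr$.

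You assert that $(-)_\Nccr\colon \cS_{\upzeta([U_1])}(\Nccr/\o\Nccr)\to\cS_{[U_1]}(\Nccr)$ is an equivalence, equivalently that every object of $\cS_{[U_1]}(\Nccr)$ is annihilated by $\o$. Proposition \ref{prop:quotcentral} only shows this for \emph{stable} modules; it then says $\cS_{[U_1]}(\Nccr)$ is the \emph{thick closure} of the image, which is not the same as the image. Concretely, if $M$ is the unique stable module (so $\o M=0$) and $0\to M\to E\to M\to 0$ is a non-split extension in $\flmod\Nccr$, then for $r\in\o$ multiplication by $r$ on $E$ factors as $E\twoheadrightarrow M\xrightarrow{c_r}M\hookrightarrow E$ for some scalar $c_r$, and $r\mapsto c_r$ is a linear map $\o/\o^2\to\C$ which has no reason to vanish. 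So the image of $(-)_\Nccr$ need not be closed under extensions, and your composed equivalence $\cS_{[U_1]}(\Nccr)\simeq\flmod\,\End(\bar U)/(\bar e)$ does not follow. Relatedly, $\flmod\End_{\D^b(\Nccr)}(U)/(e)$ and $\flmod\End_{\D^b(\Nccr/\o\Nccr)}(\bar U)/(\bar e)$ can differ for the same reason.

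The paper avoids this problem by not attempting to identify the two semistable categories. It applies the adjunction only to the single stable module $M=N_\Nccr$, obtaining $\Hom_{\D^b(\Nccr)}(U,M[i])\simeq\Hom_{\D^b(\Nccr/\o\Nccr)}(\bar U,N[i])$, which by \cite[Prop.~4.1]{AsaiWallChamber} is the simple $S_\Gamma$ for $i=0$ and zero otherwise. This already shows $\Hom_{\D^b(\Nccr)}(U,-)$ is exact on $\cS_{[U_1]}(\Nccr)$ and sends its unique simple to the unique simple of $\flmod\Gamma/(e)$; the equivalence then follows because $\RHom_{\D^b(\Nccr)}(U,-)$ is a derived equivalence (the $U$ in question is actually tilting), so its restriction to these matching hearts is an abelian equivalence. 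Your proposed route can be repaired by dropping the claim about $\o$-torsion and instead invoking the derived equivalence in this way.
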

\begin{proof}
  Let $M \in \cS_{[U_1]}(\Nccr)$ be the unique stable module, then $M = N_\Nccr$ 
  for some stable module $N\in\cS_{[U_1/\o U_1]}(\Nccr/\o\Nccr)$ by proposition \ref{prop:quotcentral}.
  By proposition \ref{prop:injsilt} the complex $U/\o U$ is in $\silt\Nccr/\o\Nccr$ and is
  the Bongartz completion of $U_1/\o U_1$.
  Because $\Nccr/\o\Nccr$ is finite dimensional, the silting version \cite[Prop. 4.1]{Asai21} of \cite[Thm. 1.1]{BST19} then implies that
  \[
    \Hom_{\D^b(\Nccr/\o\Nccr)}(U/\o U, N[i]) = 
    \begin{cases}
      S & \text{if } i = 0,\\
      0 & \text{otherwise}.
    \end{cases}
  \]
  where $S$ is the simple $\Gamma' \colonequals \End_{\D^b(\Nccr)}(U/\o U)$-module that is killed by the idempotent $e'\colon U/\o U \to U_1/\o U_1 \to U/\o U$. 
  By \eqref{eq:endoalg} the algebra $\Gamma'$ is a quotient of $\Gamma \colonequals \End_{\cK^b(\proj \Nccr)}(U)$ by a radical ideal, hence
  $S$ restricts to a simple $S_\Gamma$ and $e'$ lifts to the idempotent $e \colon U \to U_1 \to U$. By adjunction, 
  \[
  \Hom_{\D^b(\Nccr)}(U,M[i]) =
  \Hom_{\D^b(\Nccr/\o\Nccr)}(U/\o U, N[i])_\Gamma =
    \begin{cases}
      S_\Gamma & \text{if } i = 0,\\
      0 & \text{otherwise}.
    \end{cases}
  \]
  Because $\cS_{[U_1]}(\Nccr)$ is generated by its stable modules and $\fdmod \Gamma/(e) \subset \fdmod\Gamma$
  is generated by $S_\Gamma$, it follows that $U$ defines an additive functor 
  \[
    \Hom_{\D^b(\Nccr)}(U,-)\colon \cS_{[U_1]}(\Nccr) \to \fdmod \Gamma/(e),
  \]
  which is exact by the vanishing of $\Hom_{\D^b(\Nccr)}(U,M[i])$ for $i\neq 0$.
\end{proof}

\subsection{Identifying the stable modules}
\label{ssec:stables}

The results of the previous section imply that the hyperplane arrangement of figure \ref{fig:siltingfan} controls the stability of $\Nccr$: 
if $v\in \K_0(\proj \Nccr)_\R$ is stability parameter such that $\cS_v(\Nccr)$ is nontrivial, then $v$ lies in the complement of the chambers so that either:
\begin{itemize}
  \item $v$ is a multiple of a vector $[T_i],[E_i]$ for some $i\in \Z$,
  \item or $v$ is a multiple of the vector $2[P_0] - [P_1]$, which spans the accumulation hyperplane.
\end{itemize}
In the former case $\cS_v(\Nccr)$ contains a unique stable module $M$ and $\cS_v(\Nccr) = \<M\>$, which can be obtained via tilting a simple module for some NCCR $\Nccr_n$.
The objects in $\D^b(\Coh Y)$ corresponding to these tilts of simples have been identified in \cite{DW19}, allowing us to deduce the following.
\begin{lemma}\label{lem:stabtilt}
  Let $v_i$ denote the g-vector $v_i = [T_i]$, then for all $n\geq 0$,
  \[
    \cS_{v_{2n}}(\Nccr) = \<\Uppsi(\O_\curve(n-1))\>,\quad
    \cS_{v_{2n+1}}(\Nccr) = \< \Uppsi(\O_{2\curve}(n))\>.
  \]
  Likewise, let $w_i$ denote the g-vector $w_i = [E_i]$, then for all $n< 0$.
  \[
    \cS_{w_{2n}}(\Nccr) = \<\Uppsi(\O_\curve(n-1)[1])\>,\quad
    \cS_{w_{2n+1}}(\Nccr) = \<\Uppsi(\O_{2\curve}(n)[1])\>.
  \]
\end{lemma}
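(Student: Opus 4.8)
The plan is to read the stable modules off the wall-and-chamber structure of Figure \ref{fig:siltingfan}, and then to transport them to $\D^b(\Coh Y)$ using the identification of the simples of the tilted algebras from \cite{DoWeStringyKahler}. By Lemma \ref{lem:gvects} every wall of the fan is a ray spanned by the g-vector of one of the indecomposable summands $T_i$ or $E_i$ of a $2$-term silting complex, so it suffices to treat the rays $v_i=[T_i]$ and $w_i=[E_i]$ one at a time.

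Fix $i\geq -1$ and put $v=[T_i]$. The complex $T_i$ is an indecomposable summand of the $2$-term tilting complex $U\colonequals T_{i-1}\oplus T_i$ when $i\geq 1$, and of $U\colonequals\Nccr=T_{-1}\oplus T_0$ when $i\in\{-1,0\}$. Since $\Nccr$ is the maximum of $\silt\Nccr$ and mutating away from the chamber $\cone(\Nccr)$ strictly decreases the silting order, in every case $U$ is the Bongartz completion of its summand $U_1\colonequals T_i$: for $i\in\{-1,0\}$ this is simply that the Bongartz completion of a projective is $\Nccr$, and for $i\geq 1$ it is because $\cone(U)$ is the chamber adjacent to the wall $\cone(T_i)$ on the $\cone(\Nccr)$ side of the fan. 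Proposition \ref{prop:emptychambers} then shows $\cS_v(\Nccr)$ is generated by a single stable module $M$, and Proposition \ref{prop:uniquewalls} identifies $M$ as the $\Hom_{\D^b(\Nccr)}(U,-)$-preimage of the unique simple $\Nccr_i=\End_{\D^b(\Nccr)}(U)$-module killed by the idempotent $e\colon U\to U_1\to U$, namely the simple of $\Nccr_i$ at the vertex complementary to $T_i$.

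To transfer $M$ across $\Psi$, note that $U=T_{i-1}\oplus T_i$ is a projective resolution of the tilting bimodule $\Hom_R(M_0,M_i)$, so the resulting equivalence $\D^b(\Nccr_i)\xrightarrow{\sim}\D^b(\Nccr)$, composed with $\Psi^{-1}$, is the standard equivalence $\D^b(\Nccr_i)\xrightarrow{\sim}\D^b(\Coh Y)$ induced by the tilting bundle on $Y$ with pushforward $M_i$. Hence $\Psi^{-1}(M)$ is exactly the object of $\D^b(\Coh Y)$ corresponding, under this equivalence, to the vertex complementary to $T_i$, and these objects are computed in \cite{DoWeStringyKahler}: starting from $\Psi^{-1}(S_0)=\O_{2\curve}(-1)[1]$ and $\Psi^{-1}(S_1)=\O_\curve(-1)$ of \cite{VdBerghFlops} and propagating along the mutation chain $M_0,M_1,M_2,\ldots$ and its negative continuation, one finds that the complementary simple at the wall $[T_{2n}]$ is $\O_\curve(n-1)$ and the one at $[T_{2n-1}]$ is $\O_{2\curve}(n-1)$, up to a homological shift that is absorbed by $\thicc$. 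This yields the first set of equalities.

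For the walls $w_i=[E_i]=-[T_i]$ with $i\leq 0$ I would run the same argument with the tilting complexes $E_{i-1}\oplus E_i=(F_{i-1}\oplus F_i)^*[1]$ in place of $T_{i-1}\oplus T_i$: the $F_j$ are summands of the corresponding $2$-term silting complexes over the NCCR $\Nccr^\op$, and the exact duality $(-)^*\colon\cK^b(\proj\Nccr^\op)\leftrightarrows\cK^b(\proj\Nccr)$ — together with the shift in $E_i=(F_i)^*[1]$ — converts the structure sheaves obtained on the $\Nccr^\op$-side into $\O_\curve(n-1)[1]$ and $\O_{2\curve}(n-1)[1]$. I expect the main obstacle to be precisely this bookkeeping in the last two steps: keeping exact track of which vertex, which line-bundle twist, and which homological shift attaches to each wall as one passes through the chain $\D^b(\Coh Y)\simeq\D^b(\Nccr)\simeq\D^b(\Nccr_i)$ and through the $(-)^*[1]$-duality, together with the minor but necessary care about which silting complex is the Bongartz completion near the two boundary walls $[P_0]$ and $[P_1]$.
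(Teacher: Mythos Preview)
Your overall strategy matches the paper's: identify the Bongartz completion at each wall, invoke Proposition~\ref{prop:uniquewalls} to single out the simple annihilated by the relevant idempotent, and then cite \cite[Thm.~4.13]{DoWeStringyKahler} for the explicit sheaf. Two points, however, need correction.

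First, the phrase ``up to a homological shift that is absorbed by $\thicc$'' is wrong. The categories $\cS_v(\Nccr)$ are thick subcategories of the \emph{abelian} category $\flmod\Nccr$, so shifts are not absorbed; a shifted object is simply not in the category. For $i\geq 0$ the paper shows (and \cite{DoWeStringyKahler} gives) that the tilted simple lands \emph{exactly} on the module $\Psi(\O_\curve(n-1))$ or $\Psi(\O_{2\curve}(n-1))$, with no shift. You need this precision, not a hedge.

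Second, your treatment of the $w_i=[E_i]$ case is only a sketch, and the paper supplies a non-obvious step you are missing. Rather than passing through $\Nccr^\op$ and the duality $(-)^*$, the paper uses the 3-CY property of $\Nccr$ and \cite[Thm.~3.8]{IyReiCMMut} to identify the equivalence induced by $E=E_{i-1}\oplus E_i$ directly as
\[
  (-)\Lotimes_{\Nccr_i} E \;\simeq\; \RHom_{\Nccr_i}(\Hom_R(M_i,M_0),-)[1].
\]
This is precisely the functor whose action on simples is computed in \cite[Thm.~4.13]{DoWeStringyKahler}, and the global shift $[1]$ then produces $\O_\curve(n-1)[1]$ and $\O_{2\curve}(n-1)[1]$ on the nose, as genuine modules. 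The bookkeeping you anticipated is real, and this 3-CY identification is what resolves it cleanly.
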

\begin{proof}
  Let $i\geq 0$. By \cite{DW19} the tilting complexes $T_{i-1} \oplus T_i$ and $T_i \oplus T_{i+1}$ are obtained via finite sequence of mutations:
  \[
    P_0 \oplus P_1 \dashrightarrow \ldots \dashrightarrow 
    T_{i-1} \oplus T_i \dashrightarrow T_i \oplus T_{i+1},
  \]
  from the largest element\footnote{Note that $\Ext^1_\Nccr(P_0 \oplus P_1,-) = 0$ because $P_i$ are the projectives}$P_0\oplus P_1$ in the silting order to $T_i\oplus T_{i+1}$.
 The silting order is known to be monotonic with respect to mutation, see \cite[Theorem 9.34]{IWMemoir}, which shows that $T_{i-1} \oplus T_i > T_i \oplus T_{i+1}$.
 Therefore $T = T_{i-1} \oplus T_i$ is the Bongartz-completion of $T_i$,
 and Proposition \ref{prop:uniquewalls} implies that 
 \[
   \cS_{v_i}(\Nccr) = \<S\Lotimes_{\Nccr_i} T\>
 \]
 for $S\in \mod \Nccr_i$ the simple that is annihilated by the idempotent $T\to T_i \to T$.
 The images of these simples were calculated in \cite[Theorem 4.13]{DW19}; explicitly:
 \[
    S\Lotimes_{\Nccr_i} T \simeq \begin{cases}
      \Uppsi(\O_\curve(n-1)) & \text{if } i=2n\\
      \Uppsi(\O_{2\curve}(n)) & \text{if } i=2n+1
      \end{cases}
  \]
  We proceed similarly for the case $i<0$ using the complexes $E_i = F_i^*[1]$. The tilting complexes $F_i$ are again related by a sequence of mutations in $\cK^b(\proj \Nccr^\op)$
  \[
    F_{i-1} \oplus F_i \dashrightarrow F_i \oplus F_{i+1}.
    \dashrightarrow \ldots \dashrightarrow P_0^\op \oplus P_1^\op,
  \]
  so that $F_i\oplus F_{i+1} > F_{i-1} \oplus F_i$ with respect to the silting order.
  Because $(-)^*$ is an exact duality between $\cK^b(\proj\Nccr^\op)$ and $\cK^b(\proj\Nccr)$, it follows that
  \[
    \Ext^1_\Nccr(E_{i-1} \oplus E_i, E_i\oplus E_{i+1}) \simeq
    (\Ext^1(F_i\oplus F_{i+1},F_{i-1} \oplus F_{i+1}))^* = 0,
  \]
  which shows that $E = E_{i-1} \oplus E_i$ is the Bongartz-completion of $E_i$
  in $\tilt \Nccr$.
  Hence, it follows from Proposition \ref{prop:uniquewalls} that
  \[
    \cS_{w_i}(\Nccr) = \<S \Lotimes_{\Nccr_i} E\>
  \]
  for $S\in \mod \Nccr_i$ the simple module that is annihilated by the idempotent $E \to E_i \to E$.
  Because $\Nccr$ is 3-CY, a theorem of Iyama-Reiten \cite[Theorem 3.8]{IR08} yields natural isomorphisms
  \[
    \begin{aligned}
      (-) \Lotimes_{\Nccr_i} E 
      &\simeq \RHom_{\Nccr_i}(\RHom_{\Nccr_i^\op}(E,\Nccr_i^\op), -)
      \\&\simeq \RHom_{\Nccr_i}(E^*,-) 
      \\&= \RHom_{\Nccr_i}(F_{i-1}\oplus F_i, -)[1].
    \end{aligned}
  \]
  For $i<0$ the image of $S$ under the functor $\RHom_{\Nccr_i}(F_{i-1}\oplus F_i,-)$
  was also calculated in \cite[Proposition 4.13]{DW19}. Shifting their results by $[1]$ then yields
  \[
    \raisebox{\depth-.6em}{$
      S \Lotimes_{\Nccr_i} E = \begin{cases}\Uppsi(\O_\curve(n-1)[1]) & \text{if } i = 2n\\\Uppsi(\O_{2\curve}(n)[1]) & \text{if } i = 2n+1.\end{cases}$}
    \qedhere
  \]
\end{proof}
For $v$ on the ray spanned by $2[P_0] - [P_1]$, the vector $v$ is perpendicular to the class of the module $\Psi(\O_p)$ where $\O_p$ is structure sheaf of a point $p\in \curve$.
These modules are stable, and by adapting the proof of Nakamura's conjecture in \cite[\S 8]{BKR01} one shows that there are no other stable modules.

\begin{lemma}\label{lem:stabpts}
  For $p\in \curve$ let $\O_p$ denote the skyscraper sheaf on $p$. If $v\in \K_0(\proj \Nccr)_\R$
  is a positive real multiple of $2[P_0] - [P_1]$, then
  \[
    \cS_v(\Nccr) = \<\{\Psi(\O_p) \mid p \in \curve\}\>.
  \]
\end{lemma}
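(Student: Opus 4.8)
\emph{The inclusion $\thicc\{\Psi(\O_p)\mid p\in\curve\}\subseteq\cS_v(\Nccr)$.} This requires only that each $\Psi(\O_p)$ is $Z_v$-stable. First, $[\Psi(\O_p)]=[S_0]+2[S_1]$ (pair against the indecomposable projectives, using $\Psi^{-1}(S_0)=\O_{2\curve}(-1)[1]$, $\Psi^{-1}(S_1)=\O_\curve(-1)$), which is the indivisible generator of the rank-one subgroup $v^\perp\cap\K_0(\flmod\Nccr)$; in particular $\<v,[\Psi(\O_p)]\>=0$. Because $v^\perp$ is spanned by an indivisible vector, a nonzero module of this class is $Z_v$-semistable if and only if it is $Z_v$-stable, so it is enough to exclude a destabilising subobject of $\Psi(\O_p)$: such a submodule (resp.\ quotient) would produce a nonzero $\Hom_\Nccr(S_0,\Psi(\O_p))$ (resp.\ $\Hom_\Nccr(\Psi(\O_p),S_1)$), and across $\Psi$ these compute $\Hom_Y(\O_{2\curve}(-1)[1],\O_p)$ and $\Hom_Y(\O_p,\O_\curve(-1))$, both of which vanish because the skyscraper $\O_p$ has no subsheaf or quotient sheaf in common with $\O_{2\curve}(-1)$, $\O_\curve(-1)$, which are torsion free along $\curve$. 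Hence each $\Psi(\O_p)$ is a \emph{simple} object of the finite-length abelian category $\cS_v(\Nccr)$, and the reverse inclusion reduces to showing that every simple object of $\cS_v(\Nccr)$ is some $\Psi(\O_p)$.

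\emph{Classifying the simples, following the proof of Nakamura's conjecture in \cite[\S 8]{BKRNakamura}.} Let $M$ be a simple object of $\cS_v(\Nccr)$ and set $\cF\colonequals\Psi^{-1}(M)\in\D^b_\curve(\Coh Y)$; by genericity of $Z_v$ the class $[\cF]$ is the indivisible class $[\O_p]$. The plan is to show, in turn, that (i) $\cF$ is concentrated in a single cohomological degree, hence a sheaf; (ii) that sheaf is supported at a single closed point $p\in\curve$; (iii) it is then forced to be $\O_p$.

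For (i): since $M$ is a module, $\cF$ has cohomology sheaves only in degrees $-1,0$, both supported on $\curve$, and the truncation triangle $\mathcal H^{-1}(\cF)[1]\to\cF\to\mathcal H^0(\cF)$ is a short exact sequence in the perverse heart $\Psi^{-1}(\flmod\Nccr)$, its three terms all satisfying the perverse conditions by \cite{VdBerghFlops}; so $N\colonequals\Psi(\mathcal H^{-1}(\cF)[1])$ is a submodule of $M$. As $M$ has phase $\tfrac\pi2$, semistability gives $\<v,[N]\>\le0$. On the other hand $[N]=-[\mathcal H^{-1}(\cF)]$, and the functional $\<v,-\>$ on $\K_0(\flmod\Nccr)$ is a negative multiple of the ``generic rank along $\curve$'' functional: both kill exactly the line $v^\perp=\R[\O_p]$, while $\<v,[S_1]\><0$ and $\O_\curve(-1)$ has generic rank $1$ along $\curve$; since $\mathcal H^{-1}(\cF)$ is a genuine coherent sheaf it has non-negative generic rank, so $\<v,[N]\>\ge0$. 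Hence $\<v,[N]\>=0$, which forces $[N]$ to be a non-negative integer multiple of $[\O_p]$ bounded above by $[M]=[\O_p]$, i.e.\ $N=0$ or $N=M$. The case $N=M$ means $\mathcal H^0(\cF)=0$ and $\cF=\mathcal H^{-1}(\cF)[1]$, whence $[\mathcal H^{-1}(\cF)]=-[\O_p]$; but a coherent sheaf of generic rank $0$ along $\curve$ is supported on finitely many points, hence is finite length with class a \emph{non-negative} multiple of $[\O_p]$ -- a contradiction. So $N=0$, $\mathcal H^{-1}(\cF)=0$, and $\cF$ is a sheaf. Then $[\cF]=[\O_p]$ has generic rank $0$ along $\curve$, so $\cF$ is finite length; being indecomposable (as $M$ is simple) it is supported at one point $p\in\curve$; and a finite-length sheaf at $p$ has class $(\mathrm{length})\cdot[\O_p]$, so has length $1$ and equals $\O_p$. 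Therefore $M\simeq\Psi(\O_p)$, which proves the lemma.

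\emph{The main obstacle} is step (i): ruling out that the stable complex $\cF$ has a nonzero $\mathcal H^{-1}$. Everything there turns on playing the stability inequality $\<v,[N]\>\le0$ off against the positivity of generic ranks of honest coherent sheaves, once $\<v,-\>$ has been identified (via Lemma~\ref{lem:gvects}) with a negative multiple of the generic-rank-along-$\curve$ functional; the two remaining inputs -- that the perverse heart is compatible with standard truncation, and the shape of $\Psi^{-1}(S_i)$ -- are supplied by \cite{VdBerghFlops}. The genericity of $Z_v$ and Proposition~\ref{prop:emptychambers} are not used inside this argument; they are what reduce Theorem~\ref{thm:stableobjs} to the single remaining case treated here, namely $v$ on the accumulation ray.
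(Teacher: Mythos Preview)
Your overall strategy is sound and, once the gap below is fixed, gives a correct proof that is in some ways cleaner than the paper's. But there is one genuine gap.

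\medskip
\textbf{The gap.} You assert that ``by genericity of $Z_v$ the class $[\cF]$ is the indivisible class $[\O_p]$''. This is not what genericity gives you. Genericity (as defined in the paper) says only that any two $Z_v$-semistables of the same phase have $\Q$-proportional dimension vectors; it does \emph{not} force a stable object to have a primitive dimension vector. A priori a simple object $M$ of $\cS_v(\Nccr)$ could have $[M]=m\cdot[\O_p]$ for some $m\ge 1$, and you use the case $m=1$ at the very end (``so has length $1$''). You even say later that genericity is ``not used inside this argument'', which contradicts the earlier appeal to it.

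The repair is easy and fits naturally into your framework. Run step (i) with $[M]=m[\O_p]$; nothing changes: you still get $\langle v,[N]\rangle=0$, hence $N\in\cS_v(\Nccr)$ is a subobject of the simple $M$, so $N=0$ or $N=M$; the case $N=M$ gives $[\H^{-1}(\cF)]=-m[\O_p]$, which is impossible for a genuine sheaf of generic rank $0$ along $\curve$. So $\cF$ is a coherent sheaf of class $m[\O_p]$, hence of finite length $m$. Now a finite-length coherent sheaf on the smooth threefold $Y$ has a Jordan--H\"older filtration with all factors of the form $\O_q$ ($q\in\curve$); since finite-length sheaves lie in the torsion part of Bridgeland's torsion pair, each step of this filtration is a short exact sequence in the perverse heart, and applying $\Psi$ produces a filtration of $M$ by subobjects with successive quotients $\Psi(\O_q)\in\cS_v(\Nccr)$. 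Simplicity of $M$ in $\cS_v(\Nccr)$ then forces $m=1$ and $M\simeq\Psi(\O_p)$.

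\medskip
\textbf{Comparison with the paper.} Your route and the paper's diverge at step (i). The paper assumes $M\not\simeq\Psi(\O_p)$ for any $p$, uses Schur to get $\Hom_Y(\O_p,\Psi^{-1}M)=0$ for all $p$, deduces $\H^0(\Psi^{-1}M)=0$, and then reaches a contradiction via an Euler-characteristic argument on a projective compactification $Y\hookrightarrow\overline Y$ with an ample line bundle. You instead kill $\H^{-1}$ directly by pairing the semistability inequality $\langle v,[N]\rangle\le 0$ against the positivity of the generic-rank functional on honest sheaves (using $\langle v,-\rangle=-\mathrm{rk}_\curve$), which is more elementary: no compactification or ampleness is needed, and the finite-length structure of $\cF$ then finishes the argument. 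With the fix above, your approach is a legitimate and arguably tidier alternative.
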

\begin{proof}
  Because the projectives $P_0$, $P_1$ are the images of the bundles $\O_Y$ and $\cN$ respectively,
  for each $p\in \curve$ the skyscraper sheaf $\O_p$ is mapped to an object which satisfies
  \[
    \begin{aligned}
      \RHom_\Nccr(P_0,\Uppsi(\O_p)) &\simeq \RHom_Y(\O_Y,\O_p) \simeq \C,\\
      \RHom_\Nccr(P_1,\Uppsi(\O_p)) &\simeq \RHom_Y(\cN,\O_p) \simeq \C^{\rk \cN} = \C^2.
    \end{aligned}
  \]
  Hence $\Uppsi(\O_p)$ is a module of dimension vector $\dvector12$.
  A module of this dimension vector is $Z_v$-stable for $v$ a multiple of $2[P_0] - [P_1]$ if $\<v,[N]\> < 0$ for any proper submodule $N$, or equivalently if the dimension vectors of any proper submodule is a multiple of $\dvector01$.
  The module $\Uppsi(\O_p)$ cannot contain a submodule of dimension vector $\dvector11$, 
  because any such submodule would induce a nontrivial quotient map $\Uppsi(\O_p) \onto S_1$, but
  \[
    \Hom_\Nccr(\Uppsi(\O_p),S_1) \simeq \Hom_Y(\O_p,\O_\curve(-1)) = 0
  \]
  shows that this is not possible.
  Likewise, $\Uppsi(\O_p)$ cannot contain $S_0$ as a submodule:
  \[
    \Hom_\Nccr(S_0,\Uppsi(\O_p)) \simeq \Hom_Y(\O_{2\curve}(-1)[1],\O_p) \simeq \Ext^{-1}(\O_{2\curve}(-1),\O_p) = 0.
  \]
  It follows that $\Uppsi(\O_p)$ is indeed a stable module in $\cS_v(\Nccr)$ for every $p\in\curve$.
  
  Now suppose there exists a module $M\in\cS_v(\Nccr)$ which is not isomorphic to $\Uppsi(\O_p)$ for any $p\in\curve$. We claim that $\Hom_\Nccr(M,\Uppsi(\O_p)) = 0$ for all $p\in\curve$. If $f\colon M\to \Uppsi(\O_p)$ is a homomorphism, then $\im f$ is simultaneously a submodule of the stable module $\Uppsi(\O_p)$ and a quotient module of the stable module $M$, which implies 
\[
  \<v,\im f\> \leq 0 \quad\text{and}\quad \<v,\im f\> \geq 0.
\]
Hence, $\<v,\im f\> = 0$ and it follows that either $\im f = \Uppsi(\O_p)$ or $\im f = 0$. Because, $M$ is not isomorphic to $\Uppsi(\O_p)$ it follows that $\Hom_\Nccr(M,\Uppsi(\O_p)) = 0$.

  By \cite{Bridgeland02} the complex $\Uppsi^{-1}(M)$ is a perverse sheaf of perversity $0$ and is thus quasi-isomorphic to a 2-term complex of sheaves supported on $\curve$. However, the vanishing of $\Hom_\Nccr(M, \Uppsi(\O_p))$ for all $p$ implies that the sheaf $\H^0(\Uppsi^{-1}(M)) \in \Coh Y$ satisfies
  \[
    \H^0(\Uppsi^{-1}(M))_p \simeq \Hom_Y(\Uppsi^{-1}(M),\O_p) \simeq \Hom_\Nccr(M,\Uppsi(\O_p)) = 0,
  \]
  over every point $p\in\curve$. Hence, $\H^0(\Uppsi^{-1}(M))$ has empty support and it follows that $\Uppsi^{-1}(M)$ is quasi-isomorphic to $\cF[1]$ for some sheaf $\cF\in \Coh Y$. 
  
  Because $Y$ is quasiprojective, there is an embedding $j\colon Y\into \bar Y$ into a
  projective variety, and the sheaf $j_*\cF\otimes_{\overline Y} \cL$ has Euler characteristic $\upchi(\cF\otimes_{\overline Y} \cL) \geq 0$ for some sufficiently ample line bundle $\cL$ on $\bar Y$.
  The King stability condition $\<v,[M]\>=0$ implies that $[\cF] = -n[\O_p]$ for some $n\geq 0$, so that by the positivity
  \[
    0\leq \upchi(\cF \otimes_{\overline Y} \cL) = -n \upchi(\O_p \otimes_{\overline Y} \cL) = -n\upchi(\O_p) = -n,
  \]
  which implies $n=0$. It follows $[M] = -[\cF] = 0$, so that $M$ is a module with dimension vector $\dvector00$, and is therefore not stable by definition.
  It follows that all stable modules in $\cS_v(\Nccr)$ are isomorphic to $\Uppsi(\O_p)$ for some $p\in \curve$, which yields the equality.
\end{proof}
Flor the sake of clarity we will henceforth write the dimension vectors/K-theory classes of the stable modules as $\updelta_\pt \colonequals [\Uppsi(\O_p)]$,
\[
  \updelta_{\curve,n} = \begin{cases}
    [\Uppsi(\O_\curve(n-1))] & n\geq 0\\
    [\Uppsi(\O_\curve(n-1)[1])] & n< 0\\
  \end{cases},\quad
  \updelta_{2\curve,n} = \begin{cases}
    [\Uppsi(\O_{2\curve}(n-1))] & n> 0\\
    [\Uppsi(\O_{2\curve}(n-1)[1])] & n\leq 0\\
  \end{cases}.
\]
A short argument shows that $\updelta_\pt = [S_0] + 2[S_1]$ while
\[
  \updelta_{\curve,n} = \pm([S_1] + n\updelta_\pt),\quad \updelta_{2\curve,n} = \pm([S_0] + n\updelta_\pt),
\]
where the sign depends on $n$. With this notation fixed, the results of the main result of this section can now be phrased as follows.
\begin{theorem}\label{thm:stableobjs}
  There exists a generic stability condition $Z\colon \K_0(\fdmod\Nccr) \to \C$ on $\fdmod \Nccr$ for which the $Z$-stable objects are as follows: for each $n\in\Z$ there is a unique $Z$-stable module of class $\updelta_{\curve,n}$ corresponding to a twist/shift
  \[
    \O_{\curve}(n-1) \quad (n\geq 0),\quad \O_\curve(n-1)[1]\quad (n<0),
  \]
  of the structure sheaf of $\curve$ across the derived equivalence.
  For each class $\updelta_{2\curve,n}$ with $n\in\Z$ there is a unique $Z$-stable module corresponding to a twist/shift
  \[
    \O_{2\curve}(n-1) \quad (n>0),\quad\quad \O_{2\curve(n-1)}[1]\quad (n\leq 0),
  \]
  of the structure sheaf of $2\curve \supset \curve$.
  The remaining stable objects are of class $\updelta_\pt$ and correspond to the point sheaves 
  \[
    \O_p\quad p\in\curve,
  \]
  and there are no other stable objects. In particular, there exists a semistable module of class $\updelta$ if and only if $\updelta$ is a multiple of one of $\updelta_\pt$, $\updelta_{\curve,n}$, $\updelta_{2\curve,n}$.
\end{theorem}
\begin{proof}
  Let $Z_v$ be the stability condition associated to any parameter $v = v_0[P_0] + v_1[P_1] \in \K_0(\proj \Nccr)$ with $v_0-v_1>0$, and suppose $M\in\fdmod \Nccr$ is $Z_v$-stable.
  Choosing a second parameter $w$ such that $\<w,[M]\> = 0$ with sign $w_0-w_1>0$, it follows from Lemma \ref{lem:vstab} that $M$ is also $Z_w$-stable, and hence 
  \[
    M\in \cS_w(\Nccr).
  \]
  Proposition \ref{prop:uniquewalls} implies that $\cS_w(\Nccr)=0$ unless $w$ is a positive real multiple of a g-vector $v_i,w_i$ as in Lemma \ref{lem:stabtilt} or of the parameter $2[P_0]-[P_1]$ as in Lemma \ref{lem:stabpts}.
  In the latter case one finds
  \[
    M \simeq \Uppsi(\O_p),
  \]
  which is a module of class $\updelta_\pt$.
  In the case $w= v_{2n}$ and $w = v_{2n+1}$ with $n\geq 0$, Lemma \ref{lem:stabtilt} implies 
  \[
    M \simeq \Uppsi(\O_\curve(n-1)),\quad \text{ and } M \simeq \Uppsi(\O_{2\curve}(n)),
  \]
  respectively, which are objects with class $\updelta_{\curve,n}$ and $\updelta_{2\curve,n+1}$ respectively. 
  For the cases $w = w_{2n}$ and $w= w_{2n+1}$ with $n < 0$, Lemma \ref{lem:stabtilt} implies
  \[
    M \simeq \Uppsi(\O_\curve(-1-n)[1]),\quad \text{ and } M \simeq \Uppsi(\O_{2\curve}(-n)[1]),
  \]
  respectively, which have the class $\updelta_{\curve,-n}$ and $\updelta_{2\curve,1-n}$ respectively.
  As there are up to scaling no other $w$ for which $\cS_w(\Nccr)\neq 0$, there are no other $Z_v$-stable modules.
\end{proof}

\section{DT Invariants for a Family of \texorpdfstring{$\ell=2$}{length 2} Flops}\label{sec:DTcalc}
In this section we first explain a technique for determining invariants of length $2$ flops in a general setup, and afterwards move to a concrete computation for the family in \S\ref{sec:familyexamples}. 

\begin{setup}\label{setup:general}
We consider a simple flopping contraction $\uppi\colon Y\to \Spec R$ of a length 2 curve $\curve\subset Y$ onto $\o\in \Spec R$, satisfying the following assumptions:
\begin{itemize}
\item there exists a symmetric quiver with potential $(Q,W)$ such that $\Jac(Q,W)$ is an $R$-algebra, with an $R$-linear derived equivalence
  \[
    \D^b(\Coh Y) \simeq \D^b(\mod \Jac(Q,W)),
  \]
  induced by a tilting complex.
\item the completion $\widehat \Jac(Q,W)$ is isomorphic to the NCCR $\Nccr = \End_{\widehat Y}(\cP)$ defined by Van den Bergh's tilting bundle $\cP$ on the completion $\widehat Y$ of $Y$ over $\o$.
\end{itemize}
\end{setup}

With these assumptions, $Y$ has a well-defined Donaldson--Thomas partition function
\[
  \Upphi(t) = \Upphi_{Q,W}(t) = \Sym\left(\sum_{\updelta\in\Delta} \frac{\BPS_\updelta}{\L^\half-\L^\mhalf} t^\updelta \right)
\]
counting the nilpotent $\Jac(Q,W)$-modules as in \S\ref{sec:DTtools}.
Because nilpotent $\Jac(Q,W)$-modules are the same as $\Nccr\simeq\widehat\Jac(Q,W)$-modules, the stability analysis of the previous section yields a decomposition of the partition function.

Let $Z_v\colon \K_0(\fdmod\Nccr) \to \C$ be the stability condition in Theorem \ref{thm:stableobjs}, with phase function $\Uptheta \colon \K_0(\fdmod\Nccr)^\times \to (0,\uppi\,]$, and let
\[
  \uptheta_\pt = \Uptheta(\updelta_\pt),\quad
  \uptheta_{\curve,n} = \Uptheta(\updelta_{\curve,n}),\quad
  \uptheta_{2\curve,n} = \Uptheta(\updelta_{2\curve,n}),
\]
denote the phases associated to the classes of the $Z_v$-stable objects.
Then the partition function has the following decomposition. 

\begin{prop}\label{prop:stabdecompflop}
  Suppose $(Q,W)$ is a quiver with potential associated to a length $2$ flopping contraction as in Setup \ref{setup:general}. Then the partition function decomposes into the infinite product
  \[
    \Upphi(t) = \Upphi^{\uptheta_\pt}(t) \cdot \prod_{n=-\infty}^\infty\left(\Upphi^{\uptheta_{2\curve,n}}(t)\cdot \Upphi^{\uptheta_{\curve,n}}(t)\right).
  \]
\end{prop}
\begin{proof}
  The factorisation identity \ref{lem:decomp} yields a decomposition $\Upphi(t) = \prod_{\uptheta\in (0,\uppi]} \Upphi^\uptheta(t)$, where the product is ordered by phase and the factors are the contributions
  \[
    \Upphi^\uptheta(t) = \int_{[\fC^\uptheta \to \fC]} \upphi_{\tr(W)}|_\fC,
  \]
  of the substacks $\fC^\uptheta$ with $\C$-points parametrising isomorphism classes in $\cS_\uptheta(\Nccr)$.
  By Theorem \ref{thm:stableobjs} one has $\cS_\uptheta(\Nccr) = 0$, hence $\Upphi^\uptheta(t)=1$, unless $\uptheta$ is one of $\uptheta_\pt$, $\uptheta_{\curve,n}$, or $\uptheta_{2\curve,n}$ for some $n\in\Z$. 
  It follows that the product only contains contributions for the given phases, and may be ordered arbitrarily, as $\Mothat[[\Delta]]$ is commutative.
\end{proof}

Comparing with the ansatz defining the BPS invariants, one sees that $\BPS_\updelta$ vanishes whenever $\updelta$ is not an integer multiple of one of the classes in \ref{thm:stableobjs}.
Under an additional rigidity assumption, the main theorem of section \S\ref{sec:autoequiv} yields a further simplification.
\begin{prop}\label{prop:BPSequiv}
  Let $(Q,W)$ be a quiver with potential for a flopping contraction as in Setup \ref{setup:general} and suppose in addition that $R^\times = \C^\times$. Then for all $n\in\Z$ and $k\in \N_{>0}$
  \[
    \BPS_{k \updelta_{\curve,n}} = \BPS_{k [S_1]}\quad\text{ and }\quad
    \BPS_{k \updelta_{2\curve,n}} = \BPS_{k [S_0]}.
  \]
\end{prop}
The proof of this proposition requires some additional background on cyclic $A_\infty$-categories, and is therefore deferred to the very end of \S\ref{sec:autoequiv}. 
The proposition shows that in the general setup, the DT theory of the flopping contraction is captured completely by the BPS invariants of the classes $k\updelta_{\curve,0} = k[S_1]$, $k\updelta_{2\curve,0} = k[S_0]$, and $k\updelta_\pt = k([S_0]+2[S_1])$.

Now consider the family of flopping contractions $Y_{a,b}\to\Spec R_{a,b}$ described in \S\ref{sec:familyexamples}, obtained from the 2-vertex quiver $Q$ on arrows $\{x,y,c,d,s\}$ with potential 
\[
  W_{a,b} = x^2y - f_{a,b}(y) + cdy^2 - csd + 2s^a,
\]
where for parameters for $a\in \N_{\geq 2}$ and $b\in \N_{\geq 1} \cup \{\infty\}$ the polynomial $f_{a,b}(y)$ is again
\[
  f_{a,b}(y) = \begin{cases}
    y^{2a} & b = \infty \\
    y^{2a} + y^{2b+1} & b \neq \infty
  \end{cases}.
\]
The partition function of $(Q,W_{a,b})$ determines a sequence of BPS invariants $\BPS_\updelta$ as in the general case.
Each dimension vector $\updelta \in \K_0(\fdmod \Nccr)$ corresponds to a unique pair $(\rk(\updelta),\chi(\updelta))$ of rank and Euler characteristic via the composition
\[
  \K_0(\fdmod\Nccr) \xrightarrow{\Uppsi^{-1}} \K_0(\Coh_\curve Y) \xrightarrow{\ (\rk,\chi)\ } \Z^2.
\]
where we note that $(\rk,\chi)([S_0]) = (-2,1)$ and $(\rk,\chi)([S_1]) = (1,0)$.
In terms of these $(\rk,\chi)$-pairs, the BPS invariants are as follows.
\begin{theorem}\label{thm:theinvariants}
  The BPS invariants $\BPS_\updelta$ associated to a length 2 flopping contraction $Y_{a,b} \to \Spec R_{a,b}$ have the following dependence on $\rk$ and $\chi$:
  \begin{itemize}
  \item if $\rk(\updelta) = 0$ then
    \begin{equation}
      \label{eq:DTpoint}
      \BPS_\updelta = \L^{-\frac32}[\P^1],
    \end{equation}
  \item if $\rk(\updelta) = \pm 1$ then
    \begin{equation}
      \label{eq:DTcontract}
      \BPS_\updelta = \begin{cases}
        \L^{-1}(1 - [D_{4a}]) + 2   & a \leq b, \\
        \L^{-1}(1 - [D_{2b+1}]) + 3 & a > b,
      \end{cases}
    \end{equation}
    where $D_{4a}$ and $D_{2b+1}$ are curves of genus $a$ and $b$, with a monodromy action of $\upmu_{4a}$ and $\upmu_{2b+1}$ respectively,

  \item if $\rk(\updelta) = \pm 2$ and $\upchi(\updelta)$ is odd then
    \begin{equation}
      \label{eq:DTcurve}
      \BPS_\updelta = \L^\mhalf(1-[\upmu_a]),
    \end{equation}
    and if $\upchi(\updelta)$ is even the BPS invariant has the realisation
    \begin{equation}
      \label{eq:MMHSreal}
      \chi_\mmhs(\BPS_\updelta) = \chi_\mmhs(\L^\mhalf(1-[\upmu_a])).
    \end{equation}
    
  \item if $|\!\rk(\updelta)| \geq 3$ and $\upchi(\updelta)$ is not divisible by $\rk(\updelta)$ then
    \[
      \BPS_\updelta = 0,
    \]
    while for $|\!\rk(\updelta)| \geq 3$ and $\upchi(\updelta)$ divisible by $\rk(\updelta)$ the realisation vanishes:
    \[
      \chi_\mmhs(\BPS_\updelta) = 0.
    \]
  \end{itemize}
\end{theorem}
\begin{proof}
  Comparing the BPS ansatz of $\Upphi(t)$ with that of the partition functions $\Upphi^{\uptheta_\pt}(t)$, $\Upphi^{\uptheta_{2\curve,n}}(t)$ and $\Upphi^{\uptheta_{\curve,n}}(t)$ via Proposition \ref{prop:stabdecompflop} yields
  \[
    \begin{aligned}
      \Sym\left(\sum_{\updelta\in\Updelta} \frac{\BPS_\updelta}{\L^\half-\L^\mhalf}\right) =
      &\Sym\left(
        \sum_{k>0} \frac{\BPS_{k\updelta_\pt} t^{\updelta_\pt}}{\L^\half-\L^\mhalf} 
        + \sum_{n\in\Z, k>0} \frac{\BPS_{k\updelta_{\curve,n}} t^{\updelta_{\curve,n}}}{\L^\half-\L^\mhalf}\right.
        \\&\quad\quad\quad \left.+ \sum_{n\in\Z, k>0} \frac{\BPS_{k\updelta_{2\curve,n}} t^{\updelta_{2\curve,n}}}{\L^\half-\L^\mhalf}
      \right),
    \end{aligned}
  \]
  so the BPS invariant vanishes unless $\updelta$ is a multiple of the class of a stable module.
  These classes correspond to the rank/Euler pairs
  \[
    (\rk,\chi)(\updelta_\pt) = (0,1),\quad 
    (\rk,\chi)(\updelta_{\curve,n}) = \pm (1,n)
    ,\quad 
    (\rk,\chi)(\updelta_{2\curve,n}) = \pm(2,2n+1)
  \]
  where the sign depends on the sign of $n$.
  The calculation of the invariants for multiples of these classes is given in the rest of this section, so it remains to verify the case distinction.

  A class $\updelta \in \Updelta$ with $\rk(\updelta) = 0$ is given by $\updelta = k\updelta_\pt$ for some $k$, and it is shown in Lemma \ref{lem:DTpoint} that the BPS invariant is given by \eqref{eq:DTpoint}.

  A class $\updelta$ with $\rk(\updelta) = \pm1$ is given by $\updelta = \updelta_{\curve,n}$ for some $n$, and it is shown in Lemma \ref{lem:DTcontract} that the BPS invariant is given by \eqref{eq:DTcontract}.
  
  A class $\updelta$ with $\rk(\updelta) = \pm 2$ is either given by $\updelta = \updelta_{2\curve,n}$, in which case $\chi(\updelta)$ is odd and Lemma \ref{lem:DTcurve} shows that the BPS invariant is given by \eqref{eq:DTcurve}, or $\updelta = 2\updelta_{\curve,n}$ for which the MMHS realisation is \eqref{eq:MMHSreal}, as shown in Lemma \ref{lem:MMHSreal}.

  Finally, Lemma \ref{lem:DTcurve} also shows that $\BPS_{k\updelta_{2\curve,n}} = 0$ for $k>1$, so if $\updelta$ is a class with $|\rk(\updelta)| \geq 3$ such that $\BPS_\updelta \neq 0$, then it must be of the form $\updelta = k\updelta_{\curve,n}$.
  By inspection
  \[
    (\rk,\chi)(\updelta) = k (\rk,\chi)(\updelta_{\curve,n}) = \pm (k,k n),
  \]
  so it follows that the Euler characteristic of such a class is divisible by its rank.
  It is however shown in Lemma \ref{lem:MMHSvanish} that the MMHS realisations of $\BPS_{k\updelta_{\curve,n}}$ vanish for $k>2$, which yields the last claim.
\end{proof}

To gain insight in the MMHS--realisation of the invariants, it is worth determining the Hodge structure and monodromy on the curves $D_{4a}$ and $D_{2b+1}$. The monodromy is concentrated on the middle cohomology, and as we show in \S\ref{sec:rankonecalc}, is of the following form.
\begin{prop}\label{prop:Hodgemon}
  The monodromic mixed Hodge structure $\H^1(D_{4a},\Q)$ decomposes over $\C$ into a direct sum of the following irreducible $\upmu_{4k}$-representations:
  \[
  \H^1(D_{4a}, \O_{D_{4a}}) \simeq \textstyle\bigoplus_{j=1}^a \upxi^{2j-1+2a},\quad
  \H^0(D_{4a}, \Omega_{D_{4a}}) \simeq \textstyle\bigoplus_{j=1}^a \upxi^{2j-1}.
  \]
  Likewise, $H^1(D_{2b+1},\Q)$ is the direct sum of the following $\upmu_{2b+1}$-representations:
  \[
  \H^1(D_{2b+1},\O_{D_{2b+1}}) \simeq \textstyle\bigoplus_{j=1}^b \upxi^{b+j},\quad
  \H^0(D_{2b+1},\Omega_{D_{2b+1}}) \simeq \textstyle\bigoplus_{j=1}^b \upxi^{j},
  \]
  where in each case $\upxi$ denotes a generator for the representation ring.
\end{prop}
From the above characterisation, one can easily deduce other realisations of the other refined invariants, such as the Hodge spectrum, weight polynomial and the numerical invariants.
\begin{corollary}
  The Hodge spectrum realisations $\hsp_\updelta \colonequals \upchi_\hsp([\BPS_\updelta])$ are
 \begin{itemize}
 \item for $\rk(\updelta) = 0$
   \[
     \hsp_\updelta(u,v) = - u^{-\frac32}v^{-\frac32} (1 + uv),
   \]
 \item for $\rk(\updelta) = \pm 1$ 
   \[
     \hsp_\updelta(u,v) = 
     \begin{cases}
       1 + \sum_{j=1}^{a} \left(u^{\frac{2j - 1}{4a}}v^{-\frac{2j-1}{4a}} + u^{-\frac{2j-1}{4a}}v^{\frac{2j-1}{4a}}\right) & a\leq b\\
       2 + \sum_{j=1}^{b} \left(u^{\frac j{2b+1}}v^{-\frac j{2b+1}} + u^{-\frac j{2b+1}}v^{\frac j{2b+1}} \right) & a> b
     \end{cases}
   \]
 \item for $\rk(\updelta) = \pm 2$
   \[
     \hsp_\updelta(u,v) = u^\half v^\mhalf \cdot \sum_{j=1}^{a-1} u^{\frac{j}{a}}v^{-\frac{j}{a}}
   \]
 \item and $\hsp_\updelta(u,v) = 0$ for $|\rk(\updelta)| \geq 3$.
 \end{itemize}
\end{corollary}
The weight-polynomial $\wt_\updelta(q) = \hsp_\updelta(q^\half,q^\half)$ is given by $\wt_\updelta(q) = -q^{3/2}(1+q)$ in the rank zero case, and is given by a constant otherwise:
\begin{align*}
  \wt_\updelta(q) &= \begin{cases}
    \min\{2a +1, 2b+2\} & \rk(\updelta) = \pm 1\\
    a-1 & \rk(\updelta) = \pm 2\\
    0 & \text{otherwise}
  \end{cases}
\end{align*}
As Katz \cite{Katz08} shows, these constants are precisely the Gopakumar--Vafa invariants of the flopping contraction.
By \cite{Toda15} these numbers determine the dimension of the \emph{contraction algebra} $\Nccr_\con$ of NCCR $\Nccr$ as defined by Donovan--Wemyss \cite{DW16} and the dimension of its abelianisation:
\begin{align*}
\dim_\C \Nccr_\con &= \GV_1 + 4 \GV_2 = \begin{cases} 6a - 3 & a\leq b \\ 4a+2b-2 & a > b\end{cases},\\
\dim_\C  \Nccr_\con^\ab &= \GV_1 = \min\{2a, 2b+1\} + 1.
\end{align*}
These same dimensions were also found independently by Kawamata \cite{Kawamata20}.

\subsection{Motivic invariants for $2\curve$}

Proposition \ref{prop:BPSequiv} shows that the BPS invariants for the dimension vectors $k\updelta_{2\curve,n}$ can be calculated from the case $\updelta = k\updelta_{2\curve,0} = k[S_0]$.
For this class the space $\Rep_\updelta(Q)$ parametrises representations $\uprho$ of the form
\[
  \begin{tikzpicture}[>=stealth,inner sep=1pt]
    \clip (-3,-.8) rectangle (3,.8);
    \node[circle,outer sep=.8pt] (A) at (-.9,0) {$\C^k$};
    \node[circle,outer sep=.8pt] (B) at (.9,0) {$0$};
    \draw[->] (A) to[bend left] (B);
    \draw[->] (B) to[bend left] (A);
    \draw[->] (B) to[loop,looseness=10,in=280,out=-10] (B);
    \draw[->] (B) to[looseness=10,in=80,out=10] (B);  
    \draw[->] (A) to[loop,looseness=7,in=140,out=220] node[midway,fill=white, inner sep=.8pt]{$\scriptstyle\uprho(s)$} (A);
  \end{tikzpicture}
\]
with gauge group $\GL_\updelta \simeq \GL_k(\C)$ acting on the first vertex. Because these are precisely the semistable representations of phase $\uptheta_{2\curve,0}$, it follows that
\[
  \fM_\updelta^{\uptheta_{2\curve,0}} = \fM_\updelta \simeq \fM_{\cQ,k},
\]
where $\cQ$ denotes the quiver with a single vertex and a single loop $s$.
Under this isomorphism, the function $\tr(W_{a,b})$ pulls back to $\tr(2s^a)$, and one obtains an equality
\[
  \Upphi^{\uptheta_{2\curve,0}}(t) = \sum_{k\geq 0} \int_{\fC_{\dvector k0}}\kern-12pt\upphi_{\tr(W)} t_1^k = \Upphi_{\cQ,2s^a}(t_1),
\]
which implies that the BPS invariants $\BPS_{k[S_0]}$ are the BPS invariants of the one loop quiver with potential $s^a$, which were found by Davison--Meinhardt \cite{DM15}.
\begin{lemma}\label{lem:DTcurve}
  Let $\updelta_{2\curve,n}$ denotes the class of the unique stable module of phase $\uptheta_{2\curve,n}$, then
  \[
    \Upphi^{\uptheta_{2\curve,n}}(t) = \Upphi_{\cQ,2s^a}(t^{\updelta_{2\curve,n}}) = \Sym\left(\frac{\L^\mhalf(1-[\upmu_a])}{\L^\half-\L^\mhalf} t^{\updelta_{2\curve,n}}\right).
  \]
  In particular, the associated BPS invariants are
  \[
    \BPS_{\updelta_{2\curve,n}} = \L^\mhalf(1-[\upmu_a]),\quad \BPS_{k\updelta_{2\curve,n}} = 0 \quad\text{for } k\geq 2.
  \]
\end{lemma}
\begin{proof}
  Setting $n=0$, it follows from \cite[Theorem 6.4]{DM15} and the above discussion that
  \[
    \Sym\left(\sum_{k> 0} \frac{\BPS_{k[S_0]}}{\L^\half-\L^\mhalf} t^{k[S_0]} \right) = \Upphi^{\uptheta_{2\curve,0}}(t) = \Upphi_{\cQ,s^a}(t^{[S_0]}) = \Sym\left(\frac{\L^\mhalf(1-[\upmu_a])}{\L^\half-\L^\mhalf} t^{[S_0]}\right).
  \]
  For other $n$, it follows from Proposition \ref{prop:BPSequiv} that the BPS invariants satisfy 
  \[
    \BPS_{k\updelta_{2\curve,n}} = \BPS_{k\updelta_{2\curve,0}} = \BPS_{k[S_0]}.
  \]
  Comparing the BPS ansatz for each partition function now yields the result.
\end{proof}

\subsection{Motivic rank zero invariants} \label{sec:skyscrapers}

For the phase $\uptheta = \uptheta_\pt$ the points of the moduli space $\fC^\uptheta \subset \fM^\uptheta$ correspond to the finite length sheaves with support in $\curve$, which are precisely the extensions of the point sheaves $\O_p$.
We will show that the partition function $\Upphi^\uptheta(t)$ decomposes along the supports of these finite length sheaves.

Fix a point $p\in\curve$, and define $\fC^p$ as the closed substack of $\fC^\uptheta$ parametrising semistable modules $M \in \fdmod \Nccr$ such that $\Uppsi^{-1}(M) \in \Coh_\curve Y$ is supported on $p$. 
Likewise, let $\fC^\circ$ be the open substack of $\fC^\uptheta$ of modules $M$ for which $\Uppsi^{-1}(M)$ is support in the complement $\curve\setminus\{p\}$. 
The associated elements in the Hall algebra define partition functions
\[
  \Upphi^p(t) = \int_{[\fC^p \to \fC]}\kern-2pt \upphi_{\tr(W_{a,b})}|_\fC,\quad
  \Upphi^\circ(t) = \int_{[\fC^\circ \to \fC]}\kern-2pt \upphi_{\tr(W_{a,b})}|_\fC,
\]
via an application of the integration map recalled in \S\ref{ssec:mothall}. 
These partition functions are a virtual count of the components of a finite length sheaf supported on $p$ and its complement, and together these partitition functions recover $\Upphi^\uptheta(t)$.
\begin{lemma}\label{lem:partdecomp}
  There is a decomposition $\Upphi^\uptheta(t) = \Upphi^\circ(t) \cdot \Upphi^p(t)$.
\end{lemma}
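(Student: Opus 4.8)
The plan is to establish the identity in the motivic Hall algebra $\cH(Q,W)$ and then apply the integration map, which is a $\K(\St/\C)$-algebra homomorphism. Write $\upvartheta = \Uptheta\dvector12$; by Lemma \ref{lem:decomp} the function $\Upphi_\pt(t)$ is the $\upvartheta$-factor $\Upphi^\upvartheta(t) = \int_{[\fC^\upvartheta\into\fC]}\upphi_{\tr(W)}|_\fC$ in the Kontsevich--Soibelman product, so it suffices to prove
\[
  [\fC^\upvartheta\into\fC] = [\fC^\circ\into\fC]\star[\fC^p\into\fC]
\]
and then integrate.

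First I would record the geometric input. Any $\Nccr$-module $M$ that is semistable of phase $\upvartheta$ is $\Psi$ of a zero-dimensional sheaf $\cF$ on $\curve$, and the stratification $\curve = \A^1\sqcup\{p\}$ gives a unique primary decomposition $\cF\simeq\cF^\circ\oplus\cF^p$ with $\cF^\circ$ set-theoretically supported on $\A^1$ and $\cF^p$ on $\{p\}$; this expresses each point of $\fC^\upvartheta$ uniquely as $M^\circ\oplus M^p$ with $M^\circ\in\fC^\circ$, $M^p\in\fC^p$. Because $\cF^\circ$ and $\cF^p$ live on disjoint closed subsets of $Y$, the sheaf-Ext's $\sExt^i_Y(\cF^\circ,\cF^p)$ and $\sExt^i_Y(\cF^p,\cF^\circ)$ vanish, whence $\Hom_\Nccr(M^\circ,M^p) = \Hom_\Nccr(M^p,M^\circ) = 0$ and $\Ext^1_\Nccr(M^\circ,M^p) = \Ext^1_\Nccr(M^p,M^\circ) = 0$. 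In particular $\Aut_\Nccr(M^\circ\oplus M^p) = \Aut_\Nccr(M^\circ)\times\Aut_\Nccr(M^p)$, so the direct-sum map $\fC^\circ\times\fC^p\to\fC^\upvartheta$ is an isomorphism of stacks.

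Next I would feed this into the definition of the convolution product. For $(M^\circ,M^p)\in\fC^\circ\times\fC^p$ the fibre of $p_1\times p_2\colon\fExt_{\updelta_1,\updelta_2}\to\fM_{Q,W,\updelta_1}\times\fM_{Q,W,\updelta_2}$ is the quotient stack $[\Ext^1_\Nccr(M^p,M^\circ)/\Hom_\Nccr(M^p,M^\circ)]$ (see \cite{BridgelandMotivicHall}), which by the vanishing above is a point. Hence the pullback $\fY$ in the diagram defining $\star$ is $\fExt_{\updelta_1,\updelta_2}|_{\fC^\circ\times\fC^p}\simeq\fC^\circ\times\fC^p$, and the structure map $q\circ g\colon\fY\to\fM_{Q,W}$ is identified with the direct-sum map, hence with the inclusion $\fC^\upvartheta\into\fC$ via the isomorphism of the previous paragraph. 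This yields the displayed Hall-algebra identity, and applying the integration map $\int_\bullet\upphi_{\tr(W)}|_\fC\colon\cH(Q,W)\to\Mot^\muhat(\C)[[\Updelta]]$ gives $\Upphi_\pt(t) = \Upphi_\circ(t)\cdot\Upphi_p(t)$.

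The main point to get right is the stack-theoretic identification $\fExt_{\updelta_1,\updelta_2}|_{\fC^\circ\times\fC^p}\xrightarrow{\sim}\fC^\upvartheta$, that is, that the orthogonality of the two families upgrades the bijection on points to an equivalence of stacks; everything else is formal. Note that compatibility of $\tr(W)$ with the decomposition needs no separate argument, since the trace of a cyclic word is additive over direct sums of representations, so it plays no role once the identity has been phrased inside $\cH(Q,W)$.
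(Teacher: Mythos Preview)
Your proposal is correct and follows essentially the same route as the paper: prove the Hall-algebra identity $[\fC^\upvartheta\into\fC] = [\fC^\circ\into\fC]\star[\fC^p\into\fC]$ via the unique support decomposition of zero-dimensional sheaves on $\curve$, then integrate. The only cosmetic difference is that the paper cites \cite[Lem.~3.2]{BridgelandMotivicHall} to reduce the stack identification $\fY\simeq\fC^\upvartheta$ to an equivalence of groupoids of $\C$-points, whereas you argue the identification directly from the vanishing of $\Hom$ and $\Ext^1$ between modules with disjoint support; these are two phrasings of the same verification.
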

\begin{proof}
  Because the integration map is a homomorphism, it suffices to show the identity
  \begin{equation}\label{eq:toprove}
    [\fC^{\uptheta} \to \fC] = [\fC^\circ \to \fC] \star [\fC^p \to \fC]
  \end{equation}
  in the motivic Hall algebra. Recall that the product is defined via the stack $\fExt$ parametrising short exact sequences in $\fdmod\Nccr$.
  Chasing through the definition, one sees that the right hand side of \eqref{eq:toprove} is the class $[\fY \to \fC]$ associated to the substack $\fY \subset \fExt$ parametrising the short exact sequences
  \[
    0 \to M^\circ \to M \to M^p \to 0,
  \]
  with $M^\circ$ in $\fC^\circ$ and $M^p$ in $\fC^p_{\updelta_2}$, where the map $\fY\into \fExt \to \fC$ sends a short exact sequence to its middle term.
  Because an extension of semistable modules of phase $\uptheta$ is again semistable of phase $\uptheta$,
  this map factors as $\fY \to \fC^{\uptheta} \into \fC$, and we claim that this factorisation identifies the classes $[\fY\to \fC]$ and $[\fC^{\uptheta} \into \fC]$ in $\K(\St/\fC)$.
  By \cite[Lemma 3.2]{Bridgeland12} it is sufficient to check that functor $\fY(\C) \to \fC^{\uptheta}(\C)$ on $\C$-points is an equivalences of categories.
  A object in $\fC^\uptheta(\C)$ is simply a semistable module $M \in \fdmod \Nccr$ of phase $\uptheta$, which is the image $M=\Uppsi(\cF)$ of a finite length sheaf $\cF$ on $\curve$.
  Such a moduli is isomorphic to the direct sum $M \simeq M^\circ \oplus M^p$ of the modules
  \[
    M^\circ \colonequals \Uppsi(\cF|_{\A^1}),\quad M^p = \Uppsi(\cF|_{p}),
  \]
  which are objects $M^\circ \in \fC^\circ(\C)$ and $M^p\in \fC^p(\C)$.
  It follows that $M$ corresponds to a unique short exact sequence $M^\circ \to M \to M^p$, which is an object of $\fY(\C)$.
  It follows that $\fY(\C) \simeq \fC^\uptheta(\C)$, which yields the result.
\end{proof}

Recall that the space $Y_{a,b}$ was constructed in \S~\ref{sec:familyexamples} as a closed subscheme of the moduli scheme $\cM^\uptheta_{\updelta_\pt}(Q) = \Rep^\uptheta_{\updelta_\pt}(Q)\gitquot\!\GL_\updelta$.
The latter consists of two open charts
\[
  \begin{aligned}
    U_y &= \{\uprho \in \Rep^\uptheta_{\updelta_\pt}(Q) \mid \det (\uprho(c)\mid \uprho(yc)) \neq 0\}\gitquot\!\GL_\updelta,\\
    U_x &= \{\uprho  \in \Rep^\uptheta_{\updelta_\pt}(Q) \mid \det (\uprho(c) \mid \uprho(xc)) \neq 0\}\gitquot\!\GL_\updelta.
  \end{aligned}
\]
Intersecting these charts with the curve $\curve \subset Y_{a,b} \subset \cM^\uptheta_{\updelta_\pt}(Q)$ yields a decomposition of $\curve\simeq \P^1$ into an open chart $U_y\cap \curve \simeq \A^1$ and a point $p = \curve \setminus (U_y\cap \curve)$, corresponding the semistable nilpotent representation $\uprho_p \in \cM^\uptheta_{\updelta_\pt}(Q)$ such that $\uprho_p(yc) = 0$.
This choice of point then yields a decomposition of $\Upphi^\uptheta(t)$ as in the lemma.

To compute the partition functions of the critical loci $\fC^\circ$ and $\fC^p$ we employ a decomposition of $\fM^\uptheta$.
We note that $\fC^\circ$ is precisely the intersection of the nilpotent locus $\fN$ with the critial locus of $\tr(W)$ restricted to the open substack
\[
  \fM^\circ \colonequals \coprod_{k\geq 0} \big\{\uprho \in \Rep_{k\updelta_\pt}(Q) \mid 
    \left(\uprho(c) \mid \uprho(yc)\right) \text{ is invertible} \big\} /\GL_{k\updelta_\pt}. \\
\]
To compute the partition function $\Upphi^\circ(t)$, it is convenient to rewrite $\fM^\circ$ as the moduli stack of a quiver. Consider the quiver $\cQ$ with a unique vertex and $9$ loops
\[
\cQ_1 = \{\upalpha_1,\upalpha_2,\upalpha_3,\upbeta_1,\upbeta_2,\upbeta_3,\upgamma_1,\upgamma_2,\upgamma_3\}.%
\] 
Let $\loc_y\colon \C Q_\cyc \to \C \cQ_\cyc$ be the composition of the trace map $\tr_\cQ\colon \Mat_{3\times 3}(\C\cQ) \to \C \cQ$ with the homomorphism $\C Q \to \Mat_{3\times 3}(\C \cQ)$ defined on generators as
\begin{equation}\label{eq:basedchange}
  \begin{gathered}
    s \mapsto 
    \begin{pmatrix}
      \upgamma_3 & 0 & 0\\
      0 & 0 & 0\\
      0 & 0 & 0
    \end{pmatrix}
    ,\quad 
    c \mapsto 
    \begin{pmatrix}
      0 & 0 & 0\\
      1 & 0 & 0\\
      0 & 0 & 0
    \end{pmatrix},\quad
    d \mapsto 
    \begin{pmatrix}
      0 & \upbeta_1 & \upbeta_2\\
      0 & 0 & 0\\
      0 & 0 & 0
    \end{pmatrix},\\
    x \mapsto 
    \begin{pmatrix}
      0 & 0 & 0\\
      0 & \upgamma_2 & \upbeta_3 - \upgamma_1\upgamma_3\\
      0 & \upgamma_1 & \upalpha_3 - \upgamma_2
    \end{pmatrix},\quad
    y \mapsto 
    \begin{pmatrix}
      0 & 0 & 0\\
      0 & 0 & \upalpha_1 + \upgamma_3\\
      0 & 1 & \upalpha_2
    \end{pmatrix}.
  \end{gathered}
\end{equation}
Then $\cW = \loc_y(W_{a,b}) \in \C\cQ_\cyc$ is a potential on $\cQ$, and we have the following.
\begin{lemma}\label{lem:ninelooploc}
  There is an isomorphism $\fM^\circ \xrightarrow{\sim} \fM_{\cQ}$ that pulls back $\tr(W_{a,b})$ to $\tr(\cW)$.
\end{lemma}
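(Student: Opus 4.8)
The plan is to present $\fM^\circ$ as a quotient stack in which the $\GL(V_1)$-gauge symmetry has been completely rigidified, leaving only a copy of $\GL(V_0)$ that plays the role of the gauge group of the nine-loop quiver $\cQ$. First I would unpack the defining condition. For a representation $\uprho$ of dimension vector $\dvector k{2k}$ on $V_0\simeq\C^k$ and $V_1\simeq\C^{2k}$, the condition $\im\uprho(c)\cup\im\uprho(yc)=V_1$ means $\im\uprho(c)+\im\uprho(y)\uprho(c)=V_1$, and since $\dim V_0+\dim V_0=\dim V_1$ a dimension count forces $\uprho(c)$ and $\uprho(yc)$ to be injective with complementary images, so that $(\uprho(c),\uprho(yc))\colon V_0\oplus V_0\xrightarrow{\sim}V_1$ is an isomorphism. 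A short computation with the stability parameter $2[P_0]-[P_1]$ then shows that any such $\uprho$ is automatically $\upvartheta$-semistable: a subrepresentation $(U_0,U_1)$ has $\uprho(c)(U_0)$ and $\uprho(yc)(U_0)$ independent inside $U_1$, whence $\dim U_1\ge 2\dim U_0$, which is exactly the semistability inequality for phase $\upvartheta=\Uptheta\dvector12$. Hence $\fM^\circ$ is the stack quotient, over all $k\ge0$, of the open subscheme of $\Rep_{\dvector k{2k}}(Q)$ carved out by this non-degeneracy condition.

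Next I would turn the displayed $3\times3$ matrices into an explicit parametrisation. Writing $V_1=\C^k\oplus\C^k$ and grouping the source of $c$ with these two summands into a three-block space $\C^k\oplus\C^k\oplus\C^k$, the matrices define a map on arrows $Q_1\to\Mat_{3\times3}(\C\cQ)$; I would check that it sends the vertex idempotents to the orthogonal idempotents $\operatorname{diag}(1,0,0)$ and $\operatorname{diag}(0,1,1)$ and respects the source/target of every arrow, so that it extends to an algebra homomorphism $\pi\colon\C Q\to\Mat_{3\times3}(\C\cQ)$ with $\loc_y=\tr_\cQ\circ\pi$. Given a representation $\tau$ of $\cQ$ on $\C^k$, evaluating the $\C\cQ$-entries of the $\pi(a)$ at $\tau$ produces a matrix in $\Mat_{3k\times3k}(\C)$ which, by block-support, is a representation $\Theta(\tau)$ of $Q$ of dimension $\dvector k{2k}$; from the matrices, $\Theta(\tau)(c)$ and $\Theta(\tau)(yc)$ are the inclusions of the first block into the second and third, so $\Theta(\tau)\in\Rep^\circ$, and $\Theta$ is equivariant for $\GL_k\into\GL_k\times\GL_{2k}$, $g\mapsto(g,g\oplus g)$. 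This yields a morphism $[\Theta]\colon\fM_{\cQ,k}\to\fM^\circ_{\dvector k{2k}}$.

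Then I would show $[\Theta]$ is an equivalence by constructing a quasi-inverse. Given a family of representations in $\Rep^\circ$ over a base $B$, the isomorphism $(\uprho(c),\uprho(yc))$ is itself a trivialisation identifying $V_1\simeq V_0\oplus V_0$; in these coordinates $\uprho(c),\uprho(yc)$ become the standard inclusions and the remaining data $\uprho(x),\uprho(y),\uprho(d),\uprho(s)$ amount to nine endomorphisms of $V_0$, which after the (polynomial, $\GL_k$-equivariant, invertible) substitution recorded in the entries of $\pi$ are exactly a $\cQ$-representation $\tau$ with $\Theta(\tau)\simeq\uprho$. This is functorial in $B$, hence descends to a morphism $\fM^\circ_{\dvector k{2k}}\to\fM_{\cQ,k}$ inverse to $[\Theta]$; taking the union over $k\ge0$ gives $\fM^\circ\simeq\fM_\cQ$. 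I expect the main point requiring care to be exactly here: this must be an equivalence of \emph{stacks}, not merely a bijection on isomorphism classes, so I would need the normal form to exist and be unique over an arbitrary base and --- crucially --- the residual automorphism group of a normalised representation to be precisely the diagonal $\GL_k$, i.e. any $(h_0,h_1)\in\GL_k\times\GL_{2k}$ preserving the normal forms of $c$ and $y$ must satisfy $h_1=h_0\oplus h_0$. This last identity is the concrete statement that the $\GL(V_1)$-freedom has been entirely used up, and verifying it (together with its family version) is the technical heart of the lemma.

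Finally, the compatibility of functions is formal once the slice is in place: since $\loc_y=\tr_\cQ\circ\pi$, for any $\cQ$-representation $\tau$ the value $\tr(W)(\Theta(\tau))$ is the ordinary trace of the matrix $\Theta(\tau)(W)$, which is the entrywise evaluation at $\tau$ of $\pi(W)\in\Mat_{3\times3}(\C\cQ)$; summing diagonal blocks first, this equals $\tr(\tau(\tr_\cQ\pi(W)))=\tr(\tau(\cW))=\tr(\cW)(\tau)$. Hence $[\Theta]^{*}\tr(W)=\tr(\cW)$, which completes the argument.
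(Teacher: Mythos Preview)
Your proposal is correct and follows essentially the same route as the paper: use the isomorphism $(\uprho(c),\uprho(yc))\colon V_0\oplus V_0\xrightarrow{\sim}V_1$ to rigidify the $\GL_{2k}$-action, leaving a residual diagonal $\GL_k$ acting on nine $k\times k$ blocks, and then identify these blocks with a $\cQ$-representation via the polynomial substitution encoded in the displayed $3\times3$ matrices. The paper phrases this as exhibiting a $\GL_{2k}$-principal bundle $U\to V$ onto an explicit slice $V$ and then writing down the $\GL_k$-equivariant isomorphism $V\to\Rep_k(\cQ)$, whereas you phrase it as constructing a morphism $[\Theta]$ and its quasi-inverse directly at the stack level; these are the same argument, and your added verification that the non-degeneracy condition forces semistability (implicit in the paper's passage from $\Rep^\upvartheta$ to $\Rep$) is a useful sanity check.
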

\begin{proof}
  Fix $\updelta = k\updelta_\pt$ and consider the tautological representation $\uptau$ on $\Rep_\updelta(Q)$:
  the $\C[\Rep_\updelta(Q)]$-valued representation corresponding to the identity across the isomorphism
  \[
    \Rep_\updelta(Q)(\C[\Rep_\updelta(Q)]) \simeq \Hom_{\mathrm{Sch}}(\Rep_\updelta(Q),\Rep_\updelta(Q)).
  \]
  Let $A = \big(\!\!\begin{array}{c|c} \uptau(c)\! &\! \uptau(yc)   \end{array}\!\!\big)$ denote the $\C[\Rep_\updelta(Q)]$-valued $2k\times 2k$-matrix obtained by adjoining the block matrices $\uptau(c)$ and $\uptau(yc)$.
  Then $\fM^\circ$ is the quotient $U/\GL_\updelta$ of the subspace 
  \[
    U = \Spec \C[\Rep_\updelta(Q)][(\det A)^{-1}] \subset \Rep_\updelta(Q).
  \]
  There is a closed subspace $V\subset U$ defined by the vanishing of the $(2k)^2$ entries in the matrix $A - \id_{2k \times 2k}$. We claim that $U$ is a $\GL_{2k}$-torsor over this subspace $V$ with respect to the action of $\GL_{2k} \simeq \{\id\} \times \GL_{2k} \subset \GL_\updelta$. To show this, consider the invertible $\C[U]$-valued matrix
  \[
    g= \left(\begin{array}{c|c} 
               \id_{k\times k} & 0 \\ \hline 
               0 & A^{-1}
             \end{array}\right) \in \GL_\updelta(\C[U]).
  \]
  then the family $g \cdot \uptau$ of representations satisfies $\big( g\cdot \uptau(c)\mid g\cdot \uptau(yc)) = \id_{2k\times 2k}$ and hence defines a map $U \to V$. The $\GL_{2k}$-action restricts to a free \& transitive action on the fibres of this map, which shows that $U$ is indeed a $\GL_{2k}$-torsor over $V$. It follows that 
  \[
    \fM^\circ \simeq U/\GL_\updelta \simeq V/\GL_k.
  \]  
  Because $V$ is affine, any $k$-dimensional representation of $\cQ$ with values in $\C[V]$ determines a map $V\to \Rep_k(\cQ)$ via $\Rep_k(\cQ)(\C[V])\simeq \Hom_{\mathrm{Sch}}(V,\Rep_k(\cQ))$. One such representation is defined as follows.
  The restriction $\uptau|_V$ of the tautological representation to $V$ is of the form
  \begin{gather*}
    \uptau(s),\quad 
    \uptau(c) = \left(\begin{array}{c} \id_{k\times k} \\ \hline 0 \end{array}\right),\quad
    \uptau(d) = \left(\begin{array}{c|c} d_0 & d_1 \end{array}\right),\\
    \uptau(x) = \left(\begin{array}{c|c} x_{00} & x_{01} \\\hline x_{10} & x_{11} \end{array}\right),\quad
    \uptau(y) = \left(\begin{array}{c|c} 0 & y_{01} \\\hline \id_{k\times k} & y_{11} \end{array}\right),
  \end{gather*}
  where $\uptau(s),d_0,d_1,x_{00},x_{01},x_{10},x_{11},y_{01},y_{11}$ are $\C[V]$-valued $k\times k$ matrices.
  Hence there is a representation $\upsigma\in \Rep_k(\cQ)(\C[V])$ which maps the loops in $\cQ$ to 
  \begin{gather*}
    \upsigma(\upalpha_1) = y_{01} - \uptau(s),\quad 
    \upsigma(\upalpha_2) = y_{11},\quad
    \upsigma(\upalpha_3) = x_{11}+x_{00},\\
    \upsigma(\upbeta_1) = d_0,\quad 
    \upsigma(\upbeta_2) = d_1,\quad
    \upsigma(\upbeta_3) = x_{01} + x_{10}\uptau(s),\\
    \upsigma(\upgamma_1) = x_{10},\quad 
    \upsigma(\upgamma_2) = x_{00},\quad
    \upsigma(\upgamma_3) = \uptau(s).
  \end{gather*}
  One checks that this map is a $\GL_k$-equivariant isomorphism, and therefore yields an isomorphism $\fM^\circ \simeq V/\GL_k \simeq \fM_{\cQ,k}$ of moduli stacks. 
  Moreover, by comparing the above with \eqref{eq:basedchange} one
  sees that the isomorphism identifies the functions $\tr(W_{a,b})$ and $\tr(\cW)$ on the two spaces.
  Repeating this process for all $k$ and taking the disjoint union gives the required isomorphism.
\end{proof}
The lemma implies that the nilpotent critical locus $\fC^\circ$ of $\tr(W_{a,b})$ in $\fM^\circ$ is isomorphic to a substack of the critical locus of $\tr(\cW)$ and that the motivic vanishing cycles on the two spaces agree.
Hence, the partition function $\Upphi^\circ(t)$ can be computed from $(\cQ,\cW)$.
\begin{lemma}\label{lem:DTcalcA}
  The contribution of the stratum $\fC^\circ$ is
  \[
  \Upphi^\circ(t) = \Sym\left(\sum_{k\geq 1} \frac{\L^{-\frac32}[\A^1]}{\L^\half - \L^\mhalf} \cdot t_0^kt_1^{2k}\right)
  \]
\end{lemma}
\begin{proof}
  The potential $\cW = \loc_y(W_{a,b}) \in \C\cQ_\cyc$ has the following form:
  \[
    \begin{aligned}
      \cW &= \loc_y(x^2y - f_{a,b}(y) + cdy^2 - sdc + 2 s^a) \\
      &= 
      \upalpha_1\upbeta_1
      + \upalpha_2\upbeta_2
      + \upalpha_3\upbeta_3
      + [\upgamma_1,\upgamma_2]\upgamma_3
      + \upalpha_1([\upgamma_1,\upgamma_2] + \upalpha_3\upgamma_1)
      - \upalpha_3\upgamma_1\upgamma_3
      \\&\quad
      + \upalpha_2((\upalpha_3- \upgamma_2)^2  + \upgamma_1\upbeta_3 - \upgamma_1^2\upgamma_3)
      - \tr_\cQ\left(f_{a,b}\left(\begin{smallmatrix}0&\upalpha_1 + \upgamma_3\\1&\upalpha_2\end{smallmatrix}\right) \right)
      + 2 \upgamma_3^a.
    \end{aligned}
  \]
  We will construct an automorphism $\uppsi\colon \C\cQ\to \C\cQ$ which maps $\cW$ to the simplified form $\textstyle\sum_{i=1,2,3} \upalpha_i\upbeta_i + \cW_\min$, for some minimal potential $\cW_\min = \cW_\min(\upgamma_1,\upgamma_2,\upgamma_3)$ of degree $\geq 3$. By inspection, the potential can be written up to cyclic permutation as
  \[
    \begin{aligned}
      \cW &= \upalpha_1\upbeta_1 
      + \upalpha_2\upbeta_2 + \upalpha_3\upbeta_3 + 
      \upalpha_1 \cdot u_1 + \upalpha_2 \cdot u_2 + \upalpha_3 \cdot u_3\\
      &\quad
       + [\upgamma_1,\upgamma_2]\upgamma_3 - \tr_\cQ\left(f_{a,b}\left(\begin{smallmatrix}0&\upgamma_3\\1&0\end{smallmatrix}\right)\right) + 2 \upgamma_3^a.
    \end{aligned}
  \]
  where each $u_i$ is a noncommutative polynomial of order $\geq 2$ which only contains the $\upgamma$-variables and the variables $\upalpha_j$, $\upbeta_j$ for $j>i$.
  Consider the automorphisms $\uppsi_1,\uppsi_2,\uppsi_3$ of the path algebra $\C\cQ$ which map
  \[
    \uppsi_i(\upbeta_i) = \upbeta_i - u_i,
  \]
  and act as the identity on the other generators. Then $\uppsi_i(u_j) = u_j$ for $j>i$, and one sees that the composition $\uppsi \colonequals \uppsi_3\circ \uppsi_2\circ\uppsi_1$ maps $\cW$ to 
  $\uppsi(\cW) = \textstyle\sum_{i=1,2,3} \upalpha_i\upbeta_i + \cW_\min$ where
  \[
    \cW_\min = [\upgamma_1,\upgamma_2]\upgamma_3 
    - \tr_\cQ\left(f_{a,b}\left(\begin{smallmatrix}0&\upgamma_3\\1&0\end{smallmatrix}\right)\right) + 2\upgamma_3^a.
  \]
  is the minimal potential on the loops $\upgamma$. Decomposing $f_{a,b}$ as $f_{a,b}(y) = y^{2a} + y g(y^2)$ one sees the last two terms cancel:
  \[
    \tr_\cQ\left(f_{a,b}\left(\begin{smallmatrix}0&\upgamma_3\\1&0\end{smallmatrix}\right)\right)
    =
    \tr_\cQ\left(\begin{smallmatrix}\upgamma_3^a &0\\ 0 & \upgamma_3^a\end{smallmatrix}\right)
    + \tr_\cQ\left(\begin{smallmatrix}0& \upgamma_3 g(\upgamma_3) \\ g(\upgamma_3) &0\end{smallmatrix}\right)
    =  2 \upgamma_3^a
  \]
  so that $\cW_\min$ is simply given by the cubic term $[\upgamma_1,\upgamma_2]\upgamma_3$.  
  For each $k\in\N_{>0}$, let $\fJ_k$ denote the image of the substack $\fC_{k\updelta_\pt}^\circ \subset \fM^\circ$ under the isomorphism $\fM^\circ \simeq \fM_\cQ$ in Lemma \ref{lem:ninelooploc}.
  Then it follows by the motivic Thom--Sebastiani identity that
  \[
    \int_{\fC_{k\updelta_\pt}^\uptheta} \upphi_{\tr(W_{a,b})} 
    = \int_{\fJ_k} \upphi_{\tr(\cW)}
    = \int_{\fJ_k} \upphi_{\tr(\uppsi(\cW))} 
    = \int_{\fJ_k\cap\{\upalpha_i=\upbeta_i =0\}}\kern-4pt \upphi_{\tr([\upgamma_1,\upgamma_2]\upgamma_3)}.
  \]
  The motivic vanishing cycle of $[\upgamma_1,\upgamma_2]\upgamma_3$ was computed by Behrend--Bryan--Szendr\Hu oi \cite{BBS13} as a motivic point count of $\A^3$.
  Here, the vanishing cycle is restricted to the support over a line $\A^1\subset \A^3$, and a slight modification of their argument shows that
  \[
    \Upphi^\circ(t) 
    = \sum_{k\geq 0} \int_{\fJ_k\cap\{\upalpha_i=\upbeta_i =0\}}\kern-4pt \upphi_{\tr([\upgamma_1,\upgamma_2]\upgamma_3)} \cdot t^{k\updelta_\pt}
    = \Sym\left(\sum_{n\geq 1} \frac{\L^{-\frac32}[\A^1]}{\L^\half-\L^\mhalf} \cdot t^{k\updelta_\pt}\right).\qedhere
  \]
\end{proof}

For the partition function $\Upphi^p(t)$ we employ an analogous construction.
As in Lemma \ref{lem:ninelooploc}, for each $\updelta = k\updelta_\pt$ the tautological representation defines a matrix $A = \left(\!\!\begin{array}{c|c}\uptau(c)\!\! &\! \uptau(xc)\end{array}\!\!\right)$, and there is an open neighbourhood $\fU$ of $\fC^p_\updelta$ in $\fM_\updelta^{\uptheta}$ of the form $\fU \simeq U/\GL_\updelta$, where
\[
  U = \Spec \C[\Rep_\updelta(Q)][\det A^{-1}].
\]
As before, $U$ is a $\GL_{2k}$-torsor over the closed subspace $V \subset U$ cut out by the entries of the matrix $A-\id_{2k\times 2k}$, so that $\fU \simeq V/\GL_k$. 
The restriction of $\uptau$ to $V$ is of the form
\[
  \begin{gathered}
    \uptau(s),\quad 
    \uptau(c) = \left(\begin{array}{c} \id_{k\times k} \\ \hline 0 \end{array}\right),\quad
    \uptau(d) = \left(\begin{array}{c|c} d_0 & d_1 \end{array}\right),\\
    \uptau(x) = \left(\begin{array}{c|c} 0 & x_{01} \\\hline \id_{k\times k} & x_{11} \end{array}\right),\quad
    \uptau(y) = \left(\begin{array}{c|c} y_{00} & y_{01} \\\hline y_{10} & y_{11} \end{array}\right),
  \end{gathered}
\]
for $\C[V]$-valued $k\times k$-matrices $\uptau(s),d_0,d_1,x_{01},x_{11},y_{00},y_{01},y_{10},y_{11}$.
and there is a $\GL_k$-equivariant isomorphism $V\to \Rep_k(\cQ)$ determined by the family of representations $\upsigma\in \Rep_k(\cQ)(\C[V])$ which takes the following values on generators:
\[
  \begin{aligned}
    \upsigma(\upalpha_1) &= -d_0,\quad &
    \upsigma(\upalpha_2) &= x_{01},\quad &
    \upsigma(\upalpha_3) &= x_{11},\\    
    \upsigma(\upbeta_1) &= \uptau(s) - y_{00}^2 - y_{01}y_{10},\quad &
    \upsigma(\upbeta_2) &= y_{00} + y_{11},\quad &
    \upsigma(\upbeta_3) &= y_{01},\\    
    \upsigma(\upgamma_1) &= y_{10},\quad &
    \upsigma(\upgamma_2) &= y_{00},\quad &
    \upsigma(\upgamma_3) &= d_1,
  \end{aligned}
\]
Again, the isomorphism of stacks $\fU \simeq V/\GL_k \simeq \fM_{\cQ,k}$ identifies $\tr(W_{a,b})$ with the trace of a potential $\cW$ on $\cQ$.
One checks that $\cW = \loc_x(W_{a,b})$, for $\loc_x \colon \C Q_\cyc \to \C\cQ_\cyc$ defines as the composition of the trace $\tr_\cQ\colon \Mat_{3\times 3}(\C\cQ) \to \C\cQ$ with the homomorphism $\C Q \to \Mat_{3\times 3}(\C\cQ)$ which maps the generators of $\cQ$ to
\[
  \begin{gathered}
    s \mapsto 
    \begin{pmatrix}
      \upbeta_1 + \upgamma_2^2 + \upbeta_3\upgamma_1 & 0 & 0\\
      0 & 0 & 0\\
      0 & 0 & 0
    \end{pmatrix}
    ,\quad 
    c \mapsto 
    \begin{pmatrix}
      0 & 0 & 0\\
      1 & 0 & 0\\
      0 & 0 & 0
    \end{pmatrix},\quad
    x \mapsto 
    \begin{pmatrix}
      0 & 0 & 0\\
      0 & 0 & \upalpha_2\\
      0 & 1 & \upalpha_3
    \end{pmatrix},\\
    d \mapsto 
    \begin{pmatrix}
      0 & -\upalpha_1 & \upgamma_3\\
      0 & 0 & 0\\
      0 & 0 & 0
    \end{pmatrix},\quad
    y \mapsto 
    \begin{pmatrix}
      0 & 0 & 0\\
      0 & \upgamma_2 & \upbeta_3\\
      0 & \upgamma_1 & \upbeta_2 - \upgamma_2
    \end{pmatrix}.
  \end{gathered}
\]
Via this isomorphism of stacks, the stack $\fC^p$ is identified with the nilpotent critical locus $\fC_{\cQ,\cW}$, and so the $\Upphi^p(t) = \Upphi_{\cQ,\cW}(t)$.
\begin{lemma}\label{lem:DTcalcp}
  There is an equality
  \[
    \Upphi^p(t) = \Sym\left(\sum_{n\geq 1} \frac{\L^{-\frac32}[\pt]}{\L^\half - \L^\mhalf}\cdot t^{k\updelta_\pt}\right).
  \]
\end{lemma}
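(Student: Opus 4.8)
The plan is to mirror the proof of Lemma \ref{lem:DTcalcA}, now applied to the chart $\fV_k\simeq\fM_{\cQ,k}$ that contains $\fC^p$. First I would compute the potential $\cW=\loc_x(W)\in\C\cQ_\cyc$ explicitly by expanding $\loc_x$ on the potential $W = x^2y - f(y) + cdy^2 - sdc + G(s)$. As in the previous lemma, the arrows $\upalpha_1,\upalpha_2,\upalpha_3$ should appear linearly: the terms $sdc$ and $cdy^2$ produce $\upalpha_1\cdot(\text{stuff})$, the term $x^2y$ produces the $\upalpha_2,\upalpha_3$-linear contributions, and the remaining terms $-f(y)+G(s)$ together with $x^2y$ restricted to the $\upbeta,\upgamma$-block produce $\cW_\min$. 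So I expect a decomposition $\cW = \sum_{i=1,2,3}\upalpha_i\upbeta_i' + \cW_\min$ after collecting, where each $\upbeta_i'$ is a polynomial in $\upbeta_1,\upgamma_1,\upgamma_2,\upgamma_3$ independent of the relevant $\upalpha$'s.

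Next I would perform the triangular change of coordinates: define invertible $I$-adic endomorphisms $\uppsi_i$ of $\widehat{\C\cQ}$ that absorb the non-quadratic tails, so that the composite $\uppsi$ sends $\cW$ to $\sum_{i}\upalpha_i\upbeta_i + \cW_\min$, exactly as in Lemma \ref{lem:DTcalcA}. By Lemma \ref{lem:motintformal} this coordinate change does not affect the partition function, so $\Upphi_p(t)$ is computed from the potential $\sum_i\upalpha_i\upbeta_i + \cW_\min$. I would then apply Lemma \ref{lem:quadterms} (or directly motivic Thom--Sebastiani, Theorem \ref{thm:thomsebas}, together with Lemma \ref{lem:quadform}) to discard the non-degenerate quadratic part $\sum_i\upalpha_i\upbeta_i$, reducing to the $3$-loop quiver with potential $\cW_\min$ in the variables $\upgamma_1,\upgamma_2,\upgamma_3$.

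The crucial computation is then the evaluation of $\cW_\min = \cW|_{\upalpha_i=\upbeta_i=0}$. Setting $\upalpha_i=\upbeta_i=0$ in the matrices defining $\loc_x$, the block for $s$ becomes $\upgamma_2^2$, the $y$-block becomes $\left(\begin{smallmatrix}0&0&0\\0&\upgamma_2&0\\0&\upgamma_1&-\upgamma_2\end{smallmatrix}\right)$, the $x$-block becomes $\left(\begin{smallmatrix}0&0&0\\0&0&0\\0&1&0\end{smallmatrix}\right)$, and the $d$-block becomes $\left(\begin{smallmatrix}0&c\upgamma_2^2&\upgamma_3\\0&0&0\\0&0&0\end{smallmatrix}\right)$. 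Taking $\tr_\cQ$ of $x^2y - f(y) + cdy^2 - sdc + G(s)$ on these blocks, I expect the $f$ and $G$ contributions to cancel by the relation $G(s)=2f_\even(s^{1/2})$ (the cancellation that already occurred in Lemma \ref{lem:DTcalcA} between $-\tr_\cQ f(\cdots)$ and $G(\upgamma_3)$, now with $s\mapsto\upgamma_2^2$), the $sdc$ term to vanish since it meets a zero block, and $x^2y+cdy^2$ to collapse to a single cubic monomial of the form $[\upgamma_1,\upgamma_2]\upgamma_3$ up to scalar and coordinate change. Hence $\cW_\min$ is (equivalent to) $[\upgamma_1,\upgamma_2]\upgamma_3$, the commutator potential on the $3$-loop quiver whose critical locus is the Hilbert scheme of points on $\A^3$. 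By the main result of \cite{BBS}, the partition function of $([\upgamma_1,\upgamma_2]\upgamma_3)$ restricted to the stratum $\fC^p_{\dvector{k}{2k}}$ — which under the identification corresponds to the single point of $\A^3$ cut out by the nilpotency ideal — gives the motivic count $\L^{-3/2}[\pt]$ of a point, yielding
\[
  \Upphi_p(t) = \Sym\!\left(\sum_{n\geq1}\frac{\L^{-\frac32}[\pt]}{\L^\half-\L^\mhalf}\cdot t_1^nt_2^{2n}\right).
\]

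The main obstacle I anticipate is the bookkeeping in showing that $\cW_\min$ really reduces to the commutator potential after the relevant cancellations and a final coordinate change — in particular verifying that the $x^2y$ and $cdy^2$ contributions combine correctly and that the $c$-dependent entry in the $d$-block (the coefficient of $s^2$ in $G(s)$) does not obstruct this, so that the hypotheses of the Behrend--Bryan--Szendrői computation apply verbatim. Once $\cW_\min\sim[\upgamma_1,\upgamma_2]\upgamma_3$ is established and the nilpotent stratum is matched with the correct locus in $\A^3$, the rest is a direct appeal to \cite{BBS} exactly as in Lemma \ref{lem:DTcalcA}.
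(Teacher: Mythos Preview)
Your approach is essentially the same as the paper's: reduce to the minimal potential $\cW_\min$ on the 3-loop quiver, verify $\cW_\min=[\upgamma_1,\upgamma_2]\upgamma_3$, and invoke \cite{BBS}. The only procedural difference is that the paper cites Lemma~\ref{lem:twequivmotives} to pass to $\cW_\min$ rather than writing down explicit triangular coordinate changes as in Lemma~\ref{lem:DTcalcA}; both routes are valid. One small correction to your term-by-term sketch: after setting $\upalpha_i=\upbeta_i=0$ the $s$-block is $\upgamma_2^2$, not zero, so $-sdc$ contributes $-c\,\upgamma_2^4$ (not zero), and conversely $x^2y$ contributes nothing since the restricted $x$-block squares to zero; the commutator $[\upgamma_1,\upgamma_2]\upgamma_3$ actually comes from $cdy^2$, and the various $\upgamma_2$-only terms cancel via $G(\upgamma_2^2)=2f_\even(\upgamma_2)$ together with the $c\,\upgamma_2^4$ contributions.
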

\begin{proof}
  As in Lemma \ref{lem:DTcalcA}, we calculate the motivic contribution via the potential
  \begin{align*}
    \cW &= \loc_x(x^2y - f_{a,b}(y) + cdy^2 - sdc + 2 s^a)
    \\&= 
    \upalpha_1\upbeta_1 + \upalpha_2\upbeta_2 + \upalpha_3\upbeta_3 + \upgamma_3[\upgamma_1,\upgamma_2]
    +\upalpha_2\upalpha_3\upgamma_1 + \upalpha_3^2(\upbeta_2 - \upgamma_2)
    +\upgamma_3\upbeta_2\upgamma_1
    \\&\quad+ 2(\upbeta_1 + \upgamma_2^2 + \upbeta_3 \upgamma_1)^a - \loc_x(f_{a,b}(y)).
  \end{align*}
  as a formal potential in $\widehat{\C\cQ}_\cyc$. We will construct a formal automorphism of $\widehat{\C\cQ}$ that maps $\cW$ to a potential of the form $\sum_i \upalpha_i\upbeta_i + \upgamma_3[\upgamma_1,\upgamma_2]$, as this is sufficient to yield the required identity: by Lemma \ref{lem:motintformal} and Lemma \ref{lem:quadterms} it follows that
  \[
    \Upphi^p(t) = \Upphi_{\cQ,\cW}(t) = \Upphi_{\cQ,\uppsi(\cW)}(t) = \Upphi_{\cQ,\upgamma_3[\upgamma_1,\upgamma_2]}(t),
  \]
  and the latter is again given by the computation of Behrend--Bryan--Szendr\Hu oi \cite{BBS13}:
  \[
    \begin{aligned}
      \Upphi^p(t) = \Upphi_{\cQ,\upgamma_3[\upgamma_1,\upgamma_2]}(t) = \Sym\left(\sum_{k\geq 1} \frac{\L^{-3/2}[\pt]}{\L^\half-\L^\mhalf} \cdot t^{k\updelta_\pt}\right).
    \end{aligned}
  \]
  To construct the automorphism, we first write $\cW$ in the form
  \begin{equation}\label{eq:randomeq}
    \begin{aligned}
      \cW &= 
      \upalpha_1\upbeta_1 + \upalpha_2\upbeta_2 + \upalpha_3\upbeta_3 
      + \upgamma_3[\upgamma_1,\upgamma_2] 
      + u \cdot \upbeta_1 
      + \upalpha_2(\upalpha_3\upgamma_1) 
      + \upalpha_3(-\upalpha_3\upgamma_2) 
      \\&\quad
      + w \cdot \upbeta_3
      + (\upalpha_3^2 + \upgamma_1\upgamma_2  + v) \cdot \upbeta_2
      + 2 \upgamma_2^a
      - \tr_\cQ\left(\begin{smallmatrix}f_{a,b}(\upgamma_2) & 0 \\ \ldots & f_{a,b}(-\upgamma_2) \end{smallmatrix}\right),
    \end{aligned}
  \end{equation}
  for $u= u(\upbeta_1,\upbeta_3,\upgamma_1,\upgamma_2)$ of order $\geq 1$, and $v = v(\upbeta_2,\upbeta_3,\upgamma_1,\upgamma_2)$, $w = w(\upbeta_3,\upgamma_1,\upgamma_2)$  of order $\geq 3$.
  Once again, the last two terms in \eqref{eq:randomeq} cancel by a parity argument:
  writing $f_{a,b}(y) = y^a + yg(y)$ one sees that
  \[
    \tr_\cQ\left(\begin{smallmatrix}f_{a,b}(\upgamma_2) & 0 \\ \ldots & f_{a,b}(-\upgamma_2) \end{smallmatrix}\right)
    = \upgamma_2^a + (-\upgamma_2)^a + \upgamma_2g(\upgamma_2) - \upgamma_2 g(\upgamma_2)
    = 2\upgamma_2^2.
  \]
  Let $\uppsi_1,\uppsi_2,\uppsi_3\colon \C\cQ\to \C\cQ$ be the automorphisms which map
  \[
    \begin{gathered}
    \uppsi_1(\upalpha_1) = \upalpha_1 - u,
    \quad
    \uppsi_2(\upbeta_2) = \upbeta_2 - \upalpha_3\upgamma_1,
    \quad
    \uppsi_3(\upalpha_2) = \upalpha_2 - \upalpha_3^2 - \upgamma_1\upgamma_2 - \uppsi_2(v),
    \end{gathered}
  \]
  and act as the identity on the other generators. 
  By construction, these map $\cW$ to
  \[
    \begin{aligned}
      \uppsi_3(\uppsi_2(\uppsi_1(\cW))) &=
          \upalpha_1\upbeta_1 + \upalpha_2\upbeta_2 + \upalpha_3\upbeta_3 
          + \upgamma_3[\upgamma_1,\upgamma_2] 
          \\&\quad
          + w \cdot \upbeta_3 
          + \upalpha_3 \cdot (-\upalpha_3\upgamma_2
          - \upgamma_1\upalpha_3^2 - \upgamma_1^2\upgamma_2 - \upgamma_1 
          \uppsi_2(v)).
        \end{aligned}
  \]
  Because the terms on the bottom line are of order $\geq 3$, one can use a recursive algorithm analogous to \cite[\S 3]{DWZ08} to further reduce these terms order by order:
  for each $n$ there is an automorphism $\uppsi_n \colon \C\cQ \to \C\cQ$, trivial up to order $n-1$, and noncommutative polynomials 
  $w^{(n)}(\upbeta_1,\upbeta_3,\upgamma_1,\upgamma_2)$,
  $v^{(n)}= v^{(n)}(\upbeta_1,\upgamma_1,\upgamma_2)$, and
  $\cW_\min^{(n)}(\upgamma_1,\upgamma_2)$
  of orders $\geq n-1$, $\geq n-1$, and $\geq n$ respectively,
  such that $\uppsi_n(\uppsi_{n-1}(\cdots\uppsi_1(\cW)))$ is of the form
  \[
    \upalpha_1\upbeta_1 + \upalpha_2\upbeta_2 + \upalpha_3\upbeta_3 
    + \cW_\min^{(3)} + \ldots + \cW_\min^{(n)}
    + w^{(n)} \upbeta_3 + \upalpha_3 v^{(n)}.
  \]
  The existence of the above data can be shown by induction on the base case 
  \[
    \cW_\min^{(3)} = \upgamma_3[\upgamma_1,\upgamma_2],\quad w^{(3)} = w,\quad v^{(3)} = -\upalpha_3\upgamma_2 -\upgamma_2\upalpha_3^2 - \upgamma_1^2\upgamma_2 - \upgamma_1\uppsi(v).
  \]
  Suppose the data $\uppsi_n$, $\cW_\min^{(n)}$, $v^{(n)}$, $w^{(n)}$ as above are given for $n\leq N$,
  then we construct the automorphism $\uppsi_{N+1}$ which maps
  \[
    \uppsi_{N+1}(\upalpha_2) = \upalpha_2 + w^{(N)},\quad \uppsi_{N+1}(\upbeta_3) = \upbeta_3 + v^{(N)},
  \]
  and sends all other generators to themselves.
  By assumption, this satisfies
  \[
    \begin{aligned}
      \uppsi_{N+1}(\cdots(\uppsi_1(\cW))) &=
      \upalpha_1\upbeta_1 + \upalpha_2\upbeta_2 + \upalpha_3\upbeta_3 
      + \cW_\min^{(3)} + \ldots + \cW_\min^{(N)}
      \\&\quad+ w^{(N)}v^{(N)} 
      - \uppsi_{N+1}(w^{(N)}) v^{(N)} - w^{(N)}\uppsi_{N+1}(v^{(N)}).
      \\&\quad 
      + (\uppsi_{N+1}(w^{(N)}) - w^{(N)})\upbeta_3 + \upalpha_3(\uppsi_{N+1}(v^{(N)})-v^{(N)}).
    \end{aligned}
  \]
  The bottom two lines contain only terms of order $\geq (N-1)^2 \geq N+1$.
  Hence, these lines can be written (up to cyclic permutation) as
  $\cW_\min^{(n)} + w^{(n)} \upbeta_3 + \upalpha_2 v^{(n)}$
  for nc-polynomials of the claimed form. By induction the required data then exists for all $n\geq 4$.

  Having constructed this sequence of automorphisms, the limit $\uppsi = \lim_{n\to\infty} \uppsi_n\circ\cdots\circ \uppsi_1$ is a well-defined formal automorphism $\widehat{\C\cQ} \to \widehat{\C\cQ}$, and maps $\cW$ to 
  \[
    \uppsi(\cW) = \sum_{i=1}^3 \upalpha_i\upbeta_i + \cW_\min
    = \sum_{i=1}^3 \upalpha_i\upbeta_i + \upgamma_3[\upgamma_1,\upgamma_2] + \cW_\min',
  \]
  where $\cW_\min'$ a powerseries in the variables $\upgamma_1, \upgamma_2$. 
  The Jacobi algebra $\widehat \Jac(\cQ,\uppsi(\cW)) \simeq \widehat \Jac(\cQ,\cW)$ is isomorphic to the ring $\C[[\O_{Y,p}]]$ of formal functions at $p$, and therefore commutative.
  This implies that the cyclic derivatives of $\cW_\min$ are contained in the (completed) commutator ideal, and because $\cW_\min'$ is only a function of $\upgamma_1,\upgamma_2$ we find
  \[
    \begin{gathered}
      \del_{\upgamma_1} \cW_\min' \equiv 0 \quad\mathrm{mod}\  ([\upgamma_1,\upgamma_2])_{\mathrm{top}},\quad
      \del_{\upgamma_2} \cW_\min' \equiv 0 \quad\mathrm{mod}\  ([\upgamma_1,\upgamma_2])_{\mathrm{top}},\quad
      \del_{\upgamma_3} \cW_\min' = 0,
    \end{gathered}
  \]
  where $(\ldots)_{\mathrm{top}}$ denotes the closure of the ideal in the adic topology.
  A moment of reflection shows\footnote{One can for example, apply the Euler identity $\sum_i \upgamma_i \del_{\upgamma_i} H =n \cdot H$ to the homogeneous parts of $\cW_\min$.} that $\cW_\min= \upgamma_3[\upgamma_1,\upgamma_2] + q\cdot [\upgamma_1,\upgamma_2]$ for some noncommutative polynomial $q$ of order $\geq 2$. One final automorphism $\upgamma_3 \mapsto \upgamma_3 - q$ then maps $\cW_\min$ to $\upgamma_3[\upgamma_1,\upgamma_2]$, as required.
\end{proof}
Adding up the contributions of $\fC^\circ$ and $\fC^p$ now yields the desired DT invariants.
\begin{lemma}\label{lem:DTpoint}
  The BPS invariants are $\BPS_{k[\pt]} = \L^{-\kern-.5pt\frac32}[\P^1]$ for all $k\geq 1$.
\end{lemma}
\begin{proof}
  By Lemma \ref{lem:partdecomp} the partition function decomposes as $\Upphi^\uptheta(t) = \Upphi^\circ(t) \cdot \Upphi^p(t)$, so it follows from Lemma \ref{lem:DTcalcA}, Lemma \ref{lem:DTcalcp} and the properties of the plethystic exponential that
  \[
  \Upphi^\uptheta(t) = 
  \Sym\left(\sum_{k\geq0}\frac{\L^{-\frac32}\left([\A^1] + [\pt]\right)}{\L^\half-\L^\mhalf}t^{k\updelta_\pt}\right).\qedhere
\]
\end{proof}

\begin{remark}
  In the framework of \cite{BBS13} the BPS invariants are interpreted as a virtual count of points, and are given by the restriction of the \emph{virtual motive} of $Y$ to $\curve$:
  \[
    \virt{Y}|_\curve = \L^{-3/2}[\P^1].
  \]
  The proposition shows that the invariants $\BPS_{k[\pt]}$, which we compute in the framework of \cite{KS08}, are in fact given by this virtual motive.
\end{remark}

\subsection{Invariants for $\curve$}
Finally, we compute BPS invariants $\BPS_{k\updelta_{\curve,n}}$ associated to the phases $\uptheta_{\curve,n}$. 
It once again follows by Proposition \ref{prop:BPSequiv} that these are independent of $n$, so we may focus on $\BPS_{k\updelta_{\curve,0}} = \BPS_{k[S_1]}$.
The moduli spaces for the dimension vectors $\updelta = k[S_1]$ parametrise representations of the form
\[
  \begin{tikzpicture}[>=stealth]
    \clip (-3,-.9) rectangle (3,.9);
    \node[circle,outer sep=.8pt,inner sep=1pt] (A) at (-.9,0) {$0$};
    \node[circle,outer sep=.3pt,inner sep=1pt] (B) at (.9,0) {$\C^k$};
    \draw[->] (A) to[bend left] (B);
    \draw[->] (B) to[bend left] (A);
    \draw[->] (B) to[loop,looseness=7,in=280,out=-10]  node[pos=.46, fill=white, inner sep=.2pt, outer sep=0pt]{$\scriptstyle\uprho(x)$} (B);
    \draw[->] (B) to[looseness=7,in=80,out=10]  node[pos=.46,fill=white, inner sep=.2pt, outer sep=0pt]{$\scriptstyle\uprho(y)$} (B);  
    \draw[->] (A) to[loop,looseness=10,in=140,out=220] (A);
  \end{tikzpicture}
\]
and it once again follows that the moduli space $\fM^{\uptheta}$ is isomorphic to the moduli space $\fM_\cQ$ of a quiver $\cQ$ with a unique vertex and loops $\cQ_1 = \{x,y\}$. The potential restricts to $\cW = x^2y - f_{a,b}(y) \in \C\cQ_\cyc$ and the BPS invariants are determined by the partition function of $(\cQ,\cW)$ via
\[
\Upphi^{\uptheta_{\curve,0}}(t) = \sum_{k\geq 0} \int_{\fC_{\cQ,k}}\kern-5pt \upphi_{\tr(\cW)} \cdot t^{k[S_1]} = \Upphi_{\cQ,\cW}(t).
\]
The coefficients of the partition function can be computed via the integration formula of Denef--Loeser, which requires one to find an embedded resolution of the zero locus $\{\tr(\cW)=0\}$ in $\Rep_k(\cQ)$. 
We are able to find such an embedded resolution for $k=1$, but for $k>2$ the dimension of $\Rep_k(\cQ)$ is at least $8$ and finding a suitable embedded resolution seems out of reach.
Instead, we will determine the realisations in the Grothen\-dieck ring of monodromic mixed Hodge structures. 

As shown by Davison--Meinhardt \cite{DM20}, the $\MMHS$-realisation $\upchi_\mmhs(\BPS_{k[S_1]})$ coincides with the class $[\cBPS_k] \in \K_0(\MMHS)$ of a genuine MMHS
\[
  \cBPS_k \colonequals \H_c\left(\cM_{\cQ,k},\left(\upphi_{\tr(\cW)}^\mmhs \IC_{\cM_{\cQ,k}}\right)^{\mathrm{nilp}}\right),
\]
defined as the cohomology with compact support on the coarse moduli scheme $\cM_{\cQ,k}$ of $\fM_{\cQ,k}$ with values in the image of the intersection complex $\IC_{\cM_{\cQ,k}}$ under the vanishing cycle functor $\upphi_{\tr(\cW)}^\mmhs$, restricted to the nilpotent locus. 
It was shown by Davison \cite{Davison19} that this MMHS vanishes when $k$ is greater than the length.

\begin{lemma}\label{lem:MMHSvanish}
  The cohomological invariant $\cBPS_k$ vanishes for $k > 2$. In particular $\upchi_\mmhs(\BPS_{k[S_1]}) = 0$ for $k>2$.
\end{lemma}
\begin{proof}
  By \cite[Theorem B]{Davison19} the monodromic mixed Hodge structures $\cBPS_k$ are all concentrated in degree $0$, and by \cite[Proposition 5.2]{Davison19} the dimension $\dim_\C(\cBPS_k)$ of the degree $0$ part is given by the Gopakumar--Vafa invariant $n_k$ of the flop. Because $Y$ is a length $\ell=2$ flop, the GV invariant $n_k$ vanishes if $k > \ell = 2$ and $\cBPS_k$ is therefore trivial for $k>2$, and so are the K-theory classes.
\end{proof}

\subsection{The MMHS realisation for $k=2$}
For $k=2$ the coarse moduli space is smooth.
\begin{lemma}
$\cM_{\cQ,2} \simeq  \A^5$
\end{lemma}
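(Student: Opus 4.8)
The quiver $\cQ$ has one vertex and two loops $x,y$, so a representation of dimension $2$ is a pair of $2\times 2$ matrices $(X,Y)$, and the coarse moduli space is the affine GIT quotient $\cM_{\cQ,2} = \Rep_2(\cQ)\gitquot \GL_2(\C) = \Spec\big(\C[\Mat_2\times\Mat_2]^{\GL_2}\big)$, the variety of invariants of a pair of $2\times 2$ matrices under simultaneous conjugation. The plan is to invoke the classical description of this invariant ring. By the work of Procesi (and in this low-dimensional case, classically), the ring of invariants of a pair of generic $2\times2$ matrices is a polynomial ring generated by the five traces
\[
  \tr(X),\quad \tr(Y),\quad \tr(X^2),\quad \tr(Y^2),\quad \tr(XY),
\]
with no relations among them. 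First I would record these five regular functions, giving a morphism $\cM_{\cQ,2}\to\A^5$; the content is that it is an isomorphism.

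**Carrying out the argument.** The cleanest route is: (1) cite the classical fact that $\C[\Mat_2(\C)^{\oplus 2}]^{\GL_2}$ is freely generated by $\tr(X),\tr(Y),\tr(X^2),\tr(Y^2),\tr(XY)$ — this is the first fundamental theorem for $2\times 2$ matrix invariants together with the fact that in this case there are no relations (the second fundamental theorem is vacuous here because the single possible relation, the Cayley–Hamilton identity, has already been used to reduce $\tr(X^2)$ etc.\ and no further syzygy survives in the pair case of size $2$). Concretely one checks algebraic independence by exhibiting a point where the Jacobian of the five functions has rank $5$ — e.g.\ take $X = \vector{t_1&1\\0&0}$ and suitable $Y$ — or simply by a dimension count: $\dim \Rep_2(\cQ) = 8$, the generic orbit of $\GL_2$ acting by conjugation has dimension $\dim \GL_2 - \dim(\text{generic stabiliser}) = 4 - 1 = 3$ (the generic stabiliser of a pair is the scalars $\G_m$), so the quotient has dimension $8-3 = 5$, matching $\A^5$; since a surjection of integral domains of finite type over $\C$ of the same dimension between normal varieties that is generically an isomorphism must be an isomorphism once one knows the coordinate ring is normal (it is, being a ring of invariants of a polynomial ring), it suffices to see the five traces generate, which is the quoted classical theorem.

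**Alternative self-contained version.** If one prefers not to quote Procesi, one can argue by hand using the $\G_m$-action on $\cQ$ coming from the grading used elsewhere in the section: the potential here is $\cW = x^2 y - f(y)$, but for the coarse moduli space of the \emph{unrestricted} quiver only the quiver structure matters, so one is free to normalise. Put a generic pair into the form where $Y$ is diagonalisable with distinct eigenvalues (a dense open locus), conjugate $Y$ to $\mathrm{diag}(\lambda_1,\lambda_2)$; the residual torus then acts on the off-diagonal entries $x_{12},x_{21}$ of $X$ by opposite characters, so $x_{12}x_{21}$, $x_{11}$, $x_{22}$, $\lambda_1$, $\lambda_2$ are invariant coordinates on this chart, and one checks these are rationally equivalent to the five trace functions (e.g.\ $x_{11}+x_{22}=\tr X$, $x_{11}^2+x_{22}^2+2x_{12}x_{21}=\tr X^2$, $\lambda_1 x_{11}+\lambda_2 x_{22}=\tr XY$, etc., a linear-algebra identity to unwind). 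This shows the fraction field of $\cM_{\cQ,2}$ equals $\C(\tr X,\tr Y,\tr X^2,\tr Y^2,\tr XY)$, and combined with normality of the invariant ring and the dimension count gives $\cM_{\cQ,2}\cong\A^5$.

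**Main obstacle.** The only genuine subtlety is making sure that there really is \emph{no relation} among the five traces — i.e.\ that the quotient is $\A^5$ on the nose and not a hypersurface or worse. In the $2\times 2$, two-generator case this is a known but slightly delicate point (for three or more matrices, or larger size, one gets a nontrivial affine variety), so I would lean on the explicit Jacobian-rank computation at a well-chosen point as the decisive step, since it simultaneously proves algebraic independence and, together with the dimension count $\dim\Rep_2(\cQ)/\GL_2 = 5$ and normality of the invariant ring, upgrades the dominant map $\cM_{\cQ,2}\to\A^5$ to an isomorphism.
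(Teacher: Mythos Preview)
Your proposal is correct and follows essentially the same route as the paper: both invoke Procesi's description of the $\GL_2$-invariants of pairs of $2\times 2$ matrices to identify the coordinate ring with $\C[\tr(X),\tr(Y),\tr(X^2),\tr(Y^2),\tr(XY)]$. You are in fact more careful than the paper about the algebraic independence of the five traces (dimension count, Jacobian check, diagonalisation chart), whereas the paper simply records the trace identity \eqref{eq:traceidentity} and the generation-in-length-$\leq 2$ statement from Procesi and then asserts the result.
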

\begin{proof}
As shown by Procesi \cite{Procesi84}, the ring of $\GL_2$-invariant functions on the space of representations $\Rep_2(\C\<x_1,\ldots,x_n\>)$ of the free algebra $\C\<x_1,\ldots,x_n\>$ is the ring of trace functions $\tr(p)\colon \uprho \mapsto \tr(\uprho(p))$ of noncommutative polynomials $p \in \C\<x_1,\ldots,x_n\>$, subject to the relations
\begin{equation}\label{eq:traceidentity}
  \begin{aligned}
    \tr(p_1p_2p_3) + \tr(p_1p_3p_2) &= \tr(p_1p_2)\tr(p_3) + \tr(p_1p_3)\tr(p_2)
    \\&\quad+ \tr(p_1)\tr(p_2p_3) - \tr(p_1)\tr(p_2)\tr(p_3).
  \end{aligned}
\end{equation}
for any triple of noncommutative polynomials $p_1,p_2,p_3$.
In the case $n=2$ the trace ring was shown to be a polynomial ring by LeBruyn--Procesi \cite[Proposition II.3.1]{LP87}, and one finds
\[
\cM_2(\cQ) = \Rep_2(\cQ)\gitquot \GL_2 \simeq \Spec \C[\tr(x), \tr(y), \tr(x^2), \tr(y^2), \tr(xy)].\qedhere
\]
\end{proof}
Because $\cM_{\cQ,2}$ is smooth, its intersection complex $\IC_{\cM_2(\cQ)}$ is trivial, and
we can calculate the BPS invariant of the function $\tr(\cW)$ on the coarse scheme.
\begin{lemma}\label{lem:BPSranktwo} 
Let $\{0\} \subset \cM_{\cQ,2}$ denote the origin, then for all $n\in\Z$
\[
  \upchi_\mmhs(\BPS_{2\updelta_{\curve,n}}) = \upchi_\mmhs\left(\int_{\{0\}}\kern-2pt \upphi_{\tr(\cW)}\right),
\]
where on the right-hand side $\tr(\cW)$ is regarded as a function on $\cM_{\cQ,2}$.
\end{lemma}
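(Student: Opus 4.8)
The plan is to unwind both sides to the local monodromic vanishing cohomology of $\tr(\cW)$ at the origin of $\cM_{\cQ,2}\simeq\A^5$ and then invoke the realisation compatibility between motivic and Hodge-theoretic vanishing cycles used in \cite{DaMeCohDT}. First I would use the preceding lemma: since $\cM_{\cQ,2}\simeq\A^5$ is smooth, the intersection complex $\IC_{\cM_{\cQ,2}}$ is, up to the standard shift and Tate twist, the constant monodromic mixed Hodge module $\constQ$, so $\upphi^{\mmhs}_{\tr(\cW)}\IC_{\cM_{\cQ,2}}$ is simply the Hodge-module vanishing cycle complex of the regular function $\tr(\cW)$ on $\A^5$; denote it $\cF$.

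Next I would identify the locus over which the hypercohomology in the definition of $\cBPS_2^\curve$ is taken. A nilpotent representation of $\cQ$ of dimension $2$ has both loops $x,y$ acting by nilpotent $2\times2$ matrices, so $\tr(p)$ vanishes for every word $p$ of positive degree in $x,y$; since by Procesi's theorem $\cM_{\cQ,2}=\Spec\C[\tr(x),\tr(y),\tr(x^2),\tr(y^2),\tr(xy)]$, every nilpotent module maps to the origin $\cC$. Hence the nilpotent locus in the coarse space is exactly $\cC$, and $\cBPS_2^\curve=\H_c(\cC,\cF|_\cC)=\cF_0$, the stalk of the vanishing cycle complex at the origin together with its monodromic mixed Hodge structure, because $\H_c$ over a single reduced point is just taking cohomology (and for the vanishing cycle of a function on a smooth variety the $*$-restriction and $!$-restriction to a closed point agree, both computing the reduced vanishing cohomology).

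On the motivic side, $\int_\cC\upphi_{\tr(\cW)}=a_*i^*\upphi_{\tr(\cW)}$ with $i\colon\cC\hookrightarrow\cM_{\cQ,2}$ and $a\colon\cC\to\Spec\C$ is nothing but the value at the origin of the motivic vanishing cycle measure $\upphi_{\tr(\cW)}\in\Mot^\muhat(\cM_{\cQ,2})$, i.e. the local motivic vanishing cycle of $\tr(\cW)$ at $0$. The realisation map $\upchi_\mmhs$ intertwines the motivic vanishing cycle with the Hodge-module vanishing cycle — this is exactly the compatibility established in \cite{DaMeCohDT}, building on \cite{DaMeOneLoop}, and it is precisely what licenses the definition of $\cBPS$ there — so $\upchi_\mmhs\!\left(\int_\cC\upphi_{\tr(\cW)}\right)=[\cF_0]$ in $\K_0(\MMHS)$, once the $\L^{-\dim/2}=\L^{-5/2}$ normalisation built into the motivic vanishing cycle is matched with the perverse shift and Tate twist built into $\upphi^{\mmhs}$. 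Combining this with the previous paragraph yields $\upchi_\mmhs(\BPS_2^\curve)=[\cBPS_2^\curve]=[\cF_0]=\upchi_\mmhs\!\left(\int_\cC\upphi_{\tr(\cW)}\right)$, which is the assertion. The only delicate point is this last bookkeeping of conventions — matching the twist $\L^{-5/2}$ and the sign/shift conventions for $\upphi^{\mmhs}$ on $\A^5$, and confirming that "restriction to the nilpotent locus followed by $\H_c$" really produces the $*$-stalk $\cF_0$ — but once these are pinned down as in \cite{DaMeCohDT,DaJacobiAlg} the identity is immediate.
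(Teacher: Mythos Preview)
Your proposal is correct and follows essentially the same route as the paper: both use the smoothness $\cM_{\cQ,2}\simeq\A^5$ to replace $\IC$ by the shifted constant sheaf, and then invoke the compatibility of the motivic and Hodge-theoretic vanishing cycles to match $\int_\cC\upphi_{\tr(\cW)}$ with $[\cBPS_2^\curve]$. The paper cites the monodromic version of \cite[Thm.~4.2.1]{DenefLoeserIgusaZeta} (as in \cite[\S2.7]{DaJacobiAlg}) for this step rather than \cite{DaMeCohDT}, and leaves the identification of the nilpotent locus with the origin implicit; your explicit verification via traces of nilpotent $2\times 2$ matrices is a welcome clarification. One small caveat: your parenthetical claim that $*$- and $!$-restriction of the vanishing cycle complex to a closed point agree is not true in general and is not needed here---once $\cC$ is a single point, $\H_c(\cC,-)$ of the restricted complex is just its value, regardless.
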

\begin{proof}
  Because $\cM_{\cQ,2} \simeq \A^5$ is smooth of dimension $5$, its intersection complex is simply
  \[
    \IC_{\cM_{\cQ,2}} = \constQ[\dim\cM_{\cQ,2}] = \constQ[5],
  \]
  where $\constQ$ denotes the constant sheaf with value $\Q$ on $\cM_{\cQ,2}$.
  It then follows from the monodromic version of \cite[Theorem 4.2.1]{DL98} (see \cite[\S 2.7]{Davison19}), that
  \[
  \upchi_\mmhs\left(\int_{\{0\}}\kern-2pt \upphi_{\tr(\cW)}\right) = 
    \left[\H_c\left(\cM_{\cQ,2},\left(\upphi^\mmhs_{\tr(\cW)}\constQ[5]\right)^\nilp\right)\right],
  \]
  and the right hand side is precisely $[\cBPS_2] = \upchi_\mmhs(\BPS_{2[S_1]}) = \upchi_\mmhs(\BPS_{2\updelta_{\curve,n}})$.
\end{proof}
\begin{lemma}\label{lem:MMHSreal}
  For all $n\in\Z$ the realisations of the BPS invariants satisfy
  \[
    \upchi_\mmhs(\BPS_{2\updelta_{\curve,n}}) = \upchi_\mmhs\left(\L^\mhalf(1-[\upmu_a])\right).
  \]
\end{lemma}
\begin{proof}
  Substituting $p_1 = p_2 = y$ and $p_3 = y^n$ into \eqref{eq:traceidentity}, there is a relation
  \[
  2\cdot \tr(y^{n+2}) = \tr(y^2)\tr(y^n) + 2\cdot\tr(y^{n+1})\tr(y) - \tr(y)^2\tr(y^n),
  \]
  in the coordinate ring of $\cM_{\cQ,2}$ for every $n>0$, which implies that one can expand
  \[
    \tr(f_{a,b}(y)) = \tr(y) \cdot v(\tr(y),\tr(y^2)) + (\tr(y^2))^a,
  \]
  for some polynomial $v(\tr(y),\tr(y^2))$. Likewise, substituting $p_1 = p_2 = x$, $p_3 = y$ into equation \eqref{eq:traceidentity} yields
  \[
    2\tr(x^2y) = \tr(x^2)\tr(y) + 2\tr(xy)\tr(x) - \tr(x)^2\tr(y).
  \]
  Choosing the coordinates
  \begin{gather*}
    a_1 = \tfrac12\tr(x^2) - \tfrac12\tr(x)^2 - v(\tr(y),\tr(y^2)),\quad b_1 = \tr(y),\\
    a_2 = \tr(xy),\quad b_2 = \tr(x),\quad z = \tr(y),
  \end{gather*}
  one finds that $\tr(\cW)$ can be written as the polynomial
  \[
    \tr(x^2y - f_{a,b}(y)) = a_1b_1 + a_2b_2 - z^a.
  \]
  Hence it follows from the Thom--Sebastiani identity that the DT invariant is determined by the minimal part:
  \[
  \int_{\{0\}}\kern-2pt \upphi_{\tr(\cW)} =
  \int_{\{0\}}\kern-2pt \upphi_{a_1b_1 + a_1 b_2 - z^a} =
  \int_{\A^1_z}\kern-2pt \upphi_{z^a} =
  \L^\mhalf (1-[\upmu_a]).\qedhere
  \]
\end{proof}
\subsection{The motivic invariant for $k=1$}\label{sec:rankonecalc}

The invariant for $k=1$ is the linear term in the partition function $\Upphi_{\cQ,\cW}(T)$: 
the plethystic exponential has the first order expansion $\Sym(\sum_{k>1} a_k\cdot T^k) = 1 + a_1\cdot T + \ldots$ so the partition function is of the form
\[
\Upphi_{\cQ,\cW}(T) = 1 + \frac{\BPS_{[S_1]}}{\L^\half-\L^\mhalf} \cdot T + (\ldots\text{higher order terms}\ldots).
\]
Hence $\BPS_{[S_1]}$ can be calculated as the motivic integral of the vanishing cycle $\upphi_{\tr(\cW)}$ on the origin in $\Rep_1(\cQ) \simeq \A^2$. 
The function $\tr(\cW) = x^2y-f_{a,b}(y) \in \C[x,y]$ has isolated singularities, so we can fix an open neighbourhood $U \into \Rep_1(\cQ)$ of the origin $\{0\} \subset \A^2 \simeq \Rep_1(\cQ)$ which does not contain any other singularities. Then
\[
  \BPS_{[S_1]} = (\L^\half-\L^\mhalf) \cdot \int_{\fC_{\cQ,1}}\kern-5pt \upphi_{\tr(\cW)}
  = \int_{\{0\}}\kern-2pt  \upphi_{\tr(\cW)} = \int_U\kern-2pt \upphi_{\tr(\cW)}|_U.
\]
To calculate the right-hand side we construct an embedded resolution $h\colon X \to U$ of the divisor $Z \colonequals \{{\tr(\cW)} = 0\}$ such that $h^*Z$ has normal crossings: 
i.e. every prime component of $h^*Z$ is a smooth codimension 1 subvariety of $X$ and the intersection of any set of components is defined by a regular sequence.
The embedded resolution depends on $a$ and $b$ as follows.
\begin{lemma}\label{lem:blowup1}
  If $a \leq b$ there exists an embedded resolution $h\colon X \to U$ such that 
  \[
    h^*Z = L_1 + L_2 + \sum_{k=2}^a (2k-1) \cdot E_{2k-1} + 2a \cdot E_{2a} + 4a \cdot E_{4a},
  \]
  where $L_1$ and $L_2$ are the components of the strict transform of $Z$ and the $E_i$ are exceptional $\P^1$'s, intersecting as in the following diagram:
  \[
    \begin{tikzpicture}
      \node at (-5.1,3.7) {$L_1$};
      \draw[thick] (-5.8,2.4) to (-4.5,4);
      \node at (-4.9,2.7) {$E_3$};
      \draw[thick] (-6,3) to[bend left] (-4.8,2);
      \node at (-3.9,1.9) {$E_5$};
      \draw[thick] (-5,2.2) to[bend left] (-3.8,1.2);
      \draw[thick, dashed] (-4,1.4) to[bend left] (-2.8,.4);
      \node at (-2,.6) {$E_{2a-1}$};
      \draw[thick] (-3,.6) to[bend left] (-1.8,-.4);
      \node at (-.5,1.2) {$L_2$};
      \draw[thick] (-.2,2) to (-.2,-.4);
      \node at (2.3,.15) {$E_{4a}$};
      \draw[thick] (-2.7,-.3) to[bend left] (2.5,-.3);
      \node at (1.2,1.2) {$E_{2a}$};
      \draw[thick] (1.3,-.4) to[bend left] (2.8,2);
    \end{tikzpicture}
  \]
\end{lemma}
\begin{lemma}\label{lem:blowup2}
  If $a > b$ there exists an embedded resolution $h\colon X \to U$ such that 
  \[
    h^*Z = L_1 + L_2 + \sum_{k=2}^b (2k-1) \cdot E_{2k-1} + (2b+1) \cdot E_{2b+1},
  \]
  where $L_1$ and $L_2$ are the components of the strict transform of $Z$ and the $E_i$ are exceptional $\P^1$'s, intersecting as in the following diagram:
  \[
    \begin{tikzpicture}
      \node at (-5.1,3.7) {$L_1$};
      \draw[thick] (-5.8,2.4) to (-4.5,4);
      
      \node at (-4.9,2.7) {$E_3$};
      \draw[thick] (-6,3) to[bend left] (-4.8,2);
      
      \node at (-3.9,1.9) {$E_5$};
      \draw[thick] (-5,2.2) to[bend left] (-3.8,1.2);
      
      \node at (-3,1.2) {$\ldots$};
      \draw[thick, dashed] (-4,1.4) to[bend left] (-2.8,.4);
      
      \node at (-2,.6) {$E_{2b-1}$};
      \draw[thick] (-3,.6) to[bend left] (-1.8,-.4);
      
      \node at (-.5,1.2) {$L_2$};
      \draw[thick] (-.2,2) to (-.2,-.2) to[out=270,in=-133,looseness=2] (.3,-.25) to (1.9,1.7);
      
      \node at (2.3,.15) {$E_{2b+1}$};
      \draw[thick] (-2.7,-.3) to[bend left] (2.5,-.3);
    \end{tikzpicture}
\]
\end{lemma}
The resolutions can be found via a sequence of blowups of points, a straightforward but somewhat long computation which we include in appendix \ref{sec:blowups}.

To compute the motives we use the formula of Denef--Loeser recalled in \S\ref{sec:vancyc}.
Write $h^*Z$ as a sum $\sum_{i\in I} m_i E_i$ of prime divisors $E_i$ with multiplicity $m_i>0$ ranging over an index set $I$, and let $E_J$ and $E_J^\circ$ be the strata for subsets $J\subset I$.
Looijenga \cite{Looijenga02} defines the following degree $m_I = \gcd\{m_j \mid j \in J\}$ cover $D_J \to E_J$
of $E_J$: let $g\colon \widetilde X \to \A^1$ be the normalisation of the base-change
\[
  \begin{tikzpicture}
    \node (A) at (-2,\gr) {${\A^1 \times_{\A^1} X}$};
    \node (B) at (0,\gr) {$X$};
    \node (C) at (-2,0) {$\A^1$}; 
    \node (D) at (0,0) {$\A^1$}; 
    \draw[->] (C) to[edge label=${\scriptstyle z\mapsto z^{m_I}}$] (D);
    \draw[->] (B) to[edge label=${\scriptstyle {\tr(\cW)} \circ h}$] (D);
    \draw[->] (A) to (B);
    \draw[->] (A) to (C);
  \end{tikzpicture}
\]
then $\widetilde X \to X$ is a $\upmu_I$-fold cover of $X$, and $D_J \to E_J$ is the restriction to $E_J \subset X$. This cover has a canonical $\upmu_{m_I}$-action via its action on $\A^1$.
We will also denote by $D^\circ_J \to E_J^\circ$ the restriction to the open subspace $E_J^\circ$, which is a regular cover with Galois group $\upmu_{m_J}$. To ease notation, we write $D_j$, etc. instead of $D_{\{j\}}$, etc. if $J = \{j\}$ is a one-element set.

\begin{lemma}\label{lem:DTcontract}
  For any $n\in\Z$ the BPS invariant associated to $\updelta_{\curve,n}$ is
  \[
    \BPS_{\updelta_{\curve,n}} = \BPS_{[S_1]} = \begin{cases}
      \L^{-1}(1-[D_{4a}]) + 2 & a \leq b \\
      \L^{-1}(1-[D_{2b+1}]) + 3 & a > b
    \end{cases}.
  \]
  where $D_{4a}$ is a genus $a$ curve equipped with a $\upmu_{4a}$-action and $D_{2b+1}$ is a genus $b$ curve equipped with a $\upmu_{2b+1}$ action.
\end{lemma}
\begin{proof}
  As before, the first equality follows from Proposition \ref{prop:BPSequiv}, so it suffices to calculate the case $\updelta_{\curve,0} = [S_1]$, which we do using the Denef--Loeser formula.
  
  Given an embedded resolution as described above, the Denef--Loeser formula for the motivic integral is
  \[
    \L^{\dim U/2} \cdot \int_U\kern-2pt \upphi_{\tr(\cW)} = [Z] - \sum_{\varnothing\neq J\subset I} (1-\L)^{|J|-1} [D_J^\circ],
  \]
  where $D_J^\circ$ carries the $\muhat$ action induced from the $\upmu_{m_J}$-action. For the case $a\leq b$, the explicit expression can then be read off from the diagram in Lemma \ref{lem:blowup1}: write $E_1 = L_1$ and $E_2 = L_2$ and let $I = \{1,2,3,5,\ldots,2a-1,2a,4a\}$ then the formula expands to
  \begin{align*}
    \L\cdot \int_U\kern-2pt \upphi_{\tr(\cW)} &= [Z] - [D_1^\circ] - [D_2^\circ]
    \\&\quad                            
    - (1-\L)[D_{\{1,3\}}^\circ] 
    - (1-\L)[D_{\{2,4a\}}^\circ]
    \\&\quad
    -\sum_{i=2}^a[D_{2i-1}^\circ] 
    - (1-\L) \sum_{i=2}^{a-1} [D_{\{2i-1,2i+1\}}^\circ]
    \\&\quad
    - [D_{2a}^\circ]
    - [D_{4a}^\circ] 
    - (1-\L)[D_{\{4a,2a\}}^\circ]
    - (1-\L)[D_{\{2a-1,4a\}}].
  \end{align*}
  We will reduce this expression line by line. The divisor $L_1$ appears with multiplicity $m_1 = 1$, so that $D_1 = L_1$ is a trivial cover and $D_1^\circ \subset L_1$ is the complement of the intersection point, which lies above the singularity of ${\tr(\cW)}$; similarly for $L_2$. Because $L_1 \sqcup L_2$ is the strict transform of $Z$, it is isomorphic to $Z$ outside the singular locus, so that
  \[
    [Z] - [D_1^\circ] - [D_2^\circ] = ([Z]- 1) - ([L_1] -1) - ([L_2] - 1) + 1 = 1.
  \]
  Likewise, the intersection points of $L_1 \cap E_3$ and $L_2 \cap E_{4a}$ have a trivial cover, so that
  \[
    -(1-\L)[D^\circ_{\{1,3\}}] - (1-\L)[D^\circ_{\{2,4a\}}] = 2\L - 2.
  \]
  For $i=2,\ldots,a-1$, the exceptional $E_{2i-1} \simeq \P^1$ has multiplicity $m_{2i-1} = 2i-1$ and intersects $E_{2i+1}$ in a point with multiplicity $\gcd(2i-1,2i+1) = 1$. It follows that each cover $D_{2i-1} \to E_{2i-1}$ is connected, and therefore restricts to a regular covering
  \[
    D_{2i-1}^\circ \to E_{2i-1}^\circ \simeq \G_m,
  \]
  for each $i=2,\ldots,a$. The only connected cover is $D^\circ_{2i-1} \simeq \G_m$, which means that the map $D_{2i-1}^\circ \to E_{2i-1}^\circ$ is an equivariant isomorphism. Hence in $\Mot^\muhat(\C)$ there is an equality
  \[
    [D_{2i-1}^\circ] = [E_{2i-1}^\circ] = \L -1.
  \]
  It follows that these curves and their intersections contribute
  \[
    -\sum_{i=2}^a [D_{2i-1}^\circ] - (1-\L)\sum_{i=2}^{a-1} [D_{\{2i-1,2i+1\}}^\circ] = (a-1)(1-\L) - (a-2)(1-\L) = 1-\L
  \]
  Likewise, $D_{2a-1}$ intersects $D_{4a}$ in a point with multiplicity $\gcd(2a-1,4a) = 1$ and contributes
  \[
    -(1-\L)[D_{\{2a,4a\}}] = \L-1.
  \]
  The curve $E_{2a}$ only intersects $E_{4a}$ in a single point, so that $E_{2a}^\circ \simeq \A^1$, which has only the trivial $\upmu_{2a}$-cover $D_{2a}^\circ = (\A^1)^{\sqcup 2a} \to \A^1$ for which $\upmu_{2a}$ permutes the sheets. Hence there is an equivariant isomorphism $D_{2a}^\circ \simeq \A^1 \times \upmu_{2a}$, and it follows that $[D_{2a}] = \L[\upmu_{2a}]$. Likewise, the intersection $E_{2a} \cap E_{4a}$ is a point which is covered by $E_{\{2a,4a\}} = \upmu_{2a}$ because the multiplicity is $\gcd(2a,4a)=2a$. Adding these two contributions gives:
  \[
    - [D_{2a}^\circ] 
    - (1-\L)[D_{\{2a,4a\}}^\circ] 
    = -\L[\upmu_{2a}] - (1-\L)[\upmu_{2a}] = -[\upmu_{2a}].
  \]
  The curve $E_{4a}$ intersects $L_2$ and $E_{2a-1}$ in a point of multiplicity $1$ and $E_{2a}$ in a point of multiplicity $2a$, so $D_{4a}\to E_{4a}$ is a connected cover with Euler characteristic
  \[
    \upchi(D_{4a}) = 4a \upchi(E_{4a}^\circ) + (2+2a) = 4a\upchi(\P^1-3\pt) - (2+2a) = 2 - 2a.
  \]
  Hence, $D_{4a}$ is a smooth projective curve of genus $a$ with equivariant motive
  \[
    [D_{4a}] = [D_{4a}^\circ] + 2 + [\upmu_{2a}].
  \]
  Collection the terms found above, it follows that the motivic integral is
  \[
  \begin{aligned}
    \int_U\kern-2pt \upphi_{\tr(\cW)} &= \L^{-1}\left(1 + 2\L - 2 + (1-\L) + (\L-1) - [\upmu_{2a}] - [D_{4a}] + 2 + [\upmu_{2a}]\right)
    \\&= \L^{-1}(1- [D_{4a}]) + 2.
  \end{aligned}
  \]
  The case $a > b$ proceeds in much the same way, and yields the motivic integral
  \[
  \begin{aligned}
    \L\int_U\kern-2pt \upphi_{\tr(\cW)} &= 1 + (b-1)(1-\L) 
    + (b+2)(\L-1) - [D_{2b+1}] + 3
    \\&= (1- [D_{2b+1}]) + 3\L,
  \end{aligned}
  \]
  where $D_{2b+1}$ is a genus $b$ curve with an $\upmu_{2b+1}$ action.
\end{proof}
To complete the calculation, we will make the Hodge structure and monodromy on the curves $D_{4a}$ and $D_{2b+1}$ explicit. We recall some generalities.

Suppose $C_g$ is a smooth projective curve of genus $g$ over $\C$ with $\uprho\colon \upmu_i \into \Aut(C_g)$ an action of $\upmu_i$. The components of its integral cohomology 
\[
\H^\bullet(C_g, \Z) \simeq \Z \oplus \Z^{2g}[1] \oplus \Z[2] \simeq \H_\bullet(C_g,\Z)^\vee[2],
\]
carry an induced action $\H^i(\uprho,\Z)$ of $\upmu_i$. 
Because the action preserves effective classes, it is trivial on $\H^0(C_g,\Z)$ and $\H^2(C_g,\Z)$, and hence concentrated on the middle cohomology. 
The middle cohomology of a smooth projective curve has a weight $1$ pure Hodge structure
\[
\H^1(C_g, \Z) \otimes_\Z \C = \H^n(C_g, \C) \simeq \H^{1,0}(C_g) \oplus \H^{0,1}(C_g),
\]
where the summands $\H^{p,q}(C_g)$ are isomorphic to $\H^q(C_g, \Omega_{C_g}^p)$ by the degeneration of the Hodge-to-de Rham spectral sequence.
The action of $\upmu_i$ restricts to an action on $H^{p,q}(C_g)$, turning it into a $g$-dimensional representation.
The representations decompose into a direct sum of 1-dimensional irreducible representations $\upxi^j$ on which $\upmu_i$ acts by weight $j$, and the two summands are conjugate $\H^1(C_g,\O_{C_g}) \simeq \overline{\H^0(C_g,\Omega^1_{C_g})}$.

\begin{proof}[Proof of Proposition \ref{prop:Hodgemon}]
  The curve $D_{4a}$ is a ramified cover $q\colon D_{4a} \to \P^1$ of degree $4a$. By Birkhoff--Grothendieck, the push-forward $q_*\O_{D_{4a}}$ splits as a direct sum $\bigoplus_{i=0}^{4k} L_i$ of line bundles $L_i$ on $\P^1$. It follows from \cite[Lemma 3.14]{Steenbrink77} that this decomposition can be chosen to be invariant with respect to the monodromy action, with $\upmu_{4a}$ acting with weight $i$ on $L_i$. Furthermore, the degrees of these linebundles are determined by the multiplicities of the components that intersect $E_{4a}$ in the diagram of proposition \ref{lem:blowup1}. The components $E_{4a}$ intersects the components $L_2, E_{2a}, E_{2a-1}$ of multiplicities $1,2a,2a-1$ each in a single point, so Steenbrink's formula yields
  \[
  L_i \colonequals \O_{\P^1}\left(-i + \floor{\frac{i}{4a}} + \floor{\frac{2a \cdot i}{4a}} + \floor{\frac{(2a-1) \cdot i}{4a}} \right),
  \]
  where $\floor{-}\colon \Q \to \Z$ is the floor function. Some pleasant modular arithmetic shows that 
  \[
  L_i \simeq \begin{cases} \O_{\P^1}(-1) & i = 2j, \\
    \O_{\P^1}(-1) & i = 2j-1, j \leq a, \\  
    \O_{\P^1}(-2) & i = 2j-1, j > a.
  \end{cases}
  \]
  Because the morphism $q\colon D_{4a} \to \P^1$ is affine, $\H^1(D_{4a}, \O_{D_{4a}}) \simeq \H^1(\P^1, q_*\O_{D_{4a}})$ and the Hodge decomposition contains exactly a summand $\upxi^i$ for each $i$ such that $L_i \simeq \O_{\P^1}(-2)$:
  \[
  \H^1(D_{4a}, \O_{D_{4a}}) = \upxi^{2a+1} \oplus \upxi^{2a+3} \oplus \cdots \oplus \upxi^{4a-3} \oplus \upxi^{4a-1}.
  \]
  The second summand $\H^0(D_{4a},\Omega^1_{D_{4a}})$ is obtained by duality. Likewise, the curve $D_{2b+1}$ is a ramified cover $q\colon D_{2b+1} \to \P^1$ of degree $2b+1$ and the decomposition $q_*\O_{D_{2b+1}} = \bigoplus_{i=0}^{2b+1} L_i$ can be chosen invariantly, with $\upmu_{2k+1}$ acting on $L_i$ by weight $i$. Because the curve $E_{2b+1}$ intersects $E_{2b-1}$ of multiplicity $2b-1$ and has a double intersection with the curve $L_2$, which has multiplicity $1$, these line bundles are 
  \[
    L_i \colonequals \O_{\P^1}\left(-i + 2\floor{\frac i{2b+1}} + \floor{\frac{(2b-1) \cdot i}{2b+1}} \right)
    \simeq \begin{cases} \O_{\P^1}(-1) & i \leq b, \\  
      \O_{\P^1}(-2) & i > b.
    \end{cases}
  \]
  Taking the first cohomology once more, one finds
  \[
  \H^1(D_{2b+1},\O_{D_{2b+1}}) = \upxi^{b+1} \oplus \upxi^{b+2} \oplus \cdots \oplus \upxi^{2b-1} \oplus \upxi^{2b},
  \]
  with $\H^0(D_{2b+1},\Omega^1_{D_{2b+1}})$ being the dual representation.
\end{proof}

\section{Preservation of Superpotentials under Auto-Equivalences}\label{sec:autoequiv}
The goal of this section is to prove Proposition \ref{prop:BPSequiv} using an enhancement of the derived category $\D^b_\nilp(A)$ of nilpotent modules over the Jacobi algebra $A=\Jac(Q,W)$.
This enhancement captures the Calabi--Yau structure of $A$, and endows modules $M \in \nilp A \subset \D^b_\nilp(A)$ with a quiver with minimal potential $(\cQ_{M},\cW_{M})$, which expresses its deformation theory.
The latter potential determines the contribution of $M$ and its self-extensions to to the DT theory of $A$.

Working with this enhancement, it becomes possible to compare potentials of different objects $M$ and $N$ related by $N \simeq F(M)$ via a \emph{standard} derived equivalence $F$, i.e. a derived equivalence that lifts to the enhancement of $\D^b(A)$.
This includes in particular all tilting functors defined in \S\ref{sec:stables}. 
Such an equivalence has an action on Hochschild homology
\[
  \HH_\bullet(F)\colon \HH_\bullet(A)\to\HH_\bullet(A),
\]
and we formulate a sufficient condition for the potentials to be preserved by $F$ in terms of this action.
We find the following theorem, which applies to a much more general setting than that of Proposition \ref{prop:BPSequiv}.
\begin{theorem}\label{thm:CYstructPots}
  Let $A=\Jac(Q,W)$ be a Jacobi algebra which is finite over a central Noetherian subring $R \subset A$.
  Suppose $F\colon \D^b(A) \to \D^b(A)$ is an $R$-linear standard equivalence such that
  \[
    \HH_3(F) = \uplambda \in\C^\times.
  \]
  Then for every pair of nilpotent modules $M,N$ such that $\End_A(M) \simeq \C$ and $F(M) \simeq N$, the potentials $\cW_M$ and $\uplambda\cdot \cW_N$ are equivalent via a formal change of variables.
\end{theorem}
The partition functions $\Upphi^{\uptheta_{\curve,n}}(t)$ and $\Upphi^{\uptheta_{2\curve,n}}(t)$ in \S\ref{sec:DTcalc} counts semistable modules of phase $\uptheta_{\curve,n}$ and $\uptheta_{2\curve,n}$ respectively. Because these semistables arise as extensions of a unique stable module $M$, these can be computed by applying the integration map to the stack of extensions of $M$.
The theorem has the following consequence for such contributions.
\begin{corollary}\label{cor:equivBPS}
  In the setting of Theorem \ref{thm:CYstructPots}, let $\cP_{M,k},\cP_{N,k} \subset \fC_{Q,W}$ be the substacks of $k$-fold self extensions of the modules $M$ and $N$ respectively.
  Then there is an equality
  \[
    \int_{\fP_{M,k}}\kern-5pt \upphi_{\tr(W)} = \int_{\fP_{N,k}}\kern-5pt \upphi_{\tr(W)},
  \]
  of the associated contributions to the DT theory of $(Q,W)$.
\end{corollary}
Because the stable modules of phases $\uptheta_{\curve,n},\uptheta_{2\curve,n}$ are related to the simples by a tilting equivalence, the proof of Proposition \ref{prop:BPSequiv} follows easily from Corollary \ref{cor:equivBPS}, provided the tilting functors act as a scalar on Hochschild homology.
We show in Proposition \ref{prop:everyautoequiv} that this is always holds if the units of $R$ coincide with the nonzero scalars.

To prove the theorem, we relate the Hochschild homology of a smooth DG-enhancement $\cA$ of $\D^b(A)$ with (a version of) the Hochschild cohomology of a proper DG-enhancement $\cN$ of $\D^b_\nilp(A)$, and to show that this relation is compatible with derived equivalences. This relation comes from a pairing on Hochschild homology reviewed in \S\ref{ssec:HH} and is related to Koszul duality as we show in \S\ref{ssec:KozDu}. The potentials are defined on the minimal model of $\cN$, which (as we explain in \S\ref{ssec:cycminmod}) is given by a \emph{cyclic} $A_\infty$-category of twisted complexes. The cyclic inner product on this category expresses the Calabi--Yau property, and is the crucial additional structure which allows one to define the potentials, as we recall in \S\ref{ssec:cyc}.

\subsection{Hochschild homology}\label{ssec:HH}
We recall the related notions of Hochschild (co-)homology and Calabi--Yau structures on DG-categories and $A_\infty$-categories. Detailed introductions to the theories of DG and $A_\infty$-categories can be found in \cite{Keller06} and \cite{LefevreHasegawa03} respectively. In what follows we work over the base-field $\C$, all DG-/$A_\infty$-categories are assumed to be small and all $A_\infty$-categories are assumed to have strict units. If $\cC$ is a DG-/$A_\infty$-category we write $\DGPerf \cC$ for its DG-category of perfect complexes.

Given a DG-/$A_\infty$-category $\cC$, the Hochschild complex is (see e.g. \cite[\S 5.3]{Keller06})
\[
  \bC(\cC) \colonequals \left(\bigoplus_{k\geq 0} \bigoplus_{c_i\in\Ob\cC} \cC(c_1,c_0) \otimes (\cC(c_2,c_1)\otimes\ldots\otimes\cC(c_0,c_k)), b\right)
\]
where the differential $b$ is given by application of the composition $\circ$ and differential $\d$ if $\cC$ is a DG-category, and involves also the higher multiplications in case $\cC$ is an $A_\infty$-category, see e.g. \cite{Ganatra12} or the appendix to \cite{Sheridan16}. Its homology is the \emph{Hochschild homology}
\[
  \HH_\bullet(\cC) \colonequals \H^{-\bullet}\kern-1pt\bC(\cC).
\]
For a DG-category $\cC$, the complex $\bC(\cC)$ is an explicit model for the derived tensor product $\cC \Lotimes_{\cC^e} \cC$ over the enveloping DG-category $\cC^e = \cC^\op \otimes_\C \cC$, where the $\cC^e$-module structure on $\cC$ is via the obvious bimodule action. 
If $\cC$ is a \emph{smooth} DG-category, in the sense that $\cC$ is perfect as a $\cC^e$-module, then there is a duality
\[
  \HH_k(\cC) \simeq \H^0(\cC\Lotimes_{\cC^e}\cC[-k]) \simeq \Hom_{\D(\cC^e)}(\cC^!,\cC[-k]),
\]
where $\cC^! \colonequals \RHom_{\cC^e}(\cC,\cC^e)$ denotes the derived bimodule dual, so that cycles in $\HH_k(\cC)$ can be interpreted as morphisms.

Write $(-)^* = \Hom_\C(-,\C)$ for the \emph{linear} dual, then the \emph{Hochschild cohomology} with coefficients in $\cC^*$ is the cohomology of the dual complex:
\[
  \HH^\bullet(\cC,\cC^*) \colonequals \H^{\bullet}\kern-1pt(\bC(\cC)^*)
\]
Recall that $\cC$ is \emph{proper} if the cohomology $\H^\bullet\kern-1pt\cC(c,c')$ of the underlying complex is finite dimensional for all $c,c'\in\Ob\cC$. For proper DG-/$A_\infty$-categories one can again identify the cohomology classes in $\HH^\bullet(\cC,\cC^*)$ with morphisms through the adjunction:
\[
  \HH^k(\cC,\cC^*) \simeq \H^k\kern-1pt\Hom_\C(\cC\Lotimes_{\cC^e} \cC,\C) \simeq \Hom_{\D(\cC^e)}(\cC,\cC^*[k]).
\]
The Hochschild (co-)homology can be used to define the two (dual) versions of the Calabi-Yau property.
\begin{define}
  A (weak) \emph{left $k$-Calabi--Yau structure} on a smooth DG-category $\cC$ is a cycle $\upnu\in\bC_k(\cC)$ such that $[\upnu] \in \Hom_{\D(\cC^e)}(\cC^!,\cC[-k])$ is an isomorphism.
\end{define}
\begin{define}
  A (weak) \emph{right $k$-Calabi--Yau structure} on a proper DG-/$A_\infty$-category $\cC$ is a cocycle $\upxi\in (\bC_k(\cC))^*$ such that $[\upxi]\in\Hom_{\D(\cC^e)}(\cC,\cC^*[-k])$ is an isomorphism.
\end{define}
Suppose $F\colon \cC\to \cD$ is a DG-/$A_\infty$-functor, then application of $F$ defines a chain map $\bC(F) \colon \bC(\cC) \to \bC(\cD)$. For a DG-functor the map $\bC(F)$ is defined pointwise:
\[
  \cC(c_1,c_0)\otimes \ldots\otimes\cC(c_0,c_k) \xrightarrow{F\otimes\ldots\otimes F} \cD(F(c_1),F(c_0))\otimes\ldots\otimes\cD(F(c_0),F(c_k)), 
\]
and for an $A_\infty$-functor $F = (F_k)_{k\geq 1}$ it also involves the higher maps (see \cite[\S 2.9]{Ganatra12}). We denote its dual as $\bC(F)^* \colon \bC(\cD) \to \bC(\cC)$, and write
\[
  \HH_\bullet(F) \colon \HH_\bullet(\cC)\to\HH_\bullet(\cD),\quad 
  \HH^\bullet(F)\colon\HH^\bullet(\cD)\to\HH^\bullet(\cC).
\]
for the induced maps on (co-)homology. If a DG category $\cC$ is smooth and proper, it admits a perfect pairing $\HH_\bullet(\cC) \simeq \HH^\bullet(\cC,\cC^*)$ which is compatible with DG-functors (see the work of Shklyarov \cite{Shklyarov13}),
and identifies left and right Calabi--Yau structures. 
This is the DG-categorical analogue of the Mukai pairing for smooth projective schemes as in the work of Caldararu \cite{Caldararu03}.

In the noncompact Calabi-Yau setting we work in, all DG-categories of interest (e.g. enhancements of $\D^b(\mod A)$) are smooth but \emph{not} proper. There nonetheless exists a pairing when restricting to a subcategory $\cN\subset \cC$ of \emph{compactly supported} objects, as shown by Brav--Dyckerhoff \cite{BD19}. Recall that an object $p\in \cC$ is compactly supported if $\cC(c,p) \in \DGPerf\C$ for all $c\in\cC$. If $\cN \subset \cC$ is the full DG-subcategory on a set of compactly supported objects, then the diagonal bimodule $\cC$ defines a morphism of DG categories
\[
  \cC(-,-) \colon \cC^\op \otimes \cN \to \DGPerf \C,
\]
and hence a chain map $\bC(\cC^\op \otimes \cN) \xrightarrow{\bC(\cC(-,-))} \bC(\DGPerf \C)$ on the associated Hochschild complexes. 
Recall (see e.g. \cite[\S 4.2.1]{Loday92}) that the Hochschild homology admits a shuffle product $\nabla\colon \bC(\cC^\op)\otimes \bC(\cN) \to \bC(\cC^\op\otimes \cN)$, which maps a pair of classes $\textbf{f} = f_0[f_1\mid\ldots\mid f_n]$, $\textbf{g} = g_0[g_1\mid\ldots\mid g_m]$ (written in bar notation) to the class
\[
  \nabla(\textbf{f} \otimes \textbf{g})
  = \sum_\upsigma  \pm (f_0\otimes g_0) [\upsigma_1 | \ldots | \upsigma_{m+n}]
\]
where the sum is over the $(n,m)$-shuffles of the terms $f_1\otimes 1,\ldots f_n\otimes 1, 1\otimes g_1,\ldots,1\otimes g_m$. 
This shuffle product induces the following pairing between the Hochschild complexes
\begin{equation}\label{eq:mukaipairing}
  \bC(\cC^\op) \otimes \bC(\cN) 
  \xrightarrow{\nabla} \bC(\cC^\op \otimes \cN)
  \xrightarrow{\bC(\cC(-,-))} \bC(\DGPerf \C),
\end{equation}
and this yields a pairing on cohomology:
\[
  \<-,-\>_\cN \colon \HH_\bullet(\cC^\op) \otimes \HH_\bullet(\cN) \to \HH_\bullet(\DGPerf\C) \simeq \HH_\bullet(\C) \simeq \C.
\]
If $\upnu\in \HH_d(\cC)\simeq \HH_d(\cC^\op)$ is the Hochschild class of a (weak) left Calabi-Yau structure on $\cC$, then its dual $\<\upnu,-\>\in (\HH_\bullet(\cN))^* \simeq \HH^\bullet(\cN,\cN^*)$ is the class of a (weak) right Calabi-Yau structure on $\cN$ (see \cite[Theorem 3.1]{BD19}); although not every right Calabi--Yau structure necessarily arises in this way. 
The following lemma shows that the pairing is preserved under suitable DG functors.

\begin{lemma}\label{lem:pairinginv}
  Suppose $F\colon \cC \to \cD$ is a quasi-fully-faithful DG-functor that maps a compactly supported subcategory $\cN \subset \cC$ to $\cN' \subset \cD$, then the pairings satisfy
  \[
    \<\HH_\bullet(F^\op)(-), \HH_\bullet(F)(-)\>_{\cN'} = \<-,-\>_{\cN}.
  \]
\end{lemma}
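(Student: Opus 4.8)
The plan is to deduce the identity from the strict functoriality of the Hochschild chain construction $\bC(-)$ and of the shuffle product $\nabla$, together with the fact that $\bC$ carries a natural transformation of DG-functors that is an objectwise quasi-isomorphism to a chain homotopy.

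First I would unwind the definition \eqref{eq:mukaipairing}: $\<-,-\>_\cN$ is induced on homology by the chain map $\bC(\cC(-,-))\circ\nabla$, where $\cC(-,-)\colon\cC^\op\otimes\cN\to\DGPerf\C$ is the diagonal bifunctor, and likewise $\<-,-\>_{\cN'}$ is induced by $\bC(\cD(-,-))\circ\nabla$ with $\cD(-,-)\colon\cD^\op\otimes\cN'\to\DGPerf\C$. Next I would invoke the naturality of the shuffle product with respect to the DG-functor $F^\op\otimes F\colon\cC^\op\otimes\cN\to\cD^\op\otimes\cN'$, which gives the strict identity $\nabla\circ(\bC(F^\op)\otimes\bC(F))=\bC(F^\op\otimes F)\circ\nabla$; combined with functoriality of $\bC$, this shows that the chain map computing $\<\HH_\bullet(F^\op)(-),\HH_\bullet(F)(-)\>_{\cN'}$ is $\bC(\cD(-,-))\circ\bC(F^\op\otimes F)\circ\nabla=\bC(\cD(F(-),F(-)))\circ\nabla$, where $\cD(F(-),F(-))=\cD(-,-)\circ(F^\op\otimes F)$ is again a DG-bifunctor on $\cC^\op\otimes\cN$.

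It then remains to compare the two DG-bifunctors $\cC(-,-)$ and $\cD(F(-),F(-))$ from $\cC^\op\otimes\cN$ to $\DGPerf\C$. The hypothesis that $F$ is quasi-fully-faithful is exactly the statement that the canonical map $\eta_{c,p}\colon\cC(c,p)\to\cD(F(c),F(p))$ is a quasi-isomorphism for all $c\in\cC$, $p\in\cN$; since $F$ is a DG-functor these maps assemble into a closed degree-zero natural transformation $\eta\colon\cC(-,-)\Rightarrow\cD(F(-),F(-))$ which is an objectwise quasi-isomorphism. I would then use that such a natural transformation induces a chain homotopy between the induced maps on Hochschild complexes — concretely, via the standard homotopy that inserts the component $\eta$ into each tensor slot of a Hochschild chain — so that $\bC(\cC(-,-))$ and $\bC(\cD(F(-),F(-)))$ are chain-homotopic. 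Composing with $\nabla$ and passing to homology then gives $\<-,-\>_\cN=\<\HH_\bullet(F^\op)(-),\HH_\bullet(F)(-)\>_{\cN'}$, which is the assertion (read with the $\cN'$-pairing applied to the images under $F$).

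The main obstacle is this last step: one must check with care that an objectwise quasi-isomorphism of DG-bifunctors produces a genuine chain homotopy at the level of Hochschild complexes — equivalently, that Hochschild homology is an invariant of the homotopy type of a DG-functor and not merely of the underlying DG-category. This is the DG-categorical counterpart of the invariance of the Mukai pairing under Fourier--Mukai equivalences used in \cite{CaldararuMukai}, and I would either cite it from the literature on Hochschild homology of DG-categories or verify it directly by writing out the inserting-$\eta$ homotopy and checking that it intertwines the Hochschild differentials; everything else in the argument is strict naturality of $\bC$ and of $\nabla$.
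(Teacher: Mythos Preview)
Your proposal is correct and follows essentially the same route as the paper: first the strict naturality of the shuffle product $\nabla\circ(\bC(F^\op)\otimes\bC(F))=\bC(F^\op\otimes F)\circ\nabla$, then the observation that quasi-fully-faithfulness makes $F$ a DG-natural transformation $\cC(-,-)\Rightarrow\cD(-,-)\circ(F^\op\otimes F)$ which is an objectwise quasi-isomorphism, hence induces the same map on Hochschild homology. For the last step the paper cites \cite[Lemma~3.4]{KellerCycExCats} rather than writing out an explicit homotopy; you also correctly flag the swapped subscripts $\cN\leftrightarrow\cN'$ in the statement.
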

\begin{proof}
  Given elements $\textbf{f} = f_0[f_1|\ldots|f_n]\in \bC(\cC^\op)$ and $\textbf{g} = g_0[g_1|\ldots|g_n]\in\bC(\cN)$, the definition of the shuffle product directly yields
  \[
    \begin{aligned}
      \nabla \circ (\bC(F^\op)\otimes \bC(F))(\textbf{f}\otimes\textbf{g}) &= \sum_\upsigma \pm (F(f_0)\otimes F(g_0))[(F^\op\otimes F)(\upsigma_1)|\ldots | (F^\op\otimes F)(\upsigma_n)]
      \\&= \bC(F^\op\otimes F) \circ \nabla(\textbf{f}\otimes\textbf{g})
    \end{aligned}
  \]
  which shows that $\nabla \circ (\bC(F^\op)\otimes \bC(F)) = \bC(F^\op\otimes F) \circ \nabla$.
  Because $F$ is quasi-fully-faithful, for all $M\in\cC$, $N\in\cN$ there are quasi-isomorphisms
  \[
    F_{M,N} \colon \cC(M,N) \to \cD(F(M),F(N)),
  \]
  which are natural in $M,N$.
  This data defines a DG-natural transformation between the functors $\cC(-,-)$ and $\cD(F^\op(-),F(-))$, i.e. a homotopy equivalence.
  Composing with $\nabla$ yields another homotopy equivalence, and because homotopic functors induce homotopic chain maps by a result of Keller \cite[Lemma 3.4]{Keller99} it follows that
  \begin{align*}
    \<\HH_\bullet(F^\op)(-),\HH_\bullet(F)(-)\>_{\cN'} &=                                                        \H^0(\bC(\cD(-,-)) \circ \nabla \circ (\bC(F^\op)\otimes \bC(F)))
    \\&\simeq \H^0(\bC(\cD(F^\op(-),F(-))) \circ \nabla)
    \\&\simeq \H^0(\bC(\cC(-,-)) \circ \nabla)
    = \<-,-\>_{\cN}.    
    \qedhere
  \end{align*}
\end{proof}

Some of the DG-categories we consider are equipped with an additional $R$-linear structure over a commutative $\C$-algebra $R$.
This $R$-linear structure induces an $R$-module structure on the Hochschild homology (defined over $\C$), where $r\in R$ acts on an element of $\bC(\cC)$ by
\[
  f_0 [f_1\mid \ldots \mid f_n] \mapsto rf_0 [f_1\mid \ldots \mid f_n],
\]
The action is compatible with the Hochschild differential, so that $\HH_\bullet(\cC)$ is a graded $R$-module.
Likewise, an $R$-linear DG-functor $F\colon \cC\to\cD$, induces $R$-linear chain maps $\bC(F)\colon \bC(\cC)\to\bC(\cD)$, and similarly for the maps $\HH_\bullet(F)$ and $\HH^\bullet(F)$.
The $R$-linear structure is compatible with the pairing in Lemma \ref{lem:pairinginv} in the following sense.
\begin{lemma}\label{lem:Rpairing}
  If $\cC$ is an $R$-linear DG-category $\cN \subset \cA$ a subcategory of compactly supported objects,
  then the pairing is $R$-linear: $\<r\cdot -,-\>_\cN = \<-,r\cdot -\>_\cN$ for all $r\in R$.
\end{lemma}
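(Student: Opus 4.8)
The plan is to trace the $R$-action through the explicit construction of the pairing in \eqref{eq:mukaipairing}, which factors as the shuffle product $\nabla$ followed by $\bC$ applied to the evaluation DG-functor $\cC(-,-)\colon\cC^\op\otimes\cN\to\DGPerf\C$. The key point is that $r\in R$ acts on $\HH_\bullet(\cC^\op)$ and on $\HH_\bullet(\cN)$ by left-multiplication on the leading tensor factor $f_0$ of a bar element $f_0[f_1\mid\cdots\mid f_n]$, and that this leading-factor action is compatible with each of the two maps composing the pairing. So I would first check that the shuffle product $\nabla\colon \bC(\cC^\op)\otimes\bC(\cN)\to\bC(\cC^\op\otimes\cN)$ intertwines the $R$-actions in the required way: applying $r$ to the leading factor of a shuffle of $f_0[\cdots]$ with $g_0[\cdots]$ produces $(rf_0\otimes g_0)[\cdots]$ among the shuffled terms, which is the same element one gets by shuffling $rf_0[\cdots]$ with $g_0[\cdots]$ and also the same (since $\cC^\op\otimes\cN$ has its tensor-product $R$-structure and $R$ is commutative) as shuffling $f_0[\cdots]$ with $rg_0[\cdots]$; hence $\nabla(r\cdot - \otimes -) = \nabla(-\otimes r\cdot-)$ as maps into $\bC(\cC^\op\otimes\cN)$ equipped with its leading-factor $R$-action.

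Next I would observe that the DG-functor $\cC(-,-)\colon\cC^\op\otimes\cN\to\DGPerf\C$ is $R$-linear, since $\cC$ is $R$-linear by hypothesis and the diagonal bimodule inherits this structure; therefore $\bC(\cC(-,-))$ is an $R$-linear chain map by the general remark (already in the text, preceding Lemma \ref{lem:Rpairing}) that $R$-linear DG-functors induce $R$-linear maps on Hochschild complexes. Composing, the pairing $\<-,-\>_\cN$ — being the composite of an $R$-bilinear map followed by an $R$-linear map — satisfies $\<r\cdot-,-\>_\cN = \<-,r\cdot-\>_\cN$ at the chain level, and hence on cohomology. One must also note that the target $\HH_\bullet(\DGPerf\C)\simeq\C$ carries only the trivial $R$-action (through the structure map $R\to\C$ is not what happens; rather the $R$-action on $\bC(\DGPerf\C)$ factors through $\C$ because $\DGPerf\C$ is a $\C$-linear, not genuinely $R$-linear, category), so strictly the statement is that the two sides agree as $\C$-valued pairings; this is exactly what is used later.

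The main obstacle I anticipate is purely bookkeeping: the shuffle product distributes the leading-factor action across many summands, so I must be careful that ``multiply the leading factor by $r$'' is a well-defined operation commuting with $\nabla$ and not, say, with the differential in some subtler way — but since $r$ is central in $R$ and $R$ acts by endomorphisms of the whole DG-structure, this is automatic and the verification is a one-line naturality check. A secondary subtlety is making precise in what sense the pairing is ``$R$-linear'' given that its target is $\C$: the correct formulation, which suffices for the applications, is the symmetry $\<r\cdot-,-\>_\cN=\<-,r\cdot-\>_\cN$ as stated, and I would phrase the proof to deliver exactly that. I would not belabour the explicit shuffle-sign combinatorics, as none of it affects the $R$-equivariance.

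\begin{proof}
  Recall from \eqref{eq:mukaipairing} that $\<-,-\>_\cN$ is induced on cohomology by the composite
  \[
    \bC(\cC^\op)\otimes\bC(\cN) \xrightarrow{\ \nabla\ } \bC(\cC^\op\otimes\cN) \xrightarrow{\ \bC(\cC(-,-))\ } \bC(\DGPerf\C).
  \]
  The $R$-action on each Hochschild complex is by left multiplication of $r\in R$ on the leading tensor factor of a bar element, and $\cC^\op\otimes\cN$ carries the resulting $R$-structure (which, as $R$ is commutative, agrees whether $r$ acts on the $\cC^\op$-factor or the $\cN$-factor of the leading term). For bar elements $f_0[f_1\mid\cdots\mid f_m]$ and $g_0[g_1\mid\cdots\mid g_n]$, every summand of $\nabla(f_0[\cdots]\otimes g_0[\cdots])$ has leading factor $f_0\otimes g_0$, and the same shuffled summands arise from $\nabla(rf_0[\cdots]\otimes g_0[\cdots])$ and from $\nabla(f_0[\cdots]\otimes rg_0[\cdots])$, now with leading factor $(rf_0)\otimes g_0 = f_0\otimes(rg_0)$. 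Hence $\nabla$ intertwines the $R$-actions and $\nabla(r\cdot-\otimes-)=\nabla(-\otimes r\cdot-)$.

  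Since $\cC$ is $R$-linear, the diagonal bimodule is $R$-linear and the evaluation DG-functor $\cC(-,-)\colon\cC^\op\otimes\cN\to\DGPerf\C$ is $R$-linear, so the induced chain map $\bC(\cC(-,-))$ is $R$-linear. Composing with the previous paragraph, the chain-level pairing satisfies $\<r\cdot-,-\>=\<-,r\cdot-\>$, and passing to cohomology gives $\<r\cdot-,-\>_\cN=\<-,r\cdot-\>_\cN$ for all $r\in R$.
\end{proof}
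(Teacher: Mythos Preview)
Your overall structure matches the paper's, but there is a genuine gap in the step where you claim $\nabla(r\cdot-\otimes-)=\nabla(-\otimes r\cdot-)$ as maps into $\bC(\cC^\op\otimes\cN)$. The tensor product $\cC^\op\otimes\cN$ is taken over $\C$, not over $R$, so the morphism spaces are $\cC^\op(c,c')\otimes_\C\cN(p,p')$, and in such a space the elements $(rf_0)\otimes g_0$ and $f_0\otimes(rg_0)$ are \emph{not} equal. Commutativity of $R$ only guarantees that each of the two $R$-actions on $\cC^\op\otimes\cN$ is well-defined; it does not identify them. So the equality you assert at the level of $\bC(\cC^\op\otimes\cN)$ is false, and your subsequent appeal to ``$\bC(\cC(-,-))$ is $R$-linear'' does not repair it, since the target $\DGPerf\C$ carries no genuine $R$-structure to be linear with respect to.

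The paper avoids this by never asserting equality before applying $G=\cC(-,-)$. Instead it checks directly that $G(rf_0,g_0)=G(f_0,rg_0)$ as morphisms in $\DGPerf\C$: indeed $G(f,g)$ sends $h\mapsto g\circ h\circ f$, and $R$-linearity of composition in $\cC$ gives $g\circ h\circ(rf)=(rg)\circ h\circ f$. Since every summand in the shuffle has leading factor $f_0\otimes g_0$ and the higher bar entries are unchanged, one obtains $(\bC(G)\circ\nabla)(r\cdot\mathbf f,\mathbf g)=(\bC(G)\circ\nabla)(\mathbf f,r\cdot\mathbf g)$ at the chain level. Your argument becomes correct once you move the comparison past $G$ in this way; the point is that $G$ coequalizes the two $R$-actions on $\cC^\op\otimes\cN$, rather than the actions coinciding on their own.
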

\begin{proof}
  For clarity, we write $G\colon \cC^\op\otimes\cN \to \DGPerf \cC$ for the functor that maps a pair of morphisms $(f\colon c'\to c,g\colon p\to p')$ in $\cC^\op\otimes\cN$ to the map
  \[
    G(f,g) \colon \cC(c,p) \to \cC(c',p'),\quad  h\mapsto  g\circ h \circ f.
  \]
  By inspection this satisfies $G(r\cdot f, g) = G(f,r\cdot g)$ because the composition commutes with the $R$-action. Applying the shuffle product now yields
  \begin{align*}
    (\bC(G) \circ \nabla)(r\cdot \mathbf{f},\mathbf{g}) 
    &= \sum \pm G(rf_0,g_0)[G(\upsigma_1)\mid\ldots\mid G(\upsigma_{n+m})]\\
    &= \sum \pm G(f_0,rg_0)[G(\upsigma_1)\mid\ldots\mid G(\upsigma_{n+m})]\\
    &= (\bC(G) \circ \nabla)(\mathbf{f},r\cdot \mathbf{g}).
  \end{align*}
  The same identity then holds in cohomology, making $\<-,-\>_\cN$ an $R$-linear pairing.
\end{proof}

\subsection{Koszul duality}\label{ssec:KozDu}
\newcommand{\pS}{{\mathbf p}S}

Let $A$ be a (module-)finite algebra over a commutative Noetherian $\C$-algebra $R$, and assume it is homologically smooth over $\C$. Then the DG-category of perfect complexes $\cA \colonequals \DGPerf A$ is a smooth DG-category which is moreover $R$-linear. Given a maximal ideal $\m\subset R$ there is a full DG-subcategory $\cN \subset \cA$ of objects with cohomology supported on $\m\in\Spec R$,
i.e. $\H^0\kern-3pt\cN = \D^\perf_\m(A) \subset \D^\perf(A)$.
These are compactly supported objects and hence induce a pairing $\<-,-\>_\cN$ as in \eqref{eq:mukaipairing}.

The objects in $\D^\perf_\m(A)$ have finite length: they are obtained as a finite extension of shifts of the 
simple $A$-modules supported over $\m$. Hence $\D^\perf_\m(A)$ is generated by some finite sum $S = \bigoplus_{i} S_i$ of simple modules.
Let $\pS \in \cN$ be the associated perfect complex, so that the DG-algebra
\[
  E \colonequals \cA(\pS,\pS),
\]
computes $\REnd_A(S)$.
Because $S$ generates, the embedding $E \to \cN$ is a Morita equivalence, hence defines a quasi-isomorphism $\bC(E) \to \bC(\cN)$ between the Hochschild complexes.
Likewise, $\cA^\op$ is Morita equivalent to $\cA^\op(A,A) \simeq A$, giving a quasi-isomorphism $\bC(A) \to \bC(\cA^\op)$. 
The pairing therefore restricts to a pairing between Hochschild homologies of (DG-)algebras
\[
  \<-,-\>_\cN \colon \HH_\bullet(A) \otimes \HH_\bullet(E) \to \C,
\]
and by adjunction this gives a morphism of $R$-modules
\[
  \Upsilon\colon \HH_\bullet(A) \to \HH_\bullet(E)^* = \HH^\bullet(E,E^*).
\]
In general this map fails to be an isomorphism (certainly for flops) but this is to be expected: we may as well have replaced $A$ by a suitable localisation.
However, one can replace $A$ by its $\m$-adic completion, in which case Van den Bergh \cite[Corollary D.2]{VandenBergh15} shows the analogous map to be an isomorphism due to Koszul duality.

\begin{prop}\label{prop:completion}
  The map $\Upsilon$ factors through the completion of $\HH_\bullet(A)$ as
  \[
    \Upsilon \colon \HH_\bullet(A) \to \HH_\bullet(A) \otimes_R \widehat R \simeq \HH^\bullet(E,E^*).
  \]
\end{prop}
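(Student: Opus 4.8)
The plan is to deduce the factorization from the $R$-linearity of $\Upsilon$ together with the fact that the target $\HH_\bullet(E,E^*)$ is already $\m$-adically complete, and then to recognise the resulting map as the Koszul-duality isomorphism of \cite{VdBSuperPots}.

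First I would record that $\Upsilon$ is $R$-linear. By construction it is the adjoint of the pairing $\langle-,-\rangle_\cN\colon\HH_\bullet(A)\otimes\HH_\bullet(E)\to\C$; the $R$-action on $\HH_\bullet(E,E^*)=\HH_\bullet(E)^*$ is $(r\cdot\varphi)(\xi)=\varphi(r\cdot\xi)$, and Lemma~\ref{lem:Rpairing} gives $\Upsilon(r\cdot a)(\xi)=\langle r\cdot a,\xi\rangle_\cN=\langle a,r\cdot\xi\rangle_\cN=(r\cdot\Upsilon(a))(\xi)$. Next I would check that $\HH_\bullet(E,E^*)$ is a module over $\widehat R$. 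Via the $R$-linear Morita quasi-isomorphism $\bC(E)\xrightarrow{\sim}\bC(\cN)$ it suffices to see that $\HH_\bullet(\cN)$ is locally $\m$-nilpotent: a Hochschild cycle in $\bC(\cN)$ is a finite sum of elements $f_0\otimes f_1\otimes\cdots\otimes f_k$ with each $f_i\in\cC(c_{i+1},c_i)=\RHom_A(c_{i+1},c_i)$ a morphism between $\m$-supported perfect complexes, and the $R$-action multiplies the leading factor $f_0$, which is killed by any power of $\m$ annihilating the cohomology of $c_0$. Hence $\HH_\bullet(E)=\bigcup_N\HH_\bullet(E)[\m^N]$, so $\HH_\bullet(E,E^*)=\varprojlim_N(\HH_\bullet(E)[\m^N])^*$ is a limit of $R/\m^N$-modules and therefore an $\widehat R$-module.

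Granting this, the universal property of base change along $R\to\widehat R$ produces a unique $\widehat R$-linear $\overline\Upsilon$ with $\Upsilon=\overline\Upsilon\circ(\mathrm{can})$ for $\mathrm{can}\colon\HH_\bullet(A)\to\HH_\bullet(A)\otimes_R\widehat R$; since $A$ is module-finite over the Noetherian ring $R$ and homologically smooth over $\C$, $\HH_\bullet(A)$ is a finite $R$-module, so $\HH_\bullet(A)\otimes_R\widehat R$ really is the $\m$-adic completion. It then remains to show $\overline\Upsilon$ is an isomorphism. Here I would pass to the completion of $A$: because $S$ is $\m$-torsion and $R$ is Noetherian, $E=\cA(\pS,\pS)\simeq\RHom_A(S,S)\simeq\RHom_{\widehat A}(S,S)$, so $E$ is equally the Koszul dual of $\widehat A$, and neither the category $\cN$ of $\m$-supported perfect complexes nor the pairing $\langle-,-\rangle_\cN$ changes on replacing $A$ by $\widehat A$ (while $\HH_\bullet(\widehat A)\simeq\HH_\bullet(A)\otimes_R\widehat R$ by flatness of completion and module-finiteness). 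Now \cite[Cor.~D.2]{VdBSuperPots} supplies a natural isomorphism between $\HH_\bullet(\widehat A)$ and $\HH_\bullet(E,E^*)$, and the last point is to verify that it coincides with $\overline\Upsilon$ — equivalently, that the Brav--Dyckerhoff pairing $\langle-,-\rangle_\cN$ agrees with the Shklyarov/Koszul-duality pairing attached to the $(\widehat A,E)$-bimodule $S$ implementing the duality. Both arise from feeding the same datum, the evaluation bifunctor $\cC(-,-)\colon\cC^\op\otimes\cN\to\DGPerf\C$, into the shuffle-product construction on Hochschild complexes, so this should reduce to unwinding definitions and tracking the $R$-linear structures.

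The step I expect to be the main obstacle is precisely this last identification: matching the adjoint $\overline\Upsilon$ of the relative-Calabi--Yau pairing with the Koszul-duality isomorphism of \cite{VdBSuperPots}, since the two are set up in different formalisms (bar/bimodule resolutions versus the Brav--Dyckerhoff construction) and the $R$-linear structure must be kept coherent throughout. If one is content to cite \cite[Cor.~D.2]{VdBSuperPots} as already producing the pairing-adjoint in the complete case, the proposition reduces to the formal factorization argument above together with the routine identification $\HH_\bullet(\widehat A)\simeq\HH_\bullet(A)\otimes_R\widehat R$.
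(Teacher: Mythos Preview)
Your approach is correct and relies on the same two key inputs as the paper (Van den Bergh's Koszul duality and the Brav--Dyckerhoff pairing), but organizes them differently. You first obtain the factorization abstractly, from $R$-linearity of $\Upsilon$ together with the observation that the target is an $\widehat R$-module, and only afterwards try to identify the completed map $\overline\Upsilon$ with the Koszul-duality isomorphism---leaving you the comparison step you rightly flag as the obstacle. The paper instead begins by invoking \cite[Thm.~3.1]{BravDyckerhoffRelativeCY} to identify $\Upsilon$ directly with the map $\RHom_{A^e}(A^!,A)\to\RHom_{E^e}(E,E^*)$ induced by the derived functor $\RHom_A(S,\RHom_A(-,S))\colon \D^\perf(A^e)\to\D^\perf(E^e)^\op$. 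Once this is done the factorization through completion is transparent (the functor factors through $-\otimes_R\widehat R$ because $S$ is $\m$-supported), and the identification with Van den Bergh's isomorphism is immediate, since the Koszul-duality equivalence \cite[Cor.~D.2]{VdBSuperPots} is precisely this same derived functor applied over the completed algebra $\Nccr$. So the paper's route sidesteps the comparison you worry about by working in bimodule derived categories from the start; your route trades that for a cheaper factorization argument but leaves the identification of $\overline\Upsilon$ as a genuine loose end that the bimodule viewpoint resolves for free.
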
 
\begin{proof}
  As remarked before, the Hochschild homology and its dual compute derived bimodule morphisms: there are
  $R$-linear isomorphisms
  \[
    \HH_\bullet(A) \simeq \RHom_{A^e}(A^!,A),\quad \HH^\bullet(E,E^*) \simeq \RHom_{E^e}(E,E^*).
  \]
  It follows from the proof of \cite[Theorem 3.1]{BD19}, the composition of these isomorphisms with the map $\Upsilon \colon \HH_\bullet(A) \to \HH^\bullet(E,E^*)$
  is induced by the following derived functor
  \[
    \RHom_A(S,\RHom_A(-,S)) \colon \D^\perf(A^e) \to \D^\perf(E^e)^\op,
  \]
  which maps $A$ to $E$ and $A^!$ to $E^*$. Let $\widehat R$ be the completion of $R$ at $\m$, then because $R$ is Noetherian we may identify the completion $\widehat M$ of any finitely generated $R$-module $M$ with $M\otimes_R\widehat R$. In particular, the completion of $A$ is the base-change $\Nccr \simeq A\otimes_R \widehat R$.
  This completion is a pseudocompact algebra, which Van den Bergh shows \cite{VandenBergh15} is Koszul dual to $E$. Let $\D^\perf_{\mathrm{pc}}(\Nccr^e)$ denote the category of perfect complexes of pseudocompact $\Nccr$-bimodules (see e.g. the appendix of \cite{KY11}). By Koszul duality, the functor
  \begin{equation}\label{eq:botrow}
    \RHom_{\Nccr}(S,\RHom_{\Nccr}(-,S)) \colon \D^\perf_{\mathrm{pc}}(\Nccr^e) \to \D^\perf(E^e)^\op,
  \end{equation}
  is an equivalence of triangulated categories. In particular, it defines an isomorphism $\RHom_{\Nccr^e}(\Nccr^!,\Nccr) \to \RHom_{E^e}(E,E^*)$, making the following diagram of $R$-modules commute:
  \[
    \begin{tikzpicture}
      \node (A) at (0,0) {$\RHom_{A^e}(A^!,A)$};
      \node (B) at (0,-\gr) {$\RHom_{{\Nccr}^e}(\Nccr^!, \Nccr)$};
      \node (C) at (8,-\gr) {$\RHom_{E^e}(E,E^*)$};
      \draw[->] (A) to[bend left=10,edge label=${\scriptstyle \RHom_A(S,\RHom_A(-,S))}$,near start] (C);
      \draw[->] (B) to[edge label=${\scriptstyle \RHom_{\Nccr}(S,\RHom_{\Nccr}(-,S))}$] (C);
      \draw[->] (A) to[edge label'=${\scriptstyle -\otimes_R \widehat R}$] (B);
    \end{tikzpicture}
  \]
  where $-\otimes_R\widehat R$ is the map induced by the completion functor (which is exact).
  The $R$-module $\RHom_{A^e}(\Nccr^!,\Nccr)$ is obtained by base-change from the Hochschild homology:  
  \[
    \RHom_{\Nccr^e}(\Nccr^!,\Nccr) 
    \simeq \RHom_{A^e}(A^!,A) \otimes_R \widehat R \simeq \HH_\bullet(A) \otimes_R \widehat R.
  \]
  Let $K$ denote the composition of this isomorphism with \eqref{eq:botrow}, then $\Upsilon$ is the composition
  \[
    \HH_\bullet(A) \xrightarrow{-\otimes_R\widehat R} \HH_\bullet(A)\otimes_R\widehat R \xrightarrow{\ K\ } \HH^\bullet(E,E^*).\qedhere
  \]
\end{proof}
Suppose $F\colon \cA\to\cA$ is an $R$-linear quasi-equivalence preserving $\cN$, then it induces $R$-linear endomorphisms $\HH_\bullet(F)$ on $\HH_\bullet(A)\simeq \HH_\bullet(\cA^\op)$ and $\HH^\bullet(F)$ on $\HH^\bullet(E,E^*) \simeq \HH^\bullet(\cN,\cN^*)$.
By the previous proposition, the actions are related as follows:
\begin{prop}\label{prop:completeaction}
  Let $F\colon \cA \to \cA$ be an $R$-linear quasi-equivalence
  preserving $\cN$, then 
  \[
    \HH^\bullet(F) = K\circ (\HH_\bullet(F)^{-1} \otimes_R \widehat R) \circ K^{-1}
  \]
  for $K\colon \HH_\bullet(A)\otimes_R \widehat R \to \HH_\bullet(E,E^*)$ the isomorphism
  from the previous proposition.
\end{prop}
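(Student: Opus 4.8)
The statement to be proved is essentially a naturality/compatibility claim: the action of an $R$-linear quasi-equivalence $F$ on the Hochschild \emph{cohomology} $\HH^\bullet(E,E^*) \simeq \HH_\bullet(\cN,\cN^*)$ of the Koszul-dual side is conjugate, via the Koszul duality isomorphism $K$, to the inverse of its action on $\HH_\bullet(A)$ (base-changed to $\widehat R$). The plan is to deduce this from the two pairing-invariance lemmas (Lemma \ref{lem:pairinginv} and Lemma \ref{lem:Rpairing}) together with the factorisation of $\Upsilon$ through the completion established in Proposition \ref{prop:completion}. The key point is that the pairing $\<-,-\>_\cN\colon \HH_\bullet(A)\otimes\HH_\bullet(E)\to\C$ is a \emph{perfect} pairing once we restrict to the completion: indeed, by Proposition \ref{prop:completion} the adjoint map $\Upsilon$ identifies $\HH_\bullet(A)\otimes_R\widehat R$ with $\HH_\bullet(E,E^*) = \HH_\bullet(E)^*$, and this is precisely $K$.

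First I would unwind the functoriality. The quasi-equivalence $F\colon\cA\to\cA$ restricts to a quasi-equivalence $\cN\to\cN$ (since it preserves $\cN$), and on the Morita models it induces $\HH_\bullet(F)$ on $\HH_\bullet(A)\simeq\HH_\bullet(\cA)$, an induced map on $\HH_\bullet(E)\simeq\HH_\bullet(\cN)$, and by linear duality the map $\HH^\bullet(F)$ on $\HH_\bullet(E)^* = \HH_\bullet(E,E^*)$. Applying Lemma \ref{lem:pairinginv} to the functor $F$ (which is quasi-fully-faithful and maps $\cN$ to $\cN$) gives the invariance
\[
  \<\HH_\bullet(F^\op)(-),\,\HH_\bullet(F)(-)\>_\cN = \<-,-\>_\cN
\]
on $\HH_\bullet(\cA^\op)\otimes\HH_\bullet(\cN)$, i.e. $\<\HH_\bullet(F)(a),\,\HH_\bullet(F)(e)\>_\cN = \<a,e\>_\cN$ under the identifications $\HH_\bullet(\cA)\simeq\HH_\bullet(\cA^\op)$ (note $F^\op$ corresponds to $F$ under $\cA\simeq\cA^\op$ up to the usual opposite-bimodule identification, which is exactly what lets us write this symmetrically).

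Next I would transport this identity through the adjunction defining $\Upsilon$ and the Koszul factorisation. The defining relation $\<a,e\>_\cN = \langle \Upsilon(a), e\rangle$ (evaluation of the functional $\Upsilon(a)\in\HH_\bullet(E)^*$ on $e$), combined with the pairing invariance, yields for all $a,e$:
\[
  \langle \Upsilon(\HH_\bullet(F)(a)),\, \HH_\bullet(F)(e)\rangle = \langle \Upsilon(a), e\rangle.
\]
Writing $\HH^\bullet(F)$ for the transpose of $\HH_\bullet(F)|_{\HH_\bullet(E)}$ (which is how the problem defines the action on $\HH_\bullet(E,E^*)=\HH_\bullet(E)^*$), the left-hand side equals $\langle (\HH^\bullet(F))^{-1}\big(\Upsilon(\HH_\bullet(F)(a))\big), e\rangle$; here the inverse appears because transposition reverses composition, and $\HH^\bullet(F)$ is invertible since $F$ is a quasi-equivalence. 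Since this holds for all $e$ and the pairing is perfect after completion (Proposition \ref{prop:completion}, which is what produces $K$), we get $\Upsilon(\HH_\bullet(F)(a)) = \HH^\bullet(F)(\Upsilon(a))$ for all $a$, i.e. $\Upsilon$ intertwines $\HH_\bullet(F)$ with $\HH^\bullet(F)$ — equivalently, factoring $\Upsilon$ through $K$ via $\Upsilon = K\circ(-\otimes_R\widehat R)$ and using that the completion map is $F$-equivariant, $\HH^\bullet(F)\circ K = K\circ(\HH_\bullet(F)\otimes_R\widehat R)$, which rearranges to the asserted formula. One must be careful about the direction of the inverse: the cleanest route is to track that $\HH^\bullet(F)$ is by definition the \emph{precomposition} (pullback) action on functionals on $\HH_\bullet(E)$, while $\HH_\bullet(F)$ is a covariant pushforward, so the transpose identity forces the $\HH_\bullet(F)^{-1}$ on the cohomology side — I would verify this sign/variance bookkeeping explicitly on bar-complex generators using Lemma \ref{lem:Rpairing} to keep everything $R$-linear.

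\textbf{Main obstacle.} The only real subtlety is the variance bookkeeping — correctly matching "pullback of functionals" ($\HH^\bullet(F)$ as defined via $\bC(F)^*$) against "pushforward on homology" ($\HH_\bullet(F)$), and confirming that the Koszul-duality equivalence \eqref{eq:botrow} of Proposition \ref{prop:completion} is genuinely $F$-equivariant, i.e. that the derived functor $\RHom_{\Nccr}(S,\RHom_{\Nccr}(-,S))$ commutes with the bimodule twist induced by $F$ (this uses that $F$ is a \emph{standard} equivalence, hence given by a bimodule, and that $S$ is preserved up to the correspondence $\cN\to\cN$). Once that equivariance is in hand, the formula $\HH^\bullet(F) = K\circ(\HH_\bullet(F)^{-1}\otimes_R\widehat R)\circ K^{-1}$ is a formal consequence of the perfectness of the pairing and its $F$-invariance from Lemma \ref{lem:pairinginv}.
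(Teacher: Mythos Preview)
Your overall strategy matches the paper's: invoke Lemma~\ref{lem:pairinginv} to get invariance of the pairing, pass to the adjoint $\Upsilon$, and then use the factorisation $\Upsilon = K\circ c$ from Proposition~\ref{prop:completion}. Two points need correction.

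First, the variance computation in the middle is garbled. If $\HH^\bullet(F)$ is the transpose (precomposition) action on functionals, the identity is $\langle \phi,\HH_\bullet(F)(e)\rangle = \langle \HH^\bullet(F)(\phi),e\rangle$ with \emph{no} inverse. Substituting into your displayed equation gives $\HH^\bullet(F)\bigl(\Upsilon(\HH_\bullet(F)(a))\bigr)=\Upsilon(a)$, i.e.\ $\HH^\bullet(F)\circ\Upsilon\circ\HH_\bullet(F)=\Upsilon$, hence
\[
  \HH^\bullet(F)\circ\Upsilon \;=\; \Upsilon\circ\HH_\bullet(F)^{-1},
\]
which is exactly the paper's equation~\eqref{eq:commHact}. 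Your intermediate claim $\Upsilon(\HH_\bullet(F)(a))=\HH^\bullet(F)(\Upsilon(a))$ is off by an inverse and would lead to the wrong formula; you flag this yourself but never actually resolve it.

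Second, your ``main obstacle'' --- verifying that the Koszul functor underlying $K$ is $F$-equivariant --- is not needed. The paper bypasses it with a universal-property argument: write $\Upsilon=K\circ c$ with $c\colon\HH_\bullet(A)\to\HH_\bullet(A)\otimes_R\widehat R$ the completion map, and consider the evident $3\times 2$ diagram with rows $c$ then $K$ and vertical maps $\HH_\bullet(F)^{-1}$, $\HH_\bullet(F)^{-1}\otimes_R\widehat R$, $\HH^\bullet(F)$. The outer rectangle commutes by~\eqref{eq:commHact}; the left square commutes because $\HH_\bullet(F)^{-1}\otimes_R\widehat R$ is by construction the unique $\widehat R$-linear extension of $\HH_\bullet(F)^{-1}$ through $c$. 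Since any $\widehat R$-linear map out of $\HH_\bullet(A)\otimes_R\widehat R$ is determined by its restriction along $c$, the right square is forced to commute, which is the claim. No direct compatibility check of $\RHom_{\Nccr}(S,\RHom_{\Nccr}(-,S))$ with $F$ is required.
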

\begin{proof}
  By Lemma \ref{lem:pairinginv} the pairing $\<-,-\>_\cN$ is invariant under the simultaneous action
  of $\HH_\bullet(F)$ on both arguments. Hence, by adjunction the map $\Upsilon$ satisfies
  \[
    \HH^\bullet(F) \circ \Upsilon \circ \HH_\bullet(F) = \Upsilon,
  \]
  for any quasi-fully faithful functor $F$. If $F$ is a quasi-equivalence, 
  then $\HH_\bullet(F)$ is moreover invertible, so that
  \begin{equation}\label{eq:commHact}
    \HH^\bullet(F) \circ \Upsilon = \Upsilon \circ \HH_\bullet(F)^{-1}.
  \end{equation}
  Let $c\colon \HH_\bullet(A) \to \HH_\bullet(A)\otimes_R \widehat R$ denote the completion map.
  Then by Proposition \ref{prop:completion} above, there is a factorisation $\Upsilon = K\circ c$,
  and we can consider the following diagram of $R$-modules
  \[
    \begin{tikzpicture}[baseline=(current bounding box.center)]
      \node (A) at (0,0) {$\HH_\bullet(A)$};
      \node (B) at (3,0) {$\HH_\bullet(A) \otimes_R\widehat R$};
      \node (C) at (6,0) {$\HH_\bullet(E,E^*)$};
      \draw[->] (A) to[edge label=$\scriptstyle c$] (B);
      \draw[->] (B) to[edge label=$\scriptstyle K$] (C);
      \node (D) at (0,-\gr) {$\HH_\bullet(A)$};
      \node (E) at (3,-\gr) {$\HH_\bullet(A) \otimes_R\widehat R$};
      \node (F) at (6,-\gr) {$\HH_\bullet(E,E^*)$};
      \draw[->] (D) to[edge label=$\scriptstyle c$] (E);
      \draw[->] (E) to[edge label=$\scriptstyle K$] (F);
      \draw[->] (A) to[edge label=$\scriptstyle \HH_\bullet(F)^{-1}$] (D);
      \draw[->] (B) to[edge label=$\scriptstyle \HH_\bullet(F)^{-1} \otimes_R \widehat R$] (E);
      \draw[->] (C) to[edge label=$\scriptstyle \HH^\bullet(F)$] (F);
    \end{tikzpicture}
  \]
  The outer compositions agree by \eqref{eq:commHact}, and by the universal property
  of the completion $\HH_\bullet(F)^{-1}\otimes_R\widehat R$ is the unique map which makes 
  the left inner square commute.
  Hence the right-inner square also commutes and the result follows.
\end{proof}

\begin{corollary}\label{cor:raction}
  Suppose $F\colon\cA\to\cA$ is an $R$-linear quasi-equivalence and which acts
  on $\HH_d(\cA)$ as multiplication $\HH_d(F) = r\cdot$ by a unit $r\in R^\times$.
  Then $\HH^{-d}(F) = r^{-1}\cdot$.
\end{corollary}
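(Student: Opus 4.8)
The plan is to obtain this immediately from Proposition \ref{prop:completeaction}. First I would note that since $r\in R^\times$, the endomorphism $\HH_\bullet(F)=r\cdot$ of $\HH_\bullet(A)$ is invertible with inverse the multiplication-by-$r^{-1}$ map, i.e. $\HH_\bullet(F)^{-1}=r^{-1}\cdot$. Applying the base change $-\otimes_R\widehat R$, the resulting endomorphism $\HH_\bullet(F)^{-1}\otimes_R\widehat R$ of $\HH_\bullet(A)\otimes_R\widehat R$ is again multiplication by $r^{-1}$, now via the image of $r$ under the unital ring map $R\to\widehat R$; this image is still a unit, so the expression makes sense, and scalar multiplication by $r^{-1}$ is intrinsic to the $R$-module structure and hence unaffected by the base change.

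Then I would invoke Proposition \ref{prop:completeaction}, which gives
\[
  \HH^\bullet(F) = K\circ\bigl(\HH_\bullet(F)^{-1}\otimes_R\widehat R\bigr)\circ K^{-1},
\]
where $K\colon \HH_\bullet(A)\otimes_R\widehat R \xrightarrow{\ \sim\ } \HH_\bullet(E,E^*)$ is an isomorphism of $R$-modules. Since $K$ is $R$-linear, it intertwines the multiplication-by-$r^{-1}$ maps on source and target, so conjugating by $K$ leaves multiplication by $r^{-1}$ unchanged; therefore $\HH^\bullet(F)=r^{-1}\cdot$ on $\HH_\bullet(E,E^*)\simeq\HH^\bullet(\cN,\cN^*)$. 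I do not expect any genuine obstacle: all the substantive input — the $R$-linearity of the Mukai-type pairing (Lemmas \ref{lem:pairinginv} and \ref{lem:Rpairing}) and the Koszul-duality factorisation of $\Upsilon$ through the completion (Proposition \ref{prop:completion}) — has already been established, and the only thing to verify here is the harmless bookkeeping that scalars act compatibly through $-\otimes_R\widehat R$ and through the $R$-linear isomorphism $K$.
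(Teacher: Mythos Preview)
Your proposal is correct and is exactly the intended argument: the paper states this as an immediate corollary of Proposition~\ref{prop:completeaction} without further proof, and your observation that scalar multiplication commutes with the $R$-linear isomorphism $K$ and with base change is precisely what makes it immediate.
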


\begin{remark}
  In the context of CY structures,
  Proposition \ref{prop:completion} shows that any right CY structure for the objects $\cN$ supported on $\m$
  is determined by a left CY structure defined in a formal neighbourhood of $\m$, and that
  a `global' left CY structure restricts to this formal neighbourhood.
  Although not every right CY structure for $\cN$ is the image of a global left CY structure,
  Proposition \ref{prop:completeaction} shows that the action of a global equivalence
  on the right CY structures on $\cN$ is nonetheless determined
  by its action on the global left CY structures.
\end{remark}

\subsection{Cyclic \texorpdfstring{$A_\infty$}{A-infinity}-categories}\label{ssec:cyc}
\newcommand{\inftymod}{\,\overset\infty{\vphantom{:}\smash\mod}\,}

In order to endow the properly supported objects in our 3-CY categories with a \emph{potential}, we use $A_\infty$-categories equipped with a \emph{cyclic structure}, which are a strict version of a right Calabi--Yau structure.
In what follows all $A_\infty$-categories/functors/modules are strictly unital.

Given an $A_\infty$-category $\cC$, we write $\cC\inftymod\cC$ for its DG-category of $A_\infty$-bimodules. 
The Hom-complex between bimodules $M,N \in \cC\inftymod\cC$ is of the form
\[
  \cC\inftymod\cC(M,N) \colonequals \left({\textstyle\bigoplus_{i,j\geq 0} }\Hom_\C(\cC^{\otimes i}\otimes M \otimes \cC^{\otimes j}, N),\d\right),
\]
and so any degree $k$ bimodule map $\upalpha\colon M\to N[k]$ is given by its components $\upalpha_{i,j}$.
Any $A_\infty$-category $\cC$ is a bimodule over itself, and so is its linear dual $\cC^*$ by pre-composition.
Given an $A_\infty$-functor $F\colon \cC\to \cD$ there is a pullback $F^*\colon \cD\inftymod\cD \to \cC\inftymod\cC$,
which identifies $F^*\kern-1pt\cM(c,c') = \cM(F(c),F(c'))$.
The functor also gives a morphism $F\colon \cC\to F^*\cD$ in a natural way, so that we may complete
any bimodule morphism $\upalpha\colon \cD\to\cD^*$ to a bimodule morphism $\cC\to \cC^*$ via the diagram
\[
  \begin{tikzpicture}
    \node (A) at (0,0) {$\cC$};
    \node (B) at (3,0) {$F^*\cD$};
    \node (C) at (0,-\gr) {$\cC^*$};
    \node (D) at (3,-\gr) {$F^*\cD^*$};    
    \draw[->] (A) to[edge label=${F}$] (B);
    \draw[->,dashed] (A) to (C);
    \draw[->] (B) to[edge label=${\upalpha}$] (D);
    \draw[->] (D) to[edge label'=${F^*}$] (C);
  \end{tikzpicture}
\]
in $\cC\inftymod\cC$. By slight abuse of notation we denote the dashed vertical arrow as $F^*\upalpha$.
Following Cho--Lee \cite{CL11}, a cyclic structure can be defined in this bimodule formulism as follows.

\begin{define}\label{def:cycstruct}
  Let $\cC$ be a finite dimensional $A_\infty$-category, by which we mean that $\cC(c,c')$ is a finite dimensional vectorspace for all $c,c'\in\Ob\cC$. A cyclic structure on $\cC$ is an $A_\infty$-bimodule homomorphism
  $\upsigma = (\upsigma_{i,j}) \colon \cC \to \cC^*[-3]$ such that:
  \begin{enumerate}
  \item the higher maps $\upsigma_{i,j}$ for $(i,j) \neq (0,0)$ vanish,
  \item for all $a,b\in\Ob\cC$ the map $\upsigma_{0,0}(a,b) \colon \cC(a,b) \to \cC(b,a)^*$ is an isomorphism,
  \item the dual $\upsigma^*\colon \cC^{**}[3] \to \cC^*$ is identified with $\upsigma$ via the isomorphism $\cC \simeq \cC^{**}$ and obvious shift
  \end{enumerate}
  Under these conditions the pair $(\cC,\upsigma)$ is a \emph{cyclic $A_\infty$-category}.
  A \emph{cyclic $A_\infty$-functor} $F\colon (\cC,\upsigma) \to (\cD,\upsigma')$ is an $A_\infty$-functors $F\colon \cC\to \cD$ such that $F^*\upsigma' = \upsigma$.
\end{define}
Objects in a cyclic $A_\infty$-category are endowed with a potential.
Let $(\cC,\upsigma)$ be a cyclic $A_\infty$-category and $T\in \Ob \cC$ an object with endomorphism $A_\infty$-algebra $\cC_T\colonequals \cC(T,T)$,
which has a cyclic structure $\upsigma|_T\colon \cC_T \to \cC_T^*$ given by the restriction of $\upsigma$.
Then the potential of $T$ is the noncommutative formal function
\[
\cW = \cW_T \in \left(\textstyle\bigoplus_{k\geq 1} (\cC_T^1)^{\otimes k}\right)^*
\]
which maps the $k+1$ tuple $f_0\otimes\ldots \otimes f_k$ of degree 1 elements to
\begin{equation}\label{eq:formalfunc}
  \cW(f_0,\ldots,f_k) \colonequals \upsigma(f_0)(m_k(f_1,\ldots,f_k)).
\end{equation}
After picking a basis cardinality $N= \dim_\C \cC_T^1$, the function $\cW$ can be identified with a formal potential $\cW\in\widehat{\C \cQ}_\cyc$ on the $N$-loop quiver $\cQ_T$. If $F\colon (\cC,\upsigma) \to (\cD,\upsigma')$ is a cyclic $A_\infty$-functor then Kajiura \cite[Proposition 4.16]{Kajiura07} shows that there is an induced formal homomorphism $\widehat{\C\cQ}_{F(T)} \to \widehat{\C\cQ}_T$ of the quiver algebras which maps $\cW_{F(T)}$ to $\cW_T$.

If an (ordinary) $A_\infty$-functor $F$ between cyclic $A_\infty$-categories fails to be cyclic, one can instead ask for a weaker ``homotopic'' version of the condition $F^*\upsigma' = \upsigma$.
Kontsevich--Soibelman \cite{KS09} have shown that an $A_\infty$-morphism satisfying such weaker condition can be made cyclic via a perturbation.
Given a cyclic $A_\infty$-category $(\cC,\upsigma)$, the map $\upsigma = \upsigma_{0,0}$ defines an cochain 
in the dual Hochschild complex via the isomorphism\footnote{N.B. one checks that this isomorphism is compatible with the Hochschild and bimodule differential. It extends to a quasi-isomorphism $\bC(\cC)^* \to \cC\inftymod\cC(\cC,\cC^*)$, see e.g. \cite{Ganatra12}.}
\[
  \bigoplus_{c,c'\in\Ob\cC} \Hom_\C(\cC(c,c'),\cC^*(c,c'))
  \simeq   \bigoplus_{c,c'\in\Ob\cC} \Hom_\C(\cC(c,c')\otimes \cC(c',c),\C) \subset \bC(\cC)^*,
\]
and its homotopy class coincides with a class $[\upsigma] \in \HH^{-3}(\cC,\cC^*)$.
If $F\colon \cC\to \cD$ is an $A_\infty$-functor onto a second cyclic $A_\infty$-category $(\cD,\upsigma')$, 
then $\HH^\bullet(F)[\upsigma']$ corresponds to the homotopy class of the bimodule morphism $F^*\upsigma'$.
One can therefore ask that the condition $F^*\upsigma' = \upsigma$ holds up to homotopy:
\[
  \HH^\bullet(F)[\upsigma'] = [\upsigma].
\]
If this condition holds, there exists an automorphism of $\cC$ that perturbs $F^*\upsigma$ to $\upsigma$. 
These automorphisms are described explicitly by Cho--Lee \cite{CL11} in the setting of $A_\infty$-algebras.

\begin{lemma}\label{lem:perturb}
  Let $(C,\upsigma)$ and $(D,\upsigma')$ be minimal cyclic $A_\infty$-algebras with
  an $A_\infty$-homo\-mor\-phism $f\colon C \to D$. Suppose $\HH^\bullet(f)([\upsigma']) = [\upsigma]$,
  then there exists an $A_\infty$-automorphism $g\colon C\to C$ such that the composition
  $f\circ g$ is a \emph{cyclic} $A_\infty$-homomorphism.
\end{lemma}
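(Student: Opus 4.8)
The plan is to reduce the statement to an obstruction-theoretic deformation argument along the lines of \cite{ChoLeeNotesOnKS}, \cite{KSNotesAInfty} and \cite{Kajiura}, treating the discrepancy $f^*\upsigma' - \upsigma$ as a Hochschild cocycle that we kill step by step. First I would record the setup: since $C$ and $D$ are minimal, both $\upsigma$ and $\upsigma'$ are genuine cyclic pairings (the maps $\upsigma_{0,0}$, $\upsigma'_{0,0}$ from Definition~\ref{def:cycstruct}), and $f^*\upsigma'$ is an $A_\infty$-bimodule morphism $C \to C^*[-3]$ which need not be concentrated in bidegree $(0,0)$. Via the quasi-isomorphism $\bC(C)^* \simeq C\inftymod C(C,C^*)$ mentioned in the footnote before the lemma, both $\upsigma$ and $f^*\upsigma'$ define classes in $\HH^3(C,C^*)$, and the hypothesis $\HH^\bullet(f)([\upsigma']) = [\upsigma]$ says exactly that $[f^*\upsigma'] = [\upsigma]$ in $\HH^3(C,C^*)$. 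Hence $f^*\upsigma' - \upsigma = \partial \upeta$ for some Hochschild cochain $\upeta$ of degree $2$.

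Next I would set up the iterative perturbation. The group of $A_\infty$-automorphisms of $C$ that are the identity on cohomology acts on the set of bimodule morphisms $C \to C^*[-3]$ representing a fixed Hochschild class; infinitesimally this action is by the Hochschild differential applied to the "gauge" cochain, so that an automorphism $g = \exp(\text{(derivation attached to }\upeta))$ changes $f^*\upsigma'$ by $\partial\upeta$ to first order. Concretely, writing $g = (g_k)_{k\ge 1}$ with $g_1 = \mathrm{id}$ and organising everything by word length, one builds $g$ order by order: having arranged that $(f\circ g)^*\upsigma'$ agrees with $\upsigma$ modulo terms of word length $\ge n$, the obstruction to fixing length $n$ is a cocycle whose class in $\HH^3(C,C^*)$ is the length-$n$ component of $[f^*\upsigma'] - [\upsigma] = 0$, hence is a coboundary, which determines $g_n$ up to a cocycle. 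Because $C$ is a (strictly unital, minimal) $A_\infty$-algebra, this process is purely algebraic and the formal limit $g = \varprojlim g_n$ exists as an $A_\infty$-automorphism. By construction $(f\circ g)^*\upsigma' = \upsigma$ on the nose, i.e. $f\circ g$ is a cyclic $A_\infty$-homomorphism; one then checks $g$ is invertible since $g_1 = \mathrm{id}$.

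The main obstacle, and the step requiring the most care, is the bookkeeping that shows the length-$n$ obstruction is \emph{precisely} the length-$n$ part of the Hochschild coboundary relation and not merely cohomologous to it up to already-corrected terms. This is the standard subtlety in all such "perturbation lemmas": one must verify that the automorphisms used to clear lower-order discrepancies do not reintroduce obstructions at higher order except through the controlled Hochschild differential, which amounts to checking compatibility of the transfer of cyclic structures under $A_\infty$-morphisms with the filtration by word length. I would handle this by invoking the explicit formulas of \cite[Prop.~4.16]{Kajiura} for how $f^*$ interacts with potentials (equivalently, the bimodule pairing), together with the decomposition-of-the-coproduct arguments of \cite{ChoLeeNotesOnKS}; the self-duality condition (3) of Definition~\ref{def:cycstruct} is what guarantees that the relevant cochains can be chosen symmetric, so that the perturbing automorphism preserves rather than breaks the remaining cyclic symmetry. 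A secondary point to address is strict unitality: one should make sure the $g_n$ can be chosen so that $g$ remains strictly unital, which follows from the fact that the unit is a cocycle representing a distinguished class and the obstruction theory can be run in the reduced (normalised) Hochschild complex.
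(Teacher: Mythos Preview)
Your proposal is correct and is essentially the approach the paper takes: the paper's own proof simply reads ``See the proof of \cite[Prop 7.4]{ChoLeeNotesOnKS},'' and what you have written is a faithful sketch of that obstruction-theoretic perturbation argument. In other words, you have unpacked the cited reference rather than diverged from it.
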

\begin{proof}
  See the proof of \cite[Proposition 7.4]{CL11}.
\end{proof}

This result applies to the endomorphism $A_\infty$-algebras of objects in a cyclic $A_\infty$-category.

\begin{lemma}\label{lem:hocycfun}
  Let $(\cC,\upsigma)$ and $(\cD,\upsigma')$ be minimal cyclic $A_\infty$-categories and $F\colon \cC \to \cD$ a quasi-fully-faithful $A_\infty$-functor which satisfies $\HH^\bullet(F)[\upsigma'] = [\upsigma]$. Then for every $M\in \cC$ there exists a cyclic $A_\infty$-algebra isomorphism $(\cC_M, \upsigma|_M) \to (\cD_{F(M)}, \upsigma'|_{F(M)})$.
\end{lemma}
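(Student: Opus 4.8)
The plan is to transfer the problem to the endomorphism $A_\infty$-algebras of $M$ and $F(M)$ and then invoke Lemma \ref{lem:perturb}. Fix $M\in\Ob\cC$. First I would observe that $F$ restricts to an $A_\infty$-homomorphism $f\colon\cC_M\to\cD_{F(M)}$ between endomorphism algebras, namely the $A_\infty$-functor obtained by restricting $F$ to the full $A_\infty$-subcategory on the single object $M$, with target the full subcategory on $F(M)$; concretely its components are the maps $F_k$ evaluated on $\cC(M,M)^{\otimes k}$. Since $F$ is quasi-fully-faithful, $f_1$ induces an isomorphism $\H^\bullet\cC(M,M)\xrightarrow{\sim}\H^\bullet\cD(F(M),F(M))$, and as $\cC$, $\cD$ are minimal these cohomologies are the hom-spaces themselves, so $f_1$ is already a linear isomorphism. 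A strictly unital $A_\infty$-homomorphism with invertible linear part is an $A_\infty$-isomorphism, hence $f$ is one and admits an $A_\infty$-inverse.

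Next I would check that the hypothesis $\HH^3(F)[\upsigma']=[\upsigma]$ descends along restriction to endomorphism algebras. The inclusions of the full subcategories on $\{M\}$ and $\{F(M)\}$ give a strictly commuting square of (strictly unital) $A_\infty$-functors whose remaining two sides are $F$ and $f$; applying the covariant Hochschild complex construction and then passing to the linear dual yields, in degree $3$, the identity $\mathrm{res}\circ\HH^3(F)=\HH^3(f)\circ\mathrm{res}$ relating the restriction maps $\mathrm{res}\colon\HH^3(\cC,\cC^*)\to\HH^3(\cC_M,\cC_M^*)$ and $\mathrm{res}\colon\HH^3(\cD,\cD^*)\to\HH^3(\cD_{F(M)},\cD_{F(M)}^*)$. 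Because the higher components $\upsigma_{i,j}$, $\upsigma'_{i,j}$ vanish for $(i,j)\neq(0,0)$ by axiom (1) of Definition \ref{def:cycstruct}, the cocycles representing $[\upsigma]$ and $[\upsigma']$ restrict to the cocycles $\upsigma|_M$ and $\upsigma'|_{F(M)}$. Chasing $[\upsigma']$ around the square and using $\HH^3(F)[\upsigma']=[\upsigma]$ then gives $\HH^\bullet(f)\big([\upsigma'|_{F(M)}]\big)=[\upsigma|_M]$.

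With this in hand I would apply Lemma \ref{lem:perturb} to $f\colon(\cC_M,\upsigma|_M)\to(\cD_{F(M)},\upsigma'|_{F(M)})$, obtaining an $A_\infty$-automorphism $g\colon\cC_M\to\cC_M$ such that $f\circ g$ is a \emph{cyclic} $A_\infty$-homomorphism, i.e. $(f\circ g)^*\upsigma'|_{F(M)}=\upsigma|_M$. Since $g$ is an automorphism and $f$ an $A_\infty$-isomorphism, $f\circ g$ is an $A_\infty$-isomorphism; writing $h$ for its $A_\infty$-inverse and using contravariance of pullback, $h^*\upsigma|_M=h^*(f\circ g)^*\upsigma'|_{F(M)}=((f\circ g)\circ h)^*\upsigma'|_{F(M)}=\upsigma'|_{F(M)}$, so $h$ is cyclic as well. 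Hence $f\circ g$ is an isomorphism in the category of cyclic $A_\infty$-algebras, which is exactly the desired isomorphism $(\cC_M,\upsigma|_M)\xrightarrow{\sim}(\cD_{F(M)},\upsigma'|_{F(M)})$. The step I expect to be the main obstacle is the second one: making precise that restricting a Hochschild $3$-cochain to a single object commutes with the functorial maps $\HH^3(F)$, $\HH^3(f)$ and correctly identifies $[\upsigma]\mapsto[\upsigma|_M]$ — this uses both the diagonal form of cyclic structures (vanishing of the $\upsigma_{i,j}$ for $(i,j)\neq(0,0)$) and strict unitality, so that the bar-complex identification of a cyclic structure with its class in $\HH^3(-,-^*)$ behaves functorially under restriction to full subcategories.
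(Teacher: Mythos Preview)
Your proposal is correct and follows essentially the same approach as the paper: restrict $F$ to an $A_\infty$-isomorphism $f=F|_M$, use the commutative square of inclusion functors $i_M$, $i_{F(M)}$ to transport the hypothesis $\HH^3(F)[\upsigma']=[\upsigma]$ down to $\HH^3(f)[\upsigma'|_{F(M)}]=[\upsigma|_M]$, and then apply Lemma~\ref{lem:perturb}. The paper's proof is simply a terser version of yours, compressing your ``commuting square'' paragraph into the single chain $\HH^3(F|_M)[\upsigma'|_{F(M)}]=\HH^3(i_{F(M)}\circ F|_M)[\upsigma']=\HH^3(i_M)(\HH^3(F)[\upsigma'])=[\upsigma|_M]$.
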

\begin{proof}
  If an $A_\infty$-functor between minimal $A_\infty$-categories is quasi-fully-faithful,
  then the restrictions $F|_M \colon \cC_M \to \cD_{F(M)}$ are $A_\infty$-isomorphisms.
  By the perturbation Lemma \ref{lem:perturb} it suffices to shows that this preserves the Hochschild cohomology
  classes of the cyclic structures.
  Let $i_{F(M)}$ and $i_M$ denote the inclusion functors of $\cD_{F(M)}$ and $\cC_M$, then 
  \[
    \HH^\bullet(F|_M)[\upsigma'|_{F(M)}] = \HH^\bullet(i_{F(M)}\circ F|_M)[\upsigma'] = \HH^\bullet(i_M)(\HH^\bullet(F)[\upsigma']) = [\upsigma|_M].\qedhere
  \]
\end{proof}

We will also make use of the following auxilary lemma, which shows that the image $F(M)$ in the above lemma can be replaces by a quasi-isomorphic object.

\begin{lemma}\label{lem:cyccanon}
  Let $(\cC,\upsigma)$ be a minimal cyclic $A_\infty$-category and $M,N \in \Ob \cC$. If $M$ and $N$ are isomorphic in $\H^0\kern-2pt\cC$, then $(\cC_M, \upsigma|_M) \simeq (\cC_N, \upsigma|_N)$ as cyclic $A_\infty$-algebras.
\end{lemma}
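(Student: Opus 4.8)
The plan is to deduce this from the functoriality of cyclic structures established just above, applied to the identity (or rather a suitable quasi-isomorphism) on $\cC$. The key point is that $M$ and $N$ being isomorphic in $\H^0\cC$ does \emph{not} give us an honest $A_\infty$-functor swapping them, so we cannot directly invoke Lemma \ref{lem:hocycfun}. Instead I would use that an isomorphism $\phi\colon M\to N$ in the cohomology category lifts, since $\cC$ is minimal, to a closed degree-$0$ morphism in $\cC(M,N)$ which is a quasi-isomorphism of the one-object $A_\infty$-subcategories; equivalently, the two full $A_\infty$-subcategories of $\cC$ on $\{M\}$ and on $\{N\}$ are quasi-equivalent, and one can in fact realise the quasi-equivalence by an $A_\infty$-functor that is the identity on all higher data. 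More concretely, I would consider the full $A_\infty$-subcategory $\cC'\subset\cC$ on the two objects $\{M,N\}$ and observe that the inclusions $i_M\colon\cC_M\into\cC'$ and $i_N\colon\cC_N\into\cC'$ are both quasi-equivalences (since $M\simeq N$ means each generates the same thick subcategory, and here the subcategory literally has $M$ and $N$ as isomorphic objects).

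First I would record that the restricted cyclic structures $\upsigma|_{\cC'}$, $\upsigma|_M$, $\upsigma|_N$ are all related by pullback along these inclusions: $i_M^*(\upsigma|_{\cC'}) = \upsigma|_M$ and $i_N^*(\upsigma|_{\cC'}) = \upsigma|_N$ on the nose, directly from the definition of restriction in Definition \ref{def:cycstruct}, since $i_M$ and $i_N$ are strict inclusion functors (their higher components vanish). Next, because $i_M$ and $i_N$ are quasi-fully-faithful between minimal $A_\infty$-categories, the induced maps on Hochschild cohomology $\HH^3(i_M)$ and $\HH^3(i_N)$ are isomorphisms, and they send $[\upsigma|_{\cC'}]$ to $[\upsigma|_M]$ and $[\upsigma|_N]$ respectively. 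Now pick a quasi-isomorphism $f\colon \cC_M\to\cC_N$ lifting $\phi$; composing with the inclusions it sits inside $\cC'$, and I claim $\HH^3(f)[\upsigma|_N] = [\upsigma|_M]$. This follows because $i_N\circ f$ and $i_M$ are both quasi-fully-faithful functors $\cC_M\to\cC'$ inducing the \emph{same} map on $\HH^3$ (they agree up to the inner/homotopy equivalence coming from conjugation by $\phi$, which acts trivially on Hochschild cohomology), so $\HH^3(f)\HH^3(i_N)^{-1} = \HH^3(i_M)^{-1}$ after identifying targets; chasing through gives $\HH^3(f)[\upsigma|_N] = \HH^3(f)\HH^3(i_N)[\upsigma|_{\cC'}] = \HH^3(i_M)[\upsigma|_{\cC'}] = [\upsigma|_M]$.

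Having verified the Hochschild-cohomology compatibility, I would then invoke the perturbation Lemma \ref{lem:perturb}: there is an $A_\infty$-automorphism $g\colon\cC_M\to\cC_M$ such that $f\circ g\colon\cC_M\to\cC_N$ is a \emph{cyclic} $A_\infty$-homomorphism, i.e. $(f\circ g)^*(\upsigma|_N) = \upsigma|_M$ exactly. Since $f$ is a quasi-isomorphism and $g$ an automorphism, $f\circ g$ is a quasi-isomorphism of minimal $A_\infty$-algebras, hence an $A_\infty$-isomorphism; therefore $(\cC_M,\upsigma|_M)\simeq(\cC_N,\upsigma|_N)$ as cyclic $A_\infty$-algebras, which is the assertion.

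The main obstacle I anticipate is the bookkeeping in the middle step: making precise that the two functors $\cC_M\to\cC'$ (namely $i_M$, and $i_N\circ f$) induce the same map on $\HH^3$. This is morally the statement that conjugation by an invertible degree-zero morphism is an inner equivalence and acts trivially on Hochschild cohomology, but in the minimal $A_\infty$-setting one has to set up the homotopy carefully — either by a direct construction of the bimodule homotopy, or by passing to the DG-enhancement where the statement is the standard fact that naturally isomorphic functors act identically on Hochschild homology (the $A_\infty$-analogue of \cite[Lemma 3.4]{KellerCycExCats}, already used in the proof of Lemma \ref{lem:pairinginv}). I would structure the written proof to lean on that cited fact rather than reproving it, keeping this lemma short.
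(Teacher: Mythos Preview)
Your proposal is correct and follows the same overall strategy as the paper: produce an $A_\infty$-isomorphism $\cC_M\to\cC_N$ satisfying the homotopy-cyclic condition $\HH^3(f)[\upsigma|_N]=[\upsigma|_M]$, then invoke the perturbation Lemma~\ref{lem:perturb}. The difference lies entirely in how the Hochschild compatibility is verified. The paper bypasses your two-object subcategory framework: it passes to a DG-envelope $\cD$ of $\cC$, where conjugation $u\circ-\circ u^{-1}$ by a lift of the isomorphism is an honest DG-map $\overline u\colon\cD_M\to\cD_N$, and a two-line direct calculation shows $[\upalpha|_M]=\HH^k(\overline u)[\upalpha|_N]$ for any bimodule morphism $\upalpha\colon\cD\to\cD^*$. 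This is exactly the step you flag as the ``main obstacle'' (that $i_M$ and $i_N\circ f$ agree on $\HH^3$), and the paper's DG-envelope computation is precisely the ``direct construction of the bimodule homotopy'' you mention as one option. Your more categorical packaging is fine, but it defers the real content to the cited Keller lemma and also requires you to construct $f$ --- which, if done honestly, amounts to the same conjugation-via-envelope construction. The paper's route is shorter and more self-contained; yours makes the Morita-invariance underpinnings more visible.
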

\begin{proof}
  Recall that the $A_\infty$-category $\cC$ admits a DG-envelope $\cD$, which is a DG-category with the same set of objects as $\cC$, for which $\cC$ is a minimal model, and which comes equipped with a quasi-equivalence $\DGMod \cD^e \to \cC\inftymod\cC$ (see \cite[Lemme 2.5.2.2]{LefevreHasegawa03}).
  
  Let $u\in \cD(M,N)$ and $u^{-1} \in \cD(N,M)$ be the lifts of the isomorphism $M\xrightarrow{\sim} N$ in $\H^0\kern-2pt\cD = \H^0\kern-2pt\cC$ and its homotopy inverse, and consider the induced map
  \[
    u\circ - \circ u^{-1} \colon \cD_M \to  \cD_N.
  \]
  This DG-algebra homomorphism induces DG-bimodule morphisms $\overline u \colon \cD_M \to \cD_N$ and $\overline u^*\colon \cD_N^* \to \cD_M^*$. Let $\upalpha\colon \cD\to\cD^*[k]$ be a lift of the cyclic structure $\upsigma$, then
  \begin{align*}
    (\overline u^*\circ \upalpha|_N \circ \overline u)(f)(g) 
    &= \upalpha(u\circ f \circ u^{-1})(u\circ g\circ u^{-1})\\
    &= \upalpha(f\circ u^{-1}\circ u)(g \circ u^{-1}\circ u).
  \end{align*}
  Because $u^{-1}\circ u$ is homotopic to the identity, it follows that for any such $\upalpha\colon\cD\to\cD^*[k]$
  \[
    [\upalpha|_M] = [\overline u^* \circ \upalpha|_N \circ \overline u] = \HH^\bullet(\overline u)[\upalpha|_N].
  \]
  Because $\upalpha$ is a lift of $\upsigma$, the $A_\infty$-homomorphism $f\colon \cC_M \to \cC_N$ induced by $u\circ-\circ u^{-1}$ then satisfies $[\upsigma|_M] = \HH^\bullet(f)[\upsigma|_N]$. 
  The result then follows from Lemma \ref{lem:perturb}.
\end{proof}
\begin{remark}
  Note that the existence of a quasi-isomorphism $M\simeq N$ in $\cC$ is much stronger than
  the existence of a $A_\infty$-isomorphism $\cC_M \simeq \cC_N$, and the latter does not guarantee that the homotopy-cyclic condition holds.
\end{remark}

\subsection{Cyclic minimal models}\label{ssec:cycminmod}
Given a quiver with potential, it has a standard cyclic $A_\infty$-category associated to it.

\begin{define}
  Let $(Q,W)$ be a quiver with potential and for vertices $v,w\in Q_0$ denote by $Q(v,w)$ the set of arrows from $v$ to $w$.
  The $A_\infty$-category $\cD = \cD_{Q,W}$ has objects $\Ob\cD = Q_0$ and morphism spaces
  \[
    \cD(v,w) = \begin{cases}
      \C 1_v \oplus \C Q(w,v)^*[1] \oplus \C Q(v,w) [2] \oplus \C 1_v^*[3]& v = w\\
      \C  Q(w,v)^*[1] \oplus \C Q(v,w) [2]  & \text{otherwise}
    \end{cases}
  \]
  The higher products are required to have $1_v$ as strict units, for each $a \in Q(v,w)$
  \[
    m_2(a^*,a) = 1_v^*,\quad m_2(a,a^*) = 1_w^*,
  \]
  and for any chain of arrows $a_1,\ldots,a_k$ in $Q$ where $a_1 \in Q(v,w')$ and $a_k \in Q(v',w)$,
  \[
    m_k(a_k^*,\ldots,a_1^*) = \sum_{a \in Q(w,v)} c^a_{a_1\cdots a_k} \cdot a,
  \]
  where $c^a_{a_k\cdots,a_1}$ is the coefficient of $a_1\cdots a_k$ in the cyclic derivative $\del W/\del a \in \C Q$ of the potential. 
  All other compositions are zero, and in particular $\cD$ is minimal.
  We endow $\cD$ with the cyclic structure
  \[
    \upsigma(f)(g) = \tr_Q(m_2(f,g)),
  \]
  where $\tr_Q\colon \bigoplus_{v\in Q_0} \cD(v,v) \to \C$ is the linear map which send the generators $1_v^*$ to $1\in\C$ and maps all other generators to $0$.
\end{define}

One would like to extend the cyclic structure on $\cD_{Q,W}$ to the DG category $\DGPerf \cD_{Q,W}$ of perfect complexes, but this is not possible, as the latter does not have finite dimensional Hom-spaces.
Instead, one can take the $A_\infty$-category $\cC \colonequals \tw\cD_{Q,W}$ of twisted complexes, defined e.g. in the work of Lef\`evre-Hasegawa \cite[\S7]{LefevreHasegawa03}, which is a finite dimensional replacement for $\DGPerf\cD_{Q,W}$.
The cyclic structure extends the cyclic structure on $\cD_{Q,W}$, which we again denote by $\upsigma$, and every $T\in \cC$ is endowed with a potential $\cW_T \in {\cQ_T}_\cyc$.
\begin{theorem}[see {\cite[Theorem 7.1.3]{DavisonThesis}}]\label{thm:ben}
  Let $(Q,W)$ be a quiver with potential, $M\in\nilp \Jac(Q,W) \simeq \H^0\!\cC$ a module with $\End_{\Jac(Q,W)}(M) \simeq \C$, and $\cP\subset \fM_{Q,W}$ the locus of repeated self-extensions of $M$.
  Then there for any lift $T \in \cC$ of $M$
  \begin{equation}\label{eq:DTtwist}
    \int_{[\cP \to \fC]}\kern-2pt \upphi_{\tr(W)}|_\fC = \Upphi_{\cQ_T,\cW_T}(t^{[M]}),
  \end{equation}
  where $\cQ_T$ is the $N$-loop quiver of $T$ with potential $\cW_T$ as defined in \eqref{eq:formalfunc}.
\end{theorem}
The potential $\cW_T$ of a twisted complex is too coarse of an invariant to track under derived quasi-equivalences, and we will instead consider the associated \emph{minimal} potential.
Given $T \in \cC$ corresponding to a nilpotent module $M$ with $\End_A(M) \simeq \C$, the cyclic decomposition theorem \cite[Theorem 5.15]{Kajiura07} gives a splitting of the cyclic endomorphism $A_\infty$-algebra $\cC_T$ of $T$: there is a cyclic $A_\infty$-isomomorphism 
\begin{equation}\label{eq:minimalsplitting}
   (\cC_T,\upsigma|_T) \xrightarrow{\sim} (\H^\bullet\kern-2pt\cC_T, \upsigma_\min) \oplus (L_T, \upsigma'),
\end{equation}
where $(\H^\bullet\kern-2pt\cC_T, \upsigma_\min)$ is \emph{the cyclic minimal model}, a cyclic minimal $A_\infty$-algebra structure on the cohomology of $\cC_T$, and $(L_T,\upsigma')$ is a linearly contractible $A_\infty$, i.e. a cyclic $A_\infty$-algebra with $m_k = 0$ for $k\geq2$ and trivial cohomology. There is an associated decomposition
\begin{equation}\label{eq:basiscoho}
  (\cQ_T)_1 = \{x_1,\ldots,x_n\} \sqcup \{y_1,\ldots,y_{N-n}\}
\end{equation}
of the set of loops for the $N$-loop quiver $\cQ_T$, so that $x_i$ form a basis for the cohomology $\H^1\kern-2pt\cC_T$. Let $\cQ_{\min,T}$ be the subquiver of $\cQ_T$ generated by the $n$-loops $\{x_1,\ldots,x_n\}$, then the \emph{minimal potential} on $\cQ_{\min,T}$ is the noncommutative formal function
\[
  \cW_{\min,T} = \cW_{\min,T}(x_1,\ldots,x_n),
\]
defined as in \eqref{eq:formalfunc} from $(\H^\bullet\kern-2pt\cC_T,\upsigma_\min)$. Likewise, the linearly contractible summand $(L,\upsigma')$ has a potential $q= q(y_1,\ldots,y_{N-n})$, which is a nondegenerate quadratic form. The splitting \eqref{eq:minimalsplitting} induces formal isomorphism $\uppsi_T \colon \widehat{\C\cQ}_T \to \widehat{\C\cQ}_T$ such that $\uppsi_T(\cW_T) = \cW_{\min,T} + q$.

If $\cW_{\min,T}$ is again a \emph{finite} potential, the partition function $\Upphi_{\cQ_{\min,T},\cW_{\min,T}}(t)$ is well-defined, and Lemma \ref{lem:quadterms} implies that it is equal to the partition function $\Upphi_{\cQ_T,\cW_T}(t)$. Even if the minimal potential is a formal powerseries, it can still be used to compare the partition functions associated to two twisted complexes.
\begin{lemma}\label{lem:twequivmotives}
  Let $T_1,T_2 \in \Ob \cC$ be twisted complexes corresponding to nilpotent modules $M,N\in\nilp A$ with simple endomorphism algebras as above. If there exists a formal isomorphism $\uppsi\colon \widehat{\C\cQ}_{\min,T_1} \to \widehat{\C\cQ}_{\min,T_2}$ between their complete path algebras such that
  \[
    \uppsi(\cW_{\min,T_1}) = \uplambda\cdot \cW_{\min,T_2},
  \]
  for some scalar $\uplambda\in\C^\times$, then the partition functions of $T_1$, $T_2$ are equal:
  \begin{equation}\label{eq:formequiv}
    \Upphi_{\cQ_{T_1},\cW_{T_1}}(t) = \Upphi_{\cQ_{T_2},\cW_{T_2}}(t).
  \end{equation}
\end{lemma}
\begin{proof}
  Without loss of generality, we can identify the first $n$ loops in the $N_1$-loop quiver $\cQ_{T_1}$ with the first $n$ loops in the $N_2$-loop quiver $\cQ_{T_2}$, and write the splitting in \eqref{eq:basiscoho} as
  \[
    (\cQ_{T_1})_1 = \{x_1,\ldots,x_n\} \sqcup \{y_1,\ldots,y_{N_1-n}\},\quad
    (\cQ_{T_2})_2 = \{x_1,\ldots,x_n\} \sqcup \{z_1,\ldots,z_{N_2-n}\},
  \]
  so that $\uppsi$ is a formal automorphism of the quiver generated by the variables $x_i$. The potentials $\cW_{\min,T_1}$, $\cW_{\min,T_2}$ are functions in the variables $x_i$, and the quadratic terms $q_1$, $q_2$ of the linearly contractible summands for $T_1$ and $T_2$ are functions in the variables $y_i$ and $z_i$ respectively. Let $\cQ$ be the $N_1 + N_2 - n$-loop quiver with loops
  \[
    \cQ_1 = \{x_1,\ldots,x_n\} \sqcup \{y_1,\ldots,y_{N_1-n}\} \sqcup \{z_1,\ldots,z_{N_2-n}\}.
  \]
  Then the formal isomophisms $\uppsi_{T_1}$, $\uppsi_{T_2}$, $\uppsi$ lift to formal automorphisms of the completed path algebra $\widehat{\C\cQ}$ in the obvious way, and satisfy:
  \[
    \begin{aligned}
       \uppsi_{T_1}(\cW_{T_1} + \uplambda \cdot q_2) &= \cW_{\min,T_1} + q_1 + \uplambda\cdot q_2,\\
      \uppsi(\cW_{\min,T_1} + q_1 + \uplambda\cdot q_2) &= \uplambda\cdot \cW_{\min,T_2} + q_1 + \uplambda\cdot q_2,\\
      \uppsi_{T_2}(\uplambda\cdot \cW_{T_2} + q_1) &= \uplambda\cdot \cW_{\min,T_2} + q_1 + \uplambda\cdot q_2.
    \end{aligned}
  \]
  By inspection, the composition $\uppsi_{T_2}^{-1}\circ\uppsi\circ\uppsi_{T_1}$ maps $\cW_{T_1} + \uplambda\cdot q_2$ to $\uplambda\cdot \cW_{T_2} + q_1$. Hence, Lemma \ref{lem:motintformal} and Lemma \ref{lem:quadterms} imply that
  \[
    \Upphi_{\cQ_{T_1},\cW_{T_1}}(t) \overset{\text{\ref{lem:quadterms}}}= 
    \Upphi_{\cQ,\cW_{\min,T_1} + \uplambda\cdot q_2}(t) \overset{\text{\ref{lem:motintformal}}}= 
    \Upphi_{\cQ,\uplambda\cdot\cW_{\min,T_2} + q_1}(t) \overset{\text{\ref{lem:quadterms}}}= 
    \Upphi_{\cQ_{T_2},\uplambda\cdot \cW_{T_2}}(t).
  \]
  The partition function of $(\cQ_{T_2},\uplambda\cdot \cW_{T_2})$ is independent of $\uplambda$ as the vanishing cycle of $\tr(\uplambda\cdot \cW_{T_2}) = \uplambda\cdot \tr(\cW_{T_2})$ depends only on the zero locus of the function.
\end{proof}

In view of the above, it suffices to work with the cyclic minimal model $\H^\bullet\kern-2pt\tw\cD_{Q,W}$ of the cyclic $A_\infty$-category $\tw\cD_{Q,W}$. 

\subsection{Cyclic minimal models associated to finite $R$-algebras}

We return to the setting of \S\ref{ssec:KozDu} where $A$ is an algebra over a commutative Noetherian $\C$-algebra $R$, which is smooth over $\C$. We let $\Nccr = A\otimes_R \widehat R$ denote the completion of $A$ at a choice of maximal ideal $\m\subset R$, and let $E$ denote the Koszul dual of $A$ in $\m$.

If the completion is isomorphic to a completed Jacobi algebra of a quiver with potential $(Q,W)$, then the following theorem of Van den Bergh relates the Koszul dual to the $A_\infty$-category of $(Q,W)$. 

\begin{theorem}[{See \cite[Theorem 12.1]{VandenBergh15}}]\label{lem:minmodKoz}
  Suppose the completion $\Nccr$ is isomorphic to $\widehat \Jac(Q,W)$ for some quiver with potential $(Q,W)$.
  Then $\cD_{Q,W}$ is $A_\infty$-quasi-isomorphic to the DG algebra $E$.
\end{theorem}

If $A$ satisfies the conditions of the theorem we then obtain the following chain of quasi-equivalences
\[
  U \colon \cH
  \xrightarrow{\sim \text{q.e}} \tw\cD_{Q,W}
  \xrightarrow{\sim \text{q.e}} \DGPerf\cD_{Q,W}
  \xrightarrow{\sim \text{q.e}} \DGPerf E 
  \xrightarrow{\sim \text{q.e}} \cN.
\]
where $\cH\colonequals \H^\bullet\kern-2pt\tw\cD_{Q,W}$ is the cyclic minimal model of $\tw\cD_{Q,W}$ and $\cN \subset \cA = \DGPerf A$ is the DG-subcategory of objects supported on the maximal ideal $\m\subset R$ as in \S\ref{ssec:KozDu}. Via the equivalence $U$ we can relate the Hochschild actions of autoequivalences on $\cH$ and $\cN$, yielding the main theorem.

\begin{proof}[Proof of Theorem \ref{thm:CYstructPots}]
  Let $A$ be an algebra with a completion isomorphic to $\widehat\Jac(Q,W)$, and
  write $\cA = \DGPerf A$ as before.
  If $F\colon \cA \to \cA$ is an $R$-linear quasi-equivalence, such that $\HH_3(F) = \uplambda \in \C^\times$, then by Corollary \ref{cor:raction} it acts on $\HH^{-3}(\cN,\cN^*)$ as 
  \[
    \HH^{-3}(F) = \uplambda^{-1}.
  \]
  By \cite[Theorem 9.2.0.4]{LefevreHasegawa03}, the $A_\infty$-functor $U\colon \cH \to \cN$ has a quasi-inverse $U^{-1}\colon \cN \to \cH$, and one can hence lift $F$ to a quasi-auto-equivalence $F'\colonequals U^{-1}\circ F \circ U$ on $\cH$, which acts on the Hochschild cohomology $\HH^{-3}(\cH,\cH^*)$ as  
  \[
    \HH^{-3}(F') = \HH^{-3}(U^{-1}) \circ \uplambda^{-1} \circ \HH^{-3}(U) = \uplambda^{-1}.
  \]
  This shows that the functor $F'$ satisfies the homotopy-cyclic condition 
  \[
    \HH^{-3}(F')([\uplambda\cdot\upsigma]) = [\upsigma],
  \]
  with respect to the cyclic structures $\upsigma$ and $\uplambda\cdot\upsigma$ on $\cH$.
  Let $T\in\Ob \cH$ be a twisted complex, then Lemma \ref{lem:hocycfun} 
  shows that there exists a cyclic $A_\infty$-algebra isomorphism
  \begin{equation}\label{eq:cychomscaled}
    (\cH_T,\upsigma|_T) \to (\cH_{F(T)},\uplambda\cdot\upsigma|_{F'(T)}).
  \end{equation}
  Now suppose $M,N \in \nilp A$ are modules with $\End_A(M) \simeq \End_A(N) \simeq \C$ such that there exists a quasi-isomorphism $F(M) \simeq N$ in the derived category $\D_\m(A) \simeq \H^0\cN$. Then they can be represented by the twisted complexes $T_1,T_2 \in \Ob \cH$ such that $U(T_1) \simeq M$ and $U(T_2) \simeq N$. Because $F(M) \simeq N$, it then it also follows that $F'(T_1) \simeq T_2$ in $\H^0\cH$. Combining the map \eqref{eq:cychomscaled} with Lemma \ref{lem:cyccanon}, we obtain a cyclic $A_\infty$-isomorphism
  \[
    (\cH_{T_2},\uplambda\cdot\upsigma|_{T_2}) \xrightarrow{\ \sim\ } (\cH_{F(T_1)},\uplambda\cdot\upsigma|_{F'(T_1)})\xrightarrow{\ \sim\ } (\cH_{T_1},\upsigma|_{T_1}).
  \]
  In particular, there is an isomorphism $\uppsi\colon \widehat{\C\cQ}_{\min,T_1} \to \widehat{\C\cQ}_{\min,T_2}$ of the completed path algebras which maps $\cW_M = \cW_{\min,T_1}$ to the potential
  \[
    \begin{aligned}
      \uppsi(\cW_M)(f_0,\ldots,f_i) &= \sum_{i=2}^\infty (\uplambda\cdot \upsigma|_{T_2})(f_0)(m_i(f_1,\ldots,f_i)) \\
      &= \uplambda \cdot \sum_{i=2}^\infty \upsigma|_{T_2}(f_0)(m_i(f_1,\ldots,f_i)) \\
      &= \uplambda\cdot \cW_{\min,T_2}(f_0,\ldots,f_i).
    \end{aligned}
  \]
  Hence $\uppsi(\cW_M) = \uplambda\cdot \cW_{\min,T_2} = \uplambda\cdot \cW_N$ as claimed.
\end{proof}

With Theorem \ref{thm:CYstructPots} established, the proof of the corollary now follows almost directly from Theorem \ref{thm:ben} and Lemma \ref{lem:twequivmotives}.

\begin{proof}[Proof of Corollary \ref{cor:equivBPS}]
  By assumption $M \in \nilp A$ and $F(M) \in \nilp A$ are modules with $\End_A(M) \simeq \End_A(F(M)) \simeq \C$, so Theorem \ref{thm:ben} implies that 
  \[
    \begin{aligned}
      \sum_{k\geq 0}\int_{\cP_{M,k}}\kern-5pt \upphi_{\tr(W)}\cdot t^{[M]} &= \Upphi_{\cQ_{T_1},\cW_{T_1}}(t^{[M]}),\\
      \sum_{\k\geq 0}\int_{\cP_{F(M),k}}\kern-5pt \upphi_{\tr(W)}\cdot t^{[F(M)]} &= \Upphi_{\cQ_{T_2},\cW_{T_2}}(t^{[F(M)]}),
    \end{aligned}
  \]
  for some twisted complexes $T_1,T_2\in\tw \cD_{Q,W}$ corresponding to $M$ and $F(M)$ respectively. Theorem \ref{thm:CYstructPots} shows that there exists a formal isomorphism between the completed path algebras of $\cQ_{\min,T_1}$ and $\cQ_{\min,T_2}$ which maps $\cW_{\min,T_2}$ to $\uplambda\cdot \cW_{\min, T_1}$ for some scalar $\uplambda\in\C^\times$. Hence, Lemma \ref{lem:twequivmotives} show that
  \[
    \sum_{k\geq 0} \int_{\cP_{M,k}}\kern-5pt \upphi_{\tr(W)}\cdot t^{[M]} 
    = \Upphi_{\cQ_{T_1},\cW_{T_1}}(t^{[M]})
    = \Upphi_{\cQ_{T_2},\cW_{T_2}}(t^{[M]})
    = \sum_{k\geq 0} \int_{\cP_{F(M),k}}\kern-5pt \upphi_{\tr(W)}\cdot t^{[M]},
  \]
  and the result follows after comparing coefficients.
\end{proof}

\subsection{The geometric setting}

We return to the setting of threefolds. Let $Y$ be a smooth quasi-projective threefold, then the bounded complexes of locally free sheaves form a DG-category $\DGPerf Y$, whose Hochschild homology has a geometric interpretation.

\begin{lemma}\label{lem:smoothvarHH}
  The DG-category $\DGPerf Y$ is a smooth and has Hochschild homology 
  \[
    \HH_3(\DGPerf Y) \simeq \H^0(Y,\omega_Y).
  \]
\end{lemma}
\begin{proof}
  The smoothness of $\DGPerf Y$ for a smooth quasi-projective variety is well known, see e.g. the work of Orlov \cite{Orlov16} and Lunts \cite{Lunts10}. 
  It was moreover shown by Keller \cite{Keller98} that the Hochschild homology $\HH_\bullet(\DGPerf Y)$ of the DG category $\DGPerf Y$ coincides with the geometric Hochschild homology $\HH_\bullet(Y)$. 
  Because $Y$ is smooth, the Hochschild-Kostant-Rosenberg theorem (see e.g. \cite[Theorem 3.4.4]{Loday92}) yields a decomposition 
  \[
    \HH_d(Y) \simeq \bigoplus_{j-i=d} \H^i(Y,\Omega^j_Y) 
  \] 
  where $\Omega^j_Y$ denotes the sheaf of differential $j$-forms on $Y$. 
  Because $Y$ is a threefold, it then follows that $\HH_3(Y) \simeq \H^0(Y,\Omega^3_Y) = \H^0(Y,\omega_Y)$.
\end{proof}

Now let $R$ be a threedimensional Gorenstein ring, and suppose $Y$ is equipped with a proper morphism $\uppi\colon Y \to \Spec R$ such that $\bR \uppi_*\O_Y = \O_{\Spec R}$. Then the Hochschild homology reduces to the Hochschild homology of $R$, and we find the following.

\begin{lemma}\label{lem:HHaut}
  Let $\uppi\colon Y\to \Spec R$ be as above, then
  \[
    \Aut_R(\HH_3(\DGPerf Y)) \simeq R^\times.
  \]
\end{lemma}
\begin{proof}
  By the Gorenstein assumption $\Spec R$ is equipped with a dualising sheaf $\omega_R$. Because $\uppi$ is proper, the functor $\bR\uppi_*$ has the right adjoint $\uppi^!$ which maps the dualising sheaf $\omega_R$ on $\Spec R$ to $\uppi^!\omega_R \simeq \omega_Y$.
  Together with the assumption $\bR\uppi_*\O_Y \simeq \O_{\Spec R}$ this then implies that
  \[
    \begin{aligned}
      \H^0(Y,\omega_Y) 
      &\simeq \H^0\kern-1pt\RHom_Y(\O_Y,\uppi^!\omega_R) 
      \\&\simeq \H^0\kern-1pt\RHom_{\Spec R}(\bR\uppi_*\O_Y,\omega_R)
      \\&\simeq \H^0\kern-1pt\RHom_{\Spec R}(\O_R,\omega_R)
      \\&\simeq \H^0(\Spec R,\omega_R)
    \end{aligned}
  \]
  The canonical $\omega_R$ is moreover a line bundle, so its $R$-module of endomorphisms is free of rank 1.
  Hence, Lemma \ref{lem:smoothvarHH} implies that
  \[
    \End_R(\HH_3(\DGPerf Y)) \simeq \End_R(\H^0(Y,\omega_Y)) = \End_{\Spec R}(\omega_R) \simeq R,
  \]
  and the automorphisms are the invertible elements $R^\times\subset R$.
\end{proof}

If the group of units is equal to $\C^\times \subset R^\times$ then the above lemma implies that any $R$-linear DG autoequivalence of $\DGPerf Y$ acts by a scalar on the Hochschild homology, as in the condition of Theorem \ref{thm:CYstructPots}.
If $Y$ is derived equivalent to a Jacobi algebra, this then implies the following.

\begin{prop}\label{prop:everyautoequiv}
  Let $A = \Jac(Q,W)$ be a Jacobi algebra which is isomorphic to $\End_Y(\cP)$ for a tilting bundle $\cP$ on a smooth quasi-projective threefold $Y$ equipped with a proper map $\uppi\colon Y\to \Spec R$ satisfying $\bR\uppi_*\O_Y = \O_{\Spec R}$ to a Gorenstein scheme $\Spec R$ with units $R^\times = \C^\times$.
  Then the statement of Theorem \ref{thm:CYstructPots} holds for any $R$-linear standard equivalence $F\colon \D^b(A) \to \D^b(A)$.
\end{prop}
\begin{proof}
  Because $\cP$ is a tilting bundle, it induces quasi-inverse DG functors
  \[
    -\otimes_A \cP \colon \DGPerf A \to \DGPerf Y,\quad 
    (\DGPerf Y)(\cP,-)\colon \DGPerf Y \to \DGPerf A,
  \]
  which are moreover $R$-linear, via the canonical embedding of $R$ into $A = \End_Y(\cP)$ via its action on $\cP$ of multiplication by global sections in $\H^0(Y,\O_Y) \simeq \H^0(\Spec R,\bR\uppi_*\O_Y) \simeq R$.
  In particular, the Hochschild homologies are $R$-linearly isomorphic, so that
  \[
    \Aut_R(\HH_3(\DGPerf A)) \simeq \Aut_R(\HH_3(\DGPerf Y)) \simeq R^\times = \C^\times,
  \]
  by Lemma \ref{lem:HHaut} and the assumption on the units.
  Hence, the condition $\HH_3(F) = \uplambda$ for $\uplambda\in\C^\times$ in Theorem \ref{thm:CYstructPots} is automatically satisfied.
\end{proof}

Returning to Setup \ref{setup:general}, we recover Proposition \ref{prop:BPSequiv} as a special case.

\begin{proof}[{Proof of Proposition \ref{prop:BPSequiv}}]
  Recall that we are working in Setup \ref{setup:general}, where $(Q,W)$ has Jacobi algebra $A=\Jac(Q,W)$ derived equivalent to a length $2$ flopping contraction $Y\to \Spec R$ via a tilting bundle $\cP$, and the completion $\Nccr = \widehat A$ is the endomorphism algebra of Van den Bergh's tilting bundle on the completion over the origin in $\Spec R$.
  Moreover, we are working with the assumption $R^\times = \C^\times$, so that $A$ satisfies the assumptions of Proposition \ref{prop:everyautoequiv}.
  
  By Theorem \ref{thm:stableobjs} the semistable objects of phase $\uptheta_{\curve,n}$ in $\nilp A \simeq \fdmod \Nccr$ are all isomorphic extensions of the unique stable module $M$ with class $\updelta_{\curve,n}$.
  Therefore, the substack $\fC^{\uptheta_{\curve,n}} \into \fC$ is isomorphic to the substack $\fP_M \into \fC$ of self-extensions of $M$, which implies
    \[
      \Upphi^{\uptheta_{\curve,n}}(t) = \int_{[\fC^{\uptheta_{\curve,n}}\into \fC]} \upphi_{\tr(W)}|_{\fC} = \int_{[\fP_M \into \fC]} \upphi_{\tr(W)}|_{\fC} = \sum_{k>0} \int_{\fP_{M,k}} \upphi_{\tr(W)} t^{k\updelta_{\curve,n}}.
    \]
    Because $M$ is isomorphic to $\Uppsi(\O_\curve(n-1)[m])$ for $m=0,1$ depending on the sign of $n$, it is the image $F(S_1)$ of the simple $S_1 = \Uppsi(\O_\curve(-1))$ via the $R$-linear standard equivalence
    \[
      F\colon \D^b(\mod A) \xrightarrow{\Uppsi^{-1}} \D^b(\Coh Y) \xrightarrow{-\otimes \O_Y(n)[m]} \D^b(\Coh Y) \xrightarrow{\Uppsi} \D^b(\mod A).
    \]
    Hence, it follows from Proposition \ref{prop:everyautoequiv} that $\HH_3(F)$ is given by scalar multiplication, and Corollary \ref{cor:equivBPS} applied to $\fP_{S_1,k} = \fC_{k[S_1]}$ therefore implies that
    \[
      \begin{aligned}
        \Upphi^{\uptheta_{\curve,n}}(t) 
        &= \sum_{k>0}\int_{\fP_{M,k}} \upphi_{\tr(W)} t^{k\updelta_{\curve,n}}
        \\&= \sum_{k>0}\int_{\fC_{k[S_1]}} \upphi_{\tr(W)} t^{k\updelta_{\curve,n}}
        \\&= \Sym\left(\sum_{k>0}\frac{\BPS_{k[S_1]}}{\L^\half-\L^\mhalf} t^{k\updelta_{\curve,n}}\right).
      \end{aligned}
    \]
    Comparing with the BPS ansatz then yields $\BPS_{k\updelta_{\curve,n}} = \BPS_{k[S_1]}$ for all $k>0$.
    The statement for the invariants $\BPS_{k\updelta_{2\curve,n}}$ follows by an analogous argument.
\end{proof}

\appendix

\section{Blowup calculation}\label{sec:blowups}
Here we prove Lemma \ref{lem:blowup1} and Lemma \ref{lem:blowup2} by constructing an embedded resolution over $U\subset \A^2$ of the divisor $Z\subset U$ defined by
\[
  Z \colonequals \{0 = \cW = x^2y - f(y) \}.
\]
In what follows we decompose the parameter $f$ as $f(y) = y^{k+1} \cdot u(y)$ for $k\geq 2$ such that the factor $u(y)$ is invertible on the neighbourhood $U$. 

We construct an embedded resolution via a sequence of blowups. Consider the blowup $\uppi\colon \Bl \A^2 \to \A^2$ of the origin, which is a gluing $\Bl \A^2 = \A^2 \cup \A^2$ of two affine charts, and write 
\[
  \uppi_x\colon \A^2 \to \A^2,\quad \uppi_x(x,y) = (xy,y),\quad
  \uppi_y\colon \A^2 \to \A^2,\quad \uppi_y(x,y) = (x,xy),
\]
for the restriction of $\uppi$ to these charts. Let $N = \floor{\tfrac k2}$, then blowing up $N$ times gives
a resolution with $N+1$ charts, on which the resolution restricts to the maps
\[
  \uppi_y,\quad \uppi_x\circ \uppi_y,\quad \uppi_x \circ \uppi_x \circ \uppi_y,\quad\ldots,\quad \uppi_x^{N-1}\circ \uppi_y,\quad \uppi_x^N.
\]
The pullback of $Z$ through the resolution is locally given by 
\[
  (\uppi_x^j \circ \uppi_y)^* Z = \left\{\ y^{2j+1}x^{2j+3}(1 - x^{k-2-2j}y^{k-2j} \cdot u(xy)) = 0\ \right\}.
\]
for $j<N$ on the first $N$ charts and on the remaining chart by the equation
\[
  (\uppi_x^N)^*Z = \left\{\ y^{2N+1}(x^2 - y^{k-2N}\cdot u(y)) = 0\ \right\}.
\]
Then the pullback is normal-crossing on the former $N$ charts.
\begin{lemma}\label{lem:Hpoly}
  The divisor $(\uppi_x^j \circ \uppi_y)^* Z$ has normal-crossing singularities when restricted to the
  pre-image of $U \subset \A^2$.
\end{lemma}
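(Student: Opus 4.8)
The plan is to read off the irreducible components of $(\uppi_x^j \circ \uppi_y)^* Z$ from the displayed factorisation and then verify the two defining properties of a normal crossing divisor — smoothness of each prime component, and the intersection of any collection of components being cut out by part of a coordinate system — working directly in the affine chart with coordinates $(x,y)$. The components are the exceptional line $\{y = 0\}$ (multiplicity $2j+1$), the exceptional line $\{x = 0\}$ (multiplicity $2j+3$), and the strict transform $S \colonequals \{g = 0\}$ of multiplicity $1$, where $g = 1 - x^a y^b u(xy)$ with $a \colonequals k-2-2j$ and $b \colonequals k-2j$. Since $j$ runs over $0,\ldots,N-1$ with $N = \floor{k/2}$, one has $a \geq 0$ (with $a = 0$ only when $k$ is even and $j = N-1$), $b \geq 2$, and in every case $b - a = 2$. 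The two coordinate lines are visibly smooth and meet each other transversally at the origin, so the content of the lemma is about $S$ and its incidences over the preimage of $U$.

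First I would prove that $S$ is smooth over the preimage of $U$. On $U$, and hence on its preimage, $u(xy)$ is a unit, so on $S$ the relation $x^a y^b u(xy) = 1$ lets one rewrite the partial derivatives of $g$: when $a \geq 1$ a short computation gives, at every point of $S$, $x\,\partial_x g = -\bigl(a + xy\, u'(xy)/u(xy)\bigr)$ and $y\,\partial_y g = -\bigl(b + xy\, u'(xy)/u(xy)\bigr)$. If $S$ had a singular point there, both expressions would vanish, forcing $a = b$, contradicting $b - a = 2$. The degenerate chart $a = 0$ (so $k$ even, $b = 2$) is handled separately: there $\partial_x g = -y^{3} u'(xy)$ vanishes on $S$ only where $u'(xy) = 0$, and at such a point $\partial_y g = -2y\,u(xy) = -2/y \neq 0$ since $y$ is nonzero on $S$. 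Either way $S$ is smooth.

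Next I would analyse how $S$ meets the two exceptional lines. If $a \geq 1$, then on $S$ the identity $x^a y^b u(xy) = 1$ together with invertibility of $u$ forces $x \neq 0$ and $y \neq 0$, so $S$ is disjoint from $\{x=0\} \cup \{y=0\}$; there are then no double or triple intersection points involving $S$, and the divisor is normal crossing. If $a = 0$, then $S$ is still disjoint from $\{y=0\}$, while along $\{x=0\}$ it meets the isolated points with $y^2 u(0) = 1$; at such a point $\partial_y g = -2y\,u(0) \neq 0$, so $S$ is transverse to $\{x=0\}$, and there is still no triple point. In all cases the three components are smooth and pairwise transverse with empty triple intersection, which is precisely the normal crossing condition.

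The only real friction I anticipate is bookkeeping: fixing signs in the derivative computation, treating the single degenerate chart $a = 0$ correctly, and making sure the restriction to the preimage of $U$ — where $u$ is a unit — is invoked at the right points. Conceptually the crucial observation, and the reason this single blowup sequence already resolves $Z$ on these charts, is the multiplicity gap $b - a = 2$ forcing smoothness of the strict transform, together with the fact that the exceptional multiplicities $2j+1$ and $2j+3$ are distinct.
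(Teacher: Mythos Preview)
Your proof is correct and follows the same strategy as the paper --- identify the three prime components and verify smoothness and transversality --- but is considerably more explicit: you actually compute derivatives to establish smoothness of the strict transform $S$ and separately treat the degenerate chart $a=0$, where $S$ does meet the axis $\{x=0\}$. The paper's proof simply asserts smoothness and states that the only intersection to consider is between the two coordinate axes, which is literally correct only for $a\geq 1$; your extra care in the $a=0$ case is warranted (that intersection point is the one which, viewed in the adjacent chart $X_N$, is handled by the next lemma).
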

\begin{proof}
  The pullback of $Z$ is the sum of the following prime divisors with multiplicity
  \[
    (\uppi_x^j \circ \uppi_y)^* Z = (2j+1) \cdot \{y=0\} + (2j+3) \cdot \{x=0\} + \{1-x^{k-2-2j}y^{k-2j}\cdot u(xy) = 0\}.
  \]
  Each of the prime divisors appearing in this sum is smooth on $(\uppi_x^j\circ\uppi_y)^{-1}(U)$,
  so it suffices to check that their intersections are generated by a regular system of parameters.
  The only intersection to consider is the intersection of the axes $\{y=0\}$ and $\{x=0\}$ 
  in the origin. This is clearly normal-crossing because $x,y$ is a regular system of parameters for the equation $xy = 0$.
\end{proof}
\begin{lemma}
  If $k=2N$ then $(\uppi_x^N)^*Z$ is normal-crossing on $(\uppi_x^N)^{-1}(U)$.
\end{lemma}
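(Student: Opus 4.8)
The plan is to read the prime components of $(\uppi_x^N)^*Z$ off its defining equation on the last chart and then to check the normal-crossing condition pointwise on $(\uppi_x^N)^{-1}(U)$; the only case requiring a computation is that of the finitely many points at which the strict transform of $Z$ meets the exceptional line $\{y=0\}$.

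First I would substitute $k=2N$ into the formula $(\uppi_x^N)^*Z=\{\,y^{2N+1}(x^2-y^{k-2N}u(y))=0\,\}$ established above: since $y^{k-2N}=y^0=1$, this collapses to
\[
  (\uppi_x^N)^*Z=(2N+1)\cdot\{y=0\}+\{x^2-u(y)=0\}
\]
on this chart. Next I would check that each reduced prime component is smooth on $(\uppi_x^N)^{-1}(U)$. The axis $\{y=0\}$ is a coordinate line, hence smooth. For $\{x^2-u(y)=0\}$, the Jacobian $(2x,-u'(y))$ can vanish at a point of this hypersurface only if $x=0$; but then $x^2-u(y)=-u(y)=0$ would force $u(y)=0$, which is impossible because $u$ depends only on $y$ and is a unit on $U$, while $(\uppi_x^N)^{-1}(U)$ maps into $U$ under $\uppi_x^N$. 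So $\{x^2-u(y)=0\}$ is smooth, its irreducible components (one or two, according as $u$ admits a square root on $U$) are smooth and pairwise disjoint, and since $u$ is nowhere zero there are no triple intersections among the components of $(\uppi_x^N)^*Z$.

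The only real step is the normal-crossing check along the intersection $\{y=0\}\cap\{x^2-u(y)=0\}$. On $\{y=0\}$ this locus is cut out by $x^2-u(0)$, which defines a reduced zero-dimensional scheme since $u(0)\neq 0$; at such a point $p=(x_0,0)$ with $x_0^2=u(0)\neq 0$ I would expand, using $x_0^2=u(0)$,
\[
  x^2-u(y)=2x_0\,(x-x_0)-u'(0)\,y+r,\qquad r\in\m_p^2,
\]
where $\m_p$ is the maximal ideal of $\O_{X,p}$. Because $2x_0\neq 0$, the images of $y$ and of $x^2-u(y)$ in the cotangent space $\m_p/\m_p^2$ are linearly independent, so $\{\,y,\,x^2-u(y)\,\}$ is a regular system of parameters of $\O_{X,p}$ exhibiting the two components through $p$ as distinct coordinate hypersurfaces; hence $(\uppi_x^N)^*Z$ has normal crossings at $p$. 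At every other point of $(\uppi_x^N)^{-1}(U)$ at most one component passes, and there are no triple intersections, so $(\uppi_x^N)^*Z$ is normal-crossing on $(\uppi_x^N)^{-1}(U)$; combined with Lemma~\ref{lem:Hpoly} this completes the construction of the embedded resolution. I do not expect any genuine obstacle here: the only point needing care is keeping track of the fact that $u$ is a unit precisely on the chosen neighbourhood $U$ (so that $u(0)\neq 0$ and $u'$ is unconstrained), which is exactly what rules out both an apparent singularity of the strict transform at $x=0$ and a tangency of the strict transform with the exceptional line $\{y=0\}$.
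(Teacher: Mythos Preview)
Your proof is correct and follows essentially the same strategy as the paper: identify the prime components, observe the strict transform $\{x^2-u(y)=0\}$ is smooth, and verify that $y$ and $x^2-u(y)$ form a regular system of parameters at each intersection point with the exceptional line. Your multiplicity $2N+1$ on $\{y=0\}$ is in fact the correct one (the paper's $2N$ is a typo, as one sees from the formula just above and from the proof of Proposition~\ref{prop:blowup2}), and your cotangent-space computation is equivalent to the paper's factorisation $x^2-u(0)=(x-c)(x+c)$.
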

\begin{proof}
  For $k=2N$, the pullback of $Z$ is following sum of divisors with multiplicity
  \[
    (\uppi_x^N)^*Z = 2N\cdot \{y=0\} + \{x^2 - u(y) = 0\}.
  \]
  Note that $x^2-u(y)$ is not necessarily irreducible, but nonetheless defines a smooth
  reduced curve in $(\uppi_X^N)^{-1}(U)$. It therefore suffices to show
  that the intersections of this curve with the $x$-axis are generated by a regular system of parameters.
  Let $c$ be one of the square roots of $u(0) \neq 0$, then the curve intersects the $x$-axis
  at the points $(c,0)$ and $(-c,0)$.
  The defining equation of the curve can be put into the form
  \[
    x^2 - u(y) = x_-x_+ - (u(y)-c^2).
  \]
  where $x_\pm \colonequals x \pm c$. Then $x_+$ is invertible at the point $(0,c)$ and
  \[
    y,\quad x_- x_+ - (u(y)-c^2)
  \]
  is a regular system of parameters for the equation $y(x_-x_+ - u(y) -c^2)$ in $\O_{(0,c)}$.
  It follows that $(\uppi_x^N)^*Z$ is normal crossing at $(0,c)$, and similarly it is normal crossing at $(0,-c)$.
\end{proof}
The proof of Lemma \ref{lem:blowup2} now follows easily from the previous two lemmas.
\begin{proof}[Proof of Lemma \ref{lem:blowup2}]
  The condition $a>b$ implies that $y^{2b+1}$ is the lowest term in $f(y)$, so that the divisor $Z$ is defined by the equation
  \[
    y(x^2 - y^{2b}\cdot u(y)),
  \]
  where $u(y)$ is invertible with a leading term that is odd. Hence, we set $N=b$, and define $h\colon X = \bigcup_{j=0}^N X_j \to U$ as the gluing of the $N+1$ charts 
  \[
    X_0 = \uppi_y^{-1}(U),\quad \ldots,\quad
    X_{N-1} = (\uppi_x^{N-1} \circ \uppi_y)^{-1}(U),\quad X_N = (\uppi_X^N)(U),
  \]
  as schemes over $U$ via the maps $\uppi_x^j\circ\uppi_y$ and $\uppi_x^N$.
  Then the previous two lemmas show that $h^*Z$ is a normal-crossing divisor, and it remains to show that $h^*Z$ is the sum of the prime divisors
  $L_1,\quad E_3,\quad \ldots,\quad E_{2N+1},\quad L_2$ with the stated multiplicities and intersections. 
  On the chart $X_0$ the divisor $h^*Z$ restricts to $\uppi_y^*Z$, which is a sum of three prime divisors
  \[
    L_1 = \{y = 0\},\quad E_3|_{X_0} = \{x=0\},\quad L_2|_{X_0} = \{1 - x^{k-2}y^k u(xy) = 0\}
  \]
  with multiplicities $1$, $3$ and $1$ respectively. The lines $L_1$ and $E_3|_{X_0}$ meet in the origin and do no intersect $L_2|_{X_0}$.
  On the charts $X_j$ for $j = 1,\ldots,N-1$ the divisor $h^*Z$ restricts to $(\uppi_x^j\circ \uppi_y^*Z)$, which is a sum of prime divisors
  \[
    E_{2j+1}|_{X_j} = \{y = 0\},\quad E_{2j+3}|_{X_j} = \{x=0\},\quad L_2|_{X_j} = \{1 - x^{k-2-2j}y^{k-2j} u(xy) = 0\}
  \]
  with multiplicities $2j+1$, $2j+3$ and $1$ respectively, with the former two intersecting in the origin.
  On the chart $X_N$ the divisor $h^*Z$ restricts to $(\uppi_x^N)^*Z$, which is a sum of two prime divisors
  \[
    E_{2N+1}|_{X_N} = \{y=0\},\quad L_2|_{X_N} = \{x^2 = u(y)\},
  \]
  with multiplicities $2N+1$ and $1$ respectively. By inspection, $E_3,\ldots,E_{2N+1}$ form a chain of intersecting rational curves meeting eachother in a single point. Likewise $L_1$ meets $E_3$ in a single point, while  $L_2$ meets $E_{2N+1}$ in two points, which are the distinct solutions of $x^2 = u(0)$.
\end{proof}
For the defining equation in \ref{lem:blowup1} the parameter $k = 2N+1 = 2a-1$ is odd, and $(\uppi_x^N)^*Z$ is not normal crossing. One needs to blowup twice more.
\begin{lemma}
  The following divisors are normal-crossing on the pre-images of $U$:
  \begin{align*}
    (\uppi_x^N\circ \uppi_y)^*Z &= \{\ y^{2N+2}(x^2y - u(y)) = 0 \ \}\\
    (\uppi_x^{N+1}\circ \uppi_y)^*Z &= \{\ y^{2N+1}x^{4N+4}(1 - y\cdot u(xy))=0\ \}\\
    (\uppi_x^{N+2})^*Z &= \{\ y^{4N+4}x^{2N+2}(x- u(xy))=0\ \}
  \end{align*}
\end{lemma}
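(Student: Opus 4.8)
The plan is to continue the blowup procedure of the preceding two lemmas for two more steps. By (the proof of) Lemma~\ref{lem:Hpoly}, after the $N$-fold blowup the total transform of $Z$ is already normal crossing on the charts $X_0,\dots,X_{N-1}$, so only the last chart $X_N$ needs work: there $Z$ pulls back to $\{\,y^{2N+1}(x^2-y\,u(y))=0\,\}$, whose strict transform $\{x^2=y\,u(y)\}$ is smooth but tangent to the exceptional line $\{y=0\}$ at the origin --- precisely the failure of normal crossing. So I would first blow up the origin of $X_N$; substituting the two blowup charts into $y^{2N+1}(x^2-y\,u(y))$ gives, up to relabelling the charts, the total transforms $y^{2N+2}(x^2y-u(y))$ and $x^{2N+2}y^{2N+1}(x-y\,u(xy))$. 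The first is normal crossing, but the second has three smooth branches through its origin (the two axes and the strict transform of $Z$), so I would blow up that origin as well; a final substitution produces the remaining two displayed expressions $x^{4N+4}y^{2N+1}(1-y\,u(xy))$ and $x^{2N+2}y^{4N+4}(x-u(xy))$.

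For each of the three resulting charts the verification of normal crossing is local and follows the pattern of Lemma~\ref{lem:Hpoly}. In each case the total transform is a monomial $x^my^n$ times a reduced residual factor $g$ --- one of $x^2y-u(y)$, $1-y\,u(xy)$, $x-u(xy)$ --- cutting out the relevant strict transform of $Z$; since the two coordinate axes are evidently smooth, it suffices to check (i) $\{g=0\}$ is smooth on the preimage of $U$, (ii) $\{g=0\}$ avoids the point $\{x=y=0\}$, so that no three components meet, and (iii) $\{g=0\}$ meets any coordinate axis it touches transversally. The key point, exactly as in the main text, is that the pullback of $u(Y)$ stays invertible on the preimage of $U$ in each of these charts, because the $Y$-coordinate of each chart map lands in $U$. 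From this, (ii) is immediate since $g(0,0)\in\{\pm u(0),1\}$ is a unit; for (i), on $\{g=0\}\cap h^{-1}(U)$ one of the two partials of $g$ is forced to be a unit (e.g. for $g=x^2y-u(y)$ the relation $x^2y=u(y)\neq 0$ forces $xy\neq 0$, so $\partial_x g=2xy$ is a unit, and in the other two cases $h^{-1}(U)$ keeps the relevant coordinate small enough that $\partial_x g$, resp. $\partial_y g$, is a unit); and for (iii) the gradient of $g$ at the single point where it meets an axis has a nonzero entry along that axis, again because $u(0)$ is a unit.

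Assembling the three normal-crossing charts with the untouched $X_0,\dots,X_{N-1}$ gives an embedded resolution $h$ of $Z$ over $U$, which is the content of the lemma; reading off the exceptional configuration --- the chain $E_3,E_5,\dots,E_{2a-1}$ from the first $N$ blowups, together with $E_{2a}$ of multiplicity $2N+2=2a$ and $E_{4a}$ of multiplicity $4N+4=4a$ from the two new ones --- then yields Proposition~\ref{prop:blowup1}. I expect the only delicate step to be (iii): one must make sure that the intersection points of each residual curve with the exceptional axes really lie in $h^{-1}(U)$ before asserting transversality, and that these three charts never carry three distinct components through one point --- but once $U$ is taken small enough that all pullbacks of $u$ are units, both checks reduce to inspecting lowest-order terms.
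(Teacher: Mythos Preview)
Your proposal is correct and follows essentially the same route as the paper: both verify that in each chart the residual curve $\{g=0\}$ is smooth, avoids the origin (so at most two components meet at any point), and meets the surviving coordinate axis transversally, with all three facts reducing to $u(0)\neq 0$. Your write-up is in fact a bit more informative than the paper's, since you explain how the three charts arise from two further blowups of the bad point on $X_N$ and organise the check into the clean conditions (i)--(iii); the paper, by contrast, simply asserts smoothness of each residual curve and then exhibits an explicit regular system of parameters at each intersection point with an axis, which is equivalent to your transversality check via partials.
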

\begin{proof}
  In all three cases the axes $\{y=0\}$ and $\{x=0\}$ are smooth and intersect only in the origin.
  By assumption the polynomial $u$ has a constant term, which implies the curves $x^2y = u(y)$, $1 = y u(xy)$, and $x=u(xy)$
  are smooth.
  The $(\uppi_x^N\circ \uppi_y)^*Z$ is therefore normal-crossing, because the intersection 
  \[
    \{y=0\} \cap \{x^2y-u(y) = 0\} = \varnothing.
  \]
  The radical of the defining equation for the second divisor is $xy(1-y\cdot u(xy))$. 
  The curve $\{1 = y\cdot u(xy)\}$ does not intersect the axis $\{y=0\}$ and intersects $\{x=0\}$ in the point $p = (0,1/u(0))$.
  The variable $y$ is invertible in the local ring $\O_p$, so 
  \[
    x,\quad y(y - 1/u(0)),
  \]
  is a regular system of parameters defining $xy(1-y\cdot u(xy)$ in $\O_p$. It follows that the second divisor is normal-crossing.
  The radical of the third defining equation is $yx(x-u(xy))$.
  The curve $\{x = u(xy)\}$ does not intersect the axis $\{x = 0\}$ and intersects $\{y=0\}$ in the point $p = (u(0),0)$.
  The intersection is again normal crossing, as $yx(x-u(xy))$ has the regular system of parameters
  \[
    y,\quad x(x-u(xy)),
  \]  
  because $u(0)\neq 0$ implies $x$ is invertible in $\O_p$.
\end{proof}
The proof of Lemma \ref{lem:blowup1} now follows analogously to the proof of Lemma \ref{lem:blowup2}.
\begin{proof}[Proof of Lemma \ref{lem:blowup1}]
  The divisor $Z$ is defined by the equation
  \[
    y(x^2 - y^{2a-1}\cdot u(y)),
  \]
  for $u(y)$ invertible on $U$. Set $N = a-1$ and define $h\colon X = \bigcup_{j=0}^{N+2} X_j \to U$ as the gluing of 
  the $N+3$ charts 
  \[
    X_0 = \uppi_y^{-1}(U),\quad \ldots,\quad
    X_{N+1} = (\uppi_x^{N-1} \circ \uppi_y)^{-1}(U),\quad X_{N+2} = (\uppi_X^{N+2})(U),
  \]
  as schemes over $U$ via the maps $\uppi_x^j\circ\uppi_y$ and $\uppi_x^{N+2}$.
  As in the proof of \ref{lem:blowup2} we obtain a curve $L_1$ of multiplicity $1$ in $X_0$ and a chain of 
  exceptional $\P^1$'s $E_3,\ldots,E_{2N+1}$ of multiplicities $3,\ldots,2N+1$ glued from the lines in the charts $X_0,\ldots, X_N$.
  The remaining terms are $E_{4N+4}$, which is glued from
  \[
    E_{4N+4}|_{X_{N+1}} = \{\ x^{4N+4} = 0\ \},\quad E_{4N+4}|_{X_{N+2}} = \{\ y^{4N+4}=0\ \},
  \]
  and has multiplicity $4N+4 = 4a$, the divisor $E_{2N+2}$, which is glued from 
  \[
    E_{2N+2}|_{X_N} = \{\ y^{4N+2} = 0\ \},\quad E_{2N+2}|_{X_{N+2}} = \{\ x^{2N+2}=0\ \},
  \]
  and has multiplicity $2N+2 = 2a$, and the curve $L_2$ which is given by the equation $x = u(xy)$ on the
  chart $X_{N+2}$. By inspection, $E_{4N+4}$ meets $L_2$ and $E_{2N+2}$ in separate points on the chart $X_{N+2}$ and meets $E_{2N+1}$ on the chart $X_{N+1}$.
  The components $L_2$ and $E_{2N+2}$ do not intersect any other divisor.
\end{proof}

\let\H\Hu
\printbibliography

\end{document}